\DeclareFontFamily{OT1}{pzc}{}
\DeclareFontShape{OT1}{pzc}{m}{it}{<-> s * [1.200] pzcmi7t}{}
\DeclareMathAlphabet{\mathpzc}{OT1}{pzc}{m}{it}
\newcommand{\mb}{\mathbb}
\newcommand{\mf}{\mathfrak}
\newcommand{\mc}{\mathcal}
\newcommand{\ov}{\overline}
\newcommand{\ud}{\underline}
\newcommand{\wt}{\widetilde}
\newcommand{\arc}{A_R^{\sf C}}
\newcommand{\ev}{{\rm ev}}
\newcommand{\beqn}{\begin{equation*}}
\newcommand{\eeqn}{\end{equation*}}
\newcommand{\beq}{\begin{equation}}
\newcommand{\eeq}{\end{equation}}
\numberwithin{equation}{section}
\newtheorem{thm}{Theorem}[section]
\newtheorem{lemma}[thm]{Lemma}
\newtheorem{cor}[thm]{Corollary}
\newtheorem{prop}[thm]{Proposition}
\newtheorem{conj}[thm]{Conjecture}
\theoremstyle{definition}
\newtheorem{defn}[thm]{Definition}
\theoremstyle{remark}
\newtheorem{rem}[thm]{Remark}
\newtheorem{hyp}[thm]{Hypothesis}
\newtheorem{ques}[thm]{Question}
\title{Compactness in the adiabatic limit of disk vortices}
\author[Wang]{Dongning Wang}
\address{Center of Geometry and Physics\\
Institute for Basic Science, Pohang, Korea}
\email{dwang@math.wisc.edu}
\author[Xu]{Guangbo Xu}
\address{
Department of Mathematics \\
Princeton University\\
Fine Hall, Washington Road\\
Princeton, NJ 08544 USA
}
\email{guangbox@math.princeton.edu}
\thanks{G. X. is supported by AMS-Simons Travel Grant.}
\begin{document}

\begin{abstract}
This paper is the first input towards an open analogue of the quantum Kirwan map. We consider the adiabatic limit of the symplectic vortex equation over the unit disk for a Hamiltonian $G$-manifold with Lagrangian boundary condition, by blowing up the metric on the disk. We define an appropriate notion of stable solutions in the limit, and prove that any sequence of disk vortices with energy uniformly bounded has a subsequence converging to such a stable object. We also proved several analytical properties of vortices over the upper half plane, which are new type of bubbles appearing in our compactification.
\end{abstract}

\maketitle

{\it Keywords:} Vortex Equation, Adiabatic Limit, Symplectic Quotient

\setcounter{tocdepth}{1}
\tableofcontents

\section{Introduction}

In this paper we initiate a project which intends to develop an open-string analogue of the quantum Kirwan map, proposed by Woodward (see \cite[Section 8]{Woodward_toric}). This paper specifically aims at the compactness problem in adiabatic limit for the symplectic vortex equation on disks, which is the open-string analogue of the ``bubbling'' part of Ziltener's work \cite{Ziltener_book}. In this introduction, we review works related to the closed quantum Kirwan map and briefly describe the idea of its open analogue as well as the main results of the current paper.

\subsection{The quantum Kirwan map}\label{subsection11}

Symplectic vortex equation, introduced by Cieliebak-Gaio-Salamon \cite{Cieliebak_Gaio_Salamon_2000} and Ignasi Mundet \cite{Mundet_thesis} \cite{Mundet_2003}, is the classical equation of motion of the gauged $\sigma$-model with targets in a Hamiltonian $G$-manifold (see Section \ref{section2} for a brief review of symplectic vortex equation). It is an equivariant generalization of the nonlinear Cauchy-Riemann equation (pseudoholomorphic curves) in an almost K\"ahler manifold, where the latter has been a fundamental tool in symplectic geometry since the pioneering works of Gromov and Floer. The study of symplectic vortex equation leads to the so-called gauged Gromov-Witten (or Hamiltonian Gromov-Witten) theory. Suppose $G$ is a compact Lie group and $(X, \omega)$ is a Hamiltonian $G$-manifold with a moment map $\mu: X \to {\mf g}^*$. The moduli space of solutions to the symplectic vortex equation gives rise to a correlation function 
\begin{align}\label{eqn11}
&\ \big\langle a_1, \ldots, a_n \big\rangle_{X}^G \in {\mb Q},\ &\ n \geq 0,\ a_1, \ldots, a_n \in H^*_G( X ).
\end{align}
This is called the gauged Gromov-Witten or Hamiltonian Gromov-Witten invariant of $X$. It has been rigorously defined in certain cases (see Mundet \cite{Mundet_2003}, Cieliebak-Gaio-Mundet-Salamon \cite{Cieliebak_Gaio_Mundet_Salamon_2002}, Mundet-Tian \cite{Mundet_Tian_2009} \cite{Mundet_Tian_Draft} in the symplectic setting and Gonzalez-Woodward \cite{Gonzalez_Woodward_abelian} in the algebraic setting).

There is an interesting correspondence between solutions to the vortex equation in $X$ and pseudoholomorphic curves in the symplectic quotient $\bar{X} = \mu^{-1}(0)/ G$. The symplectic vortex equation and the correlation \eqref{eqn11} depend on a scale parameter $\lambda>0$ which is the size of the domain curve $\Sigma$. By studying the $\lambda \to +\infty$ limit of the vortex equation, one may expect a relation
\beq\label{eqn12}
\big\langle \alpha_1, \ldots, \alpha_n \big\rangle_{X}^G \sim \big\langle \kappa^X (\alpha_1), \ldots, \kappa^X (\alpha_n) \big\rangle_{\bar{X}}.
\eeq
Here $\kappa^X : H^*_G(X) \to H^*( \bar{X} )$ is the (classical) Kirwan map and $\langle \ \rangle_{\bar{X}}$ is the Gromov-Witten correlator. Gaio-Salamon \cite{Gaio_Salamon_2005} then proved that in the context of \cite{Cieliebak_Gaio_Mundet_Salamon_2002}, for monotone manifolds \eqref{eqn12} is actually an equality when degrees of all $\alpha_i$'s are small. The failure of \eqref{eqn12} being an equality in general is because in the adiabatic limit, vortices don't converge to holomorphic curves in $\bar{X}$ uniformly, but will bubble off ``affine vortices''. These objects are solutions to the vortex equation on the complex plane. Salamon conjectured that counting affine vortices gives rise to a quantum deformation 
\beq\label{eqn13}
q\kappa^X: H^*_G ( X; \Lambda ) \to H^* ( \bar{X}; \Lambda ),\ q\kappa^X = \kappa^X + {\bm t} q\kappa^X_1 + {\bm t}^2 q\kappa^X_2 + \cdots 
\eeq
of the Kirwan map, where ${\bm t}$ is a formal variable in the coefficient ring $\Lambda$. \eqref{eqn12} should become an identity if $\kappa^X$ is replaced by the quantum Kirwan map $q\kappa^X$. In symplectic setting, certain preliminary results towards the proof of Salamon's conjecture have been obtained by F. Ziltener (\cite{Ziltener_Decay}\cite{Ziltener_book}). 

When $X$ is a smooth projective variety and the $G$-action extends to an action by its complexification, the quantum Kirwan map is constructed by Woodward \cite{Woodward_15}, using the algebraic construction of virtual cycles on the moduli space of affine vortices. An important perspective in \cite{Woodward_15} is that the equivariant quantum cohomology of $X$ (introduced by Givental \cite{Givental_96}) and the quantum cohomology of the quotient $\bar{X}$ are viewed as cohomological field theory algebras (CohFT algebras for short). Then the gauged Gromov-Witten invariants of $X$ and Gromov-Witten invariants of $\bar{X}$ are ``traces'' on these two CohFT algebras, and the quantum Kirwan map $q\kappa^X$ is a morphism between these two CohFT algebras that relates the two traces. To foreshadow our project, we remark that CohFT algebras can be viewed as complexifications of $A_\infty$ algebras.

Another important aspect of the quantum Kirwan map is related to mirror symmetry. In some sense, the quantum Kirwan map is essentially the mirror map obtained by Givental \cite{Givental_96} and Lian-Liu-Yau \cite{LLY_1} in proving Candelas-de la Ossa-Green-Parkes mirror formula (\cite{COGP}). The philosophy behind this relation is explained by Hori-Vafa \cite{Hori_Vafa} (cited from the introduction of \cite[Part I]{Woodward_15}): the mirror symmetry for vector spaces is trivial and the nontrivial change of coordinates as the mirror formula appears when passing from gauged linear $\sigma$-model to nonlinear sigma model. The quantum Kirwan map is also used to compute the quantum cohomology of symplectic quotients, such as in \cite{GW_Toric} \cite{CLLT}.

\subsection{The open quantum Kirwan map}\label{subsection12}

Now we describe our project on defining an open analogue of the quantum Kirwan map. Let $\bar{X}$ be symplectic (not necessarily obtained from symplectic reduction) and $\bar{L} \subset \bar{X}$ be an embedded Lagrangian submanifold. The Fukaya $A_\infty$ algebra $\mc{Fuk}(\bar{L})$ of $\bar{L}$ consist of a chain group $C( \bar{L}; \Lambda)$ of $\bar{L}$ with coefficients in a Novikov ring $\Lambda$ and a collection of higher compositions $\{ \bar{\mf m}_{\ud k}\}_{\ud k \geq 0}$. The compositions are defined by counting isomorphism classes of pseudoholomorphic disks $u: ({\mb D}, \partial {\mb D}) \to ( \bar{X}, \bar{L})$ with $\ud k + 1 $ marked points mapped into $\ud k + 1$ chains in $\bar{L}$, which are multilinear maps
\beq\label{eqn14}
\bar{\mf m}_{\ud k}:  C( \bar{L}; \Lambda)^{\otimes \ud k} \to C( \bar{L}; \Lambda),\ \ud k = 0, 1, \ldots
\eeq
Different choices of data such as the almost complex structures lead to $A_\infty$ algebras that are homotopy equivalent to each other (cf. Chapter 4 of \cite{FOOO_Book}). 

Another family of relevant objects are the correlations 
\begin{align}\label{eqn15}
&\ \bar{\tau}_{\ud k} (b_1, \ldots, b_{\ud k}) = \langle b_1, \ldots, b_{\ud k} \rangle\in \Lambda,&\ \ud k \geq 0,\ b_1, \ldots, b_{\ud k} \in C( \bar{L}; \Lambda)
\end{align}
defined by counting isolated {\it parametrized} marked holomorphic disks with constrain at boundary markings given by chains in $\bar{L}$. The collection $\{ \bar{\tau}_{\ud k} \}_{\ud k \geq 0}$ satisfies certain splitting axioms involving the $A_\infty$ structure of $\mc{Fuk}(\bar{L})$, which we will call an {\bf $A_\infty$ trace} on $\mc{Fuk}(\bar{L})$. More generally, for any quantum cohomology class $a\in QH(\bar{X})$, one can define the {\it bulk deformation} of the $A_\infty$ structure, or the correlations \eqref{eqn15}, by allowing interior markings on the disk. The deformed correlations give rise to  
\beq\label{eqn16}
\bar{\tau}_{\ud k}(a; b_1, \ldots, b_{\ud k}) = \sum_k \frac{1}{k!} \langle \underbrace{a, \ldots, a}_{k}; b_1, \ldots, b_{\ud k} \rangle.
\eeq

If $\bar{X}$ is obtained by symplectic reduction from a Hamiltonian $G$-manifold $(X, \omega, \mu)$ and $L \subset \mu^{-1}(0)$ is the $G$-invariant lift of $\bar{L}$, then in \cite{Woodward_toric} another $A_\infty$ algebra $\mc{QF}(L)$, called the {\bf quasimap $A_\infty$ algebra}, can be constructed counting holomorphic disks in $(X, L)$ modulo $G$-action. Since chains in $\bar{L}$ can be lifted to $G$-invariant chains in $L$, $\mc{QF}(L)$ has the same underlying chain group $(C(\bar{L}; \Lambda)$, though differs from $\mc{Fuk}(\bar{L})$ in the composition maps.

A {\bf disk vortex} is a pair $(A, u)$, where $A$ is a connection on the trivial $G$-bundle over ${\mb D}$, and $u$ is a map from ${\mb D}$ to $X$, satisfying the {\it vortex equation}:
\beq\label{eqn17}
\ov\partial_A u = 0,\ F_A + \lambda^2 \mu(u) \nu_0 = 0,\ u(\partial {\mb D}) \subset L.
\eeq
(See Section \ref{section2} for explanations for the terms in the equation.) The parameter $\lambda$ measures the scale. Since disk vortices may bubble off holomorphic disks or spheres, counting solutions to \eqref{eqn17} up to gauge equivalence should define an $A_\infty$ trace on $\mc{QF}(L)$, which is parametrized by $\lambda$ and denoted by
\beq\label{eqn18}
\tau^\lambda_{\ud k}(b_1, \ldots, b_{\ud k}) \in \Lambda,\ b_1, \ldots, b_k \in C(\bar{L}; \Lambda).
\eeq

Since CohFT algebras can be viewed as complexifications of $A_\infty$ algebras, it is natural to consider the open analogue of the quantum Kirwan map, as originally proposed in \cite[Section 8]{Woodward_toric}. We look for an $A_\infty$ morphism $q\kappa^{X, L}: \mc{QF}(L) \to \mc{Fuk}(\bar{L})$, which conjecturally is defined by counting affine vortices over the upper half plane ${\mb H}$. $q\kappa^{X, L}$ should intertwine the trace $\bar\tau$ on $\mc{Fuk}(\bar{L})$ (with some bulk deformation) with the $\lambda \to \infty$ limit of the trace $\tau^\lambda$ on $\mc{QF}(L)$. More precisely, we state the following conjecture. 

\begin{conj}\label{conj1}
Let $q\kappa_0^X: \Lambda \to QH(\bar{X})$ be the zeroth component of the closed quantum Kirwan map $q\kappa^X: QH^G(X) \to QH(\bar{X})$ and denote $a_X = q\kappa_0^X(1)\in QH(\bar{X})$. Then the counting of affine vortices over ${\mb H}$ defines an $A_\infty$ morphism 
\beqn
q\kappa^{X, L}: \mc{QF}(L) \to \mc{Fuk}(\bar{L}; a_X),
\eeqn
where $\mc{Fuk}(\bar{L}; a_X)$ is the bulk-deformation of $\mc{Fuk}(\bar{L})$ by $a_X$. The $A_\infty$ morphisms defined using different choice of data are $A_\infty$ homotopic to each other. 

Moreover, the gauged correlation function \eqref{eqn18} and the bulk-deformed correlation function \eqref{eqn16} on $\mc{Fuk}(\bar{L}; a_X)$ are related via the adiabatic limit (possibly up to $A_\infty$ homotopy)
\beq\label{eqn19}
\bar\tau^{X, L}_{\ud k} \big( a_X; q\kappa^{X, L}(b_1), \ldots, q\kappa^{X, L}(b_{\ud k}) \big) = \lim_{ \lambda \to +\infty} \tau^\lambda_{\ud k} \big( b_1, \ldots, b_{\ud k} \big).
\eeq
\end{conj}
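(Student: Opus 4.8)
\medskip

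\noindent\textbf{Proof strategy for Conjecture~\ref{conj1}.}
The plan is to follow the template of the algebraic construction of the closed quantum Kirwan map in \cite{Woodward_15}, transported to the symplectic open-string setting, where the analytic backbone is the compactness theory developed in the present paper. The first step is to assemble, for each $\ud k\geq 0$ and each tuple of boundary labels, the moduli space $\mc{M}_{\ud k}$ of finite-energy affine vortices over the upper half plane ${\mb H}$ with Lagrangian boundary condition along $\partial{\mb H}$, equipped with $\ud k$ ordered boundary marked points and with interior marked points recording insertions of the class $a_X$, taken modulo the residual $\real$-rescaling of ${\mb H}$ and modulo gauge. One then checks the deformation theory is Fredholm, computes expected dimensions, and --- invoking the stable-map compactification and the classification of bubbles established here (affine vortices over $\com$ and over ${\mb H}$, pseudoholomorphic spheres and disks in $X$, and pseudoholomorphic spheres and disks in $\bar X$) --- proves $\ov{\mc{M}}_{\ud k}$ is sequentially compact. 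Granting transversality through Kuranishi structures or virtual perturbations as in \cite{FOOO_Book}, the components of $q\kappa^{X,L}$ are then defined by pushing forward the virtual fundamental chains of the rigid ($0$-dimensional) strata.

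\medskip

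The $A_\infty$-morphism relations for $q\kappa^{X,L}$ should be read off from the codimension-one boundary of the $1$-dimensional components of $\ov{\mc{M}}_{\ud k}$. Two families of strata occur: a quasimap disk in $(X,L)$ modulo $G$ --- a generator of $\mc{QF}(L)$ --- sprouting at one of the inputs, contributing the terms $q\kappa^{X,L}\circ\bar{\mf m}^{\mc{QF}}_{\ud{\ell}}$; and the affine vortex degenerating into a rooted tree of ${\mb H}$-vortices attached along the boundary to a pseudoholomorphic disk in $(\bar X,\bar L)$, contributing $\bar{\mf m}^{\mc{Fuk}}_{\ud{\ell}}\circ(q\kappa^{X,L},\dots,q\kappa^{X,L})$. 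The key point is the origin of the bulk class: interior sphere bubbles that run off to infinity in the second family are counted by closed affine vortices over $\com$, whose signed count is precisely $q\kappa^X_0(1)=a_X$, so the natural target is $\mc{Fuk}(\bar L;a_X)$ rather than $\mc{Fuk}(\bar L)$. Independence of $q\kappa^{X,L}$, up to $A_\infty$-homotopy, of the almost complex structures, the metric, and the perturbation data follows by the usual device of building moduli over a one-parameter family of data and extracting the $A_\infty$-homotopy from their codimension-one boundary, with an analogous argument for homotopies of homotopies.

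\medskip

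The comparison of traces \eqref{eqn19} is the deepest ingredient, and the main obstacle. We must complete the adiabatic analysis of the disk-vortex moduli spaces underlying $\tau^\lambda$ as $\lambda\to+\infty$. This splits into: (a) a compactness theorem, asserting that any sequence of disk vortices on ${\mb D}$ with energy bounded converges, after the metric rescaling studied here, to a pseudoholomorphic disk in $(\bar X,\bar L)$ decorated with $a_X$-insertions together with affine ${\mb H}$-vortices emerging at the points where the limiting disk meets the unstable locus --- exactly the configurations enumerated by the left-hand side of \eqref{eqn19}; and (b) an inverse gluing theorem, that such a broken configuration glues to an honest disk vortex for all large $\lambda$, uniquely up to symmetry. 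Matching orientations and reconciling the obstruction bundles on the two sides via a fiber-product analysis then produces a cobordism of virtually $0$-dimensional moduli, hence \eqref{eqn19}. The hardest technical pieces are the uniform estimates on the neck in the adiabatic limit --- for which the exponential-decay and annular energy bounds for ${\mb H}$-vortices proved in this paper are tailored --- and the compatibility of the virtual perturbations across $\lambda=\infty$, which in practice may only be achievable up to $A_\infty$-homotopy, as the statement of Conjecture~\ref{conj1} already permits.
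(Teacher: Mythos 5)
You are attempting to prove a statement that the paper itself does not prove: Conjecture~\ref{conj1} is stated as an open conjecture, and the present paper contributes only one ingredient toward it, namely the adiabatic compactness theorem (Theorem~\ref{thm3}/Theorem~\ref{thm33}) together with the basic analysis of ${\mb H}$-vortices (Theorem~\ref{thm2}). The introduction explicitly defers the remaining ingredients to other work: the construction of $q\kappa^{X,L}$ via Cieliebak--Mohnke stabilizing divisors (Woodward--Xu), the gluing construction for affine vortices (\cite{Xu_glue}, with analytic framework in \cite{Venugopalan_Xu}), and even the compatible definition of the quasimap algebra $\mc{QF}(L)$, which the paper says ``has not been written down.'' So there is no proof in the paper to compare against, and your text should be judged as a program outline rather than a proof.

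As a proof it has genuine gaps, and they are exactly the ones the paper flags as open. First, transversality: you ``grant'' Kuranishi or virtual perturbations as in \cite{FOOO_Book}, but for gauged moduli of affine vortices over ${\mb H}$ and ${\mb C}$ (noncompact domain, noncompact automorphism/rescaling group, boundary in $L$, evaluation at infinity landing in $\bar{L}$ or $\bar{X}$) no such structure is constructed here or in the cited literature, and the coherence of these perturbations with whatever perturbation scheme defines $\mc{QF}(L)$ and $\mc{Fuk}(\bar{L};a_X)$ is precisely the unsolved compatibility problem; without it neither the $A_\infty$-relations nor the homotopy-independence statement follows. Second, the identity \eqref{eqn19} requires the converse of compactness, a gluing theorem producing, for large $\lambda$, a unique disk vortex near each rigid stable scaled configuration, with matching signs; you assert this ``inverse gluing theorem'' but supply no argument, and this is the content of the forthcoming gluing paper, not a corollary of the exponential decay and annulus estimates proved here. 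Third, the claim that escaping interior bubbles assemble exactly into the bulk insertion $a_X=q\kappa_0^X(1)$ is plausible but unproven: it needs a degeneration/dimension analysis showing that only fiber-class ${\mb C}$-vortices contribute in codimension one and that their counts reproduce the zeroth closed Kirwan component, which you do not carry out. In short, the outline is consistent with the program described in Subsection~\ref{subsection12}, but every step beyond the compactness results of this paper is assumed rather than established, so the conjecture remains unproved by your argument.
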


This conjecture has important implications, which we will discuss later on. Towards the resolution of this conjecture, there are many tasks yet to be accomplished . In the forthcoming work \cite{Woodward_Xu}, Woodward and second named author plan to use the stablizing divisor technique of Cieliebak-Mohnke \cite{Cieliebak_Mohnke} to treat the related transversality problem and construct the $A_\infty$ morphism $q\kappa^{X, L}$. The stabilizing divisor approach has been used in \cite{Charest_Woodward_2} \cite{Charest_Woodward_2014} to construct Floer cohomology and Fukaya algebra. However, the corresponding version of $\mc{QF}(L)$ using the stabilizing divisor technique has not been written down. On the other hand, a crucial part in proving the $A_\infty$ relations for $q\kappa^{X, L}$ and the relation \eqref{eqn19} is certain gluing construction for affine vortices, which will be treated in \cite{Xu_glue}. The analytical framework for the gluing construction has been provided in \cite{Venugopalan_Xu}.

\subsection{Potential functions in the toric case}\label{subsection13}

Besides the generalization from the closed case, another motivation of studying the open quantum Kirwan map is to understand, in the toric case, the meaning of the coordinate change between the Lagrangian Floer potential and the Givental-Hori-Vafa potential. Here we briefly describe this application.

If $( \mc{Fuk}(\bar{L}), \{ \bar{\mf m}_{\ud k}\}_{\ud k \geq 0}\big)$ has a cohomological unit ${\bm 1}_L$, then a solution $b$ to the $A_\infty$ Maurer-Cartan equation
\beq\label{eqn110}
\sum_{\ud k=0}^\infty \bar{\mf m}_{\ud k} (b, \ldots, b) \equiv 0\ {\rm mod}\ {\bm 1}_L
\eeq
is called a weakly bounding cochain, which can be used to deform the map $\bar{\mf m}_1$, which gives a chain complex $( C(\bar{L}; \Lambda); \bar{\mf m}_1^b )$. Its homology gives obstructions of displacing the Lagrangian $\bar{L}$ by Hamiltonian diffeomorphisms of $\bar{X}$. In the view of mirror symmetry, an important object is the {\bf potential function}
\beqn
\mf{PO}: \hat{\mc M}_{weak}(\bar{L}) \to \Lambda,\ \  \sum_{k=0}^\infty {\mf m}_k(b, \ldots, b) = \mf{PO}(b)\cdot {\bm 1}_L.
\eeqn
Here $\hat{\mc M}_{weak}(\bar{L})$ is the set of all weakly bounding cochains. 

If $\bar{X}$ admits a special Lagrangian torus fibration (the SYZ picture), then $\mf{PO}$ induces a holomorphic function $W$ on the dual torus fibration $\bar{X}^\vee$ and mirror symmetry predicts relations between the Fukaya category of $\bar{X}$ and B-side theories of $(\bar{X}^\vee, W)$ (see more details in the expository article \cite{Auroux_09}). In the toric case, the potential $\mf{PO}$ is closely related to the Givental-Hori-Vafa potential, which coincides with the potential function on $\mc{QF}(L)$ (see \cite{Givental_potential}, \cite{Hori_Vafa}, \cite{Woodward_toric}). In the Fano case, the two potential functions are proven to be the same in \cite{Cho}, \cite{Cho_Oh}, \cite{FOOO_toric_1} by direct computation of the Lagrangian Floer potential\footnote{Their works are for different coefficients.}. For semi-Fano toric orbifolds, Chan-Lau-Leung-Tseng showed in \cite{CLLT} that the mirror map is the needed coordinate change, while for general toric manifolds, Fukaya-Oh-Ono-Ohta constructed a formal coordinate change by induction in \cite{FOOO_mirror}. 

Beyond the semi-Fano case, the geometric meaning of the coordinate change has not been understood clearly. However, if Conjecture \ref{conj1} is true, then $q\kappa^{X, L}$ sends solutions to the Maurer-Cartan equation of $\mc{QF}(L)$ to solutions to \eqref{eqn110}, and then induces a correspondence between the potential functions. Therefore, the coordinate change between this two potential functions essentially coincides with the open quantum Kirwan map.

\subsection{Main results of this paper}\label{subsection14}

In this paper we resolved a basic analytical problem in our project, i.e., the compactness of the $\lambda \to +\infty$ adiabatic limit of \eqref{eqn17}. An extension of the results to the case of polygons involving a collection of Lagrangians can be found in an earlier arXiv version of this paper. This compactness result is an important step towards proving relation \eqref{eqn19}. Meanwhile, we prove basic analytical properties of affine vortices over the upper half plane.

Consider a Hamiltonian $G$-manifold $X$ with symplectic quotient $\bar{X}$, satisfying Hypothesis \ref{hyp4}. Consider an embedded Lagrangian submanifold $\bar{L}$ contained in the smooth locus of $\bar{X}$. $\bar{L}$ lifts to $G$-invariant Lagrangian submanifolds $L\subset X$, contained in $\mu^{-1}(0)$, and $G$ acts freely on $L$.

Let $\wt{\mc M}_{\ud k}^\lambda(L)$ be the set of pairs $({\bf v}, {\bf w})$ with ${\bf v} = (A, u)$ solving \eqref{eqn17} and ${\bf w} \subset \partial{\mb D}$ is a set of $\ud k$ boundary marked points. Let ${\mc M}_{\ud k}^\lambda(L)$ be the set of gauge equivalence classes. A natural topology can be put on $\displaystyle \sqcup_{\lambda\geq 0} {\mc M}_{\ud k}^\lambda(L)$. Its compactification on the $\lambda = \infty$ side is reduced to the following question:

\begin{ques}
Given a sequence $\lambda_i \to \infty$ and $( {\bf v}^{(i)}, {\bf w}^{(i)})\in \wt{\mc M}{}^{\lambda_i}_{\ud k}(L)$. Suppose the energy of ${\bf v}^{(i)}$ is uniformly bounded. Up to choosing a subsequence and gauge transformation, to what geometric object does $({\bf v}^{(i)}, {\bf w}^{(i)})$ converge?
\end{ques}

The same as in the closed case (\cite{Ziltener_book}), we will prove that, away from a finite subset $Z\subset {\mb D}$, a subsequence converges uniformly on compact subsets to a holomorphic disk in the symplectic quotient, while near the points in $Z$, certain bubbling happens. The construction of the subsequence and $Z$ depends on the energy quantization property of the bubbles. In our case, a new type of bubble appears, which we call ${\mb H}$-vortices. They are solutions to the symplectic vortex equation over the upper half plane with Lagrangian boundary condition. A few basic properties of ${\mb H}$-vortices proved in this paper are summarized here.

\begin{thm}\label{thm2} Let $L$ be a $G$-Lagrangian submanifold of $X$.
\begin{enumerate}
\item If ${\bf v} = (A, u)$ is an ${\mb H}$-vortex with finite energy and $u({\mb H})$ has compact closure in $X$, then there exist a gauge transformation $g: {\mb H} \to G$ and a point $x \in L$ such that 
\beqn
\lim_{z \to \infty} g^{-1}(z) u(z) = x.
\eeqn
Moreover, the convergence is exponentially fast.

\item There exists a constant $\epsilon_{X, L}>0$ such that if ${\bf v} = (A, u)$ is an ${\mb H}$-vortex having positive finite energy and $u({\mb H})$ has compact closure, then 
\beqn
E({\bf v}) \geq \epsilon_{X, L}.
\eeqn
\end{enumerate}
\end{thm}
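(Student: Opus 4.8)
The plan is to treat the two parts in sequence, with Part (1) supplying the asymptotic control that makes Part (2) into a standard energy-gap argument. For Part (1), I would first use the finite-energy hypothesis together with the fact that $u(\mathbb{H})$ has compact closure to show that the energy density of $\mathbf{v}$ restricted to the annular/half-annular regions $\{z : R \le |z| \le 2R\} \cap \mathbb{H}$ tends to zero as $R \to \infty$; this is the usual consequence of $\int_{\mathbb{H}} e(\mathbf{v}) < \infty$. Then I would invoke an $\varepsilon$-regularity (mean value) estimate for $\mathbb{H}$-vortices — both in the interior and up to the boundary $\partial\mathbb{H}$, where the Lagrangian boundary condition $u(\partial\mathbb{H}) \subset L$ enters — to upgrade the small-energy-on-annuli statement into a pointwise decay statement: on each such region the gauge-invariant quantities $|d_A u|$ and $|F_A|$ are bounded by a constant times the square root of the local energy, hence tend to zero. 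The $\varepsilon$-regularity near the boundary is the technical workhorse here and I expect it to have been set up earlier in the paper (it is the direct analogue of Ziltener's interior estimates together with a doubling/reflection trick across $\partial\mathbb{H}$).

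With pointwise smallness in hand, I would pass to polar-type coordinates on the half-plane (a biholomorphism $\mathbb{H}\setminus\{|z|<1\} \cong [0,\infty)\times[0,\pi]$ or the strip picture after $z\mapsto \log z$) and run the standard isoperimetric/differential-inequality scheme: show that the local energy $\varepsilon(s) = \int_{\{s' \ge s\}} e(\mathbf{v})$ satisfies a differential inequality of the form $\varepsilon'(s) \le -c\,\varepsilon(s)$ once $\varepsilon(s)$ is below the threshold, using the isoperimetric inequality on $X$ near the relevant part of $L$ (this is where one needs the boundary loop to be short and to lie near $L$, so that an equivariant isoperimetric inequality for the action functional applies). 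This yields exponential decay of $\varepsilon(s)$, hence of $|d_A u|$ and $|F_A|$. Finally I would put the connection into a radial (or temporal) gauge on the end, integrate the resulting exponentially-small estimate on $\partial_s u$ to get a Cauchy sequence of values $u(s,\cdot)$, and conclude that after the gauge transformation $g$ the maps converge exponentially to a single point $x$; the boundary condition forces $x \in L$, and $\bar\partial_A u = 0$ together with decay forces the limit to be constant (not just convergent along rays). The main obstacle I anticipate is establishing the correct \emph{boundary} isoperimetric inequality and the accompanying boundary $\varepsilon$-regularity with the Lagrangian condition — getting the constants and the gauge-fixing near $\partial\mathbb{H}$ exactly right is the delicate part, whereas the interior argument is essentially Ziltener's.

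For Part (2), I would argue by contradiction: suppose there is a sequence of $\mathbb{H}$-vortices $\mathbf{v}^{(j)}$ with compact image closure and $0 < E(\mathbf{v}^{(j)}) \to 0$. By Part (1) each has a well-defined asymptotic point and, after translating and rescaling using the conformal symmetries of $\mathbb{H}$ so that, say, the energy density achieves a fixed fraction of the total energy at a point of bounded height, one extracts a limit. The key dichotomy is whether the bubble escapes to the interior of $\mathbb{H}$ or concentrates at the boundary: in the first case one obtains in the limit a nonconstant affine vortex on $\mathbb{C}$ (or a holomorphic sphere in $\bar X$) of energy $0$, contradicting the energy quantization for affine vortices / Gromov; in the second case, after rescaling one gets a nonconstant $\mathbb{H}$-vortex of zero energy, again impossible, or a holomorphic disk in $(\bar X, \bar L)$ of zero energy, contradicting the minimal-area bound for non-constant holomorphic disks with Lagrangian boundary. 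So $\epsilon_{X,L}$ can be taken to be the minimum of the affine-vortex energy gap, the sphere energy gap in $\bar X$, and the minimal symplectic area of a non-constant disk bounding $\bar L$. The only subtlety is ensuring the rescaled limits are genuinely non-constant — this is exactly where the quantitative decay from Part (1) is used, to prevent the energy from leaking to the ends during the rescaling — so Part (2) is short once Part (1) is available.
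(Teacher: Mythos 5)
Your Part (1) follows essentially the same route as the paper: pass to the strip via $z\mapsto \log z$, prove a mean value inequality for the energy density up to the boundary by reflecting across $\partial\mathbb{H}$, then run Po\'zniak's isoperimetric inequality for the local equivariant action to get a differential inequality, exponential decay of the energy on the end, and finish in temporal gauge. The one ingredient you only gesture at is how the reflection is made legitimate: the paper constructs a $(J,L,\mu)$-admissible metric (Lemma \ref{lemma49}, with $L$ totally geodesic, $J(TL)\perp TL$, and $T\mu^{-1}(0)\perp J{\mc X}_\xi$) precisely so that $\partial_t e=0$ along the boundary and the subharmonicity-type estimate survives the doubling; this is the ``delicate part'' you correctly anticipate, and it is a genuine construction, not a routine adaptation.

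Part (2) is where there is a real gap. You take a sequence with $0<E({\bf v}^{(j)})\to 0$, rescale, and derive a contradiction from ``nonconstant limits of zero energy, contradicting energy quantization.'' But quantization of the bubble types is vacuous here: any limit automatically has energy at most $\lim_j E({\bf v}^{(j)})=0$, so the entire burden is to show some rescaled limit is \emph{nonconstant}, and your normalization (``density achieves a fixed fraction of the total energy'') cannot deliver this, since that fraction also tends to zero. To force nonconstancy one must normalize the peak density itself and then handle three regimes (peak density $\to\infty$: holomorphic bubbles in $X$; bounded: affine vortices --- note that the conformal rescalings you invoke do not preserve the vortex equation, they rescale the area form; peak density $\to 0$: the adiabatic regime with limits in $\bar X$), in each case with compactness strong enough that the density normalization passes to the limit; none of this is set up in the sketch, and Part (1) only prevents leakage at $z=\infty$, which is not the issue. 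The paper's actual proof of Theorem \ref{thm14} is two lines and uses no compactness at all: by Corollary \ref{cor13} every $\mathbb{H}$-vortex has image in the fixed compact set $X_{c_0}$, so one sets $\epsilon_{X,L}=\epsilon_{K,L}$ with $K=X_{c_0}$; if $E({\bf v})<\epsilon_{X,L}$, the boundary mean value estimate (Proposition \ref{prop56}) gives $e(z)\leq \frac{8}{\pi r^2}\,E\big({\bf v};B_r(z)\cap\mathbb{H}\big)$ for every $z$ and every $r>0$, and letting $r\to\infty$ with finite total energy forces $e\equiv 0$, hence $E({\bf v})=0$. Since you already assumed exactly this boundary $\varepsilon$-regularity in Part (1), you should apply it directly rather than route through a bubbling argument that, as written, does not close.
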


Together with quantization properties of other types of bubbles, this allows us to construct inductively the bubble trees attached at all the bubbling points. Modelling on specific type of trees described in Appendix \ref{appendixb}, we will define a notion of ``stable scaled holomorphic disk'' as the possible limits of sequences $({\bf v}^{(i)}, {\bf w}^{(i)})$ in Section \ref{section4}. Our main theorem can be roughly stated as follows (see Theorem \ref{thm33} for the precise version).
\begin{thm}\label{thm3}
Given a sequence $({\bf v}^{(i)}, {\bf w}^{(i)}) \in \wt{\mc M}{}_{\ud k}^{\lambda_i}(L)$ with $\lambda_i$ diverging to $\infty$ and uniformly bounded energy, there exists a subsequence which converges to a stable scaled holomorphic disk modulo gauge transformation.
\end{thm}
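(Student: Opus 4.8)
The plan is to follow the bubbling-tree blueprint from the closed adiabatic limit \cite{Gaio_Salamon_2005, Ziltener_book}, adapted to the disk with Lagrangian boundary and incorporating the ${\mb H}$-vortex bubbles. First I would fix, for each $i$, a representative of the gauge class in a local slice (Coulomb/Uhlenbeck gauge on small balls), so that $(A^{(i)}, u^{(i)})$ becomes an honest sequence of connections and maps. Let $\hbar>0$ be the minimum of the relevant energy gaps: the constant $\epsilon_{X,L}$ from Theorem \ref{thm2} for ${\mb H}$-vortices, its analogue for affine vortices over ${\mb C}$, and a positive lower bound for the energy of non-constant holomorphic spheres in $X$ and $\bar X$ and of non-constant holomorphic disks in $(X,L)$ and $(\bar X,\bar L)$ --- all positive by Hypothesis \ref{hyp4} and Gromov compactness. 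Hypothesis \ref{hyp4} also yields a uniform $C^0$ bound on the $u^{(i)}$, so every bubble extracted below has image with compact closure and Theorem \ref{thm2} applies. After passing to a subsequence, define the bubbling locus
\[
Z = \Big\{ z \in {\mb D} : \limsup_{i\to\infty} E\big({\bf v}^{(i)}; B_r(z)\cap {\mb D}\big) \geq \tfrac{\hbar}{2}\ \text{ for every } r>0 \Big\};
\]
the uniform bound $E({\bf v}^{(i)})\le E_0$ forces $Z$ to be finite.

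Next, on ${\mb D}\setminus Z$ an $\epsilon$-regularity estimate for the adiabatic vortex equation gives, once the local energy is below $\hbar/2$, uniform interior and boundary $C^\infty_{loc}$ bounds after gauge fixing; by a diagonal argument a subsequence converges modulo gauge. Since $\lambda_i\to\infty$ and $F_{A^{(i)}}=-\lambda_i^2\mu(u^{(i)})\nu_0$ is pointwise bounded, the limit satisfies $\mu(u)\equiv 0$, so after a further gauge transformation it descends to a finite-energy $\ov\partial$-holomorphic map $\bar u_0 : ({\mb D}\setminus Z,\partial{\mb D}\setminus Z)\to(\bar X,\bar L)$; removal of interior and boundary singularities across the finitely many punctures in $Z$ extends it to a holomorphic disk $\bar u_0 : ({\mb D},\partial{\mb D})\to(\bar X,\bar L)$, the principal component.

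Then, at each $z_*\in Z$ I would run the soft-rescaling analysis. Choose a blow-up scale $\delta_i\to 0$ by normalizing (e.g.\ via a center-of-mass condition) the energy density of $(A^{(i)},u^{(i)})$ on balls around $z_*$, and rescale. The limiting object is dictated by the asymptotics of $\lambda_i\delta_i$: if $\lambda_i\delta_i\to\infty$ one extracts an affine vortex over ${\mb C}$ (for $z_*$ interior) or over ${\mb H}$ with Lagrangian boundary (for $z_*\in\partial{\mb D}$ --- this is where ${\mb H}$-vortices appear); if $\lambda_i\delta_i$ stays in a compact subset of $(0,\infty)$ one gets the same objects with a finite positive scaling parameter; and if $\lambda_i\delta_i\to 0$ the moment-map term drops out and one obtains a non-constant holomorphic sphere in $X$ or holomorphic disk in $(X,L)$. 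In every case, Theorem \ref{thm2}(2) and its analogues force a non-ghost bubble to carry energy $\ge\hbar$, while Theorem \ref{thm2}(1) (and the corresponding decay statements for the other bubble types) provides the asymptotic value at infinity needed to attach the bubble to its parent. Iterating at the finitely many new concentration points, the procedure terminates after finitely many steps because each non-ghost component consumes $\ge\hbar$ of the fixed budget $E_0$; the outcome is a finite rooted tree of components whose combinatorial type is the ``stable scaled holomorphic disk'' of Appendix \ref{appendixb}. The marked points ${\bf w}^{(i)}$ are carried along, possibly colliding and descending onto bubble components, and collapsing the finitely many unstable (ghost, no-special-point) components yields the stable limit.

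The main obstacle is the \emph{no energy loss in the necks} step, on which both the termination of the bubbling and the connectedness of the limit rest. One must bound the energy on the long annular necks separating consecutive scales around each $z_*$, and the estimate is genuinely regime-dependent: where $\lambda_i$ times the local scale is large one exploits the moment-map term to obtain exponential decay --- an annulus lemma in the spirit of the estimates behind Theorem \ref{thm2} --- whereas where it is small one falls back on the isoperimetric inequality for the action/area functional of holomorphic curves and disks. Producing a single annulus lemma valid uniformly across the transition between these regimes, in the half-plane geometry and with the Lagrangian boundary contributing to the isoperimetric term, is the technical heart of the argument; once it is in place, the energy identity $\sum(\text{energies of all components}) = \lim_i E({\bf v}^{(i)})$ and the matching of asymptotic values close the proof.
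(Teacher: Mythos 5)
Your overall skeleton (bubbling locus, convergence off $Z$ to a curve in the quotient, soft rescaling at concentration points, energy quantization from Theorem \ref{thm2}, regime-dependent neck estimates) is the right one, but two of the steps you sketch would not produce the limit object of Definition \ref{defn29}, and one of your classifications is wrong. First, the scale dichotomy is inverted and incomplete: if $\delta_i$ is the concentration scale, then after rescaling the effective area form is $(\lambda_i\delta_i)^2\nu_0$, so the regime $\lambda_i\delta_i\to\infty$ is again adiabatic and the first bubble is a holomorphic sphere or disk in $(\bar{X},\bar{L})$ (Propositions \ref{prop36} and \ref{prop37}); affine ${\mb C}$- or ${\mb H}$-vortices appear exactly when $\lambda_i\delta_i$ stays bounded (Propositions \ref{prop38} and \ref{prop39}), and curves upstairs in $(X,L)$ when it tends to $0$. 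As written, your tree contains no $\bar{X}$-bubbles besides the root, so it cannot exhaust the energy in type-${\mc R}$ concentrations, and the coloring axioms (order-reversing ${\mf s}$ with at most one vortex level per path) could not hold. Second, the base of the limit is not the fixed disk ${\mb D}$ plus bubbles at energy-concentration points: boundary markings can collide with no energy concentrating at all, and the diameter $d^{(i)}_{\ud\alpha}$ of each collapsing cluster must be compared with $\lambda_i^{-1}$ --- clusters collapsing at rate comparable to $\lambda_i^{-1}$ force (possibly zero-energy but stable) ${\mb H}$-vortex components, faster ones force disks in $(X,L)$, slower ones disks in $(\bar{X},\bar{L})$. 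This is precisely the first step of the paper's proof (stabilize with three auxiliary points, pass to the limit in $\ov{\mc M}_{0,\ud k+3}$, then color vertices by $\lim_i\lambda_i d_{\ud\alpha}^{(i)}$, Proposition \ref{prop40}); it is what makes the limit ``scaled''. Your one-line remark that the markings ``descend onto bubble components'' does not yield a configuration satisfying (C1) of Definition \ref{defn29} and (L3)--(L4) of Definition \ref{defn31} when markings collide away from your set $Z$.

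The second genuine gap is in the no-energy-loss step. Boundedness of the density on a neck strip does not prevent an amount $\geq\hbar$ of energy from travelling along the neck: this is Floer-type strip breaking, invisible both to your bubbling locus $Z$ and to point-centered soft rescaling, because the concentration happens at moduli $s_l^{(i)}$ drifting away from both ends of the strip. The isoperimetric machinery (Lemmas \ref{lemma51} and \ref{lemma52}, Po\'zniak's inequality) only applies once each cross-sectional path lies in ${\mc P}_\updelta$, so one must first extract these finitely many connecting strip bubbles --- holomorphic strips in $(\bar{X},\bar{L})$ or in $(X,L)$ according to the scale of the adjacent vertices (Proposition \ref{prop45} and the surrounding construction) --- and only then run the exponential-decay argument. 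Moreover, the decay argument itself is not a single uniform annulus lemma: Proposition \ref{prop57} requires the area form on the strip to be bounded below, which fails on necks attached to components with ${\mf s}\leq 1$; there the paper uses temporal gauge, the explicit pointwise bound on $\psi^{(i)}$ coming from the exponentially small area, and the equivariant local action to close the estimate. Without the connecting-bubble extraction the ``no energy loss'' statement you rely on is simply false, and without the low-area variant of the neck estimate the energy identity $\lim_i E({\bf v}^{(i)})=E({\mc C})$ required by (L5) is not established.
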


\subsection{Organization of this paper}

In Section \ref{section2} we briefly recall the basic setup of the symplectic vortex equation. We also introduce the central objects as examples of vortex equation. In Section \ref{section3} we introduce the adiabatic limit process of the symplectic vortex equation, and recall some known results about it. We also prove the compactness modulo bubbling result as a preparation of the main result. In Section \ref{section4} we introduce the stable objects which appear in the adiabatic limit, define the notion of convergence, and state our main theorem (Theorem \ref{thm33}). In Section \ref{section5} we prove several technical results which we need to construct the limiting object when proving the main theorem. In Section \ref{section6} we patch everything together and give a complete proof of the main theorem. In Appendix \ref{appendixa} we include useful analytical results about vortices. In Appendix \ref{appendixb} we define proper notion of trees which is used to define the stable objects in this paper. 

\subsection{Acknowledgements} We are very grateful to Chris Woodward for suggesting this problem, sharing many ideas and for his hospitality during our visits to Rutgers University. We would like to thank Prof. Yong-Geun Oh and Prof. Gang Tian for helpful discussions. G. X. would like to thank Institute for Basic Science (Pohang) and Institute for Advanced Study (Princeton) for hospitality and would like to thank Nick Sheridan for helpful discussion.

\section{Vortices with Lagrangian Boundary Condition}\label{section2}

In this section we give a brief review about symplectic vortex equation where the domain is a Riemann surface with possibly nonempty smooth boundary.

\subsection{The symplectic vortex equation}

\subsubsection*{The target space}

Let $G$ be a compact Lie group with Lie algebra ${\mf g}$. Let $(X, \omega, \mu)$ be a Hamiltonian $G$-manifold. This means that $(X, \omega)$ is a symplectic manifold and there is a $G$-action on $X$ with moment map $\mu: X \to {\mf g}^*$. Our convention is, for any $\xi\in {\mf g}$, the infinitesimal action of $\xi$ is the vector field ${\mc X}_\xi$ defined by 
\beqn
{\mc X}_\xi(x) = \left. \frac{d}{dt} \right|_{t=0} e^{t\xi} x.
\eeqn
In this convention the moment map satisfies
\beqn
d ( \mu(\xi) ) = \omega ( {\mc X}_\xi, \cdot ) \in \Omega^1(X),\ \forall \xi \in {\mf g}.
\eeqn
Throughout this paper, we make the following assumptions on $(X, \omega, \mu)$, which are usual assumptions in gauged Gromov-Witten theory for possibly noncompact targets (cf. \cite{Cieliebak_Gaio_Mundet_Salamon_2002}).

\begin{hyp}\label{hyp4}
$(X, \omega, \mu)$ satisfies the following conditions.
\begin{enumerate}
\item $\mu$ is proper, $0$ is a regular value of $\mu$ and the $G$-action on $\mu^{-1}(0)$ is free.

\item (cf. \cite[Definition 2.6]{Cieliebak_Gaio_Mundet_Salamon_2002}) If $X$ is noncompact, then there exist a smooth $G$-invariant proper function $f_X: X \to [0, +\infty)$, a $G$-invariant, $\omega$-compatible almost complex structure $J_X$, and a constant $c_X>0$ such that 
\beqn
{f_X (x) \geq c_X\atop \xi\in T_x X} \Longrightarrow \left\{ \begin{array}{c} \langle \nabla_\xi \nabla f_X(x), \xi \rangle + \langle \nabla_{J_X\xi} \nabla f_X(x), J_X \xi \rangle \geq 0,\\
 df_X(x) \cdot J_X {\mc X}_{\mu(x)} \geq 0, \end{array}	\right.
\eeqn
Here $\nabla$ is the Levi-Civita connection of the metric defined by $\omega$ and $J_X$. 
\end{enumerate}
\end{hyp}

\begin{defn}\label{defn5}
A {\bf $G$-Lagrangian} of $X$ is an embedded Lagrangian submanifold $L \subset X$ such that $L \subset \mu^{-1}(0)$ and $L$ is $G$-invariant.
\end{defn}

By Hypothesis \ref{hyp4}, the symplectic quotient $\bar{X}:= \mu^{-1}(0) / G$ is a symplectic manifold. If $L$ is a $G$-Lagrangian of $(X, \omega, \mu)$, then $\bar{L}:= L/G$ is an embedded submanifold of $\bar{X}$. Toric manifolds (viewed as symplectic quotients of Euclidean spaces) and its smooth toric orbits are typical examples.

In this paper, without additional remarks, all almost complex structures will be $G$-invariant and $\omega$-compatible. We choose an almost complex structure $J$ that coincides outside a compact set with the $J_X$ from Hypothesis \ref{hyp4}.

\subsubsection*{The domain and the fields}

Let $\Sigma$ be a Riemann surface, which is not necessarily compact and may contain a nonempty smooth boundary $\partial \Sigma$. Let $\nu\in \Omega^2(\Sigma)$ be an area form, which induces a Hermitian metric on $\Sigma$. Let $P \to \Sigma$ be a smooth principal $G$-bundle. Denote by ${\mc A}(P)$ the space of smooth $G$-connections on $P$. Denote by $Y:= P\times_G X$ the associated fibre bundle over $\Sigma$. Denote by ${\mc S}(Y)$ the space of smooth sections of $Y$. The adjoint bundle is ${\rm ad}P := P \times_{\rm Ad} {\mf g}$.

\subsubsection*{The equation}

Choose an ${\rm Ad}$-invariant metric on ${\mf g}$. It induces an isomorphism ${\mf g} \simeq {\mf g}^*$ as well as an identification ${\rm ad}P \simeq ({\rm ad}P)^*$.

The {\bf symplectic vortex equation} is the following equation on pairs ${\bf v} = (A, u) \in {\mc A}(P)\times {\mc S} (Y)$, which depends on the choice of $J$ and $\nu$:
\beq\label{eqn21}
\left\{ \begin{array}{ccc}
\ov\partial_A u & = & 0;\\
F_A + \mu(u) \nu & = & 0.
\end{array}\right.
\eeq
We explain the terms appeared in the equation. The connection $A$ induces a horizontal distribution on $Y$. Then for any $u \in {\mc S}(Y)$, the covariant derivative $d_A u$ is the composition of $du: T\Sigma \to TY$ with the projection
\beqn
TY \simeq T^V Y \oplus \pi_Y^* T\Sigma \to T^V Y.
\eeqn
On the other hand, since $J$ is $G$-invariant, $J$ induces a complex structure on $T^V Y$. The term $\ov\partial_A u$ is defined as the $(0, 1)$-part of the linear map $d_A u: T\Sigma \to T^V Y$ with respect to the complex structure on $\Sigma$ and the complex structure on $T^V Y$; $\ov\partial_A u$ is a section of $\Lambda^{0,1} T^* \Sigma \otimes T^V Y$. On the other hand, the curvature form $F_A$ is in the space $\Omega^2 ( \Sigma, {\rm ad}P )$; the moment map $\mu$ induces a section of $\pi_Y^* ( {\rm ad}P)^*$, therefore the composition $\mu(u)$ is in $\Omega^0 ( \Sigma, ({\rm ad} P)^* )$. Via the identification ${\mf g}^* \simeq {\mf g}$, $F_A$ and $\mu(u) \nu$ lie in the same vector space.

With respect to a local trivialization of $P$ over a subset $U \subset \Sigma$, the connection $A$ is identified with a ${\mf g}$-valued 1-form. Choose a holomorphic coordinate $z = s + {\bm i} t$ on $U$, then $A = d+ \phi ds + \psi dt$ for $\phi, \psi: U \to {\mf g}$. Via the trivialization the section $u$ is identified with a map, denoted by the same symbol, $u: U \to X$. We also write the area form as $\nu = \sigma(s, t) ds dt$. Then the symplectic vortex equation takes the following local form
\beq\label{eqn22}
\left\{ \begin{array}{ccc} 
\displaystyle \frac{\partial u }{\partial s} + {\mc X}_\phi(u) + J \Big( \frac{\partial u}{\partial t} + {\mc X}_\psi(u) \Big)  & = & 0;\\[0.2cm]
\displaystyle \frac{\partial \psi}{\partial s} - \frac{\partial \phi }{\partial t} + \big[ \phi, \psi \big] + \sigma \mu(u) & = & 0. \end{array} \right.
\eeq
When a trivialization and a holomorphic coordinate are manifest, we will use the notations ${\bf v} = (A, u)$ and ${\bf v} = (u, \phi, \psi)$ interchangeably.

\subsubsection*{The energy and energy density}

Given $\Sigma$ and an area form $\nu\in \Omega^2(\Sigma)$, the {\bf energy} of a pair ${\bf v} = (A, u)\in {\mc A}(P) \times {\mc S}(Y)$ is defined as
\beqn
E({\bf v}):= \frac{1}{2} \Big(  \big\| d_A u \big\|_{L^2(\Sigma)}^2 + \big\| F_A \big\|_{L^2(\Sigma)}^2 + \big\| \mu(u) \big\|_{L^2(\Sigma)}^2 \Big).
\eeqn
Here the $L^2$-norms are taken with respect to the metric on $\Sigma$ determined by $\nu$ and the complex structure of $\Sigma$, and the metric on ${\mf g}$ we used to identify ${\mf g}$ with ${\mf g}^*$. $E({\bf v})$ depends on $\nu$ but in this paper, the choice of $\nu$ will be clear from the context. Then if ${\bf v}$ is a solution to the symplectic vortex equation with respect to $\nu$, then the energy reduces to 
\beqn
E({\bf v}) = \big\| d_A u \big\|_{L^2(\Sigma)}^2 + \big\| \mu(u) \big\|_{L^2(\Sigma)}^2.
\eeqn

If $z = s + {\bm i} t$ is a local holomorphic coordinate and we write the area form as $\nu = \sigma(s, t) ds dt$, then the {\bf energy density} of ${\bf v}$ can be expressed as   
\beqn
{\bf e}:= \left( | \partial_s u + {\mc X}_\phi(u) |^2 +  \sigma | \mu(u) |^2 \right) ds dt .
\eeqn
If $\nu_0$ is another area form and $*: \Omega^2(\Sigma) \to \Omega^0(\Sigma)$ is the associated Hodge star operator, then the {\bf energy density function} of ${\bf v}$ with respect to $\nu_0$ is 
\beqn
e({\bf v}):= * {\bf e} \in \Omega^0(\Sigma).
\eeqn

\begin{defn}\label{defn6}
${\bf v} = (A, u)\in {\mc A}(P) \times {\mc S}(Y)$ is called {\bf bounded} if $E({\bf v}) < \infty$ and there exists a $G$-invariant compact subset $K \subset X$ such that $u(\Sigma) \subset P\times_G K$. 

A collection $({\bf v}^{(\alpha)})_{\alpha \in I} = (A^{(\alpha)}, u^{(\alpha)})_{\alpha \in I}$ (for possibly different area forms $\nu_\alpha$, on possibly different domains $P_\alpha \to \Sigma_\alpha$) is said to be {\bf uniformly bounded} if their energies are uniformly bounded and there exists a $G$-invariant compact subset $K\subset X$ such that $u^{(\alpha)}(\Sigma_\alpha) \subset P_\alpha \times_G K$.
\end{defn}

\subsubsection*{Boundary conditions}	

If $L$ is a $G$-Lagrangian of $X$, then we can impose the boundary condition
\beqn
u(\partial \Sigma) \subset P\times_G L.
\eeqn
To save notations, we just write it as $u(\partial\Sigma) \subset L$. More generally, if $\partial \Sigma$ has several components, we can impose the boundary condition for different $G$-Lagrangians on different boundary components. 

\subsubsection*{Gauge transformations}

A (smooth) gauge transformation on $P$ is a smooth map $g: P \to G$ satisfying that for all $h \in G$ and $p \in P$, $g(ph) = h^{-1} g(p) h$. All gauge transformations form an infinite dimensional Lie group, denoted by ${\mc G}(P)$. It acts on the space ${\mc A}(P) \times {\mc S} (Y)$ (on the right), which is denoted by $g^* (A, u) = (g^* A, g^* u)$. 
	
The equation \eqref{eqn21} is gauge invariant in the following sense. For $g\in {\mc G}(P)$, 
\begin{align*}
&\ \ov\partial_{g^* A} g^* u = ( g^{-1} )_* (\ov\partial_A u),& \ F_{g^* A} + \mu( g^* u) \nu = {\rm Ad}_g^{-1} ( F_A + \mu(u) \nu ).
\end{align*}
The notions of energy and energy density are also gauge invariant.

\subsubsection*{Regularity}

We also need to discuss objects which are not smooth. Let $p>1$ be a real number and $k\geq 0$ be an integer. Let $\Sigma$ be a Riemann surface and $P \to \Sigma$ be a smooth $G$-bundle. A connection on $P$ is said to be of class $W^{k, p}_{loc}$ if with respect any smooth trivializations of $P$ over a subset $U \subset \Sigma$, its connection form is of class $W^{k, p}_{loc}$. Let ${\mc A}^{k, p}_{loc}(P)$ be the space of all connections on $P$ of class $W^{k, p}_{loc}$. Similarly one can define ${\mc S}^{k, p}_{loc}(Y)$ and ${\mc G}^{k, p}_{loc}(P)$. For $k\geq 1$ and $p>2$, the equation \eqref{eqn21} and the boundary condition $u(\partial\Sigma) \subset P\times_G L$ make sense. One can also consider weak solutions but we do not need them in this paper.

It was proved as \cite[Theorem 3.1]{Cieliebak_Gaio_Mundet_Salamon_2002} that over a compact Riemann surface $\Sigma$ (without boundary), a solution to the symplectic vortex equation $(A, u)$ of regularity $W_{loc}^{1, p}$ for some $p>2$ can be gauge transformed via a gauge transformation of regularity $W_{loc}^{2, p}$ to a smooth vortex. For the reader's convenience, we provide a proof for its generalization to the bordered case.

From now on we assume that $\Sigma$ is {\bf of finite type}, i.e., there exists a sequence of precompact open subsets $U_l \subset \Sigma$ such that 1) each $\ov{U_l}$ is a smooth deformation retract of $\Sigma$; 2) each $\ov{U_l}$ has a smooth boundary; 3) for each $l$, $\ov{U_l} \subset U_{l+1}$ and there is a smooth retraction $\rho_l: \ov{U_{l+1}}\subset \ov{U_l}$. Notice that for $\Sigma = {\mb H}$, $U_l:= {\mb H} \cap B_{r_l}$ for growing radii $r_l$ does not satisfy this requirement since $\ov{U_l}$ has corners. But ${\mb H}$ is of finite type because one can modify $U_l$ near the corners so its closure has smooth boundary. Moreover, ${\mb C}$ or ${\mb H}$ with finitely many punctures are of finite type.

\begin{prop}\label{prop7}
Let $p>2$ and $\Sigma$ be a Riemann surface which is not necessarily compact and may have a nonempty smooth boundary. Let $P \to \Sigma$ be a smooth $G$-bundle and $(A, u) \in {\mc A}^{1, p}_{loc}(P) \times {\mc S}^{1, p}_{loc}(Y)$ be a solution to the symplectic vortex equation satisfying the boundary condition $u(\partial \Sigma)  \subset L$. Then there exists a gauge transformation $g\in {\mc G}^{2, p}_{loc}(P)$ such that $g^* (A, u)$ is smooth.
\end{prop}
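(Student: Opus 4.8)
The plan is to treat this as a local elliptic‑regularity‑plus‑gauge‑fixing problem and to bootstrap. Since the assertion is local and $\Sigma$ is of finite type, I would first reduce to producing, on each chart of a suitable finite cover of each $\ov{U_l}$ over which $P$ is trivialized, a gauge transformation of class $W^{2,p}_{loc}$ making $(A,u)$ smooth there. The charts come in two kinds: interior disks $D$ and boundary half‑disks $D^+ = \{|z|<1,\ \mathrm{Im}\,z\ge 0\}$ meeting $\partial\Sigma$ along $\{\mathrm{Im}\,z=0\}$. These local gauges are then patched using the elementary remark that if $g_1,g_2\in\mathcal G^{2,p}_{loc}$ both make $A$ smooth over an open set, then $h=g_1g_2^{-1}$ obeys $dh = h\,(g_2^*A) - (g_1^*A)\,h$, a first‑order equation, bilinear in $h$, with smooth coefficients; since $h\in W^{2,p}_{loc}\subset C^1$ a routine bootstrap shows $h$ is smooth. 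Hence the local gauges can be glued one chart at a time, correcting on overlaps by these smooth transition transformations (extended smoothly over slightly shrunk sets), and the exhaustion $U_1\subset U_2\subset\cdots$ is handled compatibly in the same way, yielding a single $g\in\mathcal G^{2,p}_{loc}(P)$ with $g^*(A,u)$ smooth.

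On an interior disk $D$ I would fix a Coulomb gauge relative to the product connection: writing $A=d+a_0$ with $a_0\in W^{1,p}$ and shrinking $D$ so that $\|a_0\|_{W^{1,p}}$ is small, the embedding $W^{1,p}\hookrightarrow C^0$ and the implicit function theorem (linearization the Laplacian) give $g\in W^{2,p}(D')$, $D'\Subset D$, with $g^*A=d+a$ and $d^*a=0$. By the gauge‑equivariance in \eqref{eqn21}, $F_{g^*A}=-\mathrm{Ad}_g^{-1}\mu(u)\nu$, which lies in $W^{1,p}$ because $u\in W^{1,p}$, $g\in W^{2,p}\subset C^1$, $\mu$ and $\nu$ are smooth, and (as $p>2$, $\dim\Sigma=2$) $W^{1,p}(D')$ is a Banach algebra stable under composition with smooth maps. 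Thus $a$ solves the elliptic system $(d+d^*)a=-\mathrm{Ad}_g^{-1}\mu(u)\nu-\tfrac12[a\wedge a]\in W^{1,p}$, so $a\in W^{2,p}_{loc}$, i.e. $g^*A\in W^{2,p}_{loc}$. In this gauge $\bar\partial_{g^*A}u=0$ reads $\partial_s u + J(u)\partial_t u = -\mathcal X_\phi(u)-J(u)\mathcal X_\psi(u)$ with $\phi,\psi\in W^{2,p}$, a Cauchy–Riemann type equation whose right‑hand side and whose coefficients are now in $W^{1,p}$; interior regularity upgrades $u$ to $W^{2,p}_{loc}$. Iterating — each round feeds a right‑hand side one Sobolev order better into the same two problems, using the Banach‑algebra property at each stage — yields $g^*A,\ u\in W^{k,p}_{loc}$ for all $k$, hence smooth; $g$ itself stays at $W^{2,p}_{loc}$ via $dg=g(g^*A)-(d+a_0)g$, which is all that is claimed and all one can expect, since $A$ is only $W^{1,p}$.

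At a boundary half‑disk $D^+$ the same scheme applies once the two elliptic problems are replaced by boundary value problems. For the connection I would impose $d^*a=0$ in $D^+$ together with vanishing normal component of $a$ along $\partial\Sigma$, normalizing $g$ on $\partial\Sigma$ to remove the residual gauge freedom; the resulting problem for $d+d^*$ is a standard elliptic boundary value problem (the relative boundary condition of Hodge theory on a manifold with boundary). For $u$, the equation $\bar\partial_{g^*A}u=0$ with the totally real condition $u(\partial\Sigma)\subset L$ is the familiar elliptic boundary value problem for $J$‑holomorphic maps with Lagrangian boundary, which has interior‑type regularity up to the boundary (e.g. by local reflection across $\partial\Sigma$). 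The bootstrap then runs exactly as in the interior.

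I expect this boundary step to be the main obstacle — concretely, arranging the Coulomb‑type gauge and its normalization on $\partial\Sigma$ so that it simultaneously yields an elliptic boundary value problem \emph{and} remains compatible, under the patching of the first paragraph, with the interior Coulomb gauges on overlapping charts. The interior gauge fixing, the elliptic estimates, and the algebra underlying the bootstrap are all routine.
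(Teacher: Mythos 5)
Your proposal is correct in substance but organizes the argument differently from the paper. The paper first reduces, via the finite-type exhaustion, to the case of a compact surface with smooth boundary, and then applies the local slice theorem of \cite[Theorem F]{Wehrheim_Uhlenbeck} once, globally on that compact piece: this produces a single $W^{2,p}$ gauge transformation putting $A$ in Coulomb gauge relative to a smooth reference connection $A_0$, with boundary condition $* ( g^* A - A_0)|_{\partial \Sigma} = 0$, after which smoothness follows by citing elliptic regularity for the resulting boundary value problem (Coulomb gauge equation, vortex equations, Lagrangian boundary condition) with smooth coefficients. You instead work chart by chart with local Uhlenbeck/Coulomb gauges (interior disks, and boundary half-disks with the same normal-component condition) and carry out the bootstrap explicitly, alternating between the curvature-plus-gauge system for $a$ and the Cauchy--Riemann system for $u$ with totally real boundary values; this makes explicit the regularity step the paper leaves as an assertion, at the price of chart-to-chart patching, which the paper's use of the global slice theorem on each compact piece avoids (both arguments still need patching across the exhaustion, which the paper also treats only briefly). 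Two small remarks: the genuinely delicate point in your route is not the one you flag --- no compatibility between boundary and interior Coulomb gauges is needed, since by your own transition-function lemma any two smoothing gauges differ by a smooth $G$-valued map --- but rather the extension of those transition maps from overlaps to whole charts, which is not automatic topologically and is handled by the standard gauge-patching scheme (cf. the patching chapter of \cite{Wehrheim_Uhlenbeck}); and the boundary condition you impose (vanishing normal component, i.e. $*a|_{\partial\Sigma} = 0$) is the absolute/Neumann-type condition rather than the relative one, though either way the problem is elliptic and the bootstrap goes through.
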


\begin{proof}
Choose an exhausting sequence of open subsets ${U_l} \subset \Sigma$ as in the definition of finite type. If one can gauge transform $(A, u)$ to smooth ones over each $\ov{U_l}$, then one can patch the gauge transformations together to obtain a smooth vortex over $\Sigma$. So we can assume that $\Sigma$ is compact with smooth boundary. Then we can apply the local slice theorem (see \cite[Theorem F]{Wehrheim_Uhlenbeck}) to find a smooth connecton $A_0\in {\mc A}(P)$ and $g\in {\mc G}^{2,p}(P)$ such that 
\begin{align*}
&\ d_A^* ( (g^{-1})^* A_0 - A) = 0, &\ * ( (g^{-1})^* A_0 - A)|_{\partial \Sigma} = 0.
\end{align*}
This is equivalent to that 
\begin{align*}
&\ d_{A_0}^* ( g^* A - A_0) = 0, &\ * ( g^* A - A_0) |_{\partial \Sigma} = 0.
\end{align*}
Then $g^* (A, u)$ is a solution to an elliptic boundary value problem on $\Sigma$ with smooth coefficients, which implies that $g^* (A, u)$ is smooth.
\qed \end{proof}

\subsubsection*{Removal of boundary singularity}

For a solution $(A, u)$ to the symplectic vortex equation over a punctured surface, if the connections $A$ extends continuously over the punctures, then a standard result shows that one can gauge transform $(A, u)$ to a solution which extends smoothly over the punctures. A similar result holds in the open case.

\begin{lemma}\label{lemma6}
Let $L_-, L_+$ be two $G$-Lagrangians of $X$ such that $\bar{L}_-, \bar{L}_+$ intersect cleanly in $\bar{X}$. Let $(\Sigma, \partial \Sigma) = ({\mb D}^* \cap {\mb H}, {\mb D}^* \cap {\mb R})$ and $\partial_\pm \Sigma = {\mb D}^* \cap {\mb R}_\pm$. Suppose ${\bf v} = (u, \phi, \psi)$ is a bounded smooth solution to the symplectic vortex equation on $\Sigma$ with respect to $\nu \in \Omega^2({\mb D})$, then there exists a smooth gauge transformation $g: \Sigma \to G$ such that $g^* u$ extends continuously to ${\mb D}$. Moreover, if $L_- = L_+$, then one can choose $g$ such that $g^* {\bf v}$ extends smoothly.
\end{lemma}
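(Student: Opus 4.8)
The plan is to conformally straighten the punctured half-disk into a half-strip and reduce the statement to the asymptotic behaviour of a vortex on the half-strip as the longitudinal coordinate tends to $+\infty$. Fix a biholomorphism $\zeta = s + {\bm i}\theta \mapsto z$ identifying the half-strip $(0,\infty)\times[0,\pi]$ with $\Sigma = {\mb D}^*\cap{\mb H}$, carrying $s\to+\infty$ to the puncture $z=0$ and the two boundary lines $\{\theta=0\}$, $\{\theta=\pi\}$ to $\partial_-\Sigma$, $\partial_+\Sigma$, and satisfying $|dz| = e^{-s}|d\zeta|$. In these coordinates ${\bf v}=(u,\phi,\psi)$ is a smooth solution of \eqref{eqn22} on $(0,\infty)\times[0,\pi]$ for the area form $\nu = \tilde\sigma(s,\theta)\, ds\, d\theta$ with $\tilde\sigma = e^{-2s}\sigma(z(\zeta))$; since $\nu$ extends smoothly over ${\mb D}$ the function $\sigma$ is bounded near the puncture, so $0\le\tilde\sigma\le Ce^{-2s}$. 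The boundary conditions become $u(\,\cdot\,,0)\subset L_-$ and $u(\,\cdot\,,\pi)\subset L_+$, the energy $E({\bf v})$ is finite, and $u$ has image in a $G$-invariant compact set.

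Next I would normalize the gauge. On the simply connected half-strip, solving $\partial_s g_1 = -g_1\phi$ with $g_1\equiv\mathrm{id}$ along $\{s=1\}$ yields a smooth gauge transformation of $\Sigma$ after which $\phi\equiv 0$. In this gauge the second equation of \eqref{eqn22} reads $\partial_s\psi = -\tilde\sigma\,\mu(u)$; since $|\tilde\sigma\,\mu(u)|\le Ce^{-2s}$, $\psi(s,\cdot)$ converges in $C^\infty([0,\pi])$ to some $\psi_\infty$ at an exponential rate as $s\to\infty$. Solving $h' = -h\psi_\infty$ on $[0,\pi]$ with $h(0)=\mathrm{id}$ and applying the $s$-independent gauge transformation $h$ preserves $\phi\equiv 0$ and replaces $\psi$ by $\mathrm{Ad}_{h^{-1}}(\psi-\psi_\infty) = O(e^{-2s})$. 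Incorporating these moves into the gauge transformation $g$ of the statement, we may assume from now on $\phi\equiv 0$ and $\psi(s,\cdot)=O(e^{-2s})$ in every $C^k$ norm on the slice.

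The analytic core is the decay of the vortex at the puncture. Combining Hypothesis \ref{hyp4}, the boundedness of ${\bf v}$, and the mean-value ($\varepsilon$-regularity) inequality for vortices up to a Lagrangian boundary (Appendix \ref{appendixa}) with the finiteness of $E({\bf v})$, one gets exponential energy decay: writing $E(S)$ for the energy of ${\bf v}$ over $[S,\infty)\times[0,\pi]$, one has $E(S)\le Ce^{-\delta S}$, and hence, again by the mean-value inequality, $|d_A u(s,\theta)|\le Ce^{-\delta s}$. The differential inequality $E'(S)\le -\delta\,E(S)$ behind this follows from \eqref{eqn22} together with a Poincar\'e inequality on the cross-section $[0,\pi]$ for short paths from $L_-$ to $L_+$; this is where the clean intersection of $\bar{L}_-$ and $\bar{L}_+$ is used, since cleanness makes the limiting cross-sectional operator Morse--Bott, exactly as in the asymptotic analysis of holomorphic strips with clean Lagrangian boundary. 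Together with $\phi\equiv 0$ and $\psi=O(e^{-2s})$ this gives $|\partial_s u|+|\partial_\theta u|\le Ce^{-\delta' s}$, so $u(s,\theta)$ converges, uniformly in $\theta$ and at an exponential rate in $s$, to a single point $x\in X$; moreover $x\in L_-\cap L_+$, because $\mu(u(s,\cdot))\to 0$ (it vanishes at $\theta=0,\pi$ and its $\theta$-derivative tends to $0$), $u(s,0)\in L_-$, $u(s,\pi)\in L_+$, and $L_\pm$ are closed. Translating back through the biholomorphism, $g^*u(z)\to x$ as $z\to 0$, so $g^*u$ extends continuously over $0$ to all of ${\mb D}$. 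I expect this asymptotic-convergence step to be the main obstacle; continuity is moreover the best possible when $L_-\ne L_+$, because of the boundary corner forced at $z=0$ by the jump in Lagrangian.

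Finally, assume $L_-=L_+=L$. Then near $z=0$ the boundary condition is $u(\partial\Sigma)\subset L$ with no corner, and, in the gauge fixed above, $u$, $\phi$, $\psi$ all decay exponentially in $s$, i.e. at rate $O(|z|^{\delta''})$ near $z=0$. One then runs the standard removal-of-boundary-singularity bootstrap: on half-balls around $0\in\partial{\mb D}$ apply the local slice theorem to put the connection in a slice relative to a smooth reference connection, as in the proof of Proposition \ref{prop7}, so that $g^*{\bf v}$ solves an elliptic boundary value problem with smooth coefficients near $0$; the exponential decay supplies the initial $W^{1,p}$ control across the boundary point, and elliptic regularity up to the (now smooth) boundary upgrades $g^*{\bf v}$ to a smooth vortex on ${\mb D}$.
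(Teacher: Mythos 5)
Your proposal is correct in outline, and for the continuity statement it takes a genuinely different (in fact stronger) route than the paper's own proof. The paper's argument (Proposition \ref{prop55} in Appendix \ref{appendixa5}) also passes to the half-strip and to temporal gauge, but it never establishes exponential decay: it extends the energy density across the two boundary lines by reflection with respect to $(J,L_\pm,\mu)$-admissible metrics (Definition \ref{defn48}, Lemma \ref{lemma49}), applies the differential inequality of Lemma \ref{lemma54} together with the mean-value estimate of Lemma \ref{lemma53} to get the density tending to zero uniformly, and then concludes convergence to a point of $L_-\cap L_+$ from Po\'zniak's inequality; the smooth case when $L_-=L_+$ is then handled, exactly as you propose, by the local slice theorem and elliptic regularity as in Proposition \ref{prop7}. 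You instead normalize the gauge first ($\phi\equiv 0$, then $\psi=O(e^{-2s})$ after a $t$-dependent correction --- the ODE should be $h'=-\psi_\infty h$ rather than $h'=-h\psi_\infty$, and at that stage you only have $C^0$ convergence of $\psi$, which is all you need) and then derive exponential energy decay from an action--energy differential inequality; this buys you exponential convergence of $g^*u$ and the $W^{1,p}$ control you quote across the puncture. The one step you assert rather than prove is that differential inequality, and it is precisely the delicate point: since the area form degenerates like $e^{-2s}$ on the strip, the slice quantity $-E'(S)$ only contains the weighted term $\tilde\sigma\,|\mu(u)|^2$, so neither the equivariant isoperimetric inequality (Lemma \ref{lemma52}) nor the annulus lemma (Proposition \ref{prop57}, which needs $\sigma$ bounded below) applies directly. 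What rescues the argument is exactly your normalization $\psi=O(e^{-2s})$: with it the nonequivariant action of the slices is controlled by Po\'zniak's inequality (Lemma \ref{lemma50}) up to exponentially small errors, which is the computation the paper carries out in Section \ref{section6} for necks with degenerating area form. If you make that step explicit along those lines, your argument is complete, and it gives a quantitative refinement of the paper's appendix proof.
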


The first part of this lemma is proved in Appendix \ref{appendixa5}. The second part can be proved by the same method as proving Proposition \ref{prop7}, using a weaker version of the local slice theorem (\cite[Theorem F']{Wehrheim_Uhlenbeck}).

\subsubsection*{The moduli space and topology}

From now on all $G$-bundles $P \to \Sigma$ are canonically trivialized, hence a connection $A$ on $P$ is identified with a 1-form $a \in \Omega^1(\Sigma)$, a section $u$ of $Y$ is identified with a map $u: \Sigma \to X$ and a gauge transformation $g$ on $P$ is identified with $g: \Sigma \to G$. If $\nu \in \Omega^2(\Sigma)$ is an area form and $L \subset X$ is a $G$-Lagrangian, denote by $\wt{\mc M}(\Sigma, \nu; X, L)$ be the set of all smooth solutions ${\bf v}$ to the symplectic vortex equation over $\Sigma$ with respect to the area form $\nu$ and satisfying the boundary conditoin $u(\partial \Sigma) \subset L$. When $\partial \Sigma = \emptyset	$, we abbreviate it by $\wt{\mc M}(\Sigma, \nu; X)$. When $\nu$ is ``standard'' we will also omit $\nu$. We also consider the case $\nu = 0$. 

Suppose $\Sigma_k \subset \Sigma$ is an exhausting sequence of open subsets and $f_k \in C^\infty(\Sigma_k)$. We say that $f_k$ {\bf converges u.c.s.} on $\Sigma$ to $f \in C^\infty(\Sigma)$ (the terminology is from \cite{McDuff_Salamon_2004}) if for any compact subset $K \subset \Sigma$, $f_k|_K$ converges to $f|_K$ in $C^\infty(K)$. This notion induces the definition of that a sequence ${\bf v}_k \in \Omega^1(\Sigma_k) \times C^\infty(\Sigma_k, \partial \Sigma_k; X, L)$ converges u.c.s. to ${\bf v}\in \Omega^1(\Sigma) \times C^\infty(\Sigma, \partial \Sigma; X, L)$, and in particular, a topology on $\wt{\mc M}(\Sigma, \nu; X, L)$.  

 The following lemma shows that up to gauge transformation, weak convergence in $W^{1, p}_{loc}$ implies convergence u.c.s..

\begin{lemma}\label{lemma7}
Let $\Sigma$ be a Riemann surface possibly having a nonempty smooth boundary and $\nu\in \Omega^2(\Sigma)$. Let $\Sigma_k \subset \Sigma$ be an exhausting sequence of open subsets and $\nu_k \in \Omega^2(\Sigma_k)$ converges u.c.s. to $\nu$. Given ${\bf v}_k = (A_k, u_k) \in \wt{\mc M}(\Sigma_k, \nu_k; X, L)$, ${\bf v} = (A, u) \in \wt{\mc M}(\Sigma, \nu; X, L)$. Suppose $p>2$ and for all compact subsets $K \subset \Sigma$, ${\bf v}_k|_K$ converges weakly in $W^{1, p}(K)$ to ${\bf v}|_K$. Then there exists a sequence of smooth gauge tranformations $g_k: \Sigma_k \to G$ such that $(g_k)^* {\bf v}_k$ converges u.c.s. to ${\bf v}$.
\end{lemma}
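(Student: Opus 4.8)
The plan is to reduce to a compact domain, put the connections $A_k$ into relative Coulomb gauge with respect to the (smooth) limit $A$, and then run the standard elliptic bootstrap for \eqref{eqn21} to upgrade weak $W^{1,p}$ convergence to convergence u.c.s. For the reduction: since $\Sigma$ is of finite type, fix the exhaustion $\ov{U_l}\subset U_{l+1}$, each $\ov{U_l}$ a compact surface with smooth boundary; on $\ov{U_l}$ and $\ov{U_{l+1}}$ the gauge transformations produced below differ, over $U_l$, by a transformation carrying one limit-converging connection to another, hence converging u.c.s.\ to an element of the (compact Lie) isotropy group of $A$, and these limiting factors can be absorbed inductively in $l$ before patching the $g_k$ by a partition of unity. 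This is essentially the patching scheme used for Proposition \ref{prop7}, so from now on assume $\Sigma$ is compact with smooth boundary.

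The key point is that the curvatures converge \emph{strongly}. Since $p>2$ and $\dim_{\mb R}\Sigma=2$, the Morrey embedding $W^{1,p}\hookrightarrow C^0$ is compact on $\Sigma$, so the weak $W^{1,p}$ convergence gives $u_k\to u$ and $A_k\to A$ in $C^0$. The second equation of \eqref{eqn21} reads $F_{A_k}=-\mu(u_k)\nu_k$; since $\mu(u_k)\to\mu(u)$ in $C^0$ and $\nu_k\to\nu$ u.c.s., this forces $F_{A_k}\to F_A$ in $C^0$, hence in every $L^q$. (This strong control on $F_{A_k}$, unavailable for a general Uhlenbeck sequence, is what makes the argument work.) Because $A$ is smooth --- members of $\wt{\mc M}$ are smooth --- and $\|A_k-A\|_{L^p}+\|F_{A_k}\|_{L^p}$ is controlled, for $k$ large the local slice theorem (\cite[Theorem F]{Wehrheim_Uhlenbeck}, as invoked in the proof of Proposition \ref{prop7}) provides $g_k\in{\mc G}^{2,p}(P)$, converging to the identity in $W^{2,p}$, with $a_k:=g_k^*A_k-A$ satisfying $d_A^*a_k=0$ and $*a_k|_{\partial\Sigma}=0$.

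Now bootstrap. Writing $d_Aa_k=F_{g_k^*A_k}-F_A-\tfrac{1}{2}[a_k\wedge a_k]$, where $F_{g_k^*A_k}={\rm Ad}(g_k^{-1})F_{A_k}\to F_A$ in $C^0$, and feeding this into the elliptic estimate for the Hodge--de Rham operator with the above (Neumann-type) boundary condition, one gets $a_k\to0$ in $W^{1,p}$, and then, noting $F_{g_k^*A_k}\in W^{1,p}$, in $W^{2,p}$ and hence in $C^1$. By gauge invariance, $g_k^*{\bf v}_k$ is again a vortex for $\nu_k$ with $g_k^*u_k=g_k^{-1}u_k\to u$ in $C^0$, so the first equation of \eqref{eqn21} exhibits $g_k^*u_k$ as a solution of a Cauchy--Riemann system with $C^1$-convergent coefficients; elliptic regularity for the nonlinear $\ov\partial$-operator then yields $C^1_{loc}$ convergence of $g_k^*u_k$ as well. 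From here the usual alternating bootstrap of the two vortex equations (using $\nu_k\to\nu$ u.c.s.) promotes the convergence $g_k^*{\bf v}_k\to{\bf v}$ to $C^\infty$ on compact subsets, i.e.\ u.c.s.

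The main obstacle is precisely producing \emph{one} gauge transformation per $k$ after which the convergence is honest and $C^\infty_{loc}$, rather than merely a weak subsequential limit. This hinges on two things: the observation above that \eqref{eqn21} forces strong $C^0$ convergence of $F_{A_k}$, so that the relative Coulomb gauge is genuinely small rather than just bounded; and the uniqueness/continuity statement packaged with the local slice theorem, which pins $g_k$ near the identity so that $g_k^*u_k\to u$ with no loss. The only remaining technical point is the non-compact patching, which proceeds as in Proposition \ref{prop7}.
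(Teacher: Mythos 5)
Your treatment of the compact case is sound and is essentially what the paper does there (relative Coulomb gauge via \cite[Theorem F]{Wehrheim_Uhlenbeck} followed by an elliptic bootstrap), and the observation that the second equation of \eqref{eqn21} forces $F_{A_k}\to F_A$ in $C^0$ is a correct and useful way of making the bootstrap explicit. A minor quibble: the slice theorem pins $g_k$ near the identity only in a topology controlled by $\|A_k-A\|_{L^p}$ (the paper claims $W^{1,p}$-convergence of the gauge transformations to the identity), so the $W^{2,p}$-closeness you assert at the outset must be recovered a posteriori from the bootstrap; this is harmless.

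The genuine gap is the reduction to the compact case, which is exactly where the paper's proof of Lemma \ref{lemma7} does its work. One must produce a \emph{single} $g_k:\Sigma_k\to G$ for each $k$ so that $(g_k)^*{\bf v}_k$ converges in $C^\infty$ on every fixed compact set, whereas the slice construction gives a separate sequence $g_{k;l}$ on each $\ov{U_l}$; merging them is the content of the lemma. Your one-sentence patching argument omits the points that actually need proof: (i) the transitions $h_{k;l}=(g_{k;l+1})^{-1}g_{k;l}$ must be shown to converge, along the full sequence and in $C^\infty$ on the overlaps, since an interpolation of group-valued maps of the form $g\exp(\chi\xi)$ preserves u.c.s.\ convergence of the pulled-back vortices only if $\xi\to0$ with all derivatives; the paper obtains this from the two Coulomb relations $d_A^*(g_{k;l}^*A_k-A)=d_A^*(g_{k;l+1}^*A_k-A)=0$ together with interior elliptic estimates, after first normalizing each $g_{k;l}$ to converge to the identity in $W^{1,p}$, which removes the stabilizer issue altogether; (ii) if instead you allow nontrivial limits in the isotropy group of $A$, such a (covariantly constant) gauge transformation is defined only on $\ov{U_l}$ and need not extend to $\Sigma$, and ``absorbing'' it modifies $g_{k;l+1}$ on the larger set, so the induction over $l$ interferes with the quantifier over $k$; the paper handles this diagonal bookkeeping explicitly via the retractions $\rho_l$ and the product $g_k=g_{k,k}\prod_t h_{k;t}^*$; (iii) the appeal to ``the patching scheme of Proposition \ref{prop7}'' does not supply this, since there one patches gauge transformations to smooth a single solution, while here the patching must preserve convergence of a sequence --- which is precisely what Lemma \ref{lemma7} is asserting, so the reduction as stated is close to circular. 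None of this makes your route unworkable (a partition-of-unity interpolation of transitions, once they are shown to be $C^\infty$-small, would serve in place of the paper's product construction), but as written the main step of the lemma is asserted rather than proved.
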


\begin{proof}
Let $U_l \subset \Sigma$ be the sequence of subsets in the definition of finite type. For each $k\geq 1$, let $l_k$ be the greatest number such that $\ov{U_{l_k}} \subset \Sigma_k$. Then it suffices to prove for the case that $\Sigma_k = U_{l_k}$ because gauge transformations on $\ov{U_{l_k}}$ can be extended to $\Sigma$ and then restricted to $\ov{\Sigma_k}$. In particular we may assume that $\ov{\Sigma_k}$ has smooth boundary. Without loss of generality we can also assume that $\Sigma_k \subsetneq \Sigma_{k+1}$ and there are smooth retractions $\rho_k: \ov{\Sigma_{k+1}} \to \ov{\Sigma_k}$. Moreover, by shrinking $\Sigma_k$ a little, we may assume that ${\bf v}_k$ is defined over $\ov{\Sigma_k}$.

For each $l$ and $k \geq l$, by the local slice theorem (\cite[Theorem F]{Wehrheim_Uhlenbeck}), there exists $g_{k; l} \in W^{2, p}(\ov{\Sigma_l}, G)$  such that $(g_{k;l})^* (A_k|_{\ov{\Sigma_l}})$ is in Coulomb gauge relative to $A|_{\ov{\Sigma_l}}$. By elliptic regularity, we know that $g_{k;l}$ is smooth and $(g_{k;l})^* {\bf v}_k|_{\ov{\Sigma_l}}$ converges in $C^\infty(\ov{\Sigma_l})$ to ${\bf v}|_{\ov{\Sigma_l}}$. Since $A_k|_{\ov{\Sigma_l}}$ converges to $A|_{\ov{\Sigma_l}}$ in $L^p$, we may choose $g_{k;l}$ in such a way that $g_{k;l}$ converges in $W^{1, p} (\ov{\Sigma_l})$ to the identity.

For each $l$ and $k \geq l+1$, denote $h_{k; l}:= (g_{k; l+1}|_{\ov{\Sigma_l}})^{-1} g_{k;l}: \ov{\Sigma_l} \to G$, which is smooth and converges to the identity in $W^{1, p}(\ov{\Sigma_l})$. By the Coulomb gauge condition, we know that 
\beqn
d_A^* ( g_{k; l+1}^* A_k - A) = d_A^* (g_{k; l}^* A_k - A) = 0.
\eeqn
Then by the interior estimate, $h_{k; l}$ converges to the identity in $C^\infty(\ov{\Sigma_{l-1}})$.

On the other hand, for $k\geq l\geq 2$, $\rho_{k;l}:= \rho_{k-1} \circ \cdots \circ \rho_{l-1}: \ov{\Sigma_k} \to \ov{\Sigma_{l-1}}$ is a smooth retraction. Define $h_{k;l}^* = h_{k;l}|_{\ov{\Sigma_{l-1}}} \circ \rho_{k;l}$, which converges to the identity in $C^\infty(\ov{\Sigma_{l'}})$ for any $l'\geq l-1$. We define a sequence $g_k: \ov{\Sigma_k} \to G$ by 
\beqn
g_k = g_{k, k} \prod_{t=2}^{k-1} h_{k; t}^*.
\eeqn
For any $l< k$, one has
\beqn
g_k|_{\ov{\Sigma_l}} = g_{k, k}|_{\ov{\Sigma_l}} \prod_{t=l+1 }^{k-1} \Big((g_{k; t+1})^{-1} g_{k;t}\Big)|_{\ov{\Sigma_l}} \prod_{t=2}^{l} h_{k;t}^*|_{\ov{\Sigma_l}} = \left( g_{k;l+1}\prod_{t=2}^{l} h_{k;t}^*\right)|_{\ov{\Sigma_l}}.
\eeqn
Since on each $\ov{\Sigma_l}$, $(g_{k;l+1})^* {\bf v}_k|_{\ov{\Sigma_l}}$ converges in $C^\infty(\ov{\Sigma_l})$ to ${\bf v}$ and every $h_{k; t}^*$ (for $t =2, \ldots, l$) converges to the identity in $C^\infty(\ov{\Sigma_{l}})$ as $k \to \infty$, it follows that $(g_k)^* {\bf v}_k|_{\ov{\Sigma_l}}$ converges in $C^\infty(\ov{\Sigma_{l}})$ to ${\bf v}$. Hence the lemma is proved. \qed\end{proof}

\subsection{Symplectic vortex equation over the unit disk}\label{subsection22}

Let ${\mb D} \subset {\mb C}$ be the unit disk and let $\nu_0$ be the standard area form on ${\mb D}$. Let $\lambda\geq 1$ be a real number. In this paper we are interested in the symplectic vortex equation over ${\mb D}$ with respect to $\lambda^2 \nu_0$, and the compactness as $\lambda \to \infty$. Firstly we show the uniform boundedness for different $\lambda$.

\begin{lemma}\label{lemma8}
Let $\lambda \geq 1$ and $c_0 \geq \sup\{ c_X,\ f_X(x)+1 \ |\ x \in L \}$, where $c_X$ and $f_X$ are as in Hypothesis \ref{hyp4}. Then for ${\bf v} = (A, u)\in  \wt{\mc M}({\mb D}, \lambda^2 \nu_0; X, L)$, one has	
\beq\label{eqn23}
u ( {\mb D} ) \subset X_{c_0}:= \{ x\in X\ |\  f_X(x) \leq c_0 \}.
\eeq
\end{lemma}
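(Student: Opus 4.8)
The plan is to derive a differential inequality for the composite function $h := f_X(u)$ on $\mathbb{D}$ and then apply a maximum principle argument, using the boundary condition to control $h$ on $\partial\mathbb{D}$. Write the vortex in a local holomorphic coordinate $z = s+\bm i t$ as ${\bf v} = (u,\phi,\psi)$, so the first vortex equation reads $\partial_s u + \mathcal{X}_\phi(u) + J(\partial_t u + \mathcal{X}_\psi(u)) = 0$. Set $\xi := \partial_s u + \mathcal{X}_\phi(u)$, so that $\partial_t u + \mathcal{X}_\psi(u) = -J\xi$ as well. The key computation is to expand the (metric) Laplacian $\Delta h = \Delta(f_X \circ u)$; because $u$ is a twisted holomorphic map, the cross terms organize so that
\begin{equation*}
\Delta h \;=\; \langle \nabla_\xi \nabla f_X(u), \xi \rangle + \langle \nabla_{J\xi} \nabla f_X(u), J\xi \rangle \;-\; df_X(u)\cdot\big( \mathcal{X}_{F_A}(u) \text{-type terms} \big) \;+\; \text{(terms involving } \mathcal X_{\mu(u)}\text{)},
\end{equation*}
and the second vortex equation $F_A = -\lambda^2 \mu(u)\nu_0$ converts the curvature term into $\lambda^2 df_X(u)\cdot J\mathcal{X}_{\mu(u)}(u)$ up to sign. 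On the region $\{h \geq c_X\} = \{f_X(u) \geq c_X\}$, Hypothesis \ref{hyp4}(2) forces both $\langle \nabla_\xi \nabla f_X, \xi\rangle + \langle \nabla_{J\xi}\nabla f_X, J\xi\rangle \geq 0$ and $df_X \cdot J\mathcal{X}_{\mu(u)} \geq 0$, and since $\lambda^2 > 0$ the latter enters with the favorable sign, yielding $\Delta h \geq 0$ wherever $h \geq c_X$. (I would double-check the sign conventions in \eqref{eqn22} against CGMS \cite[Lemma~9.3]{Cieliebak_Gaio_Mundet_Salamon_2002} to make sure the curvature term lands with the sign that makes $h$ subharmonic rather than superharmonic; this is the point most likely to need care.)

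Granting the subharmonicity of $h$ on $\Omega := \{z \in \mathbb{D} : h(z) > c_X\}$, I would argue as follows. Suppose for contradiction that $\sup_{\mathbb{D}} h > c_0$. Since $c_0 \geq c_X$, the set $\Omega$ is nonempty and $h$ attains a maximum value $> c_0 \geq \sup_L f_X + 1$ at some interior or boundary point of $\mathbb{D}$. If the maximum is attained at an interior point $z_0 \in \mathbb{D}$, then $z_0 \in \Omega$ and the strong maximum principle for the subharmonic function $h$ on the connected component of $\Omega$ containing $z_0$ forces $h$ to be constant on that component; but a component of $\Omega$ cannot be all of $\mathbb{D}$ unless $h > c_X$ everywhere, and in any case the component has boundary points where $h = c_X < c_0$ (or boundary points on $\partial\mathbb{D}$), contradicting constancy at the value $> c_0$. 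Hence the maximum is attained on $\partial\mathbb{D}$. But there $u(\partial\mathbb{D}) \subset L$, so $h|_{\partial\mathbb{D}} = f_X(u(\partial\mathbb{D})) \leq \sup_{x\in L} f_X(x) < c_0$, a contradiction. Therefore $\sup_{\mathbb{D}} h \leq c_0$, which is exactly \eqref{eqn23}.

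A small technical point to address: $h$ is only as regular as $u$, which a priori is $W^{1,p}_{loc}$, but by Proposition \ref{prop7} we may gauge transform to assume ${\bf v}$ is smooth (the statement is gauge-invariant, and $f_X$ is $G$-invariant so $h$ is unchanged under gauge transformations), so the maximum principle applies in the classical sense. One should also note that $\mathbb{D}$ is compact so maxima are attained; no behavior-at-infinity issue arises here, which is what makes the disk case cleaner than the $\mathbb{H}$ or $\mathbb{C}$ cases. The main obstacle, as noted, is getting the Bochner-type identity for $\Delta h$ with all signs correct and confirming that the $\lambda^2$ curvature term has the sign that preserves subharmonicity uniformly in $\lambda \geq 1$; once that is in hand, the rest is a standard maximum-principle argument combined with the Lagrangian boundary condition $L \subset \mu^{-1}(0)$ (which, incidentally, makes $\mu(u)|_{\partial\mathbb{D}} = 0$, consistent with the boundary contributing no obstruction).
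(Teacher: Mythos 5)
Your proposal is correct and follows essentially the same route as the paper: both show $f_X(u)$ is subharmonic where it exceeds $c_X$ via Hypothesis \ref{hyp4}(2) and the vortex equation (the computation the paper delegates to \cite[Lemma 2.7]{Cieliebak_Gaio_Mundet_Salamon_2002}, which settles the sign question you flag), and then conclude by the maximum principle together with the boundary condition $u(\partial{\mb D})\subset L\subset\mu^{-1}(0)$, where $f_X\leq c_0-1$. Your treatment of the interior-versus-boundary maximum is just a more spelled-out version of the paper's one-line contradiction.
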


\begin{proof}
Suppose \eqref{eqn23} is not true. Then since $L \subset \mu^{-1}(0)$, there exists $z\in {\rm Int} {\mb D}$ such that $f_X(u(z)) = \sup f_X(u) > c_0$. Then one can show that $f_X(u)$ is subharmonic near $z$ as \cite[Lemma 2.7]{Cieliebak_Gaio_Mundet_Salamon_2002}. The maximum principle implies that $f_X(u) \equiv \sup f_X(u)$, which contradicts the fact that $\mu(u)|_{\partial {\mb D}} \equiv 0$.
\qed \end{proof}

We can also consider boundary marked points. Let $\ud k \geq 0$ be an integer and use ${\bf w}$ to denote a set of $\ud k$ marked points $\{ w_1, \ldots, w_{\ud k}\} \subset \partial {\mb D}$, always listed in the counterclockwise order. Denote by $\wt{\mc M}_{\ud k}^\lambda(L)$ the space of $({\bf v}, {\bf w})$ where ${\bf v}\in \wt{\mc M}(X, L; {\mb D}, \lambda^2 \nu_0)$ and ${\bf w}$ is a set of $\ud k$ boundary marked points.

\subsection{Affine vortices}

Affine vortices are bounded solutions to \eqref{eqn21} over $\Sigma = {\mb C}$ or $\Sigma = {\mb H}$ with respect to the standard area form $\nu_0 = ds dt$, where $z = s + {\bm i} t $ is the standard coordinate. The equation is equivalent to \eqref{eqn22} for $\sigma = 1$, which reads
\beq\label{eqn24}
\left\{ \begin{array}{ccc}\displaystyle  \frac{\partial u }{\partial s} + {\mc X}_\phi (u) + J \Big( \frac{\partial u }{\partial t} + {\mc X}_\psi (u) \Big) & = & 0,\\[0.2cm]
       \displaystyle                     \frac{\partial \psi}{\partial s} - \frac{\partial \phi}{\partial t} + \big[ \phi, \psi \big] + \mu(u) & = & 0.
\end{array} \right.
\eeq
A bounded solution ${\bf v} = (u, \phi, \psi)$ to \eqref{eqn24} over ${\mb C}$ is called a ${\mb C}$-vortex. A bounded solution ${\bf v} = (u, \phi, \psi)$ to \eqref{eqn24} subject to $u(\partial {\mb H}) \subset L$ is called an ${\mb H}$-vortex. In both cases, the energy density function of ${\bf v}$ means its energy density function with respect to the standard area form $\nu_0$. 

We first recall basic properties of ${\mb C}$-vortices proved in \cite{Gaio_Salamon_2005} and Ziltener's papers (\cite{Ziltener_Decay}, \cite{Ziltener_book}). These results will be extended to ${\mb H}$-vortices in this paper.

The first important property is the behavior near infinity. 	We have
\begin{prop}\label{prop9}\cite[Corollary 4]{Ziltener_Decay} There exists $\updelta>0$\footnote{Indeed, the constant $\updelta$ can be taken to be any number smaller than 2. However, in this paper, we do not need the optimal result whose proof is more delicate.} such that for every ${\mb C}$-vortex ${\bf v}$ with energy density function $e(z)$, one has
\beqn
\limsup_{z \to \infty} \Big( |z|^{2 + \updelta} e(z) \Big) < +\infty.
\eeqn
\end{prop}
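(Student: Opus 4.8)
\emph{Plan of proof.} The statement asks only for \emph{some} $\updelta>0$, so I would extract one from a soft differential-inequality argument in the spirit of \cite{Ziltener_Decay}. Two ingredients are needed: a mean value inequality for the energy density, which reduces the pointwise bound to a decay estimate for the energy in exterior regions, and that exterior energy decay, obtained from an isoperimetric inequality for the invariant symplectic action after passing to cylindrical coordinates near $\infty$. Fix a ${\mb C}$-vortex ${\bf v}=(u,\phi,\psi)$ and a $G$-invariant compact $K\subset X$ with $u({\mb C})\subset K$, and set $\varepsilon(R):=E({\bf v};\,{\mb C}\setminus B_R)$; this is nonincreasing, tends to $0$ as $R\to\infty$ since $E({\bf v})<\infty$, and $-\varepsilon'(R)=\int_{\partial B_R}e\,|dz|$. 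The goal is to prove $\varepsilon(R)\le C\,R^{-\updelta}$ for $R$ large and then translate this into the pointwise estimate.

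\emph{Step 1: mean value inequality.} Differentiating \eqref{eqn24} and using $u({\mb C})\subset K$ produces a Bochner-type inequality $\Delta e\ge -c(K)\,(e^2+e)$ on ${\mb C}$; this, together with Heinz's trick, yields constants $\hbar=\hbar(K)>0$ and $C_0=C_0(K)$ such that for any disk $B_{2r}(z_0)\subset{\mb C}$,
\beqn
E\big({\bf v};\,B_{2r}(z_0)\big)<\hbar \ \Longrightarrow\ e(z_0)\le \frac{C_0}{r^2}\,E\big({\bf v};\,B_r(z_0)\big)
\eeqn
(these estimates belong to the analytical toolkit collected in Appendix \ref{appendixa}). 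Since $\varepsilon(R)\to0$, choose $R_0$ with $E({\bf v};\,{\mb C}\setminus B_{R_0})<\hbar$; applying the implication at each $z_0$ with $|z_0|\ge 2R_0$ and $r=|z_0|/4$, so that $B_{2r}(z_0)\subset{\mb C}\setminus B_{R_0}$, gives
\beqn
e(z_0)\le \frac{16\,C_0}{|z_0|^2}\,\varepsilon\big(|z_0|/4\big),\qquad |z_0|\ge 2R_0 .
\eeqn
In particular $e\to0$ at infinity, and it remains only to prove $\varepsilon(R)\le C\,R^{-\updelta}$ for some $\updelta>0$.

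\emph{Step 2: exterior energy decay.} Pull ${\bf v}$ back under $z=e^{\zeta}$, $\zeta=\sigma+{\bm i}\tau$, to a vortex on the half-cylinder $[\sigma_1,\infty)\times S^1$ whose area form is the conformal factor $e^{2\sigma}\,d\sigma\,d\tau$; put $E(\sigma):=\varepsilon(e^{\sigma})$, so that $-E'(\sigma)$ is the circle integral of the cylindrical energy density at level $\sigma$. By Step 1 the gauged loops $\gamma_\sigma:=u(\sigma+{\bm i}\,\cdot)$ have length $\ell(\gamma_\sigma):=\int_{S^1}|\partial_\tau u+{\mc X}_\psi(u)|\,d\tau\to0$. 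The core estimate is
\beqn
E(\sigma)\le c\,\big(-E'(\sigma)\big)\qquad (\sigma\ \text{large}),
\eeqn
obtained by combining three inputs: the energy identity, which via Stokes' theorem identifies $E(\sigma)$ with the invariant symplectic action ${\mc A}(\gamma_\sigma)$ of the boundary loop, up to error terms that are a small fraction of $E(\sigma)$ for $\sigma$ large and a contribution at $+\infty$ which vanishes because $u$ has compact image and $e\to0$; the isoperimetric inequality ${\mc A}(\gamma)\le C\,\ell(\gamma)^2$ for short gauged loops, valid since $0$ is a regular value of $\mu$ and $G$ acts freely near $\mu^{-1}(0)$; and the Cauchy--Schwarz bound $\ell(\gamma_\sigma)^2\le 2\pi\int_{S^1}|\partial_\tau u+{\mc X}_\psi(u)|^2\,d\tau\le 2\pi\,(-E'(\sigma))$. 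Integrating $E'(\sigma)/E(\sigma)\le -1/c$ from a fixed $\sigma_2$ onwards gives $E(\sigma)\le E(\sigma_2)\,e^{-(\sigma-\sigma_2)/c}$, i.e. $\varepsilon(R)\le C\,R^{-\updelta}$ with $\updelta:=1/c>0$.

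\emph{Conclusion and main difficulty.} Feeding Step 2 into Step 1, for $|z|$ large one gets $|z|^{2+\updelta}e(z)\le 16\,C_0\,4^{\updelta}\,C$, and since $e$ is continuous (hence bounded on any fixed disk) the desired $\limsup$ is finite. The only substantial point is the core estimate $E(\sigma)\le c(-E'(\sigma))$ of Step 2: one must construct the invariant symplectic action functional on loops in $X$ modulo $G$, establish its isoperimetric inequality near $\mu^{-1}(0)$, and control the error terms in the energy identity against the growing weight $e^{2\sigma}$. That weight is precisely what rules out exponential decay in $z$ and caps the admissible $\updelta$ -- the sharp analysis gives every $\updelta<2$, cf. the footnote -- but for the non-optimal statement a coarse execution suffices. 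All of this is \cite[Corollary 4]{Ziltener_Decay}, and the same scheme will be adapted to ${\mb H}$-vortices (Theorem \ref{thm2}) later in the paper.
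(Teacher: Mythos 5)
Your proposal is correct and is essentially the argument of the cited reference \cite[Corollary 4]{Ziltener_Decay}; the paper itself gives no proof of Proposition \ref{prop9} beyond that citation, but it carries out exactly this two-step scheme (Bochner inequality plus Heinz trick for the mean value estimate, then isoperimetric inequality for the local equivariant action giving exponential decay in the cylindrical coordinate) for the ${\mb H}$-vortex analogue in Appendix \ref{appendixa} (Lemmas \ref{lemma52}--\ref{lemma54}, Proposition \ref{prop57}, and the proof of Theorem \ref{thm12}). The details you flag as the "only substantial point" are precisely the ones handled there, so nothing is missing.
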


In suitable gauge, ${\mb C}$-vortices have nice asymptotic behavior.
	
\begin{prop}\label{prop10}\cite[Proposition 11.1]{Gaio_Salamon_2005} If ${\bf v} = (A, u)$ is a ${\mb C}$-vortex and $A= d + \xi(z) d\theta$ for $|z|$ large where $\theta$ is the angular coordinate of ${\mb C}$. Then there exists a $W^{1, 2}$-map $x: S^1 \to \mu^{-1}(0)$ and an $L^2$-map $\eta: S^1 \to {\mf g}$ such that $x'(\theta) + {\mc X}_{\eta(\theta)}(x(\theta)) = 0$ and 
\beqn
\lim_{ r \to \infty} \sup_{0 \leq \theta \leq 2\pi} d ( u( re^{i\theta}), x(\theta) ) = 0,\ \lim_{r\to \infty} \int_0^{2\pi} | \xi  ( r e^{i \theta}) - \eta(\theta) |^2 d \theta = 0.
\eeqn
\end{prop}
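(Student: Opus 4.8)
The plan is to transfer the problem to the cylindrical end of ${\mb C}$, where the vortex equations \eqref{eqn24} split into a ``holomorphic'' part governing $u$ (which converges at a polynomial rate) and a ``moment-map'' part which, because of the conformal factor, decays faster than any polynomial and thereby controls the connection. Concretely, fix $R$ large and identify the end $\{|z|\ge R\}$ with the half-cylinder $[s_0,\infty)\times S^1$, $s_0=\log R$, via $z=e^{s+{\bm i}t}$ (so $r=e^s$, $\theta=t$); by hypothesis $A=d+\xi\,d\theta$ there, i.e. in these coordinates the $ds$-component of $A$ vanishes and its $dt$-component is $\psi(s,t)=\xi(re^{{\bm i}\theta})$, so \eqref{eqn24} reads
\[
\partial_s u+J\big(\partial_t u+{\mc X}_\psi(u)\big)=0,\qquad \partial_s\psi+e^{2s}\mu(u)=0 .
\]
Since $\ov\partial_A u$ is conformally invariant and $\nu_0=e^{2s}\,ds\wedge dt$, finiteness of $E({\bf v})$ gives
\[
\int_{s_0}^\infty\!\int_0^{2\pi}\Big(|\partial_s u|^2+|\partial_t u+{\mc X}_\psi(u)|^2+e^{2s}|\mu(u)|^2\Big)\,dt\,ds<\infty ,
\]
and Proposition~\ref{prop9} (i.e. $e(z)=O(|z|^{-2-\updelta})$, combined with the conformal factor $e^{2s}$) upgrades the integrand to the pointwise bound $|\partial_s u|^2+|\partial_t u+{\mc X}_\psi(u)|^2+e^{2s}|\mu(u)|^2\le Ce^{-\updelta s}$ on $[s_0,\infty)\times S^1$.

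\emph{Convergence of $u$.} Since $\sup_t|\partial_s u(s,t)|\le C^{1/2}e^{-\updelta s/2}$ is integrable over $[s_0,\infty)$ and a ${\mb C}$-vortex has image in a fixed compact set, the maps $u(s,\cdot)$ converge uniformly as $s\to\infty$ to some $x\in C^0(S^1,X)$, with $d\big(u(re^{{\bm i}\theta}),x(\theta)\big)=O(r^{-\updelta/2})$ uniformly in $\theta$. From $e^{2s}|\mu(u(s,t))|^2\le Ce^{-\updelta s}$ we get $\mu(u(s,\cdot))\to0$ uniformly, so $x(S^1)\subset\mu^{-1}(0)$.

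\emph{Super-fast decay of $\mu\circ u$ and convergence of $\xi$.} Here Proposition~\ref{prop9} alone is insufficient: it only yields $|\mu(u(z))|=O(|z|^{-1-\updelta/2})$, which, since $\updelta<2$, is not integrable against $\nu_0$. What is needed is an improved estimate $|\mu(u(z))|\le C'e^{-c|z|}$ for some $c>0$. This follows, as in \cite{Cieliebak_Gaio_Mundet_Salamon_2002}, from a Bochner-type differential inequality $\Delta|\mu\circ u|^2\ge c_1|\mu\circ u|^2-C|d_A u|^4$ on $\{|z|\ge R\}$ whose zeroth-order coefficient $c_1>0$ is \emph{positive}, because by the previous step $u(z)$ eventually lies in a neighborhood of $\mu^{-1}(0)$ on which the $G$-action is locally free (Hypothesis~\ref{hyp4}), so $|{\mc X}_\zeta(u(z))|^2\ge c_1|\zeta|^2$ for all $\zeta\in{\mf g}$; the error term is negligible since $|d_A u|^4=O(|z|^{-4-2\updelta})$. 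A maximum-principle comparison with a supersolution of $\Delta f=c_1f$ (a multiple of $K_0(\sqrt{c_1}\,|z|)$) then yields the exponential bound. Consequently $\int_{s_0}^\infty e^{2s}\sup_t|\mu(u(s,t))|\,ds<\infty$, so integrating the second equation shows $\psi(s,\cdot)=\psi(s_0,\cdot)-\int_{s_0}^{s}e^{2\sigma}\mu(u(\sigma,\cdot))\,d\sigma$ converges in $C^0(S^1,{\mf g})$, a fortiori in $L^2(S^1)$, to $\eta:=\psi(s_0,\cdot)-\int_{s_0}^{\infty}e^{2\sigma}\mu(u(\sigma,\cdot))\,d\sigma$; this is exactly the assertion $\int_0^{2\pi}|\xi(re^{{\bm i}\theta})-\eta(\theta)|^2\,d\theta\to0$.

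\emph{The limiting loop equation.} Since the $s$-integral above is finite, there is a sequence $s_n\to\infty$ along which $\int_0^{2\pi}\big(|\partial_s u(s_n,t)|^2+|\partial_t u(s_n,t)+{\mc X}_{\psi(s_n,t)}(u(s_n,t))|^2\big)\,dt\to0$; in particular $\partial_t u(s_n,\cdot)=-{\mc X}_{\psi(s_n,\cdot)}(u(s_n,\cdot))+o(1)$ in $L^2(S^1)$. Using that $u(s_n,\cdot)\to x$ uniformly, $\psi(s_n,\cdot)\to\eta$ in $L^2(S^1)$, and $(\zeta,p)\mapsto{\mc X}_\zeta(p)$ is bounded (and Lipschitz in $p$) over bounded sets of ${\mf g}$ times a compact set, one gets ${\mc X}_{\psi(s_n,\cdot)}(u(s_n,\cdot))\to{\mc X}_\eta(x)$ in $L^2(S^1)$, while $\partial_t u(s_n,\cdot)\to x'$ in the sense of distributions. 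Hence $x'+{\mc X}_\eta(x)=0$ distributionally, and since the right-hand side lies in $L^2$ this gives $x\in W^{1,2}(S^1,\mu^{-1}(0))$ and $\eta\in L^2(S^1,{\mf g})$, which completes the proof. I expect the third step to be the main obstacle: one must establish the precise differential inequality for $|\mu\circ u|^2$ with the correct (positive) sign of the zeroth-order term and verify $c_1>0$ from local freeness near $\mu^{-1}(0)$; the remaining steps are bookkeeping with the energy decay and elementary analysis on $S^1$.
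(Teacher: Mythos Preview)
The paper does not give its own proof of this proposition: it is cited as \cite[Proposition~11.1]{Gaio_Salamon_2005} and merely recalled among the ``basic properties of ${\mb C}$-vortices proved in \cite{Gaio_Salamon_2005} and Ziltener's papers''. So there is no proof in the paper to compare against; your sketch is a reconstruction of the Gaio--Salamon argument.

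Your outline is correct in structure, with one inaccuracy in the decay step. The differential inequality $\Delta|\mu\circ u|^2\ge c_1|\mu\circ u|^2-C|d_Au|^4$ is right (compare the computation leading to \eqref{eqna13} with $\sigma\equiv1$, using $\omega({\mc X}_\mu,J{\mc X}_\mu)\ge{\bm m}|\mu|^2$ once $u$ is near $\mu^{-1}(0)$). But comparison with a multiple of $K_0(\sqrt{c_1}|z|)$ alone does \emph{not} give exponential decay: the inhomogeneous term $C|d_Au|^4=O(|z|^{-4-2\updelta})$ is only polynomially small, so at an interior positive maximum of $|\mu\circ u|^2-AK_0$ the inequality only yields $|\mu\circ u|^2\le AK_0+c_1^{-1}C|d_Au|^4$. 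The correct barrier is $AK_0(\sqrt{c_1}|z|)+B|z|^{-4-2\updelta}$, and the maximum principle then gives $|\mu(u(z))|^2=O(|z|^{-4-2\updelta})$, i.e.\ $|\mu(u(z))|=O(|z|^{-2-\updelta})$, not $O(e^{-c|z|})$. Fortunately this polynomial bound already suffices for your next step: in cylindrical coordinates $e^{2s}|\mu(u)|=O(e^{-\updelta s})$ is integrable over $[s_0,\infty)$, so $\psi(s,\cdot)=\psi(s_0,\cdot)-\int_{s_0}^{s}e^{2\sigma}\mu(u(\sigma,\cdot))\,d\sigma$ converges uniformly to $\eta$. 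With that correction the rest of your argument goes through; in fact, since $\psi(s,\cdot)\to\eta$ and $|\partial_s u|\to0$ \emph{uniformly}, the passage to the limit in $\partial_t u+{\mc X}_\psi(u)=J\partial_s u$ is direct and no subsequence is needed in your last step.
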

We see that $x(\theta)$ stays within the same $G$-orbit in $\mu^{-1}(0)$. Therefore it follows that every ${\mb C}$-vortex ${\bf v}$ has a well-defined evaluation at infinity, denoted by 
\beq\label{eqn25}
\ev_\infty ({\bf v}) = [x(\theta)] \in \bar{X}.
\eeq

Lastly we recall the energy quantization result about ${\mb C}$-vortices.

\begin{prop}\label{prop11}\cite[Lemma D.1]{Ziltener_thesis}\cite{Ziltener_book}
There exists a constant $\epsilon_X >0$ such that for any ${\mb C}$-vortex ${\bf v}$ with nonzero energy, we have $E({\bf v}) \geq \epsilon_X$.
\end{prop}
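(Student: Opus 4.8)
The plan is to run the standard bubbling-tree compactness scheme, adapted to the adiabatic ($\lambda_i \to \infty$) limit, using the quantization and asymptotic-decay inputs already assembled: Propositions \ref{prop9}, \ref{prop10}, \ref{prop11} for ${\mb C}$-vortices, Theorem \ref{thm2} for ${\mb H}$-vortices, and the classical quantization constants $\epsilon_{\bar X}, \epsilon_{\bar X, \bar L}$ for $J$-holomorphic spheres in $\bar X$ and disks in $(\bar X, \bar L)$. As a preliminary, Lemma \ref{lemma8} supplies a $\lambda$-independent compact set $X_{c_0}$ containing all $u^{(i)}({\mb D})$, so the sequence is uniformly bounded in the sense of Definition \ref{defn6}; after a first subsequence we may assume $E({\bf v}^{(i)})$ converges. \emph{Step 1 (principal component).} I would invoke the compactness-modulo-bubbling result of Section \ref{section3}: after a subsequence and an application of Lemma \ref{lemma7}, there is a finite ``bubbling set'' $Z = \{z_1,\dots,z_m\} \subset {\mb D}$ and a finite-energy $J$-holomorphic map into the quotient $\bar u_\infty: ({\mb D}\setminus Z, \partial{\mb D}\setminus Z) \to (\bar X, \bar L)$ such that, u.c.s. on ${\mb D}\setminus Z$ and modulo gauge, the vortices converge to $\bar u_\infty$, while the energy density concentrates at the points of $Z$, each carrying mass at least $\hbar := \min\{\epsilon_X, \epsilon_{X,L}, \epsilon_{\bar X}, \epsilon_{\bar X, \bar L}\}$. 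Removal of boundary and interior singularities (Lemma \ref{lemma6} together with its closed interior analogue, applied to the limiting quotient curve) extends $\bar u_\infty$ to a $J$-holomorphic disk $({\mb D}, \partial{\mb D}) \to (\bar X, \bar L)$; this is the principal component of the limit, carrying the scaling datum $\lambda = \infty$.

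\emph{Step 2 (bubbles at the points of $Z$).} Near each $z_j$ I would run the soft-rescaling procedure. Centering the domain coordinate at $z_j$ and rescaling by a sequence $\delta_i \to 0$ chosen so that a fixed fraction of the concentrating energy lives in a ball of radius $\sim\delta_i$, after a further subsequence one of two behaviors occurs according to the limit of $\lambda_i \delta_i$: if $\lambda_i \delta_i \to \infty$, the rescaled fields converge modulo gauge (u.c.s.) to an affine vortex — a ${\mb C}$-vortex when $z_j$ is interior, an ${\mb H}$-vortex when $z_j\in\partial{\mb D}$ — with energy $\ge \epsilon_X$, resp. $\ge \epsilon_{X,L}$ by Proposition \ref{prop11}, resp. Theorem \ref{thm2}(2); if $\lambda_i\delta_i$ stays bounded, the curvature and moment-map terms scale away and the limit is a $J$-holomorphic sphere in $\bar X$ (interior) or disk in $(\bar X, \bar L)$ (boundary), with energy $\ge\epsilon_{\bar X}$, resp. $\ge\epsilon_{\bar X,\bar L}$. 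In every case at least $\hbar$ of energy is captured, so, the total energy being finite, the iteration terminates and produces finitely many components organized into a rooted bubble tree at $z_j$. Propositions \ref{prop10} and \ref{thm2}(1), the asymptotic behavior of ${\mb C}$- and ${\mb H}$-vortices, identify $\ev_\infty$ of each vortex bubble with the value at the corresponding node of the component below it, which is what makes the whole configuration a stable scaled holomorphic disk in the sense of Section \ref{section4} and Appendix \ref{appendixb}.

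\emph{Step 3 (no energy loss, marked points, stability, convergence).} Finally I would show no energy escapes into the necks joining consecutive components: by the annulus and isoperimetric estimates of Appendix \ref{appendixa}, if the energy on a family of shrinking annuli failed to vanish one could extract a further nonconstant bubble, contradicting the choice of rescaling parameters; hence the total energy of the limiting tree equals $\lim_i E({\bf v}^{(i)})$. Passing to a further subsequence, the marked points ${\bf w}^{(i)}$ converge either to distinct points of $\partial{\mb D}\setminus Z$ on the principal component or into the boundary bubble trees; collapsing unstable constant components with too few special points by the usual stabilization yields a stable configuration. Assembling the u.c.s. convergence on ${\mb D}\setminus Z$, the rescaled convergence on each bubble, the vanishing of neck energy, and the convergence of marked points gives convergence in the sense of Section \ref{section4}, i.e. the precise statement of Theorem \ref{thm33}.

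I expect the main obstacle to be making the soft-rescaling dichotomy of Step 2 rigorous in the boundary case: controlling the intermediate scales between the vortex scale $1/\lambda_i$ and the bubble scales, showing that $\lambda_i\delta_i$ falls into a well-defined limiting regime after subsequences and that in the bounded regime the curvature genuinely drops out so the limit descends to $(\bar X, \bar L)$, while simultaneously ensuring that the ${\mb H}$-vortex bubbles attach correctly to adjacent components. This is exactly where Theorem \ref{thm2} and the analytic lemmas of Appendix \ref{appendixa} are indispensable, and where the tree combinatorics of Appendix \ref{appendixb} must be verified to be respected.
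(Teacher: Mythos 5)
Your proposal does not prove the statement in question. Proposition \ref{prop11} is the energy quantization for ${\mb C}$-vortices: a single finite-energy vortex on the plane with $E({\bf v})>0$ must have $E({\bf v})\geq \epsilon_X$. What you have written is instead a sketch of the bubbling-tree compactness argument for the adiabatic limit of disk vortices, i.e.\ of Theorem \ref{thm33}. Worse, your Step 2 explicitly invokes Proposition \ref{prop11} (``with energy $\ge \epsilon_X$ \dots by Proposition \ref{prop11}'') as an input to extract mass at the bubbling points; as a proof of Proposition \ref{prop11} itself this is circular. The logical order in the paper is the reverse of what you assume: the quantization constants for ${\mb C}$- and ${\mb H}$-vortices are established first (Proposition \ref{prop11}, cited from Ziltener, and Theorem \ref{thm14}, proved in Appendix \ref{appendixa4}), and only then are they fed into the compactness machinery to guarantee that only finitely many bubbles form.

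The actual argument is short and entirely local-analytic, with no compactness or bubbling input. One shows (as in Proposition \ref{prop56} and Lemma \ref{lemma54}) that the energy density $e$ of a vortex with image in a fixed compact set $K$ satisfies $\Delta e \geq -{\bm c}\, e^2$, and the Heinz-type mean value inequality (Lemma \ref{lemma53}, or \cite[Lemma 4.3.2]{McDuff_Salamon_2004}) then gives a constant $\epsilon_K>0$ such that $E({\bf v};B_{2r}(z))<\epsilon_K$ implies $e(z)\leq \frac{8}{\pi r^2}E({\bf v};B_{2r}(z))$. For a ${\mb C}$-vortex with total energy below $\epsilon_X:=\epsilon_{X_{c_0}}$ this estimate holds for every $z\in{\mb C}$ and every $r>0$; letting $r\to\infty$ forces $e\equiv 0$, hence $E({\bf v})=0$. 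This is exactly how the paper proves the boundary analogue Theorem \ref{thm14} in Appendix \ref{appendixa4}, with the extra step of reflecting the density across $\partial{\mb H}$ using a $(J,L,\mu)$-admissible metric. If you want to prove Proposition \ref{prop11}, this is the route to take; the bubble-tree analysis you describe belongs to Sections \ref{section5}--\ref{section6} and presupposes the quantization, it does not establish it.
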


Now we turn to ${\mb H}$-vortices. Firstly, we need a removal of singularity at the infinity of ${\mb H}$. In order to be able to generalize to the case of multiple Lagrangians, consider \eqref{eqn24} over $\arc:= {\mb H} \smallsetminus B_R$. Let $L_-, L_+$ be two $G$-Lagrangians whose quotients in $\bar{X}$ intersecting cleanly. We impose the boundary condition
\beq\label{eqn26}
u ( \arc \cap {\mb R}_\pm) \subset L_\pm.
\eeq
In Appendix \ref{appendixa} we prove the following result. 
\begin{thm}\label{thm12}\hfill
\begin{enumerate}
\item If ${\bf v} = (u, \phi, \psi)$ is a bounded solution to \eqref{eqn24} on $\arc$ with boundary condition \eqref{eqn26}, then there exists a $G$-orbit $O \subset L_-\cap L_+$ such that
\beqn
\lim_{ z \to \infty} d(u(z), O ) = 0.
\eeqn

\item There exists a constant ${\bm \updelta} = {\bm \updelta}(L_-, L_+)>0$ such that, for any bounded solution $(u, \phi, \psi)$ to \eqref{eqn24} with boundary condition \eqref{eqn26}, we have
\beq\label{eqn27}
\limsup_{z \to +\infty} \Big( |z|^{2+{\bm \updelta}} e(z) \Big) < +\infty.
\eeq
\end{enumerate}
\end{thm}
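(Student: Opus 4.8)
The plan is to transplant $\arc = {\mb H}\smallsetminus B_R$ to a half--strip and run an isoperimetric argument there. Writing $z = e^{\tau + {\bm i}\theta}$ identifies $\arc$ with $\Theta_T := [T, +\infty)\times[0,\pi]$, $T = \log R$, carrying $\arc\cap{\mb R}_+$ to $\{\theta = 0\}$ and $\arc\cap{\mb R}_-$ to $\{\theta = \pi\}$. Since $\ov\partial_A u = 0$ is conformally invariant and the curvature equation becomes $\partial_\tau\psi - \partial_\theta\phi + [\phi,\psi] + e^{2\tau}\mu(u) = 0$, the pair ${\bf v}$, written $(u,\phi,\psi)$ in the $w = \tau + {\bm i}\theta$ coordinate, is a vortex on $\Theta_T$ with area form $e^{2\tau}\,d\tau\,d\theta$, still subject to $u|_{\theta=0}\subset L_+$ and $u|_{\theta=\pi}\subset L_-$. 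The point is that $\|d_A u\|_{L^2}^2$ is conformally invariant, so in the $w$--coordinate it equals $\int_{\Theta_T}( |\partial_\tau u + {\mc X}_\phi(u)|^2 + |\partial_\theta u + {\mc X}_\psi(u)|^2 )\,d\tau\,d\theta$, while $\|\mu(u)\|_{L^2}^2 = \int_{\Theta_T}e^{2\tau}|\mu(u)|^2\,d\tau\,d\theta$; writing $e_\Theta(\tau,\theta)$ for the energy density of this strip vortex with respect to $d\tau\,d\theta$, one has $e(z) = e^{-2\tau}e_\Theta(\tau,\theta)$ and $\varepsilon'(\tau) = -\int_0^\pi e_\Theta(\tau,\theta)\,d\theta$, where $\varepsilon(\tau) := E({\bf v};\Theta_\tau)$, so that \eqref{eqn27} becomes equivalent to an exponential slice bound $e_\Theta(\tau,\theta)\le Ce^{-{\bm \updelta}\tau}$ (exponential in $\tau$, i.e.\ polynomial in $|z|$).

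Next I would record the qualitative ``no rate'' consequences of finiteness of energy. Since $E({\bf v})<\infty$, $\varepsilon(\tau)\to 0$. Feeding this into the mean value inequality for vortices with Lagrangian boundary (Appendix \ref{appendixa}), which applies because ${\bf v}$ is bounded in the sense of Definition \ref{defn6} so $u$ has precompact image and all target data are uniformly controlled, gives $\sup_\theta e_\Theta(\tau,\theta)\le C\,\varepsilon(\tau - 1)\to 0$, hence $|z|^2 e(z)\to 0$ in the original coordinate, and in particular $\mu(u(z))\to 0$ and $|d_A u|_w\to 0$ uniformly in $\theta$. Passing to the symplectic quotient, $\|\partial_\theta\bar u(\tau,\cdot)\|_{C^0}\to 0$, while interior and boundary elliptic bootstrapping along the two arcs gives uniform $C^1$ (indeed $C^\infty_{loc}$) bounds on the slices; so every $C^1$--subsequential limit of $\bar u(\tau,\cdot)$ as $\tau\to\infty$ is a constant path with value in $\bar{L}_-\cap\bar{L}_+$. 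This already proves $\mathrm{dist}(\bar u(z),\bar{L}_-\cap\bar{L}_+)\to 0$, and in particular $u(z)$ eventually enters a fixed relatively compact neighbourhood in $X$ of the $G$--invariant preimage of $\bar{L}_-\cap\bar{L}_+$.

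The heart of the matter is a differential inequality for $\varepsilon$. Cover a relatively compact neighbourhood of $\bar{L}_-\cap\bar{L}_+$ in $\bar X$ by finitely many Darboux charts in which $\bar{L}_\pm$ are put in the linear clean--intersection normal form; in each such chart $\omega$ has a primitive $\lambda$ vanishing on both $\bar{L}_-$ and $\bar{L}_+$ (hence on $\bar{L}_-\cap\bar{L}_+$), with $|\lambda(x)| = O(|x - p|)$ near the centre $p$. For $\tau$ large the intrinsic length $\ell(\tau) := \mathrm{length}( \bar u(\{\tau\}\times[0,\pi]) )$ of the slice in $\bar X$ is small, and the vortex energy identity --- using $\ov\partial_A u = 0$, that $\mu$ vanishes on $L_\pm$, that $\lambda$ vanishes on $\bar{L}_\pm$, and that the boundary contribution from the end $\tau'\to +\infty$ vanishes (there both $\ell$ and $\lambda\circ\bar u$ tend to $0$) --- reduces $\varepsilon(\tau)$ to an integral over the single slice $\{\tau\}\times[0,\pi]$ of a one--form built from $u$, $\lambda$, $\psi$. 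Ziltener's isoperimetric inequality for vortices (cf.\ Proposition \ref{prop11}), upgraded to the Lagrangian--boundary situation by means of this relative primitive, bounds that integral by $C\,\ell(\tau)^2$, and $\ell(\tau)^2\le \pi\int_0^\pi|\partial_\theta u + {\mc X}_\psi(u)|^2\,d\theta\le -\pi\,\varepsilon'(\tau)$ by Cauchy--Schwarz (the $e^{2\tau}|\mu(u)|^2$ term only adds to $-\varepsilon'$). Hence $\varepsilon(\tau)\le -\pi C\,\varepsilon'(\tau)$ for $\tau\ge T_1$, so $\varepsilon(\tau)\le\varepsilon(T_1)e^{-(\tau - T_1)/(\pi C)}$; putting this back through the mean value inequality gives $e_\Theta(\tau,\theta)\le C'e^{-{\bm \updelta}\tau}$ with ${\bm \updelta} := 1/(\pi C)$ (shrunk to a smaller positive number if one wants ${\bm \updelta}<2$), which is \eqref{eqn27}, and since $C$ depends only on the geometry of the clean intersection one gets ${\bm \updelta} = {\bm \updelta}(L_-,L_+)$. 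Finally, in temporal gauge $\phi\equiv 0$ one has $|\partial_\tau u|\le|d_A u|_w = e^\tau|d_A u|_z\le C''e^{-{\bm \updelta}\tau/2}$, so $\tau\mapsto u(\tau,\theta)$ has finite length for each $\theta$ and converges, uniformly in $\theta$, to some $x_\infty(\theta)$; by the previous paragraph $\bar x_\infty$ is a constant path, so $x_\infty([0,\pi])$ lies in a single fibre over $\bar X$, i.e.\ a single $G$--orbit $O$, and $x_\infty(0)\in L_+$, $x_\infty(\pi)\in L_-$ with $G$--invariance of $L_\pm$ force $O\subset L_-\cap L_+$. Since $d(u(z),O)$ is gauge invariant, this is part (1).

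I expect the main obstacle to be the uniform isoperimetric inequality near $\bar{L}_-\cap\bar{L}_+$ in the gauged setting: one must combine the relative isoperimetric inequality for curves with endpoints on cleanly intersecting Lagrangians --- whose constant depends delicately on the second fundamental forms of $\bar{L}_\pm$ along their intersection --- with Ziltener's vortex isoperimetric inequality controlling the gauge--orbit/moment--map directions (needing the quadratic estimate on $\mu$ near $\mu^{-1}(0)$ and an $L^2$ bound on the connection in a good gauge over each slice), all with a single constant valid for every large $\tau$ and every bounded vortex; this is precisely where the clean--intersection hypothesis is genuinely used. A secondary, purely technical nuisance is making the conformal rescaling rigorous up to the two boundary arcs and checking that each elliptic and mean value estimate invoked has a Lagrangian--boundary version with constants uniform in $\tau$, for which precompactness of $u(\arc)$ is used repeatedly.
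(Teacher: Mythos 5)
Your proposal is correct and follows essentially the same route as the paper: pass to logarithmic strip coordinates, use the mean value estimate with reflection across the Lagrangian boundary (via an admissible metric) to get pointwise smallness on slices, derive the differential inequality $\varepsilon(\tau)\le -C\,\varepsilon'(\tau)$ from an isoperimetric inequality for a local equivariant action near the cleanly intersecting Lagrangians, and conclude part (1) by temporal gauge and finite length. The uniform isoperimetric inequality you flag as the main remaining obstacle is precisely what the paper supplies in Lemmas \ref{lemma50}--\ref{lemma52} (Po\'zniak's inequality plus the equivariant local action estimate), packaged into the annulus lemma, Proposition \ref{prop57}, from which Theorem \ref{thm12} is deduced exactly as you outline.
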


Therefore one can define the evaluation of an ${\mb H}$-vortex at $\infty$ as a point in $\bar{L}$. Moreover, using the same argument as proving Lemma \ref{lemma8}, we have
\begin{cor}\label{cor13}
For every ${\mb H}$-vortex ${\bf v} = (u, \phi, \psi)$, $u( {\mb H}) \subset X_{c_0}$, where $X_{c_0}$ is defined in \eqref{eqn23}.
\end{cor}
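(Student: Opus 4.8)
The plan is to run the argument of Lemma~\ref{lemma8}, the only genuinely new point being that $\mb H$ is noncompact, so before invoking the maximum principle one must localize the relevant maximization to a compact region. Suppose, for contradiction, that $u(\mb H)\not\subset X_{c_0}$, and set $m:=\sup_{z\in\mb H}f_X(u(z))$. Since an $\mb H$-vortex is by definition bounded (Definition~\ref{defn6}), $u(\mb H)$ has compact closure in $X$, so $m<\infty$, and by assumption $m>c_0\geq c_X$.

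First I would show that $m$ is attained at an interior point of $\mb H$ lying in a fixed compact set. On $\partial\mb H$ the boundary condition gives $u(\partial\mb H)\subset L$, and the choice $c_0\geq\sup\{f_X(x)+1\mid x\in L\}$ yields $f_X(u)\leq c_0-1<m$ there. Near infinity, I would apply Theorem~\ref{thm12}(1) with $L_-=L_+=L$ (so that $\bar L_-\cap\bar L_+=\bar L$, a trivially clean self-intersection) to the restriction of ${\bf v}$ to $\arc$: there is a $G$-orbit $O\subset L$ with $d(u(z),O)\to 0$ as $z\to\infty$, and since $f_X$ is continuous, $G$-invariant, and bounded by $c_0-1$ on $L$, it follows that $\limsup_{z\to\infty}f_X(u(z))\leq c_0-1$. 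Hence there is $R>0$ with $f_X(u(z))<c_0$ for $|z|\geq R$, so $m>c_0$ must be attained at some $z_0$ in the interior of $\mb H\cap B_R$.

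It then remains to copy the subharmonicity argument of Lemma~\ref{lemma8}. Since $f_X(u(z_0))=m>c_X$, continuity provides a connected neighborhood $U$ of $z_0$ in $\mathrm{Int}\,\mb H$ on which $f_X\circ u>c_X$; on such a set $f_X\circ u$ is subharmonic by the computation of \cite[Lemma 2.7]{Cieliebak_Gaio_Mundet_Salamon_2002}, which is precisely where the two inequalities of Hypothesis~\ref{hyp4}(2) (the Hessian bound and $df_X(x)\cdot J_X\mc X_{\mu(x)}\geq 0$) are used. The strong maximum principle then forces $f_X\circ u\equiv m$ on $U$, so the set $\{z\in\mathrm{Int}\,\mb H:f_X(u(z))=m\}$ is open; being also closed and nonempty, it equals $\mathrm{Int}\,\mb H$ by connectedness, and by continuity $f_X\circ u\equiv m$ on $\mb H$. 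In particular $f_X(u)|_{\partial\mb H}=m>c_0-1$, contradicting the bound of the previous paragraph.

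The only step beyond the proof of Lemma~\ref{lemma8} is the localization carried out in the second paragraph, and I expect this to be the sole (and rather mild) obstacle: it is what prevents the supremum from escaping to infinity or being attained only on $\partial\mb H$, and it is dispatched cleanly by Theorem~\ref{thm12}(1) together with the boundary condition. Everything else is verbatim the argument already given for Lemma~\ref{lemma8}.
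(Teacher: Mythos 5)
Your proof is correct and follows essentially the same route the paper intends: the corollary is stated right after Theorem \ref{thm12} precisely so that the asymptotic convergence to a $G$-orbit in $L$ (together with the boundary condition $u(\partial{\mb H})\subset L$ and the choice $c_0\geq\sup\{f_X+1\ \text{on}\ L\}$) confines the supremum of $f_X\circ u$ to a compact interior region, after which the subharmonicity/maximum-principle argument of Lemma \ref{lemma8} applies verbatim. The paper leaves this localization implicit ("using the same argument as proving Lemma \ref{lemma8}"), and your write-up simply makes that step explicit.
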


Another important ingredient is the energy quantization for ${\mb H}$-vortices. We prove the following theorem in Appendix \ref{appendixa4}.
\begin{thm}\label{thm14}
There exists a constant $\epsilon_{X, L}>0$ such that for any ${\mb H}$-vortex ${\bf v}$ with nonzero energy, we have $E({\bf v}) \geq \epsilon_{X, L}$.
\end{thm}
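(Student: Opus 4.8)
The plan is to argue by contradiction, mimicking the standard $\epsilon$-regularity / energy-gap scheme for pseudoholomorphic curves and vortices. Suppose no such $\epsilon_{X,L}$ exists. Then there is a sequence of $\mb H$-vortices ${\bf v}^{(i)} = (u^{(i)}, \phi^{(i)}, \psi^{(i)})$ with $0 < E({\bf v}^{(i)}) \to 0$. By Corollary \ref{cor13} all the maps $u^{(i)}$ have image in the fixed compact set $X_{c_0}$, so the sequence is uniformly bounded in the sense of Definition \ref{defn6}. I would first establish a mean-value / $\epsilon$-regularity inequality for $\mb H$-vortices: there are constants $\hbar>0$ and $C>0$ such that whenever ${\bf v}$ is an $\mb H$-vortex and $B_r(z_0)\cap \mb H$ carries energy $\le \hbar$, then $e({\bf v})(z_0) \le \frac{C}{r^2}\int_{B_r(z_0)\cap\mb H} e({\bf v})$ — the interior version of this is classical (Cieliebak--Gaio--Mundet--Salamon, and it should appear in Appendix \ref{appendixa}), and the boundary version follows by a doubling/reflection argument across $\partial\mb H$ using the Lagrangian boundary condition together with an elliptic estimate for the doubled equation, exactly as in the Lagrangian boundary case for holomorphic disks. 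Combined with the a priori image bound, this gives a uniform pointwise energy density bound once the total energy is small.

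Next I would use this density bound to extract a limit. After a translation (the equation over $\mb H$ is invariant under real translations) I may assume the energy density of ${\bf v}^{(i)}$ is bounded below at a fixed point, or more carefully rescale: let $c_i = \max_{\mb H} e({\bf v}^{(i)})$. If $c_i$ stays bounded, then by the $\epsilon$-regularity estimate and elliptic bootstrapping (applying Proposition \ref{prop7}-type gauge fixing and interior/boundary elliptic estimates on compact sets), a subsequence of ${\bf v}^{(i)}$, after gauge transformations, converges u.c.s. (Lemma \ref{lemma7}) to an $\mb H$-vortex ${\bf v}^\infty$ with $E({\bf v}^\infty) = 0$; a zero-energy vortex has $d_A u \equiv 0$ and $\mu(u)\equiv 0$, hence $u$ lands in a single $G$-orbit in $\mu^{-1}(0)$ and the vortex is gauge-equivalent to a constant, which forces $E({\bf v}^{(i)})$ to be bounded away from $0$ for large $i$ — unless the energy is escaping to infinity. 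To handle escape, pick points $z_i\in\mb H$ with $e({\bf v}^{(i)})(z_i)$ comparable to $c_i$; if $c_i\to\infty$, rescale by $z\mapsto z_i + z/\sqrt{c_i}$. The rescaled domains converge either to $\mb C$ (if $\sqrt{c_i}\,\mathrm{dist}(z_i,\partial\mb H)\to\infty$) or to $\mb H$ (otherwise), and the rescaled vortices have energy density $\le 1$, normalized to $1$ somewhere, and total energy $\le E({\bf v}^{(i)}) \to 0$. The u.c.s. limit is then a $\mb C$-vortex or $\mb H$-vortex of energy $0$ but with energy density $1$ at a point — a contradiction, since energy $0$ forces energy density $0$. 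If instead $c_i$ is bounded but the energy is spread out at bounded density over a region growing to infinity, I would invoke the decay estimate of Theorem \ref{thm12}(2): the $|z|^{-2-{\bm\updelta}}$ decay of $e(z)$ with a constant controlled by the (small) total energy, together with the density bound near the origin, forces $E({\bf v}^{(i)})$ itself to be bounded below by the density normalization — again a contradiction. Either way we contradict $E({\bf v}^{(i)})\to 0$.

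The main obstacle I expect is the \emph{boundary} $\epsilon$-regularity estimate and the associated boundary elliptic bootstrapping in the presence of the gauge field: one must fix a good gauge near a boundial point (a local slice / Coulomb gauge relative to a smooth reference connection, as in Proposition \ref{prop7}, with the Neumann-type boundary condition $*(g^*A - A_0)|_{\partial\mb H}=0$), check that the vortex equation together with this gauge condition and the Lagrangian condition $u(\partial\mb H)\subset L$ forms a boundary-elliptic system to which one may apply interior-and-boundary $L^p$ and Schauder estimates, and carry out the doubling argument compatibly with the $G$-action. This is routine in spirit but technically the delicate point; I would isolate it as a lemma in Appendix \ref{appendixa}. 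The second, milder difficulty is bookkeeping the three possible failure modes (bounded density with compact concentration, bounded density with concentration escaping to $\infty$, unbounded density) and making sure the rescaling in each case produces a genuine $\mb C$- or $\mb H$-vortex in the limit, for which the u.c.s. convergence statement of Lemma \ref{lemma7} together with the removal-of-singularity/decay results of Theorem \ref{thm12} are exactly the tools needed.
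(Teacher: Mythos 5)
Your key lemma --- the boundary $\epsilon$-regularity/mean-value inequality obtained by reflecting across $\partial{\mb H}$ with a $(J,L,\mu)$-admissible metric --- is exactly the paper's Proposition \ref{prop56}, and once you have it the theorem follows in two lines, with no compactness or contradiction argument at all: if $E({\bf v})<\epsilon_{X,L}$ (with $\epsilon_{X,L}=\epsilon_{K,L}$ for $K=X_{c_0}$, using Corollary \ref{cor13}), then for \emph{every} $z\in{\mb H}$ and \emph{every} $r>0$ the half-ball $B_{2r}(z)\cap{\mb H}$ lies in the domain and carries energy below the threshold, so $e(z)\leq \frac{C}{r^{2}}E({\bf v})$; letting $r\to\infty$ gives $e\equiv 0$, hence $E({\bf v})=0$. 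The point that makes this work, and that you should check explicitly, is that the threshold in the mean-value lemma is independent of $r$; this is because for the standard area form the density satisfies $\Delta e\geq -{\bm c}\,e^{2}$ with no linear term (Lemma \ref{lemma54}, constant $\sigma$), so the scale-invariant form of the mean-value inequality (Lemma \ref{lemma53} with $r$ arbitrary, or \cite[Lemma 4.3.2]{McDuff_Salamon_2004}) applies.

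The contradiction/compactness superstructure you build on top of this lemma is not only unnecessary, it has a genuine gap. In the case where the sup of the density $c_i$ stays bounded (or tends to $0$), extracting a u.c.s.\ limit of zero energy gives no contradiction whatsoever: a trivial limit is perfectly consistent with $E({\bf v}^{(i)})\to 0$, because on the noncompact domain ${\mb H}$ the (small) energy can simply spread out or escape to infinity with density tending to zero uniformly, and nothing in your argument rules this out. Your attempted patch via Theorem \ref{thm12}(2) does not close this gap: that decay estimate is a qualitative $\limsup$ statement for a single fixed vortex, and it comes with no uniform constant ``controlled by the total energy'' as your argument requires; making it uniform would itself require an input like the annulus estimate of Proposition \ref{prop57}, i.e.\ essentially the quantitative density control you already have. (A smaller inaccuracy: when $c_i\to\infty$ and you rescale by $\sqrt{c_i}$, the pulled-back area form is $c_i^{-1}\nu_0\to 0$, so the limit is a $J$-holomorphic disk or sphere in $X$, not a ${\mb C}$- or ${\mb H}$-vortex; the density-versus-energy contradiction would still go through in that case, but the bounded-density case remains unrepaired.) The fix is simply to discard the contradiction scheme and apply your own mean-value inequality with $r\to\infty$ as above.
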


\subsection{Compactness}

We recall the basic compactness theorem of vortex equation over a bordered Riemann surface with Lagrangian boundary condition. Let $\Sigma$ be a Riemann surface of finite type possibly having a smooth boundary $\partial \Sigma$; let $\Sigma_i$ be an exhausting sequence of open subsets of $\Sigma$, and $\nu_i$ be a sequence of area forms on $\Sigma_i$. Suppose that $\nu_i$ converges u.c.s. to $\nu\in \Omega^2(\Sigma)$ where $\nu$ is either a smooth area form on $\Sigma$ or $\nu = 0$. Firstly we have the following ``compactness modulo bubbling'' result.

\begin{thm}\label{thm15}
Let ${\bf v}^{(i)} = (A^{(i)}, u^{(i)})  \in \wt{\mc M}(\Sigma_i, \nu_i; X, L)$ be a uniformly bounded sequence. Then there exists a subsequence (still indexed by $i$), a finite subset $Z \subset \Sigma$, ${\bf v} \in \wt{\mc M}(\Sigma, \nu; X, L)$ and a sequence of smooth gauge transformations $g^{(i)}: \Sigma_i \to G$, such that $( g^{(i)} )^* {\bf v}^{(i)}$ converges u.c.s. to ${\bf v}$ on $\Sigma \smallsetminus Z$.

Moreover, for any $z\in Z$, we have
\beqn
\lim_{r \to 0} \lim_{i \to \infty} E\big( {\bf v}^{(i)}; B_r(z) \cap \Sigma_i \big) \geq \epsilon(X, L),
\eeqn
where $\epsilon(X, L)$ is less than the minimum of the energy of any nontrivial holomorphic sphere in $X$ or any nontrivial holomorphic disk in $(X, L)$.
\end{thm}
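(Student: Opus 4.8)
\emph{Proof proposal.} The plan is to run the standard Gromov-compactness scheme for vortices, built on an $\varepsilon$-regularity (mean-value) estimate. The first and central step is to establish: for every $G$-invariant compact $K\subset X$ there are constants $\hbar=\hbar(K)>0$ and $C=C(K)>0$, depending also on $J$ and on local $C^1$-bounds for the area forms, such that if ${\bf v}=(A,u)$ is a vortex on a (half-)disk $B_r(z)\cap\Sigma$ with $u$ valued in $K$ --- and with $u(\partial\Sigma)\subset L$ in the boundary case --- and $E({\bf v};B_r(z)\cap\Sigma)<\hbar$, then
\beqn
\sup_{B_{r/2}(z)\cap\Sigma} e({\bf v}) \;\leq\; \frac{C}{r^2}\,E\big({\bf v};B_r(z)\cap\Sigma\big).
\eeqn
In the interior this is the usual mean-value estimate for the vortex equation: using the local form \eqref{eqn22} and that $u$ stays in $K$ (so all moment-map and curvature data along the image are $C^2$-bounded), one checks that $e({\bf v})$ satisfies a differential inequality of the type $\Delta e({\bf v})\geq -c\,(1+e({\bf v}))\,e({\bf v})$ and then invokes a Heinz-type mean-value inequality, as in \cite{Cieliebak_Gaio_Mundet_Salamon_2002} and \cite{Ziltener_book}. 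Along $\partial\Sigma$ one argues the same way after a reflection across the boundary, using that $L$ is Lagrangian hence totally real and working in a relative Coulomb gauge (\cite[Theorem F]{Wehrheim_Uhlenbeck}). I expect this boundary estimate, and the boundary elliptic bootstrap it feeds into, to be the main technical obstacle; the interior case is essentially that of \cite{Cieliebak_Gaio_Mundet_Salamon_2002}, \cite{Ziltener_book}.

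Granting $\varepsilon$-regularity, let $K$ be the fixed compact set containing all $u^{(i)}(\Sigma_i)$ (uniform boundedness), and write $\hbar=\hbar(K)$. After passing to a subsequence, the energy measures $m_i$, $m_i(U):=E({\bf v}^{(i)};U\cap\Sigma_i)$, which are Radon measures on $\Sigma$ of total mass $\leq\sup_i E({\bf v}^{(i)})<\infty$, converge weak-$*$ to a Radon measure $m_\infty$. Set $Z:=\{z\in\Sigma:m_\infty(\{z\})\geq\hbar\}$, which is finite because $m_\infty(\Sigma)<\infty$. For each $z\in\Sigma\smallsetminus Z$ choose $r_z>0$ with $m_\infty(\ov{B_{r_z}(z)})<\hbar$; then $E({\bf v}^{(i)};B_{r_z}(z)\cap\Sigma_i)<\hbar$ for $i$ large, so the $\varepsilon$-regularity estimate on $B_{r_z/2}(z)$, together with a finite cover of any compact $K'\subset\Sigma\smallsetminus Z$ by such (half-)balls, bounds $e({\bf v}^{(i)})$ --- and hence $|d_{A^{(i)}}u^{(i)}|$, $|F_{A^{(i)}}|$ and $|\mu(u^{(i)})|$ --- uniformly on $K'$.

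Next I extract the limit away from $Z$. With $u^{(i)}(\Sigma_i)\subset K$ and the uniform local $C^0$-bounds just obtained, apply Uhlenbeck's local compactness --- equivalently the local slice theorem \cite[Theorem F]{Wehrheim_Uhlenbeck}, in its version with a relative boundary condition along $\partial\Sigma$ --- to put $A^{(i)}$ into local Coulomb gauge with uniform $W^{1,p}$ bounds; then the first equation of \eqref{eqn22}, with the totally real boundary condition $u^{(i)}(\partial\Sigma)\subset L$, is elliptic with uniformly bounded coefficients and target in $K$, so elliptic bootstrapping gives uniform $C^\infty_{loc}$ bounds on $\Sigma\smallsetminus Z$. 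A diagonal argument over a compact exhaustion of $\Sigma\smallsetminus Z$, followed by Lemma \ref{lemma7} to pass from weak $W^{1,p}_{loc}$-convergence to u.c.s.\ convergence in one sequence of gauges, produces gauge transformations $g^{(i)}$ and a smooth vortex ${\bf v}_0\in\wt{\mc M}(\Sigma\smallsetminus Z,\nu;X,L)$ with $(g^{(i)})^*{\bf v}^{(i)}\to{\bf v}_0$ u.c.s.\ on $\Sigma\smallsetminus Z$. By lower semicontinuity of energy ${\bf v}_0$ has finite energy, and its image lies in $K$, so ${\bf v}_0$ is bounded near each $z\in Z$ in the sense of Definition \ref{defn6}. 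Interior removal of singularity for vortices (\cite{Cieliebak_Gaio_Mundet_Salamon_2002}, \cite{Ziltener_book}) at interior points of $Z$, and Lemma \ref{lemma6} with $L_-=L_+=L$ at boundary points of $Z$, then give a further gauge transformation on $\Sigma\smallsetminus Z$ after which ${\bf v}_0$ extends to a smooth vortex ${\bf v}\in\wt{\mc M}(\Sigma,\nu;X,L)$; incorporating this gauge transformation into the $g^{(i)}$ --- and noting that the convergence claim concerns only $\Sigma\smallsetminus Z$ --- yields the data in the statement. All estimates used are uniform in $\nu_i$, so the argument covers the case $\nu=0$.

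It remains to fix $\epsilon(X,L)$. By Hypothesis \ref{hyp4} (maximum principle for $f_X\circ u$, as in Lemma \ref{lemma8} and Corollary \ref{cor13}), every bounded vortex --- and likewise every nonconstant $J$-holomorphic sphere in $X$ and every nonconstant $J$-holomorphic disk in $(X,L)$ --- has image in the single compact set $X_{c_0}$ of \eqref{eqn23}; hence the $\varepsilon$-regularity threshold $\hbar(X_{c_0})$, and the minimal energies $\sigma_X$ of such spheres and $\sigma_{X,L}$ of such disks (positive by the monotonicity inequality for $J$-holomorphic curves with totally real boundary), depend only on $X$, $L$ and $J$. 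Taking $\epsilon(X,L):=\tfrac{1}{2}\min\{\hbar(X_{c_0}),\sigma_X,\sigma_{X,L}\}$ and using the threshold $\epsilon(X,L)$ in place of $\hbar$ in the construction of $Z$ above (which only enlarges $Z$ and changes nothing else), we get for each $z\in Z$, after a further diagonal subsequence so the inner limits exist,
\beqn
\lim_{r\to0}\,\lim_{i\to\infty}E\big({\bf v}^{(i)};B_r(z)\cap\Sigma_i\big) \;=\; m_\infty(\{z\}) \;\geq\; \epsilon(X,L),
\eeqn
which is the asserted lower bound, with $\epsilon(X,L)$ strictly below the minimal sphere and disk energies as required.
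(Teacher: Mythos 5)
Your scheme for extracting the limit away from $Z$ (weak-$*$ limit of the energy measures, an $\varepsilon$-regularity threshold, local Coulomb gauge and elliptic bootstrap, Lemma \ref{lemma7} to upgrade to u.c.s.\ convergence, and Lemma \ref{lemma6} plus interior removal of singularity to extend the limit over $Z$) is exactly the route the paper indicates. The genuine gap is in the last step, where you fix $\epsilon(X,L)$. You claim that every uniformly bounded vortex of the sequence has image in $X_{c_0}$ ``as in Lemma \ref{lemma8} and Corollary \ref{cor13}''; but those results concern vortices over the closed disk with the Lagrangian condition on the \emph{entire} boundary, respectively finite-energy ${\mb H}$-vortices, whereas here the domains $\Sigma_i$ are arbitrary exhausting open subsets of a possibly noncompact $\Sigma$, and on the part of $\partial\Sigma_i$ not contained in $\partial\Sigma$ there is no boundary condition at all, so the maximum-principle argument for $f_X\circ u^{(i)}$ does not close up (its supremum need not be attained at an interior or controlled boundary point, and local vortices with image far outside $X_{c_0}$ certainly exist). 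Consequently the regularity threshold you are entitled to use is $\hbar(K)$ for the compact set $K$ coming from uniform boundedness, and your threshold-defined constant then depends on the particular sequence rather than only on $(X,L,J)$ --- which is precisely what the notation $\epsilon(X,L)$ and the displayed inequality are meant to exclude. The lower bound in the statement is really a quantization statement: at each $z\in Z$ one performs the hard rescaling by the blow-up rate of the energy density (which dominates the fixed area forms, so the rescaled forms tend to zero), producing a nonconstant $J$-holomorphic sphere in $X$ or disk in $(X,L)$; by Hypothesis \ref{hyp4} such bubbles lie in a fixed compact set, so their minimal energies are sequence-independent, and the concentrated mass at $z$ is at least that minimum, hence at least $\epsilon(X,L)$. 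Your proposal omits this bubbling step, and the shortcut replacing it rests on the unjustified $X_{c_0}$ claim; to repair it, either add the rescaling argument at the points of $Z$, or you only obtain a $K$-dependent constant, which is weaker than the theorem.

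A secondary point, on the boundary $\varepsilon$-regularity you defer: reflecting the energy density across $\partial\Sigma$ is not justified by ``$L$ Lagrangian hence totally real'' together with a relative Coulomb gauge. For the evenly reflected $e({\bf v})$ to still satisfy the differential inequality one needs $\partial_t e=0$ along the boundary, and that is exactly what the $(J,L,\mu)$-admissible metrics of Definition \ref{defn48} and Lemma \ref{lemma49} ($L$ totally geodesic, $J(TL)\perp TL$, the compatibility with $\mu^{-1}(0)$) are constructed to guarantee; see Proposition \ref{prop56} and the computation in the proof of Proposition \ref{prop55}. The ingredient you need therefore exists in the paper's appendix, but the justification you propose would not deliver it as stated.
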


It follows essentially from the energy quantization property of holomorphic disks or sphers in $X$. To construct the limiting smooth vortex ${\bf v}$ and prove the convergence u.c.s., one needs the regularity results Proposition \ref{prop7}, Lemma \ref{lemma7} and the removal of singularity result Lemma \ref{lemma6}. The detail is left to the reader.

Further, we can find a bubble tree at each $z \in Z$ as part of the geometric limit of the sequence ${\bf v}^{(i)}$. Since the bubbling is only a local phenomenon, we assume that $\Sigma$ is an open subset of ${\mb H}$ of finite type. We also assume $Z$ consists of a single element $z$. The description of bubble trees are discussed in Appendix \ref{appendixb}. Then we have
\begin{thm}(cf. \cite{Ott_compactness}, \cite{Guangbo_compactness})\label{thm16}
Assume $\Sigma \subset {\mb H}$ and $\Sigma_i$, $\nu_i$, $\nu$, ${\bf v}^{(i)}$ are the same as in Theorem \ref{thm15}. Choose $z \in Z$. Then there exists a subsequence (still indexed by $i$) and: 

1) if $z \in \partial {\mb H}$, a stable holomorphic disk modelled on a based branch ${\mc T}$
\beqn
{\mc D}:= \big( (u_\alpha)_{v_\alpha \in V({\mc T})}, (z_{\alpha \beta})_{e_{\alpha \beta}\in E({\mc T})} \big);
\eeqn

2) if $z \in {\rm Int} {\mb H}$, a stable holomorphic sphere modelled on a branch ${\mc T}$
\beqn
{\mc S}:= \big( (u_\alpha)_{v_\alpha \in V({\mc T})}, (z_{\alpha \beta})_{e_{\alpha \beta}\in E({\mc T})} \big).
\eeqn
(For the precise meaning see Appendix \ref{appendixb2}.) In either case, they satisfy the following conditions.
\begin{enumerate}
\item For each $v_\alpha \in V({\mc T})$, there exist a sequence of smooth gauge transformations $g_\alpha^{(i)}: \Sigma_\alpha \to G$ and a sequence of M\"obius transformations $\varphi_\alpha^{(i)}: \Sigma_\alpha \to {\mb H}$ such that $( g_\alpha^{(i)} )^* ( \varphi_\alpha^{(i)} )^* {\bf v}^{(i)}$ converges u.c.s. to $(0, u_\alpha)$ on $\Sigma_\alpha \smallsetminus Z_\alpha$;

\item For each $\alpha$, $\varphi_\alpha^{(i)}$ converges u.c.s. on $\Sigma_\alpha$ to the constant $z$; for any $e_{\alpha \beta}\in E({\mc T})$, $( \varphi_\beta^{(i)} )^{-1} \circ \varphi_\alpha^{(i)}$ converges u.c.s. on $\Sigma_\alpha$ to the constant $z_{\alpha \beta} \in \Sigma_\beta$. 

\item There is no energy loss, i.e.,
\beqn
\lim_{r\to \infty} \lim_{i \to \infty} E( {\bf v}^{(i)}; B_r(z) \cap {\mb H} ) = \sum_{v_\alpha \in V({\mc T}) } E(u_\alpha).
\eeqn
\end{enumerate}
\end{thm}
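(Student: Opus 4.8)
### Proof Proposal for Theorem \ref{thm16}

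The plan is to adapt the standard bubble-tree construction from the closed theory of pseudoholomorphic curves (as in \cite{McDuff_Salamon_2004}) and its vortex analogue (\cite{Ott_compactness}, \cite{Guangbo_compactness}, \cite{Ziltener_book}) to the bordered setting, proceeding by induction on the total energy concentrated at $z$. First I would localize: after translating $z$ to the origin and restricting to a small half-disk $B_{r_0}(z) \cap {\mb H}$, the hypotheses of Theorem \ref{thm15} give a limiting vortex ${\bf v}$ on $(B_{r_0}(z) \smallsetminus \{z\}) \cap {\mb H}$ (the \emph{root} component after removal of singularity via Lemma \ref{lemma6}), together with the lost energy $m(z) := \lim_{r \to 0}\lim_{i\to\infty} E({\bf v}^{(i)}; B_r(z)\cap{\mb H}) > 0$. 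The main induction hypothesis will be: for any uniformly bounded sequence with total concentrated energy $\le E_0 - \epsilon(X,L)$ (where $\epsilon(X,L)$ is the quantization constant from Theorem \ref{thm15}), a bubble tree exists; the base case is vacuous because below $\epsilon(X,L)$ no energy can concentrate.

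The core of the inductive step is the \textbf{rescaling and extraction of the first bubble}. Since the metrics $\nu_i$ are being blown up by $\lambda_i^2$ in the application, but in the present theorem $\nu_i \to \nu$ u.c.s.\ (with $\nu$ possibly $0$), the relevant rescaling is purely in the domain. I would choose sequences $\epsilon_i \to 0$ and points $z_i \to z$ realizing the concentration (e.g.\ via a standard "center of mass" or Hofer-type argument on the energy density), and set $\varphi_\alpha^{(i)}(w) = z_i + \epsilon_i w$. On $B_{R}(0) \cap \epsilon_i^{-1}({\mb H} - z_i)$ the rescaled fields $(\varphi_\alpha^{(i)})^* {\bf v}^{(i)}$ satisfy the vortex equation with respect to $\epsilon_i^2 \lambda_i$-rescaled area forms, which converge u.c.s.\ to the \emph{zero} area form; hence the limit is a solution of the $\nu = 0$ equation on ${\mb C}$ or ${\mb H}$, i.e.\ a pseudoholomorphic sphere in $X$ (if $z_i$ stays interior, or escapes the boundary at a definite rate) or a pseudoholomorphic disk in $(X,L)$ (if $z_i \in \partial{\mb H}$ up to bounded distance after rescaling). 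Here I would invoke Proposition \ref{prop7} and Lemma \ref{lemma7} to pass to a smooth limit after gauge transformation, using the uniform $X_{c_0}$-bound from Lemma \ref{lemma8}; the connection part of the limit vanishes because $\|F_A\|^2$ scales away, so the bubble is honestly a $J$-holomorphic curve (this is where the $\nu=0$ structure is essential — no ${\mb C}$- or ${\mb H}$-vortices appear as bubbles in \emph{this} theorem, only in the adiabatic limit theorem). One then has to rule out energy escaping to the annular "neck" regions between scales, which is the usual \textbf{no-neck-energy / isoperimetric} estimate, and the remaining concentrated energy $m(z) - E(\text{first bubble})$ is strictly less than $m(z)$, hence $\le E_0 - \epsilon(X,L)$, so the induction hypothesis applies to the finitely many points where the leftover energy sits — either on the root component or on the first bubble — attaching a sub-branch at each.

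The \textbf{main obstacle} I anticipate is the neck analysis at the boundary: one must show that no energy is lost in the annular regions $\{ \epsilon_i R \le |w - z_i| \le r \}$ intersected with ${\mb H}$, and that the bubble components are correctly "aspherical" (no further bubbling inside the neck). In the interior case this is the classical argument via the mean value inequality for the energy density combined with the isoperimetric inequality on long cylinders; in the boundary case one needs the bordered analogue, using that $d_A u$ and $\mu(u)$ decay on half-cylinders with Lagrangian boundary — this is precisely the kind of estimate packaged in Theorem \ref{thm12} and the decay estimates of Appendix \ref{appendixa}, applied to the (zero area form) vortex equation, which degenerates to the holomorphic-strip decay of Robbin–Salamon type. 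A secondary technical point is bookkeeping the gauge transformations $g_\alpha^{(i)}$ and Möbius maps $\varphi_\alpha^{(i)}$ so that conditions (1) and (2) hold simultaneously across all vertices — this requires a diagonal argument over the tree together with the compatibility of Coulomb gauges under restriction, exactly as in the proof of Lemma \ref{lemma7}. Once the no-neck-energy statement is in hand, condition (3) is immediate by summing the u.c.s.\ convergences over a finite exhaustion, and stability of the resulting tree ${\mc T}$ follows by collapsing any ghost components with fewer than three special points into their neighbors, as in Appendix \ref{appendixb}.
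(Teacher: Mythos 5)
Your outline is correct and is essentially the same approach the paper itself takes: the paper gives no detailed argument for Theorem \ref{thm16}, deferring to \cite{Ott_compactness} and \cite{Guangbo_compactness} and remarking that the proof is the standard Gromov-compactness bubble-tree construction, which is exactly the soft-rescaling, no-neck-energy, induction-on-concentrated-energy scheme you describe (with the bubbles honestly $J$-holomorphic spheres or disks because the pulled-back area forms degenerate to zero and the flat connection is gauged away). The only blemish is the stray factor $\lambda_i$ in your phrase about the ``$\epsilon_i^2\lambda_i$-rescaled area forms''; no $\lambda_i$ appears in the hypotheses of Theorems \ref{thm15}--\ref{thm16}, and the rescaled form is simply $\epsilon_i^2$ times a bounded conformal factor, which is what makes the limit equation the $\nu=0$ one.
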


The proof of this theorem has been given by Ott \cite{Ott_compactness} in the case that $z \in {\rm Int} {\mb H}$ and by the second named author \cite{Guangbo_compactness} in the case $G = S^1$. The detail of a general proof is left to the reader because the proof has no essential difference from the proof of the standard Gromov compactness for holomorphic disks or spheres, and because our main concern is the compactness with respect to the adiabatic limit.

\begin{rem}
Besides \cite{Ott_compactness} and \cite{Guangbo_compactness}, there are other works treating compactness of vortex equations in various settings, such as \cite{Mundet_2003}, \cite{Cieliebak_Gaio_Mundet_Salamon_2002}, \cite{Mundet_Tian_2009}, \cite{Venugopalan_quasi}.
\end{rem}

\section{Compactness Modulo Bubbling}\label{section3}

In this section we start to consider adiabatic limit of vortices. Given an area form $\nu \in \Omega^2(\Sigma)$ and $\lambda > 0$, the energy of ${\bf v}  =(A, u) \in \wt{\mc M}(\Sigma, \lambda^2 \nu; X, L)$ is
\beqn
E ({\bf v}) = \frac{1}{2} \Big( \big\| d_A u \big\|_{L^2(\Sigma)}^2 + \lambda^{-2} \big\| F_A \big\|_{L^2(\Sigma)}^2 + \lambda^2 \big\| \mu(u) \big\|_{L^2(\Sigma)}^2 \Big).
\eeqn
Here the $L^2$ norms are defined with respect to the fixed area form $\nu$. If we impose a uniform energy bound, then for large $\lambda$, $\mu(u)$ will be small in the $L^2$-sense. If $\mu(u)$ is uniformly small, then $u$ will be close to a holomorphic curve in $\bar{X}$ (cf. Definition \ref{defn17}). If $\mu(u)$ is not uniformly small, then it corresponds to energy concentration and bubbling. According to different rates of the energy concentration compared to the speed of $\lambda \to +\infty$, different types of objects bubble off. The classification of bubbles is given in Subsection \ref{subsection31}. Among six types of bubbles, the type S2 (see Definition \ref{defn21}) is novel where ${\mb H}$-vortices bubble off. We treat this type of bubbles in more details.

This section corresponds to the ``hard rescaling'' in the bubbling analysis (see similar terminology used in \cite{McDuff_Salamon_2004} and \cite{Frauenfelder_disk}), where we can use the energy quantization property of various bubbles to construct a finite subset of the domain where energy concentrates. 

\subsection{Convergence in the adiabatic limit}\label{subsection31}

Let $\Sigma$ be a Riemann surface, possibly having a nonempty smooth boundary $\partial \Sigma$. Let $\nu$ be an area form on $\Sigma$. Let $\lambda_i$ be a sequence of positive numbers diverging to infinity. Let $(\Sigma_i, \partial\Sigma_i)$ be a sequence of exhausting open subsets of $(\Sigma, \partial \Sigma)$. Let $\nu_i$ be a smooth area form on $\Sigma_i$, such that for any compact subset $K \subset \Sigma$, the sequence $\nu_i|_K$ converges uniformly (with all derivatives) to $\nu|_K$. We fix these data in this subsection.

Take a sequence $\lambda_i \to +\infty$ and ${\bf v}^{(i)} \in \wt{\mc M}( \Sigma_i, \lambda_i^2 \nu_i; X, L)$. Let ${\bf e}_i$ be the energy densities of ${\bf v}^{(i)}$. In this section, the energy density function of ${\bf v}^{(i)}$ means its energy density with respect to $\nu$.

To describe the convergence towards a holomorphic curve in $\bar{X}$, we introduce the following notation. There exists a $G$-invariant neighborhood $U_X$ of $\mu^{-1}(0)$ on which the $G$-action is free. Moreover, there exists a smooth projection $\pi_{\bar{X}}: U_X \to \bar{X}$ whose restriction to $\mu^{-1}(0)$ coincides with the projection $\mu^{-1}(0) \to \bar{X}$. The following notion is independent of the choice of such $\pi_{\bar{X}}$. 

\begin{defn}\label{defn17} Suppose $\bar{u}: \Sigma \to \bar{X}$ is a $\bar{J}$-holomorphic map with $\bar{u}(\partial \Sigma) \subset \bar{L}$. We say that {\bf the sequence ${\bf v}^{(i)}	$ converge to $\bar{u}$} if
\begin{enumerate}
\item $\mu(u^{(i)}):\Sigma_i \to {\mf g}^*$ converges to zero uniformly on any compact subset of $\Sigma$;

\item For any compact subset $K\subset \Sigma$, for large $i$, the sequence of continuous maps $\pi_{\bar{X}}\circ u^{(i)}: K \to \bar{X}$ converges uniformly to the map $\bar{u}|_K$.
\end{enumerate}
\end{defn}

There is a different point of view on the above notion of convergence (see \cite[Section 2]{Gaio_Salamon_2005}). Consider the following equation for a pair ${\bf v}= (A, u)$
\beq\label{eqn31}
\ov\partial_A u = 0,\ \mu(u) = 0. 
\eeq
The group of gauge transformations on $\Sigma$ acts on the space of solutions to \eqref{eqn31}, and the set of gauge equivalence classes of solutions to \eqref{eqn31} is in one-to-one correspondence to the set of holomorphic maps $\bar{u}: \Sigma \to \bar{X}$. Then we can rewrite Definition \ref{defn17} as a convergence of sequence of ${\bf v}^{(i)}$ to a solution to \eqref{eqn31} modulo gauge transformations. We won't take this perspective but we remark that this viewpoint implies the following important fact.
\begin{prop}\label{prop18}
Suppose ${\bf v}^{(i)}$ converges to a $\bar{J}$-holomorphic map $\bar{u}: \Sigma \to \bar{X}$ with $\bar{u}(\partial \Sigma) \subset \bar{L}$. Let $e(\bar{u})$ be the energy density of $\bar{u}$ with respect to $\nu$. Then
\beqn
\lim_{i \to \infty} e_i = e(\bar{u})
\eeqn
uniformly on compact subsets of $\Sigma$.
\end{prop}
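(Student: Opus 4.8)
The plan is to exploit the fact that convergence in the sense of Definition \ref{defn17} can be upgraded, locally on $\Sigma$, to $C^\infty_{loc}$ convergence after suitable gauge transformations, and then to read off the convergence of energy densities from the explicit local formula for $\mathbf{e}$. First I would fix a compact subset $K\subset\Sigma$ and work in a local holomorphic coordinate $z=s+\mathbf{i}t$ on a neighborhood of $K$, writing $\mathbf{v}^{(i)}=(u^{(i)},\phi^{(i)},\psi^{(i)})$ for the area form $\lambda_i^2\nu_i$. Since $\mu(u^{(i)})\to 0$ uniformly on $K$ and the images $u^{(i)}(K)$ lie in a fixed compact set (uniform boundedness, via Lemma \ref{lemma8} / Corollary \ref{cor13}), elliptic bootstrapping for the vortex equation \eqref{eqn22} --- the same machinery used in Theorem \ref{thm15} --- produces, after passing to a subsequence and applying gauge transformations $h^{(i)}$, a $C^\infty_{loc}$ limit on the interior of $K$; the Lagrangian boundary condition lets this run up to $\partial\Sigma$ as well. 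The key point is that the limiting pair is a solution to equation \eqref{eqn31}: $\ov\partial_A u=0$ and $\mu(u)=0$. By the correspondence recalled just before the proposition, this limit, pushed down to $\bar X$, is exactly $\bar u|_K$, and one checks (using condition (2) of Definition \ref{defn17} to pin down which holomorphic map we get) that no other limit is possible, so in fact the whole sequence, not just a subsequence, converges after gauge transformation.

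Next I would compute the energy densities. In the chosen coordinate the energy density $2$-form is $\mathbf{e}_i=\big(|\partial_s u^{(i)}+\mathcal X_{\phi^{(i)}}(u^{(i)})|^2+\lambda_i^2\sigma_i|\mu(u^{(i)})|^2\big)\,ds\,dt$, where $\nu_i=\sigma_i\,ds\,dt$; note the factor $\lambda_i^2$ coming from the area form $\lambda_i^2\nu_i$ while $e_i=*\mathbf{e}_i$ is taken with respect to the fixed $\nu$. Here the delicate term is $\lambda_i^2\sigma_i|\mu(u^{(i)})|^2$: we only know $\mu(u^{(i)})\to 0$, not that $\lambda_i\mu(u^{(i)})\to 0$. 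To control it I would invoke the second vortex equation, $\partial_s\psi^{(i)}-\partial_t\phi^{(i)}+[\phi^{(i)},\psi^{(i)}]+\lambda_i^2\sigma_i\mu(u^{(i)})=0$, i.e. $\lambda_i^2\sigma_i\mu(u^{(i)})=-F_{A^{(i)}}$ in the trivialization; since $F_{A^{(i)}}$ is a component of the $C^\infty_{loc}$-convergent connection and its limit is the curvature of the limiting flat-in-the-moment-map-sense connection, one sees $\lambda_i^2\sigma_i|\mu(u^{(i)})|^2=|F_{A^{(i)}}|^2\cdot(\lambda_i^2\sigma_i)^{-1}\to 0$ locally uniformly because $|F_{A^{(i)}}|$ stays bounded while $\lambda_i\to\infty$. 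Similarly $\partial_s u^{(i)}+\mathcal X_{\phi^{(i)}}(u^{(i)})$ converges $C^0_{loc}$ to $\partial_s u+\mathcal X_\phi(u)$, the horizontal part of $d_A u$ for the limit, which descends to $\partial_s\bar u$. Thus $\mathbf{e}_i\to|\partial_s\bar u|^2\,ds\,dt$ locally uniformly, and applying $*$ (with respect to $\nu$, whose coefficient is the $C^\infty_{loc}$ limit of the $\nu_i$ coefficients) gives $e_i\to e(\bar u)$ uniformly on $K$. Because energy densities are gauge invariant, the gauge transformations $h^{(i)}$ introduced above do not affect $e_i$, so no bookkeeping of the gauge is needed in the final statement.

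I expect the main obstacle to be precisely the estimate $\lambda_i^2|\mu(u^{(i)})|^2\to 0$ locally uniformly: a priori the hypothesis only bounds $\|\mu(u^{(i)})\|_{L^2}$ by $O(\lambda_i^{-1})$ (from the uniform energy bound) and gives pointwise $\mu(u^{(i)})\to 0$ with no rate, so the naive product $\lambda_i^2\cdot|\mu(u^{(i)})|^2$ is an indeterminate $\infty\cdot 0$. The resolution is to trade the factor $\lambda_i^2$ for curvature via the second equation of \eqref{eqn22}, as above, which is legitimate once the $C^\infty_{loc}$ convergence (hence local boundedness of $F_{A^{(i)}}$) has been established; this is why the bootstrapping step must come first. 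A secondary technical point is handling the boundary $\partial\Sigma$ uniformly in the elliptic estimates, but this is exactly the Lagrangian boundary regularity already used in Theorem \ref{thm15}, so I would simply cite that. Everything else --- the passage from interior coordinate computations to the global statement, and the gauge invariance of $e_i$ --- is routine.
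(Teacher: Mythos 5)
Your overall architecture --- upgrade convergence in the sense of Definition \ref{defn17} to convergence modulo gauge towards a solution of \eqref{eqn31}, identify that limit with $\bar u$, and then read off the density from the local formula, trading $\lambda_i^2\sigma_i\,\mu(u^{(i)})$ for the curvature via the second equation of \eqref{eqn22} --- is in the same spirit as the remark the paper makes just before the proposition (the paper gives no detailed proof, only that the Gaio--Salamon ``convergence modulo gauge to solutions of \eqref{eqn31}'' viewpoint implies it). The concluding computation is correct as far as it goes: once one knows that, in suitable gauges, $(A^{(i)},u^{(i)})$ converges in $C^1_{loc}$ to a solution of \eqref{eqn31}, then $\partial_s u^{(i)}+{\mc X}_{\phi^{(i)}}(u^{(i)})$ converges to the horizontal lift of $\partial_s\bar u$ (complex linearity of $d_A u$ together with $\mu(u)=0$ forces $d_Au$ to be horizontal, so its norm equals $|\partial_s\bar u|$), and $\lambda_i^2\sigma_i|\mu(u^{(i)})|^2=|F_{A^{(i)}}|^2(\lambda_i^2\sigma_i)^{-1}\to 0$, which gives $e_i\to e(\bar u)$.

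The genuine gap is your first step, and it is where all the analytic content of the proposition lives. You justify the $C^\infty_{loc}$ convergence modulo gauge by ``the same machinery used in Theorem \ref{thm15}'', but that theorem concerns area forms $\nu_i$ converging to a fixed $\nu$, whereas here the relevant area forms are $\lambda_i^2\nu_i$ with $\lambda_i\to\infty$, and the hypotheses of the proposition provide only $C^0$ information: $\mu(u^{(i)})\to 0$ with no rate, and uniform convergence of $\pi_{\bar X}\circ u^{(i)}$. To start Uhlenbeck compactness and the elliptic bootstrap one needs local $L^p$ bounds on $F_{A^{(i)}}$, i.e.\ on $\lambda_i^2\sigma_i|\mu(u^{(i)})|$, together with local gradient bounds on $u^{(i)}$; neither is available a priori, and your later assertion that $|F_{A^{(i)}}|$ stays bounded ``because it is a component of the $C^\infty_{loc}$-convergent connection'' is circular --- that curvature bound is precisely what the bootstrap was supposed to deliver and precisely the quantity the hypotheses do not control (this is the $\infty\cdot 0$ problem you yourself flag, merely relocated). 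What is actually needed is: (a) an argument that Definition \ref{defn17} convergence forces locally uniform energy-density bounds, e.g.\ a rescaling argument showing that blow-up of $e_i$ would produce a nonconstant holomorphic sphere or disk in $X$ contained in $\mu^{-1}(0)$, an affine vortex, or a holomorphic curve in $\bar X$, each of which contradicts the uniform convergence of $\pi_{\bar X}\circ u^{(i)}$ or of $\mu(u^{(i)})$; and then (b) the adiabatic a priori estimates of Gaio--Salamon (the input behind Theorem \ref{thm19}), which under such density bounds control $\lambda_i^2|\mu(u^{(i)})|$ and the higher covariant derivatives and yield the gauged $C^1_{loc}$ convergence you assumed. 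Without (a) and (b) the proof does not get off the ground.
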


Now we have the adiabatic limit convergence theorem provided that the energy density doesn't blow up, which is a simple extension of the known results (see \cite{Gaio_Salamon_2005}) to the case where $\Sigma$ may have boundary.

\begin{thm}\label{thm19}
Suppose that the sequence ${\bf v}^{(i)}$ is uniformly bounded and for any compact subset $K \subset \Sigma$, we have
\beqn
\limsup_{i \to \infty} \sup_{z \in K} e_i(z) < \infty.
\eeqn
Then there exists a subsequence (still indexed by $i$) of ${\bf v}^{(i)}$ which converges to a $\bar{J}$-holomorphic map $\bar{u}: (\Sigma, \partial \Sigma) \to (\bar{X}, \bar{L})$. 
\end{thm}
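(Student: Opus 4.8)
The plan is to reduce the adiabatic-limit statement to the ordinary compactness-modulo-bubbling theorem (Theorem \ref{thm15}) applied with the degenerate target area form $\nu = 0$, and then identify the resulting vortex with a holomorphic curve in the quotient. More precisely, since ${\bf v}^{(i)}\in \wt{\mc M}(\Sigma_i, \lambda_i^2\nu_i; X, L)$ solves $\ov\partial_{A^{(i)}} u^{(i)} = 0$ and $F_{A^{(i)}} + \lambda_i^2\mu(u^{(i)})\nu_i = 0$, I would rescale the connection-curvature equation. Because the hypothesis of uniform energy density bound gives $\limsup_i \sup_{z\in K} e_i(z) < \infty$ for every compact $K$, and because $\lambda_i^2 \|\mu(u^{(i)})\|_{L^2}^2 \le 2E({\bf v}^{(i)})$ is uniformly bounded, one gets that $\mu(u^{(i)})\to 0$ pointwise and in fact uniformly on compact subsets of $\Sigma$ (the energy density bound controls $|\mu(u^{(i)})|^2$ directly, and $\lambda_i\to\infty$ forces it to zero). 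Likewise $F_{A^{(i)}} = -\lambda_i^2\mu(u^{(i)})\nu_i \to 0$ uniformly on compacts.

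Next I would run the argument of Theorem \ref{thm15}, but noting that the target area forms $\lambda_i^{-2}(\lambda_i^2\nu_i) = \nu_i$ are irrelevant to the first equation and the second equation is now read as $F_{A^{(i)}} + \mu(u^{(i)})\,(\lambda_i^2\nu_i) = 0$; equivalently, after absorbing scales, the pair $(A^{(i)}, u^{(i)})$ is an approximate solution of \eqref{eqn31} with errors going to zero u.c.s.. The local estimates (elliptic bootstrapping for $\ov\partial_A u = 0$ with the uniform $C^0$ bound coming from Lemma \ref{lemma8} / Corollary \ref{cor13} on $X_{c_0}$, plus the Coulomb-gauge fixing as in Proposition \ref{prop7} and Lemma \ref{lemma7}) then yield: a subsequence, a finite set $Z\subset\Sigma$, and gauge transformations $g^{(i)}$ so that $(g^{(i)})^*{\bf v}^{(i)}$ converges u.c.s. on $\Sigma\smallsetminus Z$ to a smooth pair ${\bf v} = (A, u)$ satisfying $\ov\partial_A u = 0$, $\mu(u) = 0$, $u(\partial\Sigma)\subset L$, i.e. a solution of \eqref{eqn31}. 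Here is where the uniform energy density bound does its essential work: it prevents any energy concentration, so in fact $Z = \emptyset$ — the standard argument shows that a point of $Z$ would absorb at least $\epsilon(X,L)$ of energy, which requires a genuine holomorphic disk or sphere bubble, but the pointwise density bound rules out the rescaling limit that produces such a bubble. Thus the convergence is u.c.s. on all of $\Sigma$.

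Finally, by the correspondence recalled just before Proposition \ref{prop18} — gauge equivalence classes of solutions to \eqref{eqn31} over $\Sigma$ are in bijection with $\bar J$-holomorphic maps $\Sigma\to\bar X$ — the limit ${\bf v}$ determines a $\bar J$-holomorphic map $\bar u:(\Sigma,\partial\Sigma)\to(\bar X,\bar L)$, namely $\bar u = \pi_{\bar X}\circ u$. The u.c.s. convergence of $(g^{(i)})^*{\bf v}^{(i)}$ to ${\bf v}$ together with $\mu(u^{(i)})\to 0$ u.c.s. is exactly conditions (1)–(2) of Definition \ref{defn17}, so ${\bf v}^{(i)}$ converges to $\bar u$ in the required sense. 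The main obstacle is the bootstrapping/gauge-fixing step that produces the smooth limit ${\bf v}$: one must interpret $F_{A^{(i)}} = O(\lambda_i^{-2})$-type decay carefully so that the Coulomb-gauged connections converge (not merely their curvatures), and verify that the limit genuinely satisfies $\mu(u) = 0$ rather than just $\mu(u)\nu = 0$ in some weak sense; but this is precisely the content of the cited work of Gaio–Salamon \cite{Gaio_Salamon_2005} adapted to nonempty boundary via Proposition \ref{prop7} and Lemma \ref{lemma7}, so no new analytic input is needed beyond those lemmas.
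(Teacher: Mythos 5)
There is a genuine gap, and it sits exactly at the step you treat as automatic. From the hypothesis $\limsup_i \sup_K e_i < \infty$ you may indeed conclude $\lambda_i^2 |\mu(u^{(i)})|^2 \leq C(K)$ on compact sets, i.e. $|\mu(u^{(i)})| = O(\lambda_i^{-1})$ u.c.s., which gives condition (1) of Definition \ref{defn17}; but this only gives $|F_{A^{(i)}}| = \lambda_i^2 \sigma_i |\mu(u^{(i)})| = O(\lambda_i)$, so your assertion that $F_{A^{(i)}} \to 0$ u.c.s. does not follow from the density bound --- and it is in fact false in general: in the adiabatic limit $\lambda_i^2 \mu(u^{(i)})\nu_i$ converges to the (generically nonzero) curvature of the connection induced from $\mu^{-1}(0) \to \bar{X}$ by the limit map, not to zero. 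The analytic heart of the theorem is precisely the improved a priori estimate $\sup_K \lambda_i^2 |\mu(u^{(i)})| \leq C(K)$, which makes the curvatures locally bounded and allows the Uhlenbeck/Coulomb-gauge step you invoke to start at all; this is a mean-value/maximum-principle estimate of Gaio--Salamon type, and near $\partial\Sigma$ it needs the reflection argument with a $(J, L, \mu)$-admissible metric in the spirit of Lemma \ref{lemma49} and Lemma \ref{lemma54}, not the regularity/gauge-fixing statements Proposition \ref{prop7} and Lemma \ref{lemma7}, which cannot produce it. A second, related misstep is the framing via Theorem \ref{thm15} ``with $\nu = 0$'': that theorem assumes the area forms entering the vortex equation converge u.c.s. to $\nu$, whereas here they are $\lambda_i^2\nu_i$ and diverge; its $\nu = 0$ case is tailored to the rescaled bubbles upstairs, not to the adiabatic limit.

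Two further remarks. Once the estimate $\lambda_i^2|\mu(u^{(i)})| \leq C(K)$ is available your outline can be completed (with bounded, not vanishing, curvature), and your argument that the density bound forces $Z = \emptyset$ and that the limit satisfies $\mu(u) = 0$ is fine. Alternatively one can bypass connection compactness entirely: Definition \ref{defn17} involves only gauge-invariant data, and since $\pi_{\bar{X}}$ is $G$-invariant one has $d(\pi_{\bar{X}}\circ u^{(i)}) = d\pi_{\bar{X}} \cdot d_{A^{(i)}} u^{(i)}$, so the projected maps have derivatives controlled by the density bound; Arzel\`a--Ascoli together with $\ov\partial_{A^{(i)}} u^{(i)} = 0$ and $\mu(u^{(i)}) \to 0$ shows the limit is weakly $\bar{J}$-holomorphic with boundary in $\bar{L}$, and elliptic regularity finishes. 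This is essentially the route of \cite{Gaio_Salamon_2005}, to which the paper itself defers (it states Theorem \ref{thm19} as a boundary extension of those results rather than proving it), so the comparison to make is with that argument; as written, your proposal omits its key estimate.
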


The following theorem is proved at the very end of this section.

\begin{thm}\label{thm20}
Let $\Sigma$, $\nu$, $\partial \Sigma$, $\Sigma_i$, $\nu_i$, $\lambda_i$ be as above. There exist a constant $\epsilon>0$, depending on $X$, $L$, $J$ such that for any uniformly bounded sequence ${\bf v}^{(i)}\in \wt{\mc M}(\Sigma_i, \lambda_i^2 \nu_i; X, L)$, there exist a subsequence (still indexed by $i$), and a finite subset $Z\subset \Sigma$ satisfying the following conditions. 
\begin{enumerate}
\item For each $z \in Z$, we have
\beqn
\lim_{r \to 0} \limsup_{i \to \infty} E ( {\bf v}^{(i)}; B_r(z) \cap \Sigma ) \geq \epsilon
\eeqn
and for any compact subset $K \subset \Sigma \smallsetminus Z$, we have
\beqn
\limsup_{i \to \infty} \sup_{z \in K} e_i (z) < \infty.
\eeqn

\item The sequence ${\bf v}^{(i)}|_{\Sigma_i \smallsetminus Z}$ converges (modulo gauge) to a $\bar{J}$-holomorphic map $\bar{u}: ( \Sigma \smallsetminus Z, \partial (\Sigma \smallsetminus Z) ) \to (\bar{X}, \bar{L})$ in the sense of Definition \ref{defn17}.
\end{enumerate}
\end{thm}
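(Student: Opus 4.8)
The plan is to run the ``hard rescaling'' scheme familiar from the closed case (\cite{Gaio_Salamon_2005}, \cite{Ziltener_book}); the one genuinely new feature is that an $\mb{H}$-vortex may now bubble off at a boundary point, so Theorem \ref{thm14} has to be fed into the quantization step alongside Proposition \ref{prop11}. First I would pass once and for all to a subsequence so that $\lim_i E({\bf v}^{(i)};B_r(z)\cap\Sigma)$ exists for every $z$ in a fixed countable dense subset of $\Sigma$ and every rational $r>0$, and use this to define the energy-concentration function
\beqn
m(z):=\lim_{r\to 0}\,\limsup_{i\to\infty} E\big({\bf v}^{(i)};B_r(z)\cap\Sigma\big)\in[0,E_0],
\eeqn
where $E_0$ is the uniform energy bound. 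Let $\epsilon>0$ be the minimum of $\epsilon_X$ (Proposition \ref{prop11}), $\epsilon_{X,L}$ (Theorem \ref{thm14}), the least energy of a nonconstant $\bar{J}$-holomorphic sphere in $\bar{X}$ or disk in $(\bar{X},\bar{L})$, and the least energy of a nonconstant $J$-holomorphic sphere in $X$ or disk in $(X,L)$; all four are positive and depend only on $X,L,J$. Put $Z:=\{z\in\Sigma:m(z)\geq\epsilon\}$. If $z_1,\dots,z_N\in Z$ are distinct, then for $r$ small the sets $B_r(z_j)\cap\Sigma$ are pairwise disjoint and each carries at least $\epsilon$ energy in the limit, so $N\epsilon\leq E_0$ and $Z$ is finite. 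The first half of (1) is then just the definition of $Z$.

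The heart of the proof is the bound $\limsup_i\sup_K e_i<\infty$ for every compact $K\subset\Sigma\smallsetminus Z$. Suppose it fails; after passing to a subsequence pick $z_i\in K$ realizing $\sup_K e_i$ with $e_i(z_i)\to\infty$ and $z_i\to z_0\in\Sigma\smallsetminus Z$. By the rescaling lemma (Hofer's lemma, cf.\ \cite{McDuff_Salamon_2004}) I may perturb $z_i$, still with $z_i\to z_0$, and find $\delta_i\to 0$ with $\delta_i R_i\to\infty$, $R_i:=\sqrt{e_i(z_i)}$, and $\sup_{B_{\delta_i}(z_i)\cap\Sigma}e_i\leq 4\,e_i(z_i)$. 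Rescale by $w\mapsto z_i+w/R_i$, normalizing so that $\partial\Sigma$ is carried either onto $\partial\mb{H}$ (when $R_i\,\mathrm{dist}(z_i,\partial\Sigma)$ stays bounded) or off to infinity (otherwise). The rescaled pairs $\tilde{\bf v}_i$ then solve the vortex equation over subsets exhausting $\mb{C}$ (or $\mb{H}$), with area forms converging u.c.s.\ to the standard one, effective scale parameter $\mu_i:=\lambda_i/R_i$, energy density $1$ at the origin, energy density $\leq 4$ on those subsets, and total energy $\leq E_0$; the image of $\tilde u_i$ stays in the fixed compact set given by uniform boundedness. Pass to a further subsequence with $\mu_i\to\mu_\infty\in[0,\infty]$.

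Now three regimes occur. If $\mu_\infty\in(0,\infty)$, the bounded energy density gives, by elliptic regularity and Lemma \ref{lemma7}, u.c.s.\ convergence of $\tilde{\bf v}_i$ modulo gauge to a bounded solution of the vortex equation over $\mb{C}$ (or $\mb{H}$, with $L$-boundary condition) for the area form $\mu_\infty^2$ times the standard one; rescaling the plane once more by $\mu_\infty$ produces a genuine $\mb{C}$- or $\mb{H}$-vortex, nonconstant because its energy density is positive at the origin, hence of energy $\geq\epsilon_X$ or $\geq\epsilon_{X,L}$ by Proposition \ref{prop11} or Theorem \ref{thm14}. If $\mu_\infty=\infty$, since the rescaled density is bounded on balls exhausting $\mb{C}$ (or $\mb{H}$), Theorem \ref{thm19} applies and produces a $\bar{J}$-holomorphic map $\mb{C}\to\bar{X}$ (or $(\mb{H},\partial\mb{H})\to(\bar{X},\bar{L})$) which is nonconstant and of finite energy by Proposition \ref{prop18}, hence extends across infinity to a nonconstant sphere or disk of energy $\geq\epsilon$. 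If $\mu_\infty=0$, the curvature equation forces $F_{\tilde A_i}\to 0$, so the limit satisfies $\ov\partial_A u=0$, $F_A=0$; gauging the flat limit connection to the trivial one gives a nonconstant finite-energy $J$-holomorphic map $\mb{C}\to X$ (or $(\mb{H},\partial\mb{H})\to(X,L)$), which by removal of singularity extends to a nonconstant sphere or disk of energy $\geq\epsilon$. In every case one obtains a nonconstant bubble of energy $\geq\epsilon$ which, at scale $R_i$, sits inside $B_{\rho/R_i}(z_i)\subset B_r(z_0)$ for any fixed $r>0$ and $i$ large; since energy is scale-invariant, lower semicontinuity of energy under u.c.s.\ convergence yields $\liminf_i E({\bf v}^{(i)};B_r(z_0)\cap\Sigma)\geq\epsilon$ for every $r$, so $m(z_0)\geq\epsilon$, contradicting $z_0\notin Z$. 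This proves the second half of (1). Finally, applying Theorem \ref{thm19} to the restrictions ${\bf v}^{(i)}|_{\Sigma_i\smallsetminus Z}$ over an exhaustion of the finite-type surface $\Sigma\smallsetminus Z$, and passing to one more subsequence, produces the $\bar{J}$-holomorphic map $\bar{u}:(\Sigma\smallsetminus Z,\partial(\Sigma\smallsetminus Z))\to(\bar{X},\bar{L})$ of (2).

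I expect the main obstacle to be the third regime $\mu_\infty=\infty$ --- a bubbling within bubbling --- where one must verify that passing to the holomorphic limit in $\bar{X}$ loses no energy; this is exactly why the rescaling lemma is needed, namely to make the rescaled density bounded on an \emph{exhausting} family of balls so that Theorem \ref{thm19} applies globally on $\mb{C}$ or $\mb{H}$. Beyond this, the work is bookkeeping: the a priori estimates used to extract the rescaled limits must be uniform in the scale parameter, which is precisely the content of the estimates of \cite{Gaio_Salamon_2005} already underlying Theorem \ref{thm19}, and the several nested subsequences (the diagonal choice defining $m$, the choice of $z_i$, the choice making $\mu_i$ converge, and the final application of Theorem \ref{thm19}) must be threaded consistently. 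With these in place, the analysis below $Z$ is the boundaryless theory of \cite{Gaio_Salamon_2005} localized near interior points together with its routine boundary analogue.
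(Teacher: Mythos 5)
Your proposal is correct and follows essentially the same route as the paper: your trichotomy in $\mu_\infty=\lim\lambda_i/R_i$ (together with the boundary/interior dichotomy) reproduces the paper's six-fold bubbling zoology of Subsection \ref{subsection32} (types F, S, R and their boundary versions, with Theorem \ref{thm14} supplying the quantization for the new ${\mb H}$-vortex bubbles), and the finite set $Z$ is then obtained from energy quantization exactly as in the paper, after which Theorem \ref{thm19} applied on $\Sigma\smallsetminus Z$ gives item (2). The only cosmetic difference is that you define $Z$ by the concentration function $m(z)$ and argue by contradiction via Hofer's lemma, whereas the paper selects the concentration points inductively using near-maximal density points; these are interchangeable versions of the same hard-rescaling argument.
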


\subsection{Bubbling zoology in adiabatic limit}\label{subsection32}

In the presence of Theorem \ref{thm19}, to prove Theorem \ref{thm20} we have to consider the case when $e_i$ blows up at some point, i.e., when bubbling happens. There are different bubbling types depending on the relative rate of energy concentration compared to the speed of the blow-up of the area form. In the closed case the classification of bubbles has been discussed in \cite[Section 12]{Gaio_Salamon_2005}, and \cite{Woodward_toric} contains a discussion about the open case. Here we summarize their classification of bubbles and provide proofs for the open situation. 

We still use our notations in Subsection \ref{subsection31}. Suppose $z_i \in \Sigma_i$ converges to $	z^* \in \Sigma$. Let $\Sigma^* \subset \Sigma$ be a precompact open subset containing $z^*$ and
\beq\label{eqn33}
c_i^2 := e_i(z_i) \geq \frac{1}{2} \sup_{\Sigma^*} e_i \to \infty.
\eeq
We denote $\gamma_i:= d(z_i, \partial \Sigma_i)$, where $d$ is the distance function induced from the metric determined by the area form $\nu$. (Indeed the remaining doesn't depend on the choice of any area form.)

\begin{defn}\label{defn21}
We say that the energy (of the sequence) blows up at $z^*$ in certain type (F1, F2, S1, S2, R1, R2) according to the following table.
\begin{center}
\vspace{0.1cm}
\begin{tabular}{|c|c|c|} 
\hline
Type & $\displaystyle \lim_{i \to \infty} ( c_i /\lambda_i )$ &  $ \displaystyle \lim_{i \to \infty} \gamma_i c_i $\\[0.15cm]
\hline
F1 & $=\infty$ & $= \infty$\\
\hline
F2 & $=\infty$ & $< \infty$\\
\hline
S1 & $\in (0, \infty)$ & $= \infty$\\
\hline
S2 & $\in (0, \infty)$ & $ < \infty$\\
\hline
R1 & $= 0$ & $ = \infty$\\  
\hline
R2 & $=0$ & $< \infty$\\
\hline
\end{tabular}
\vspace{0.1cm}
\end{center}
\end{defn}

Take a holomorphic coordinate $z = s + {\bm i} t$ centered at $z^*$ so that the area forms $\nu_i$ can be written as $\nu_i = \sigma_i(z) ds dt$, and we assume $\nu = \sigma ds dt$ with $\sigma(0) = 1$. The coordinate of $z_i$ is still denoted by $z_i$ as a point in ${\mb C}$ or ${\mb H}$. 

We have the following lemmata regarding different types of blowing up.
\begin{lemma}\label{lemma22}
If the energy blows up at $z^*$ in type F1, define $\varphi_i (w) = z_i + c_i^{-1} w$ for $w \in B ( \sqrt{c_i} ) \subset {\mb C}$. Then there exists a subsequence (still indexed by $i$) and a sequence of smooth gauge transformations $g_i: {\mb C} \to G$ such that the sequence $(g_i)^* (\varphi_i)^* {\bf v}^{(i)}$ converges u.c.s. on ${\mb C}$ to $(0, u)$, where $u: {\mb C} \to X$ is a $J$-holomorphic map with finite positive energy.
\end{lemma}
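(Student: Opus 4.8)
The plan is to recognize this as the standard "hard rescaling at maximal energy concentration" argument producing a nonconstant $J$-holomorphic plane from a Type F1 blow-up, following the template in \cite{McDuff_Salamon_2004} and \cite{Gaio_Salamon_2005}. First I would set up the rescaled fields: writing ${\bf v}^{(i)}$ in the fixed holomorphic coordinate as $(u^{(i)}, \phi^{(i)}, \psi^{(i)})$ with area form $\nu_i = \sigma_i\, ds\, dt$, I define $\varphi_i(w) = z_i + c_i^{-1} w$ and pull back to get $\tilde{\bf v}^{(i)} = (\varphi_i)^* {\bf v}^{(i)}$ on the disk $B(\sqrt{c_i})$. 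Since $\varphi_i$ contracts distances by $c_i^{-1}$ and $\gamma_i c_i \to \infty$ means $B(\sqrt{c_i})$ eventually sits inside the rescaled domain, these are genuine solutions of the vortex equation on $B(\sqrt{c_i})$ with respect to the rescaled area form $\tilde\nu_i$. The point of Type F1 ($c_i/\lambda_i \to \infty$) is that after rescaling the effective coupling parameter $\lambda_i c_i^{-1} \to 0$, so the curvature equation $F_{\tilde A^{(i)}} + (\lambda_i/c_i)^2 \mu(\tilde u^{(i)}) \tilde\sigma_i = 0$ degenerates: the moment-map term carries a factor $(\lambda_i/c_i)^2 \to 0$. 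Meanwhile the rescaled energy density satisfies $\tilde e_i(0) = 1$ and $\tilde e_i \le 2$ uniformly on compact sets (by \eqref{eqn33}), giving the a priori bound that drives elliptic bootstrapping.

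Next I would run the standard compactness machinery. With the uniform energy density bound, apply $\epsilon$-regularity for vortices (an interior estimate on the energy density, as in \cite{Cieliebak_Gaio_Mundet_Salamon_2002} and \cite[Appendix]{Gaio_Salamon_2005}) to get uniform $W^{1,p}_{loc}$ bounds, then invoke Lemma \ref{lemma7}: after applying smooth gauge transformations $g_i$ (putting $\tilde A^{(i)}$ in local Coulomb gauge relative to a fixed connection on compact exhausting pieces), the sequence $(g_i)^* \tilde{\bf v}^{(i)}$ converges u.c.s. on ${\mb C}$ to a limit $(A_\infty, u_\infty)$. In the limit, the curvature equation becomes $F_{A_\infty} = 0$ since $(\lambda_i/c_i)^2 \to 0$ and $\mu(\tilde u^{(i)})$ stays bounded (using uniform boundedness, which passes to the rescaled sequence by Lemma \ref{lemma8} / Corollary \ref{cor13}); a flat connection on ${\mb C}$ can be gauged to $0$, so absorbing this into $g_i$ we may take $A_\infty = 0$, and then the first vortex equation becomes $\bar\partial_J u_\infty = 0$, i.e. $u_\infty: {\mb C} \to X$ is $J$-holomorphic. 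Nonconstancy is forced because the energy density is continuous under u.c.s. convergence, so $e(u_\infty)(0) = \lim \tilde e_i(0) = 1 > 0$; finiteness of energy follows because $E((g_i)^* \tilde{\bf v}^{(i)}; B_R) = E({\bf v}^{(i)}; \varphi_i(B_R))$ is bounded by the uniform energy bound on ${\bf v}^{(i)}$, and letting $R \to \infty$ gives $E(u_\infty) < \infty$.

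The main obstacle — and the step deserving the most care — is establishing the uniform $W^{1,p}_{loc}$ (equivalently $C^0_{loc}$ plus derivative) bounds on the rescaled fields $\tilde{\bf v}^{(i)}$ so that Lemma \ref{lemma7} applies; this is exactly the $\epsilon$-regularity/mean value inequality for the vortex energy density, which must be checked to be uniform as the coupling parameter $\lambda_i c_i^{-1}$ varies (it does, since the estimate degenerates in the favorable direction as the parameter tends to $0$), together with the gauge-fixing bookkeeping needed to pass from an a priori estimate on the gauge-invariant energy density to bounds on the fields themselves in a good gauge. A secondary subtlety is verifying that $\mu(\tilde u^{(i)})$ remains bounded in the limit so the rescaled curvature term genuinely vanishes: this uses that the images $u^{(i)}({\mb D})$ lie in a fixed compact set $X_{c_0}$ by Lemma \ref{lemma8}, hence $\mu$ is bounded along the whole sequence, and the factor $(\lambda_i/c_i)^2 \to 0$ kills the product. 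Everything else is routine transfer of the closed-case argument, and since $z^* \in {\rm Int}\,\Sigma$ is not an issue here (Type F1 with the rescaling centered at $z_i$ and domain $B(\sqrt{c_i}) \subset {\mb C}$ — the condition $\gamma_i c_i \to \infty$ precisely guarantees the rescaled domain exhausts ${\mb C}$ rather than a half-plane), no boundary analysis enters this particular lemma.
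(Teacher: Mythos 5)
Your proposal is correct and follows essentially the same route as the paper, which simply defers this lemma to the standard rescaling argument of Gaio--Salamon (Section 12): rescale by $c_i^{-1}$ at the maximum of the energy density, note that the Type F1 condition $c_i/\lambda_i\to\infty$ makes the rescaled coupling (and hence the curvature/moment-map term) vanish in the limit while the density bound from \eqref{eqn33} rules out bubbling, gauge the limiting flat connection to zero, and read off a finite-energy nonconstant $J$-holomorphic plane with $|du(0)|$ of order one. Your added care about the $\mu$-term being $o(c_i^2)$ (so the unit density at the origin survives in the $|du|^2$ part) is exactly the point where F1 enters, and it is handled correctly.
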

\begin{proof}
Standard result in analysis of symplectic vortex equation (see \cite[Section 12]{Gaio_Salamon_2005}).
\qed \end{proof}

\begin{lemma}\label{lemma23}
If the energy blows up in type F2 at $z^* \in \partial \Sigma$, define $\varphi_i(w) = {\rm Re} z_i + c_i^{-1} w$ for $w \in B ( {\sqrt{c_i}} ) \cap {\mb H}$. Then there exists a subsequence (still indexed by $i$), and a sequence of smooth gauge transformations $g_i: {\mb H} \to G$ such that the sequence $(g_i)^* (\varphi_i)^* {\bf v}^{(i)}$ converges u.c.s. on ${\mb H}$ to $(0,u)$, where $u: ({\mb H}, {\mb R}) \to (X, L)$ is a $J$-holomorphic map with finite positive energy.
\end{lemma}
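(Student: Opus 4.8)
The plan is to run the same concentration–rescaling as in the interior case Lemma~\ref{lemma22}, now centered at the boundary point $z^*$, and to check that the two inequalities defining type F2 are exactly what is needed to identify the limit. First I would work in a boundary holomorphic coordinate $z = s + {\bm i}t$ centered at $z^*$, so that locally $\Sigma \cong {\mb H}$, $\partial\Sigma \cong {\mb R}$, and $\nu_i = \sigma_i\,ds\,dt$ with $\sigma_i \to \sigma$ u.c.s.\ and $\sigma(0) = 1$; write $z_i = s_i + {\bm i}t_i$. With $\varphi_i(w) = s_i + c_i^{-1}w$ and $\tilde{\bf v}^{(i)} := (\varphi_i)^*{\bf v}^{(i)}$ on $\varphi_i^{-1}(\Sigma_i) \cap B(\sqrt{c_i})$, these domains exhaust ${\mb H}$ since $z_i \to z^*$ and $c_i \to \infty$; and since $\Sigma_i$ exhausts $\Sigma$, for large $i$ the part of $\partial\Sigma_i$ near $z^*$ coincides with ${\mb R}$, so in the limit the boundary condition holds on all of ${\mb R}$. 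Each $\tilde{\bf v}^{(i)}$ solves the vortex equation for the area form of density $\tilde\sigma_i(w) = \lambda_i^2 c_i^{-2}\sigma_i(\varphi_i(w))$; here the first F2 condition $c_i/\lambda_i\to\infty$, together with $\sigma_i\circ\varphi_i \to \sigma(z^*) = 1$, forces $\tilde\sigma_i\to 0$ u.c.s.\ on ${\mb H}$. The base point $z_i$ corresponds under $\varphi_i$ to $w_i = {\bm i}\,c_it_i$; since the metric is nearly Euclidean at $z^*$, one has $\gamma_i = d(z_i,\partial\Sigma_i) = (1+o(1))t_i$, so the second F2 condition $\gamma_ic_i < \infty$ keeps $|w_i|$ bounded, and I pass to a subsequence so that $w_i \to w_\infty \in \ov{\mb H}$.

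Next I would record the a priori bounds needed to take the limit without energy loss. Vortex energy is invariant under biholomorphic reparametrization of the domain (with the area form pulled back), so $E(\tilde{\bf v}^{(i)};\Omega) = E({\bf v}^{(i)};\varphi_i(\Omega)) \le \sup_i E({\bf v}^{(i)}) < \infty$ for every $\Omega$, and by uniform boundedness of the original sequence the maps $\tilde u^{(i)}$ stay in a fixed $G$-invariant compact subset of $X$. A direct computation in the $w$-coordinate gives, for the energy density function with respect to the standard area form on ${\mb H}$, $\tilde e_i(w) = c_i^{-2}\,\sigma(\varphi_i(w))\,e_i(\varphi_i(w))$, where $e_i$ is the energy density of ${\bf v}^{(i)}$ with respect to $\nu$. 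Hence $\tilde e_i(w_i) = \sigma(z_i) = 1 + o(1)$, while on any compact $K\subset{\mb H}$ one has $\varphi_i(K)\subset\Sigma^*$ for large $i$ and then $\sup_K\tilde e_i$ is bounded uniformly in $i$ by \eqref{eqn33}. In particular the energy densities do not blow up under this rescaling, so no further bubbling occurs.

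Now I would extract the limit: the rescaled vortices have uniformly bounded energy density on compact sets, take values in a fixed compact set, and solve the vortex equation for area forms $\tilde\nu_i \to 0$. By the a priori estimates for vortices (Appendix~\ref{appendixa}), together with Proposition~\ref{prop7}, Lemma~\ref{lemma7}, and the compactness-modulo-bubbling Theorem~\ref{thm15} applied with vanishing limiting area form, after smooth gauge transformations $g_i: {\mb H}\to G$ and a subsequence $(g_i)^*\tilde{\bf v}^{(i)}$ converges u.c.s.\ on ${\mb H}$ to some $(A,u) \in \wt{\mc M}({\mb H},0;X,L)$ --- the bubbling set being empty because the energy densities stay bounded on compact sets. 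Then $F_A = 0$, so since ${\mb H}$ is contractible a further gauge transformation (absorbed into the $g_i$) makes $A \equiv 0$, and $u : ({\mb H},{\mb R}) \to (X,L)$ is $J$-holomorphic. It has finite energy, since $E(u;K) = \lim_i E\bigl((g_i)^*\tilde{\bf v}^{(i)};K\bigr) \le \sup_i E({\bf v}^{(i)})$ for every compact $K$; and it has positive energy, since the energy density functions converge in $C^0$ on compact subsets of $\ov{\mb H}$ (boundary included), so $e(u)(w_\infty) = \lim_i\tilde e_i(w_i) = 1 > 0$ and $u$ is non-constant.

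The analytic core --- elliptic bootstrapping and Uhlenbeck-type compactness with Lagrangian boundary --- is routine and already in place via Proposition~\ref{prop7}, Lemma~\ref{lemma7}, and Theorems~\ref{thm15} and \ref{thm19}; it is essentially the same as what underlies Lemma~\ref{lemma22}. The one point I expect to be the main obstacle is the bookkeeping of the F2 regime: one must verify that $c_i/\lambda_i\to\infty$ is precisely what kills the curvature term in the limit (so the bubble is an honest $J$-holomorphic disk in $(X,L)$, not a vortex and not a curve in $\bar X$), that $\gamma_ic_i < \infty$ is precisely what keeps the rescaled configuration at finite distance from $\partial{\mb H}$ (so the limit lives on the half-plane with its full Lagrangian boundary, rather than escaping to an interior ${\mb C}$-bubble as in the complementary case $\gamma_ic_i\to\infty$), and that $\partial\Sigma_i$ catches up with $\partial\Sigma$ near $z^*$ fast enough that the Lagrangian condition survives on all of ${\mb R}$ in the limit.
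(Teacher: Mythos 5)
Your argument is correct and is exactly the route the paper intends: the paper's proof of Lemma \ref{lemma23} is just the remark that it is the boundary-case extension of Lemma \ref{lemma22}, and your write-up supplies the standard details of that extension (rescaling at ${\rm Re}\,z_i$, the area form dying because $c_i/\lambda_i\to\infty$, the density-$1$ point staying at bounded height because $\gamma_i c_i$ is bounded, and compactness with $\nu=0$ plus gauge-triviality of the flat limit connection). In particular your identification of the roles of the two F2 conditions and of the non-constancy argument via $\tilde e_i(w_i)\to 1$ matches what the cited interior argument does, so there is nothing to correct.
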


\begin{proof}
This is a simple extension of Lemma \ref{lemma22} to the boundary case.
\qed \end{proof}

\begin{lemma}\label{lemma24} If the energy blows up at $z^*$ in type R1, define $\varphi_i(w) = z_i + c_i^{-1}w$ for $w \in  B ( \sqrt{c_i} ) \subset {\mb C}$. Then there exists a subsequence (still indexed by $i$) such that $(\varphi_i)^* {\bf v}^{(i)}$ converges to a $\bar{J}$-holomorphic map $\bar{u}: {\mb C} \to \bar{X}$ with finite positive energy in the sense of Definition \ref{defn17}.
\end{lemma}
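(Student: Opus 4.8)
The plan is to run the soft rescaling exactly as in the type~F1 case of Lemma~\ref{lemma22}, the only difference being that here the new scale parameter $\lambda_i/c_i$ diverges, so that the rescaled sequence is itself an adiabatic-limit sequence over ${\mb C}$ and Theorem~\ref{thm19} applies directly. First I would identify the rescaled data. Writing $\nu_i = \sigma_i\, ds\, dt$ near $z^*$ with $\sigma(0) = 1$, the pullback of $\lambda_i^2\nu_i$ by $\varphi_i(w) = z_i + c_i^{-1} w$ picks up the Jacobian $c_i^{-2}$, so
\[
(\varphi_i)^*(\lambda_i^2 \nu_i) = \wt\lambda_i^2\, \wt\sigma_i(w)\, dw_1\, dw_2, \qquad \wt\lambda_i := \lambda_i/c_i, \quad \wt\sigma_i(w) := \sigma_i(z_i + c_i^{-1} w).
\]
The type~R1 hypothesis $c_i/\lambda_i \to 0$ gives $\wt\lambda_i \to \infty$; and since $z_i \to z^*$, $c_i \to \infty$, $\sigma_i \to \sigma$ u.c.s.\ near $z^*$, and $\sigma$ is continuous with $\sigma(z^*) = 1$, the functions $\wt\sigma_i$ converge u.c.s.\ on ${\mb C}$ to the constant $1$. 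The remaining type~R1 condition $\gamma_i c_i \to \infty$ says exactly that for every $R > 0$ the disk $B(R)$ lies in the domain of $(\varphi_i)^* {\bf v}^{(i)}$ once $i$ is large, so $\wt{\bf v}^{(i)} := (\varphi_i)^* {\bf v}^{(i)}$ is a sequence of vortices over an exhausting sequence of open subsets of ${\mb C}$ with respect to the area forms $\wt\lambda_i^2\wt\sigma_i\, dw_1\, dw_2$. This is precisely the setting of Subsection~\ref{subsection31} with $\Sigma = {\mb C}$, $\nu = dw_1\, dw_2$, and empty boundary.

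Next I would check the two hypotheses of Theorem~\ref{thm19} for $\wt{\bf v}^{(i)}$. Uniform boundedness is inherited from the uniformly bounded sequence ${\bf v}^{(i)}$: the energy is the integral of the energy density $2$-form ${\bf e}$, and when one pulls back the vortex and its area form simultaneously this $2$-form pulls back naturally, so $E(\wt{\bf v}^{(i)}; \Omega) = E({\bf v}^{(i)}; \varphi_i(\Omega)) \le E({\bf v}^{(i)}) \le C$, and the image of the rescaled section lies in the same $G$-invariant compact subset of $X$ as the image of $u^{(i)}$. For the energy density bound, the same naturality gives, for the energy density functions $e_i$ of ${\bf v}^{(i)}$ with respect to $\nu$ and $\wt e_i$ of $\wt{\bf v}^{(i)}$ with respect to $dw_1\, dw_2$,
\[
\wt e_i(w) = c_i^{-2}\, \sigma(\varphi_i(w))\, e_i(\varphi_i(w)).
\]
Fix $R > 0$; for $i$ large $\varphi_i(B(R)) \subset \Sigma^*$, so \eqref{eqn33} gives $e_i \circ \varphi_i \le 2 c_i^2$ on $B(R)$, and since $\sigma(\varphi_i(w)) \to \sigma(z^*) = 1$ uniformly for $w \in B(R)$ we obtain $\wt e_i \le 3$ on $B(R)$ for $i$ large. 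Thus $\limsup_i \sup_K \wt e_i < \infty$ on every compact $K \subset {\mb C}$, and Theorem~\ref{thm19} produces a subsequence of $(\varphi_i)^* {\bf v}^{(i)}$ converging, in the sense of Definition~\ref{defn17}, to a $\bar{J}$-holomorphic map $\bar{u}: {\mb C} \to \bar{X}$.

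It then remains to see that $\bar{u}$ has finite, positive energy, which follows from Proposition~\ref{prop18}: $\wt e_i \to e(\bar{u})$ uniformly on compact subsets of ${\mb C}$. Finiteness is immediate, since $\int_K e(\bar{u}) = \lim_i \int_K \wt e_i \le \limsup_i E(\wt{\bf v}^{(i)}) \le C$ for every compact $K$, whence $E(\bar{u}) \le C < \infty$. Positivity comes from evaluating the displayed identity at the centre: $\wt e_i(0) = c_i^{-2}\sigma(z_i) e_i(z_i) = \sigma(z_i) \to \sigma(z^*) = 1$, so $e(\bar{u})(0) = 1 > 0$, and in particular $\bar{u}$ is non-constant and $E(\bar{u}) > 0$.

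As for where the work really sits: for type~R1 there is essentially no new analytic input beyond Theorem~\ref{thm19}. Because $\gamma_i c_i \to \infty$, any boundary of $\Sigma$ recedes to infinity after rescaling, so the bubble lives on all of ${\mb C}$ rather than ${\mb H}$, and no gauge fixing occurs in the statement because the convergence of Definition~\ref{defn17} is gauge invariant. The one point that genuinely has to be verified — and which is exactly the meaning of ``type~R1'' — is that the rescaled scale parameter $\wt\lambda_i = \lambda_i/c_i$ still diverges and that the rescaled area forms converge to the standard one, so that the rescaled problem really is an adiabatic-limit problem of the kind covered by Theorem~\ref{thm19}; the normalization \eqref{eqn33} then supplies both the uniform energy density bound away from $w = 0$ and the non-vanishing of the limit at $w = 0$.
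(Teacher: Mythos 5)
Your proof is correct and is essentially the intended argument: the paper's own ``proof'' of this lemma is simply a citation to Gaio--Salamon (Section 12), where exactly this rescaling is performed — one checks that $\wt\lambda_i=\lambda_i/c_i\to\infty$ and that the rescaled area forms converge to the standard one, so the rescaled sequence is again an adiabatic-limit sequence with bounded energy density (via \eqref{eqn33}) to which Theorem \ref{thm19} applies, with nonconstancy of $\bar u$ coming from the normalization $e_i(z_i)=c_i^2$ at the center. You have correctly supplied, inside the paper's own framework (Theorem \ref{thm19}, Proposition \ref{prop18}), the details that the authors delegate to the reference, including the role of $\gamma_i c_i\to\infty$ in pushing any boundary to infinity.
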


\begin{proof} See \cite[Section 12]{Gaio_Salamon_2005}.
\qed \end{proof}

\begin{lemma}\label{lemma25} If the energy blows up at $z^* \in \partial \Sigma$ in type R2, define $\varphi_i(w) = {\rm Re} z_i + c_i^{-1}w$ for $w \in B ( {\sqrt{c_i}} ) \cap {\mb H}$. Then there exists a subsequence (still indexed by $i$) such that $(\varphi_i)^* {\bf v}^{(i)}$ converges to a $\bar{J}$-holomorphic map $\bar{u}:  ( {\mb H}, {\mb R} )  \to ( \bar{X}, \bar{L} )$ with finite positive energy in the sense of Definition \ref{defn17}.
\end{lemma}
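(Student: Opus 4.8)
The plan is to reduce to the interior situation by rescaling and then invoke Theorem~\ref{thm19}: Lemma~\ref{lemma25} is to Lemma~\ref{lemma24} what Lemma~\ref{lemma23} is to Lemma~\ref{lemma22}. First I would set up the rescaled equation. Pick a holomorphic coordinate $z = s + {\bm i} t$ centered at $z^*$, chosen so that $\partial\Sigma$ is $\{t = 0\}$ near $z^*$ and $\sigma(0) = 1$, and substitute $w = \xi + {\bm i}\eta = c_i(z - {\rm Re}\,z_i)$. A direct computation from the local form \eqref{eqn22} shows that $\wt{\bf v}^{(i)} := (\varphi_i)^* {\bf v}^{(i)}$ is a vortex over $\wt\Sigma_i := B(\sqrt{c_i}) \cap {\mb H}$ for the area form $\mu_i^2 \wt\nu_i$, where $\mu_i := \lambda_i / c_i$ and $\wt\nu_i := \sigma_i(\varphi_i(w))\, d\xi\, d\eta$. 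Since the blow-up is of type R2 one has $\mu_i \to \infty$, while $\varphi_i(w) \to z^*$ for each fixed $w$ and $\sigma_i \to \sigma$ u.c.s. with $\sigma(z^*) = 1$, so $\wt\nu_i \to d\xi\, d\eta$ u.c.s. After smoothing the corner of $\wt\Sigma_i$ or replacing it by a sub-exhaustion with smooth boundary (as in the proof of Lemma~\ref{lemma7}), the data $\big({\mb H}, d\xi\, d\eta, \wt\Sigma_i, \wt\nu_i, \mu_i\big)$ are of the type fixed in Subsection~\ref{subsection31}.

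Next I would record the bounds needed for Theorem~\ref{thm19}. The energy density two-form ${\bf e}$ is preserved under this rescaling, so $E(\wt{\bf v}^{(i)}; \wt\Sigma_i) = E({\bf v}^{(i)}; \varphi_i(\wt\Sigma_i)) \le E({\bf v}^{(i)})$ is uniformly bounded, and the images of $\wt{\bf v}^{(i)}$ lie in the same $G$-invariant compact subset of $X$ as those of ${\bf v}^{(i)}$; hence $\{\wt{\bf v}^{(i)}\}$ is uniformly bounded. Writing $\wt e_i$ for the energy density function of $\wt{\bf v}^{(i)}$ with respect to $d\xi\, d\eta$, one has $\wt e_i(w) = c_i^{-2}\, \sigma(\varphi_i(w))\, e_i(\varphi_i(w))$; since $\varphi_i(\wt\Sigma_i)$ is the half-ball of radius $c_i^{-1/2}$ about ${\rm Re}\,z_i$, which is contained in $\Sigma^*$ for $i$ large, the hard-rescaling bound \eqref{eqn33} gives $\sup_{\wt\Sigma_i} \wt e_i \le 2 \max_{\Sigma^*} \sigma + o(1)$. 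In particular $\limsup_i \sup_K \wt e_i < \infty$ for every compact $K \subset {\mb H}$, so Theorem~\ref{thm19} applies.

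Applying Theorem~\ref{thm19} yields a subsequence along which $\wt{\bf v}^{(i)} = (\varphi_i)^* {\bf v}^{(i)}$ converges, in the sense of Definition~\ref{defn17}, to a $\bar J$-holomorphic map $\bar u \colon ({\mb H}, {\mb R}) \to (\bar X, \bar L)$. By Proposition~\ref{prop18} one has $\wt e_i \to e(\bar u)$ u.c.s., and integrating over an exhaustion of ${\mb H}$ together with the uniform energy bound gives $E(\bar u) \le \liminf_i E(\wt{\bf v}^{(i)}) < \infty$. For positivity, put $w_i := \varphi_i^{-1}(z_i) = {\bm i}\, c_i\, {\rm Im}\,z_i$; since $\gamma_i = {\rm Im}\,z_i\,(1 + o(1))$ in this coordinate and the blow-up is of type R2, the quantity $|w_i|$ is comparable to $\gamma_i c_i$ and hence bounded, so a subsequence of $w_i$ converges to some $w_\infty \in \ov{\mb H}$. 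Using that the convergence $\wt e_i \to e(\bar u)$ is uniform on compact subsets of ${\mb H}$ up to ${\mb R}$,
\beqn
e(\bar u)(w_\infty) \;=\; \lim_{i \to \infty} \wt e_i(w_i) \;=\; \lim_{i \to \infty} \sigma(z_i)\, c_i^{-2}\, e_i(z_i) \;=\; \sigma(z^*) \;=\; 1,
\eeqn
using $c_i^2 = e_i(z_i)$; hence $\bar u$ is nonconstant and $E(\bar u) > 0$, which finishes the argument.

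I do not expect a genuine obstacle here: all the analytic content is already packaged in Theorem~\ref{thm19}, which is the boundary form of Gaio--Salamon's adiabatic convergence, and the argument is the literal boundary transcription of the proof of Lemma~\ref{lemma24}. The only points requiring attention are bookkeeping ones: that the ``artificial'' corner and cutoff boundary of $B(\sqrt{c_i}) \cap {\mb H}$ cause no trouble (dealt with by a sub-exhaustion with smooth boundary, exactly as in Lemma~\ref{lemma7} and the finite-type discussion preceding Proposition~\ref{prop7}), and that $\wt e_i \to e(\bar u)$ holds up to the boundary ${\mb R}$, so that the value $e(\bar u)(w_\infty)$ is meaningful even when $w_\infty \in \partial {\mb H}$.
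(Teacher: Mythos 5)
Your proposal is correct and follows exactly the route the paper intends: the paper's proof of this lemma simply says it is the boundary extension of the R1 case, obtained by applying Theorem \ref{thm19} after the rescaling, and your write-up supplies precisely those details (uniform boundedness and density bound for the rescaled sequence, convergence via Theorem \ref{thm19}, and positivity of $E(\bar u)$ from the limit of the density at the rescaled peak points, which stay in a bounded region because the blow-up is of type R2).
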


\begin{proof}
This is an extension of the above lemma. After the rescaling we can use Theorem \ref{thm19} to prove it. Details are left to the reader.
\qed \end{proof}

\begin{lemma}\label{lemma26}
If the energy blows up at $z^*$ in type S1, define $\varphi_i(w) = z_i + \lambda_i^{-1}w$ for $w \in B ( {\sqrt{\lambda_i}} )$ (note that $\varphi_i^* \lambda_i^2 \nu_i$ converges u.c.s. to the standard area form on ${\mb C}$). Then there exists a subsequence (still indexed by $i$), and a sequence of smooth gauge transformations $g_i: {\mb C} \to G$ such that $(g_i)^* (\varphi_i)^* {\bf v}^{(i)}$ converges u.c.s. 	to some ${\bf v} \in \wt{\mc M}({\mb C}; X)$ with positive energy. 
\end{lemma}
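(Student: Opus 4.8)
The plan is to follow the standard rescaling scheme for vortices (as in \cite[Section 12]{Gaio_Salamon_2005}) and then to invoke the energy quantization recalled above to exclude any residual bubbling. First I would set $\ell:=\lim_i c_i/\lambda_i\in(0,\infty)$ and record the consequence $\gamma_i\lambda_i=\gamma_i c_i\cdot(\lambda_i/c_i)\to\infty$ of the type S1 conditions; this provides radii $R_i\to\infty$ with $\varphi_i(B(R_i))\subset\Sigma_i$, so the rescaled domains exhaust $\mb C$ and, whether or not $z^*$ lies on $\partial\Sigma$, the boundary $\partial\Sigma_i$ is pushed off to distance $\sim\gamma_i\lambda_i\to\infty$ from the origin, so that no Lagrangian boundary condition survives in the limit. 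Writing $\hat{\bf v}^{(i)}:=(\varphi_i)^*{\bf v}^{(i)}$, this is a smooth solution of the vortex equation on $B(R_i)$ for the area form $\hat\nu_i:=\varphi_i^*(\lambda_i^2\nu_i)$, which (as noted in the statement) converges u.c.s. on $\mb C$ to the standard form $\nu_0$. Since the energy $2$-form is natural under pullback, the energy density of $\hat{\bf v}^{(i)}$ with respect to $\nu_0$ equals $(1+o(1))\,\lambda_i^{-2}\,e_i(z_i+\lambda_i^{-1}\,\cdot\,)$, which at the origin tends to $\ell^2$.

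The crucial step --- and the one distinguishing type S1 from the types where bubbling truly persists --- is to turn the defining normalization $c_i^2=e_i(z_i)\geq\tfrac12\sup_{\Sigma^*}e_i$ of \eqref{eqn33} into a \emph{uniform} energy density bound: for every fixed compact $K\subset\mb C$ and every $i$ large enough that $\varphi_i(K)\subset\Sigma^*$, the energy density of $\hat{\bf v}^{(i)}$ on $K$ with respect to $\nu_0$ is at most $(1+o(1))\cdot 2c_i^2/\lambda_i^2\to 2\ell^2<\infty$. Combined with the uniform $C^0$-bound on $u^{(i)}$ furnished by uniform boundedness (Lemma \ref{lemma8}, Corollary \ref{cor13}), I would then run the usual vortex bootstrap: fix the connections in Coulomb/local-slice gauge via \cite[Theorem F]{Wehrheim_Uhlenbeck}, apply the interior elliptic estimates for \eqref{eqn24}, and bootstrap to obtain uniform $C^\infty_{loc}$ bounds after gauge transformations $g_i:\mb C\to G$. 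Equivalently, one may apply the compactness-modulo-bubbling Theorem \ref{thm15} to $(g_i)^*\hat{\bf v}^{(i)}$ on the exhaustion $B(R_i)$ of $\mb C$, obtaining a subsequence, a finite bubbling set $Z\subset\mb C$, gauge transformations $g_i$, and a limit ${\bf v}\in\wt{\mc M}(\mb C;X)$ with $(g_i)^*\hat{\bf v}^{(i)}\to{\bf v}$ u.c.s. on $\mb C\smallsetminus Z$ and energy $\geq\epsilon(X,L)$ concentrating at each point of $Z$. The uniform density bound above shows the energy density of $\hat{\bf v}^{(i)}$ cannot blow up anywhere on $\mb C$, hence no energy concentrates and $Z=\emptyset$, so the convergence is u.c.s. on all of $\mb C$.

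Finally I would check that $E({\bf v})$ is positive and finite. Since u.c.s. convergence makes the energy densities converge, $e({\bf v})(0)=\lim_i c_i^2/\lambda_i^2=\ell^2>0$, so ${\bf v}$ is non-constant and $E({\bf v})>0$; and since the energy is invariant under the rescaling (the $\tfrac12\|d_Au\|^2$ term is conformally invariant, while $\varphi_i$ pulls $\lambda_i^2\nu_i$ back to $\hat\nu_i$), we have $E\big((g_i)^*\hat{\bf v}^{(i)};B(R)\big)=E\big({\bf v}^{(i)};\varphi_i(B(R))\big)\leq\sup_i E({\bf v}^{(i)})<\infty$, so $E({\bf v})<\infty$ by letting $i\to\infty$ and then $R\to\infty$ (Fatou). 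The only genuine obstacle is the second paragraph: verifying that the energy density bound is really uniform on \emph{every} compact subset of $\mb C$, which is exactly where the bubbling-point normalization \eqref{eqn33} on the auxiliary set $\Sigma^*$ and the hypothesis $c_i/\lambda_i\to\ell\in(0,\infty)$ are used; granting this, excluding $Z$ and identifying the limit as a $\mb C$-vortex are routine.
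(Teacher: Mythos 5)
Your proof is correct and follows essentially the same route as the paper, whose "proof" of this lemma is simply a citation to Step 5 of \cite[Proposition 12.3]{Gaio_Salamon_2005}: rescale by $\lambda_i^{-1}$, use $\gamma_i\lambda_i=\gamma_ic_i\cdot(\lambda_i/c_i)\to\infty$ to push the boundary away, turn the normalization \eqref{eqn33} into a uniform bound on the rescaled energy densities (since $c_i/\lambda_i\to\ell<\infty$), fix the gauge, pass to a $\mathbb{C}$-vortex limit with no bubbling, and read off positivity of the energy from the density $\ell^2$ at the origin. This is also exactly parallel to the paper's detailed proof of the boundary analogue (Lemma \ref{lemma27}), so no comparison beyond that is needed; the only cosmetic slip is citing Lemma \ref{lemma8}/Corollary \ref{cor13} for the $C^0$-bound, which here is really just the standing uniform-boundedness hypothesis on the sequence.
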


\begin{proof}
See Step 5 of the proof of \cite[Proposition 12.3]{Gaio_Salamon_2005}.
\qed \end{proof}

\subsubsection*{Bubbling of ${\mb H}$-vortices}

Now we focus on the case of type S2 blow up which hasn't been considered in the literature before. We prove the following lemma in detail. 
\begin{lemma}\label{lemma27}
If the energy blows up at $z^*\in \partial \Sigma$ in type S2, define $\varphi_i(w) = {\rm Re} z_i + \lambda_i^{-1}w$ for $w \in B ( {\sqrt{\lambda_i}} )\cap {\mb H}$ (note that $\varphi_i^* \lambda_i^2 \nu_i$ converges u.c.s. to the standard area form on ${\mb H}$). Then there exists a subsequence (still indexed by $i$), and a sequence of smooth gauge transformations $g_i: {\mb H} \to G$ such that $(g_i)^* (\varphi_i)^* {\bf v}^{(i)}$ converges u.c.s. to some ${\bf v} \in \wt{\mc M}({\mb H}; X, L)$ with positive energy.
\end{lemma}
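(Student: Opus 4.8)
The plan is to follow the same rescaling-and-compactness scheme used for the closed case in \cite[Section 12]{Gaio_Salamon_2005} and for type S1 in Lemma \ref{lemma26}, adapting it to the boundary situation by working over $\mb H$ instead of $\mb C$. First I would set up the rescaled data: let $\wt{\bf v}^{(i)} := (\varphi_i)^* {\bf v}^{(i)}$, which by construction solves the symplectic vortex equation over $B(\sqrt{\lambda_i}) \cap \mb H$ with respect to the area form $\varphi_i^* \lambda_i^2 \nu_i$. Since $\nu_i$ converges u.c.s. to $\nu$ with $\sigma(0) = 1$, a direct computation shows $\varphi_i^* \lambda_i^2 \nu_i = \sigma_i({\rm Re} z_i + \lambda_i^{-1} w)\, ds\, dt$ converges u.c.s. on $\mb H$ to the standard area form $ds\, dt$, so in the limit we are looking at the affine equation \eqref{eqn24}. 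The boundary condition $u^{(i)}(\partial \Sigma_i) \subset L$ is preserved under the reparametrization, giving $\wt u^{(i)}(\partial \mb H \cap B(\sqrt{\lambda_i})) \subset L$, which is exactly the boundary condition for an $\mb H$-vortex. The key point of the type-S2 condition is that $\lim \gamma_i c_i < \infty$ guarantees, after translating by ${\rm Re} z_i$, that the blow-up point does not escape to the interior of $\mb H$ under the $\lambda_i^{-1}$-rescaling, so the natural limiting domain is indeed $\mb H$ and not $\mb C$.

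Next I would establish the uniform energy-density bound needed to extract a convergent subsequence. From \eqref{eqn33}, after rescaling by the factor $\lambda_i^{-1}$ (rather than $c_i^{-1}$), the energy densities $\wt e_i$ of $\wt{\bf v}^{(i)}$ satisfy $\wt e_i(w) = \lambda_i^{-2} e_i(\varphi_i(w)) \le 2 c_i^2 \lambda_i^{-2}$ on the relevant domain, which is bounded because $c_i/\lambda_i$ converges to a finite limit in the type-S2 regime. Together with the uniform energy bound on ${\bf v}^{(i)}$ (hence on $\wt{\bf v}^{(i)}$, by conformal invariance of the $d_A u$ and $\mu(u)$ terms and the correct scaling of $F_A$) and the uniform boundedness of the image (Lemma \ref{lemma8}, which confines $u^{(i)}(\mb D) \subset X_{c_0}$ independently of $\lambda_i$), this is precisely the hypothesis of the compactness-modulo-bubbling theorem. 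I would then apply Theorem \ref{thm19} — or rather its proof, applied on exhausting subsets $B(r) \cap \mb H$ of $\mb H$ — to obtain, after passing to a subsequence and applying gauge transformations $g_i: \mb H \to G$, u.c.s. convergence of $(g_i)^* \wt{\bf v}^{(i)}$ to a limit ${\bf v} \in \wt{\mc M}(\mb H; X, L)$. The regularity and patching arguments (Proposition \ref{prop7}, Lemma \ref{lemma7}) ensure the limit is a genuine smooth $\mb H$-vortex and that the gauge transformations can be chosen globally over $\mb H$.

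The remaining and most delicate point is to show that the limit ${\bf v}$ has \emph{positive} energy, i.e., is nontrivial. This is where the scaling choice matters: because we rescale by $\lambda_i^{-1}$ and $c_i/\lambda_i \to c_\infty \in (0,\infty)$, the energy density at the center satisfies $\wt e_i(0) = \lambda_i^{-2} e_i(z_i) = (c_i/\lambda_i)^2 \to c_\infty^2 > 0$, so the limiting energy density $e({\bf v})(0) = c_\infty^2$ is strictly positive by the u.c.s. convergence; hence $E({\bf v}) > 0$. One must be slightly careful here that the gauge transformations $g_i$ and the point $z_i \to z^*$ interact correctly — the center $w = 0$ of the rescaled domain corresponds to $\varphi_i^{-1}(z_i) = \lambda_i(z_i - {\rm Re}\, z_i)$, whose imaginary part is $\lambda_i \cdot {\rm Im}\, z_i = \lambda_i \gamma_i'$ where $\gamma_i' \le \gamma_i$; in the type-S2 case $\lambda_i \gamma_i$ is bounded (using $c_i/\lambda_i$ bounded below and $\gamma_i c_i$ bounded), so this sequence of points stays in a compact subset of $\ov{\mb H}$ and, after passing to a further subsequence, converges to some $w_\infty \in \ov{\mb H}$; evaluating the energy density at $w_\infty$ rather than at $0$ still gives a positive value. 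I expect this bookkeeping — tracking the basepoint and the finiteness of $\lambda_i \gamma_i$ to pin down that the bubble genuinely lives on $\mb H$ with no energy lost to the interior — to be the main obstacle, though it is entirely parallel to Step 5 of the proof of \cite[Proposition 12.3]{Gaio_Salamon_2005} and to the type-S1 argument, just carried out on the half-plane with the Lagrangian boundary condition carried along.
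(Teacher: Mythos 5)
Your rescaling setup, the bookkeeping showing that the preimage of the blow-up point, $w_i = {\bm i}\,\lambda_i\,{\rm Im}\,z_i$, stays in a bounded region of $\ov{\mb H}$ (so the bubble genuinely lives on ${\mb H}$), and your positivity argument via $\wt e_i(w_i) \approx (c_i/\lambda_i)^2 \to c_\infty^2 > 0$ are all correct -- the positivity check is in fact more explicit than what the paper records. The gap is in the middle step, where essentially all of the analysis lives. You invoke Theorem \ref{thm19}, but that theorem belongs to the adiabatic regime: its hypotheses have the area forms $\lambda_i^2\nu_i$ blowing up, and its conclusion is convergence in the sense of Definition \ref{defn17} to a $\bar J$-holomorphic map into the quotient $\bar{X}$ -- not u.c.s. convergence (modulo gauge) to an ${\mb H}$-vortex. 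After your rescaling the pulled-back area forms converge to the standard form on ${\mb H}$, so Theorem \ref{thm19} is the wrong tool; the statement you actually need is Theorem \ref{thm15} with empty bubbling set (uniform density bound). But Theorem \ref{thm15} in the bordered, gauged setting is precisely the kind of compactness whose proof is not available off the shelf -- the paper emphasizes that the S2 boundary case has not appeared in the literature -- so quoting it here without proof assumes the very content this lemma is supposed to supply.

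Concretely, what is missing is the mechanism that produces the gauge transformations $g_i$ and the limit ${\bf v}$. The paper's route is: the S2 condition together with \eqref{eqn33} gives a uniform bound on $|F_{B^{(i)}}|$ for the rescaled connections, so Uhlenbeck compactness (\cite[Theorem A']{Wehrheim_Uhlenbeck}) yields $W^{2,p}_{loc}$ gauge transformations after which the connections converge weakly in $W^{1,p}_{loc}$; the rescaled maps are then pseudoholomorphic for almost complex structures on $\Omega_i\times X$ twisted by these connections, which converge only in $C^0$, with boundary on the totally real submanifold ${\mb R}\times L$, and one needs the compactness theorem for continuous almost complex structures with totally real boundary condition (\cite[Theorem 1.2]{IS_reflection}) to extract a $W^{1,p}_{loc}$-convergent subsequence of the maps; only after that do Proposition \ref{prop7} and Lemma \ref{lemma7} (which you cite correctly) upgrade the limit to a smooth ${\mb H}$-vortex and the convergence to smooth u.c.s. convergence. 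Your proposal skips from the density bound directly to the conclusion via a mis-aimed citation, so as written it has no argument for the existence of the $g_i$ or of the limit; supplying the Uhlenbeck-plus-Ivashkovich--Shevchishin step (or an equivalent) is the essential missing piece.
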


\begin{proof}
Consider $( B^{(i)}, v^{(i)} ):= (\varphi_i)^* {\bf v}^{(i)}$ restricted to the region $\Omega_i:= B ( {\sqrt{\lambda_i}} ) \cap {\mb H}$. Then $\Omega_i$ exhausts ${\mb H}$. Moreover, by \eqref{eqn33}, with respect to the flat metric,
\beqn
\sup_{i, \Omega_i} \big| F_{B^{(i)}} \big| = \lambda_i^{-2} \sup_{i, \varphi_i(\Omega_i)} \big| F_{A^{(i)}} \big| < +\infty.
\eeqn
Then we use the Uhlenbeck compactness theorem for the region ${\mb H}$ and the sequence $B^{(i)}$ (see \cite[Theorem A']{Wehrheim_Uhlenbeck}). Notice that $B^{(i)}$ is not defined on the whole ${\mb H}$; we extend $B^{(i)}$ arbitrarily to ${\mb H}$ then the sequence satisfies the Hypothesis of \cite[Theorem A']{Wehrheim_Uhlenbeck}. It implies that, for a chosen $p>2$,  there exist a subsequence (still indexed by $i$) and a sequence of gauge transformations $g_i\in {\mc G}^{2, p}_{loc}({\mb H})$, such that $(g_i)^* B^{(i)}$ converges weakly in $W^{1, p}(K)$, for any compact region $K \subset {\mb H}$. Then $\hat{\bf v}^{(i)}:= (g_i)^* (\varphi_i)^* {\bf v}_i$ solves the symplectic vortex equation with respect to $(\varphi_i)^* \nu_i$ and for any compact subset $K$, the energy densities of $\hat{\bf v}^{(i)}$ is uniformly bounded over $K$ for all $i$.

We can view $\hat{v}^{(i)}$ as a sequence of maps from $\Omega_i$ to $\Omega_i \times X$ which are the identity on the first coordinate. $\hat{v}^{(i)}$ is holomorphic with respect to an almost complex structure $J^{(i)}$ on $\Omega_i \times X$ which is twisted from $J$ by $\hat{B}^{(i)}$ and has its boundary lying in ${\mb R} \times L$ (a totally real submanifold). Since $\hat{B}^{(i)}$ converges in $C^0$, $J^{(i)}$ converges uniformly to a continuous almost complex structure on $\Omega_i \times X$. Therefore, by the compactness result for continuous almost complex structures (see \cite[Theorem 1.2]{IS_reflection}), there exists a subsequence (still indexed by $i$) such that $\hat{v}^{(i)}$ converges in $W^{1, p}_{loc}$ to a limit $\hat{v}$. Then the weak limit $\hat{\bf v} = (\hat{B}, \hat{v})$ which is of regularity $W^{1, p}_{loc}$, is a solution to the affine vortex equation on ${\mb H}$ with boundary condition $u(\partial {\mb H}) \subset L$. By Proposition \ref{prop7}, there exists a gauge transformation $g$ of regularity $W^{2, p}_{loc}$ which can transform $\hat{\bf v}$ to a smooth solution. Using $g$ to modify $g_i$, we may just assume that $g = {\rm Id}$. 

Lastly, one replace each $g_i$ by smooth ones such that it still holds that $\hat{\bf v}^{(i)}$ converges weakly in $W^{1, p}(K)$ to $\hat{\bf v}$ for any compact subset $K\subset {\mb H}$. Then using Lemma \ref{lemma7}, one can modify each $g_i$ once more such that $\hat{\bf v}^{(i)}$ converges u.c.s. to $\hat{\bf v}$ on ${\mb H}$.\qed \end{proof}

\begin{proof}[Proof of Theorem \ref{thm20}]
By the energy quantization property of holomorphic spheres, holomorphic disks and affine vortices, it is easy to construct by induction a subsequence (still indexed by $i$), and a finite subset $Z \subset \Sigma$ which satisfy the first item of this theorem. Then apply Theorem \ref{thm19} to the case of ${\bf v}^{(i)}|_{\Sigma_i  \smallsetminus Z}$, we obtain a further subsequence which satisfies the second item of this theorem.
\qed \end{proof}

\section{Stable Scaled Disks and Compactness}\label{section4}

In this section we describe the objects we use to compactify the space of disk vortices with growing area forms introduced in Section \ref{section2}, and state the main theorem. We need some notions and notations about certain type of trees and their precise definitions are given in Appendix \ref{appendixb}.

\subsection{Scaled holomorphic disks}

Let $({\mc T}, \ud{\mc T}, {\mf s})$ with labelling $\iota: \{1, \ldots, \ud k\} \to V_0(\ud{\mc T}{}) \cup V_1(\ud{\mc T})$  be a based colored rooted tree, not necessarily stable (see Appendix \ref{appendixb} for its definition). To each $v_\alpha \in V({\mc T})$, we associate a (bordered) Riemann surface $\Sigma_\alpha$ as follows. If $v_\alpha = v_\infty$, then $\Sigma_{\infty} := {\mb D}$; if $v_\alpha \in V(\ud {\mc T}) \smallsetminus \{v_\infty \}$, then $\Sigma_\alpha := {\mb H}$; if $\alpha\in V({\mc T}) \smallsetminus V(\ud {\mc T})$, then $\Sigma_\alpha := {\mb C}$. For each $v_\alpha \neq v_\infty$, the ``$\infty$'' of $\Sigma_\alpha$ makes usual sense.

\begin{defn}\label{defn29}
A {\bf stable scaled holomorphic disk} in $(X, L)$ with combinatorial type $({\mc T}, \ud{\mc T}, {\mf s})$ is the following object
\beqn
{\mc C}:= \Big( ({\mc C}_\alpha)_{v_\alpha \in V({\mc T})},\ (z_{\alpha\beta})_{e_{\alpha \beta} \in E({\mc T})},\ (w_j)_{1\leq j \leq \ud k}\Big)
\eeqn
where the notations mean the following.
\begin{itemize}
\item[O1] For each $v_\alpha \in V_0({\mc T})$, ${\mc C}_\alpha$ is a $J$-holomorphic sphere or $J$-holomorphic disk $u_\alpha : (\Sigma_\alpha, \partial \Sigma_\alpha) \to (X, L)$.

\item[O2] For each $v_\alpha \in V_\infty({\mc T})$, ${\mc C}_\alpha$ is a $\bar{J}$-holomorphic sphere or $\bar{J}$-holomorphic disk $\bar{u}_\alpha: (\Sigma_\alpha, \partial \Sigma_\alpha) \to (\bar{X}, \bar{L})$.	

\item[O3] For each $v_\alpha \in V_1({\mc T})\smallsetminus V_1(\ud{\mc T})$, ${\mc C}_\alpha$ is a ${\mb C}$-vortex ${\bf v}_\alpha = (A_\alpha, u_\alpha)$.

\item[O4] For each $v_{\ud\alpha} \in V_1(\ud{\mc T})$, ${\mc C}_\alpha$ is an ${\mb H}$-vortex ${\bf v}_{\ud \alpha} = (A_{\ud \alpha}, u_{\ud \alpha})$.

\item[O5] For each $e_{\alpha \alpha'} \in E({\mc T}) \smallsetminus E(\ud{\mc T})$, $z_{\alpha \alpha'} \in {\rm Int} \Sigma_{\alpha'}$; for each $e_{\ud\alpha \ud\alpha'} \in E(\ud{\mc T})$, $z_{\ud\alpha \ud\alpha'} \in \partial \Sigma_{\ud\alpha'}$; for each $j$, $w_j \in \partial \Sigma_{\iota(j)}$. 

\end{itemize}
We require that the collection ${\mc C}$ satisfies the following conditions.
\begin{itemize}
\item[C1] For each $v_\beta \in V({\mc T})$, the points $z_{\alpha \beta}$ for all $e_{\alpha\beta} \in E({\mc T})$ and $w_j$ for all $\iota(j) = \beta$ are distinct.  We denote the set of them by $Z_\beta$.

\item[C2] For each $v_\alpha \in V({\mc T})$, the energy $E({\mc C}_\alpha)$ makes usual sense. We require that if $v_\alpha \in V({\mc T})$ is unstable, then $E({\mc C}_\alpha) >0$.

\item[C3] For $v_\alpha \in V({\mc T}) \smallsetminus \{ v_\infty\}$, the evaluation $\ev_\infty({\mc C}_\alpha)$ makes sense. We require that, if $v_{\alpha'} \in V_0({\mc T}) \cup V_1({\mc T})$, then $\ev_\infty({\mc C}_\alpha) = u_{\alpha'}(z_{\alpha \alpha'}) \in X$; if $v_{\alpha'} \in V_\infty({\mc T})$, then $\ev_\infty( {\mc C}_\alpha) = \bar{u}_{\alpha'}(z_{\alpha\alpha'}) \in \bar{X}$. 
\end{itemize}
The total energy of ${\mc C}$ is defined to be 
\beqn
E({\mc C}) = \sum_{v_\alpha \in V({\mc T})} E({\mc C}_\alpha).
\eeqn
\end{defn}

\subsection{Isomorphisms of stable scaled disks}

\begin{defn}\label{defn30}
Suppose for $s = 1, 2$,
\beqn
{\mc C}^s = \Big(  ( {\mc C}_\alpha^s)_{v_\alpha \in V({\mc T}^s)}, ( z^s_{\alpha\beta} )_{e_{\alpha\beta}\in E({\mc T}^s)},\ (w_j^s)_{1\leq j \leq \ud k}\Big)
\eeqn
are two stable scaled holomorphic disks with combinatorial types $({\mc T}^s, \ud{\mc T}^s, {\mf s}^s)$. An isomorphism from ${\mc C}^1$ to ${\mc C}^2$ with combinatorial given by a tree isomorphism $\rho: ({\mc T}^1, \ud{\mc T}^1, {\mf s}^1) \to ({\mc T}^2, \ud{\mc T}^2, {\mf s}^2)$ consists of the following objects
\beqn
\big( (\varphi_\alpha)_{v_\alpha \in V({\mc T}^1)}, (g_\alpha)_{\alpha \in V_1({\mc T}^1)}   \big).
\eeqn
Here 
\begin{enumerate}
\item $\varphi_\infty$ is the identity map on ${\mb D}$; for each $v_{\ud\alpha}^1 \in V_0(\ud{\mc T}^1) \cup V_\infty(\ud{\mc T}^1)$, $\varphi_{\ud\alpha}$ is a M\"obius transformation on $\Sigma_\alpha \simeq {\mb H}$; for each $v_{\ud\alpha}^1 \in V_1(\ud{\mc T}^1)$, $\varphi_{\ud\alpha}$ is a translation of ${\mb H}$; for each $v_\alpha^1 \in \big( V_0({\mc T}^1) \cup V_\infty({\mc T}^1) \big) \smallsetminus V(\ud{\mc T}^1)$, $\varphi_\alpha$ is a M\"obius transformation on $\Sigma_\alpha \simeq {\mb C}$; for each $v_\alpha^1 \in V_1({\mc T}^1) \smallsetminus V(\ud{\mc T}^1)$, $\varphi_\alpha$ is a translation of ${\mb C}$.

\item For each $v_\alpha^1 \in V_1({\mc T}^1)$, $g_\alpha: \Sigma_\alpha \to G$ is a smooth gauge transformation.
\end{enumerate}
They are subject to the following restrictions:
\begin{enumerate}
\item For each $v_{\alpha}^1 \in V_0({\mc T}^1)$, we have $u^1_\alpha = g_{\wt\alpha'}( z_{\wt\alpha \wt\alpha'} )^{-1} u^2_{\rho(\alpha)} \circ \varphi_\alpha$; (We explain the notations: for $v_\alpha^1 \in V_0({\mc T}^1)$, there exists a unique path $\alpha \succ \cdots \succ \wt\alpha$ contained in $V_0({\mc T}^1)$ such that $\wt\alpha' \in V_1({\mc T}^1)$.) 

\item For each $v_\alpha^1 \in V_1({\mc T}^1)$, we have ${\bf v}_\alpha^1 = ( g_\alpha )^* ( \varphi_\alpha )^*  {\bf v}_{\rho(\alpha)}^2$;

\item For each $v_\alpha^1 \in V_\infty({\mc T}^1)$, we have $\bar{u}^1_\alpha = \bar{u}_{\rho(\alpha)}^2\circ \varphi_\alpha$;

\item For each $e_{\alpha \beta}^1 \in E({\mc T}^1)$, we have $z^1_{\alpha\beta} = \varphi_{\beta}^{-1} ( z^2_{\rho(\alpha)\rho(\beta)})$.

\item For each $e_j \in E_\infty({\mc T}^1)$, we have $w_j^1 = \varphi_{\iota(j)}^{-1} ( w_j^2)$.
\end{enumerate}
\end{defn}
One can check that a stable scaled holomorphic disk ${\mc C}$ has finitely many automorphisms (modulo gauge transformations). 

\subsection{Definition of Convergence}

Now we define the notion of convergence in the adiabatic limit, for a sequence $({\bf v}^{(i)}, {\bf w}^{(i)}) \in \wt{\mc M}{}_{\ud k}^{\lambda_i}(L)$ with $\lambda_i \to +\infty$ towards a stable scaled holomorphic disk. This concept naturally extends to the case that if we replace the sequence of disk vortices by a sequence of ``stable solutions''. 

\begin{defn}\label{defn31}
Given a sequence $\lambda_i \to +\infty$ and a sequence $( {\bf v}^{(i)}, {\bf w}^{(i)} ) \in \wt{\mc M}_{\ud k}^{\lambda_i} (L)$. Suppose $({\mc T}, \ud{\mc T}, {\mf s})$ is a based colored rooted tree and 
\beqn
{\mc C} = \Big( ({\mc C}_\alpha)_{\alpha \in V({\mc T})}, (z_{\alpha \beta})_{e_{\alpha \beta} \in E({\mc T})},\ (w_j)_{1\leq j \leq \ud k} \Big) 
\eeqn 
is a stable scaled holomorphic disk modelled on $({\mc T}, \ud{\mc T}, {\mf s})$. $({\bf v}^{(i)}, {\bf w}^{(i)})$ is said to {\bf converge (modulo gauge) to ${\mc C}$}, if there are 1) for each $v_\alpha \in V_0({\mc T}) \cup V_1({\mc T})$, a sequence of gauge transformations $g_\alpha^{(i)}: \Sigma_\alpha \to G$; 2) for each $v_\alpha \in V({\mc T})$, a sequence of holomorphic maps $\varphi^{(i)}_\alpha: ( \Sigma_\alpha, \partial \Sigma_\alpha ) \to ( {\mb D}, \partial {\mb D} )$ with $\varphi^{(i)}_{\infty} = {\rm Id}_{\mb D}$, subject to the following conditions.
\begin{itemize}
\item[L1] For every $v_\alpha \in V_0({\mc T}) \cup V_1({\mc T})$, the sequence $( g^{(i)}_\alpha )^* ( \varphi^{(i)}_\alpha )^* {\bf v}^{(i)}$ converges u.c.s. to ${\bf v}_\alpha$ on $\Sigma_\alpha \smallsetminus Z_\alpha$.

\item[L2] For every $v_\alpha \in V_\infty({\mc T})$, the sequence $( \varphi^{(i)}_\alpha )^* {\bf v}^{(i)}$ converges to $\bar{u}_\alpha: \Sigma_\alpha \to \bar{X}$ on $\Sigma_\alpha \smallsetminus Z_\alpha$ in the sense of Definition \ref{defn17}.

\item[L3] For every $e_{\alpha \beta} \in E({\mc T})$, $( \varphi^{(i)}_\beta )^{-1} \circ \varphi^{(i)}_\alpha$ converges u.c.s. to the constant $z_{\alpha \beta}\in \Sigma_\beta$ on $\Sigma_\alpha$;

\item[L4] For every $j$, $(\varphi_{\iota(j)}^{(i)})^{-1}(w_j^{(i)})$ converges to $w_j$.

\item[L5] The sequence of energies $E({\bf v}^{(i)})$ converges to $E({\mc C})$.
\end{itemize}
\end{defn}

We can prove that the limit is unique up to isomorphisms of stable scaled holomorphic disks defined in Definition \ref{defn30}. The proof of the following theorem is based on an inductive construction of an isomorphism between two limiting objects. A similar construction in the case of holomorphic disks was carried out in \cite[Theorem 4.10]{Frauenfelder_disk} and we leave the details to the reader.
\begin{prop}\label{prop32}
If $({\bf v}^{(i)}, {\bf w}^{(i)}) \in \wt{\mc M}_{\ud k}^{\lambda_i}(L)$ converges (modulo gauge) to both ${\mc C}^1$ and ${\mc C}^2$, then ${\mc C}^1$ and ${\mc C}^2$ are isomorphic in the sense of Definition \ref{defn30}. 
\end{prop}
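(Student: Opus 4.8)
The plan is to follow the standard argument for uniqueness of Gromov-type limits (cf.\ \cite[Theorem 4.10]{Frauenfelder_disk}): first produce a tree isomorphism $\rho\colon({\mc T}^1,\ud{\mc T}^1,{\mf s}^1)\to({\mc T}^2,\ud{\mc T}^2,{\mf s}^2)$, then produce the reparametrization and gauge data $\big((\varphi_\alpha)_{v_\alpha\in V({\mc T}^1)},(g_\alpha)_{v_\alpha\in V_1({\mc T}^1)}\big)$ realizing it, and finally verify the compatibility conditions (1)--(5) of Definition \ref{defn30}; passing to a further subsequence is harmless throughout. The object to track is, for $v_\alpha\in V({\mc T}^1)$ and $v_\beta\in V({\mc T}^2)$, the sequence
\beqn
\psi^{(i)}_{\alpha\beta}:=\big(\varphi^{(i),2}_\beta\big)^{-1}\circ\varphi^{(i),1}_\alpha,
\eeqn
each a M\"obius transformation (under the identifications fixed in Section \ref{section4}, of ${\mb D}$, ${\mb C}$, or ${\mb H}$), defined on an exhausting sequence of subsets of $\Sigma_\alpha$, where $\varphi^{(i),s}_\bullet$ denotes the reparametrizations witnessing the convergence $({\bf v}^{(i)},{\bf w}^{(i)})\to{\mc C}^s$.

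For the tree isomorphism I would induct down both trees from the roots. Since $\varphi^{(i),1}_\infty=\varphi^{(i),2}_\infty={\rm Id}_{\mb D}$ we have $\psi^{(i)}_{\infty\infty}={\rm Id}$, so L1--L2 for ${\mc C}^1$ and ${\mc C}^2$ already identify the root components; put $\rho(v_\infty)=v_\infty$ and $\varphi_\infty={\rm Id}$. Assuming $v_\alpha\in V({\mc T}^1)$ has been matched to $v_\beta=\rho(\alpha)$ with $\psi^{(i)}_{\alpha\beta}$ converging u.c.s.\ to a biholomorphism $\varphi_\alpha$ of the model surface of $v_\alpha$ (a M\"obius map for a sphere or disk component, a translation for a vortex component), I would examine, for each child $v_{\alpha'}$ of $v_\alpha$ in ${\mc T}^1$, the rescaling rate and centre of $\varphi^{(i),1}_{\alpha'}$ against those of the children $v_{\beta'}$ of $v_\beta$ in ${\mc T}^2$. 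Using L1--L4 for both limits, the no-energy-loss condition L5, and the energy quantization bounds (Proposition \ref{prop11}, Theorem \ref{thm14}, together with the quantization for holomorphic spheres and disks), I expect exactly one $v_{\beta'}$ to have the property that $\psi^{(i)}_{\alpha'\beta'}$ is precompact in $C^\infty_{loc}$ with nondegenerate limit; then set $\rho(\alpha')=\beta'$ and let $\varphi_{\alpha'}$ be that limit. That $\rho$ is a bijection respecting the coloring ${\mf s}$, the underlined subtree, and the labelling $\iota$ should follow because the total energy is distributed identically across the two trees (L5 with the quantization lower bounds, which preclude a positive-energy vertex on one side from being unaccounted for on the other) and unstable components carry positive energy (C2), while vortex scales are detected by $\lambda_i$ and holomorphic-bubble scales by energy-density maxima.

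With $\rho$ and $(\varphi_\alpha)$ in hand, conditions (4) and (5) of Definition \ref{defn30}, namely $z^1_{\alpha\beta}=\varphi_\beta^{-1}(z^2_{\rho(\alpha)\rho(\beta)})$ and $w^1_j=\varphi^{-1}_{\iota(j)}(w^2_j)$, follow by taking u.c.s.\ limits in L3 and L4 written for both ${\mc C}^1$ and ${\mc C}^2$, and condition (3) ($\bar{u}^1_\alpha=\bar{u}^2_{\rho(\alpha)}\circ\varphi_\alpha$ for $v_\alpha\in V_\infty$) by taking limits in L2. For $v_\alpha\in V_1({\mc T}^1)$ set
\beqn
h^{(i)}_\alpha:=\big(g^{(i),2}_{\rho(\alpha)}\circ\psi^{(i)}_{\alpha\rho(\alpha)}\big)^{-1}\cdot g^{(i),1}_\alpha\colon\Sigma_\alpha\to G;
\eeqn
both $(g^{(i),1}_\alpha)^*(\varphi^{(i),1}_\alpha)^*{\bf v}^{(i)}$ and $(g^{(i),2}_{\rho(\alpha)}\circ\psi^{(i)}_{\alpha\rho(\alpha)})^*(\varphi^{(i),1}_\alpha)^*{\bf v}^{(i)}$ are $C^\infty_{loc}$-convergent sequences of smooth vortices on $\Sigma_\alpha$ (with limits ${\bf v}^1_\alpha$ and $(\varphi_\alpha)^*{\bf v}^2_{\rho(\alpha)}$) obtained from the common vortex $(\varphi^{(i),1}_\alpha)^*{\bf v}^{(i)}$ by the gauge transformations above, so $h^{(i)}_\alpha$ is bounded in $C^\infty_{loc}$ and, after a further subsequence, converges u.c.s.\ to a smooth $g_\alpha\colon\Sigma_\alpha\to G$; the limit of L1 then gives ${\bf v}^1_\alpha=(g_\alpha)^*(\varphi_\alpha)^*{\bf v}^2_{\rho(\alpha)}$, which is (2). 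The same limiting procedure in L1 for $v_\alpha\in V_0({\mc T}^1)$, where the relevant gauge transformation degenerates to the constant $g_{\wt\alpha'}(z_{\wt\alpha\wt\alpha'})$ attached to the nearest vortex ancestor, gives $u^1_\alpha=g_{\wt\alpha'}(z_{\wt\alpha\wt\alpha'})^{-1}u^2_{\rho(\alpha)}\circ\varphi_\alpha$, i.e.\ condition (1). This assembles the isomorphism ${\mc C}^1\cong{\mc C}^2$.

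The hard part is the tree-matching step: proving the two bubble trees are combinatorially the same, equivalently that for each $v_{\alpha'}\in V({\mc T}^1)$ there is a unique $v_{\beta'}\in V({\mc T}^2)$ for which $\psi^{(i)}_{\alpha'\beta'}$ converges nondegenerately instead of degenerating (escaping toward a bubbling point, collapsing a component, or running off to $\infty$). This is a bookkeeping argument with the rescaling exponents governing the six blow-up types of Definition \ref{defn21}: one must match $\lambda_i$-scale vortex bubbles (types S1, S2) with vortex bubbles, fast holomorphic bubbles (F1, F2) with holomorphic bubbles in $X$, and reduced bubbles (R1, R2) with holomorphic bubbles in $\bar{X}$, using that $\mu(u^{(i)})\to0$ u.c.s.\ away from the bubbling locus. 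Given the detailed analogues for holomorphic disks in \cite[Theorem 4.10]{Frauenfelder_disk} and for affine vortices in \cite{Ziltener_book}, I would regard this step as routine but lengthy and leave the full details to the reader.
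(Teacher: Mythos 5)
Your proposal follows essentially the same route as the paper: the paper gives no detailed argument, stating only that the proof is an inductive construction of an isomorphism between the two limits, modelled on \cite[Theorem 4.10]{Frauenfelder_disk}, with details left to the reader. Your sketch (matching vertices via the compositions $\psi^{(i)}_{\alpha\beta}$, using energy quantization and L5 to rule out mismatches, and extracting the limiting M\"obius maps and gauge transformations to verify Definition \ref{defn30}) is exactly that intended argument, worked out in more detail than the paper itself provides.
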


We could state our main theorem now.
\begin{thm}\label{thm33}
If $\lambda_i \to +\infty$ and $( {\bf v}^{(i)}, {\bf w}^{(i)} )\in \wt{\mc M}_{\ud k}^{\lambda_i}(L)$ is a uniformly bounded sequence. Then a subsequence of $({\bf v}^{(i)}, {\bf w}^{(i)})$ converges (modulo gauge) to a stable scaled holomorphic disk ${\mc C}$ in the sense of Definition \ref{defn31}.
\end{thm}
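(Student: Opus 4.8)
The plan is to prove Theorem \ref{thm33} by an inductive bubble-tree construction, paralleling the ``soft/hard rescaling'' scheme used in Gromov compactness but adapted to the six bubble types of Definition \ref{defn21}. First I would apply Theorem \ref{thm20} to the sequence $({\bf v}^{(i)}, {\bf w}^{(i)})$ to obtain, after passing to a subsequence, a finite bubbling set $Z \subset {\mb D}$ and a $\bar J$-holomorphic map $\bar u_\infty : ({\mb D}\smallsetminus Z, \partial) \to (\bar X, \bar L)$ to which ${\bf v}^{(i)}|_{{\mb D}\smallsetminus Z}$ converges modulo gauge in the sense of Definition \ref{defn17}. This $\bar u_\infty$ on ${\mb D}$ becomes the root vertex $v_\infty$ of the tree ${\mc T}$. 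The boundary marked points ${\bf w}^{(i)}$ either stay away from $Z$, in which case their limits are recorded as marked points on $v_\infty$, or they collide with a point of $Z$, in which case they are carried into the bubble tree grown at that point. One also needs to deal with marked points colliding with each other without any energy concentration, which forces (unstable) ghost disk/sphere components exactly as in the classical Deligne--Mumford picture; these contribute stable (or to-be-stabilized) vertices with zero energy but the required number of special points.

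The core of the argument is the local analysis at each $z^* \in Z$, which proceeds by induction on the total energy $E(z^*) := \lim_{r\to 0}\limsup_i E({\bf v}^{(i)}; B_r(z^*)\cap {\mb D})$, a quantity bounded below by $\epsilon$ from Theorem \ref{thm20} and bounded above by the uniform energy bound. At $z^*$ one selects the rescaling centers $z_i \to z^*$ and scales $c_i^2 = e_i(z_i)$ realizing (after a subsequence) the blow-up condition \eqref{eqn33}, determines which of the types F1, F2, S1, S2, R1, R2 occurs by examining $\lim c_i/\lambda_i$ and $\lim \gamma_i c_i$, and applies the corresponding one of Lemmas \ref{lemma22}--\ref{lemma27} to extract a bubble: a $J$-holomorphic plane (F1), a $J$-holomorphic disk (F2), a ${\mb C}$-vortex (S1), an ${\mb H}$-vortex (S2), a $\bar J$-holomorphic plane (R1), or a $\bar J$-holomorphic disk (R2). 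Each such bubble has positive finite energy, strictly less than $E(z^*)$ when the limiting configuration is nontrivial; by the energy quantization results --- Proposition \ref{prop11} for ${\mb C}$-vortices, Theorem \ref{thm14} for ${\mb H}$-vortices, and the standard quantization for holomorphic disks/spheres in $(X,L)$ and $(\bar X, \bar L)$ --- the inductive hypothesis applies to the remaining energy concentrated at the finitely many new points where $e_i$ still blows up after rescaling by $\varphi_i$. One then attaches the bubble as a child vertex, with the matching node $z_{\alpha\beta}$ given by the rescaled limit of the position data (condition L3) and the evaluation-at-infinity matching condition C3 guaranteed by the removal-of-singularity statements Theorem \ref{thm2}(1) (for ${\mb H}$-vortices), Proposition \ref{prop10} (for ${\mb C}$-vortices), and the usual removable-singularity theorems. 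The induction terminates because each step strictly decreases the energy by at least $\min(\epsilon_X, \epsilon_{X,L}, \hbar_X, \hbar_{\bar X, \bar L}) > 0$, so the tree ${\mc T}$ is finite; one then verifies the colored-rooted-tree axioms of Appendix \ref{appendixb} (the scaling function ${\mf s}$ encodes which vertices carry vortices versus holomorphic curves, and the inequality $c_i/\lambda_i$ being $0$, finite-positive, or $\infty$ is exactly what distinguishes the three colors), and stabilizes by collapsing ghost components with too few special points, which is harmless since they carry no energy.

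The no-energy-loss conclusion L5 requires a separate annulus argument: one must show that no energy escapes into the necks joining a vertex to its children. This is the standard ``no neck energy'' lemma, proved by an isoperimetric/monotonicity estimate on long cylinders (here, on long strips near boundary bubbles), using the finite-energy decay of vortices on the ends --- Theorem \ref{thm12} and the exponential convergence in Theorem \ref{thm2}(1) for the ${\mb H}$-vortex ends, Proposition \ref{prop9} and Proposition \ref{prop10} for ${\mb C}$-vortex ends, and the analogous decay for holomorphic disks/spheres --- together with the adiabatic-limit energy density convergence Proposition \ref{prop18} on the part mapping to $\bar X$. Combining the u.c.s.\ convergence on each component (L1, L2), the neck-matching (L3), the marked-point convergence (L4), and the energy conservation (L5) yields convergence to a stable scaled holomorphic disk in the sense of Definition \ref{defn31}, and Proposition \ref{prop32} guarantees the limit is well-defined up to isomorphism.

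I expect the main obstacle to be the type S2 step and its interaction with the neck analysis: verifying that when $c_i/\lambda_i$ tends to a positive finite constant and $\gamma_i c_i$ stays bounded, the rescaled sequence genuinely converges to an ${\mb H}$-vortex (not merely a $J$-holomorphic disk or a ${\mb C}$-vortex) with the boundary landing on $L$ and with the correct asymptotic evaluation in $\bar L$, and then threading this new bubble type correctly into the recursive energy bookkeeping and the no-neck-energy estimate --- in particular ensuring the quantization constant $\epsilon_{X,L}$ from Theorem \ref{thm14} is what controls termination along boundary strata. Lemma \ref{lemma27} does the extraction, but confirming that the resulting tree with both interior ${\mb C}$-vortex necks and boundary ${\mb H}$-vortex necks satisfies all the combinatorial and energy axioms simultaneously, and that the scaling data $\varphi_\alpha^{(i)}$ can be chosen compatibly across all vertices, is the delicate part; everything else is a careful but routine adaptation of the closed-string argument of Ziltener and the holomorphic-disk argument of Frauenfelder.
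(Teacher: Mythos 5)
Your outline reproduces the hard-rescaling classification of Section \ref{section3}, but the recursion you build on it does not close, and the missing ingredient is precisely what Sections \ref{section5} and \ref{section6} of the paper supply. Rescaling at the point of maximal energy density (the scale $c_i$ of \eqref{eqn33}, Lemmas \ref{lemma22}--\ref{lemma27}) extracts the \emph{innermost} bubble of a chain: its evaluation at infinity matches the value of an intermediate component at a node, not $\bar u_\infty(z^*)$, so condition (C3) of Definition \ref{defn29} cannot be verified this way; and the energy not captured by that bubble lives at scales between $c_i^{-1}$ and $O(1)$, i.e.\ it escapes to infinity in the rescaled coordinates rather than concentrating at ``finitely many new points where $e_i$ still blows up,'' so your inductive hypothesis does not apply to it. This is why the paper replaces hard rescaling by the soft-rescaling results (Lemma \ref{lemma34} and Propositions \ref{prop36}--\ref{prop39}), which rescale at the scale $\delta_i$ capturing $m_0-\hbar/2$ of energy (or at $\lambda_i^{-1}$), identify whether the \emph{first} bubble attached at $w_0$ is a curve in the quotient or an affine vortex, and come with exactly the statements (no energy loss at that stage, node matching, and nonconstancy or $\#Z\geq 2$) needed to make the induction on energy terminate and build the tree top-down. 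A second structural omission is the marked points: in the scaled setting the components created by colliding markings are not classical Deligne--Mumford ghosts to be collapsed; the cluster diameters $d_{\ud\alpha}^{(i)}$ must be compared with $\lambda_i^{-1}$ to decide whether such a component carries a disk in $(\bar X,\bar L)$, an ${\mb H}$-vortex, or a disk in $(X,L)$, color-one vertices must sometimes be inserted to make the coloring admissible, and (C2) forbids energyless unstable vertices. This is the content of Proposition \ref{prop40} and the induction-package formalism, for which your plan has no counterpart.

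The neck analysis is also not ``standard.'' At a boundary node adjacent to a component of color $0$ or $1$ the pulled-back area forms $\lambda_i^2(\varphi_{\ud e}^{(i)})^*\nu_0$ are not uniformly bounded from below on the strip, so the annulus lemma (Proposition \ref{prop57}) does not apply as stated; the paper gives a separate argument using the exponential decay of the area form, temporal gauge, and Po\'zniak's isoperimetric inequality. More importantly, before any no-neck-energy statement can hold one must account for energy trapped in the necks as broken holomorphic strips (the Floer--Morse degeneration of Proposition \ref{prop45}) and for density blow-up along the necks themselves, by inserting connecting components upstairs or downstairs according to the color (type-$\ud{\mc{GD}}$ growths, Propositions \ref{prop44} and \ref{prop46}); without these connecting bubbles the energy identity (L5) of Definition \ref{defn31} fails in general. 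Your proposal contains no connecting components and simply asserts the standard no-neck lemma, so both (L5) and the matching across necks remain unproved; fixing this, together with replacing the hard-rescaling recursion by soft rescaling and the marked-point/scale bookkeeping of Proposition \ref{prop40}, essentially forces you back onto the paper's argument.
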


\section{Soft Rescaling}\label{section5}

In this subsection we prove two soft rescaling results. This is an analogue of a similar technique in proving the Gromov compactness for holomorphic spheres (see \cite[Theorem 4.7.1]{McDuff_Salamon_2004}) and holomorphic disks (see \cite[Theorem 3.5]{Frauenfelder_disk}). 

\subsection{Boundary soft rescaling in adiabatic limit}

First, choose $\hbar > 0$ which is less than the energy of 1) any nonconstant holomorphic sphere in $\bar{X}$; 2) any nonconstant holomorphic sphere in $X$;
 3) any nonconstant holomorphic disk in $(X, L)$; 4) any nonconstant holomorphic disk in $(\bar{X}, \bar{L})$; 5) any nontrivial ${\mb C}$-vortex in $X$; 6) any nontrivial ${\mb H}$-vortex in $(X, L)$.

\begin{lemma}\label{lemma34}
Let $\Omega \subset {\mb H}$ be an open subset of finite type and $w_0 \in \Omega$. Suppose a sequence of volume forms $\nu_i \in \Omega^2(\Omega)$ converges u.c.s. to $\nu \in \Omega^2(\Omega)$. Suppose $\lambda_i \to \infty$. Take a uniformly bounded sequence ${\bf v}^{(i)}\in \wt{\mc M}(\Omega, \lambda_i^2 \nu_i; X, L)$. Assume 
\beq\label{eqn51}
\lim_{r \to \infty} \lim_{i \to \infty} E ( {\bf v}^{(i)}; B_r(w_0) \cap {\mb H} ) = m_0 >0.
\eeq
Then there exist a subsequence (still indexed by $i$), a positive number $r_0$, a sequence of points $z_i \to w_0$, and a sequence of positive numbers $\delta_i \to 0$ (uniquely determined by $z_i$), such that 
\beqn
w \in B_{r_0} \smallsetminus \{ w_0\}\Longrightarrow \lim_{r\to 0} \lim_{i \to \infty} E ( {\bf v}^{(i)}; B_r(w) \big) = 0,\ e_i(z_i) = \sup_{B_{r_0}(w_0) \cap {\mb H}} e_i 
\eeqn
and
\beqn
E ( {\bf v}^{(i)}; B_{\delta_i}(z_i) \cap {\mb H} ) = m_0 - \frac{\hbar}{2}.
\eeqn
Here all the disks are taken with respect to the standard metric of ${\mb H}$. 
\end{lemma}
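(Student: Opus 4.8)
\emph{Proof proposal.} This is the adiabatic-limit version, at a possibly boundary point, of the standard soft rescaling lemma; compare \cite[Theorem 4.7.1]{McDuff_Salamon_2004} and \cite[Theorem 3.5]{Frauenfelder_disk}. Since the energy density $e_i$ and the local energies $E({\bf v}^{(i)};\,\cdot\,)$ are gauge invariant, no gauge fixing is needed here. The plan is, first, to pass to a subsequence and apply the compactness-modulo-bubbling result Theorem~\ref{thm20} on $\Omega$: there is a finite set $Z\subset\Omega$ such that $e_i$ is locally uniformly bounded on $\Omega\smallsetminus Z$ and the ${\bf v}^{(i)}$ converge there (modulo gauge) to a $\bar{J}$-holomorphic map. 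Hypothesis~\eqref{eqn51} forces $w_0\in Z$, and by energy quantization (Proposition~\ref{prop11}, Theorem~\ref{thm14}, and the bubbling analysis of Subsection~\ref{subsection32}) the concentrating mass satisfies $m_0\geq\hbar$, so $m_0-\tfrac{\hbar}{2}\geq\tfrac{\hbar}{2}>0$. Then I would choose $r_0>0$ so small that $\ov{B_{r_0}(w_0)}\cap{\mb H}\subset\Omega$ and contains no point of $Z$ other than $w_0$. For $w\in B_{r_0}(w_0)\smallsetminus\{w_0\}$ the density $e_i$ is uniformly bounded near $w$, so by Proposition~\ref{prop18} it converges to the energy density of the limit map, giving $\lim_{r\to0}\lim_{i\to\infty}E({\bf v}^{(i)};B_r(w))=0$, which is the first assertion.

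Next I would take $z_i$ realizing $\sup_{\ov{B_{r_0}(w_0)}\cap{\mb H}}e_i$, which exists by compactness and continuity and is the stated point. If $\sup_i e_i(z_i)<\infty$, then $e_i$ is uniformly bounded on $\ov{B_{r_0}(w_0)}\cap{\mb H}$, and Theorem~\ref{thm19} with Proposition~\ref{prop18} would give $\lim_{r\to0}\lim_i E({\bf v}^{(i)};B_r(w_0))=0$, contradicting~\eqref{eqn51}; so after a further subsequence $e_i(z_i)\to\infty$, and any subsequential limit of $z_i$ must be $w_0$, since at any other point of $\ov{B_{r_0}(w_0)}\cap{\mb H}$ the density stays bounded, i.e. $z_i\to w_0$. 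For the radii, after a further subsequence the numbers $g(r):=\lim_i E({\bf v}^{(i)};B_r(w_0)\cap{\mb H})$ exist for all $r$ and are non-decreasing with $\lim_{r\to0^+}g(r)=m_0$ by~\eqref{eqn51}, hence $g(r)\geq m_0$ for all $r>0$. Since $z_i\to w_0$, for large $i$ one has $B_{r_0/2}(w_0)\subset B_{3r_0/4}(z_i)$, so $E({\bf v}^{(i)};B_{3r_0/4}(z_i)\cap{\mb H})\geq E({\bf v}^{(i)};B_{r_0/2}(w_0)\cap{\mb H})\to g(r_0/2)\geq m_0>m_0-\tfrac{\hbar}{2}>0$, while for each fixed $i$ the function $r\mapsto E({\bf v}^{(i)};B_r(z_i)\cap{\mb H})$ is continuous, non-decreasing, and tends to $0$ as $r\to0^+$. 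Therefore $\delta_i:=\inf\{r>0:E({\bf v}^{(i)};B_r(z_i)\cap{\mb H})\geq m_0-\tfrac{\hbar}{2}\}$ is, for large $i$, a well-defined number in $(0,3r_0/4)$ depending only on ${\bf v}^{(i)}$ and $z_i$, with $E({\bf v}^{(i)};B_{\delta_i}(z_i)\cap{\mb H})=m_0-\tfrac{\hbar}{2}$. Finally, if $\delta_i\not\to0$, then after a subsequence $\delta_i\geq\delta_*>0$, hence $E({\bf v}^{(i)};B_{\delta_*/2}(z_i)\cap{\mb H})<m_0-\tfrac{\hbar}{2}$; but for large $i$ one has $B_{\delta_*/4}(w_0)\subset B_{\delta_*/2}(z_i)$, so $g(\delta_*/4)\leq m_0-\tfrac{\hbar}{2}<m_0\leq g(\delta_*/4)$, a contradiction, so $\delta_i\to0$.

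The step needing the most care is the choice of the threshold $m_0-\tfrac{\hbar}{2}$: one must invoke $m_0\geq\hbar$ (from energy quantization) merely to make it positive, and the point of subtracting a full $\tfrac{\hbar}{2}$ is that a fixed amount of the concentrating energy is intentionally left outside $B_{\delta_i}(z_i)$, so that after rescaling at scale $\delta_i$ one obtains an object of energy $m_0-\tfrac{\hbar}{2}\geq\tfrac{\hbar}{2}$, hence a nontrivial bubble, while still leaving room for the remaining components of the bubble tree. The only other bookkeeping point, which is precisely why one works with half-balls, is that because $w_0$ may lie on $\partial{\mb H}$ one must apply Theorems~\ref{thm20},~\ref{thm19} and Proposition~\ref{prop18} in a half-disk neighborhood of $w_0$ with the Lagrangian boundary condition, and check that the monotonicity and continuity of $r\mapsto E({\bf v}^{(i)};B_r(\cdot)\cap{\mb H})$ survive intersection with ${\mb H}$; this is routine.
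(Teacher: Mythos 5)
Your argument is correct and is exactly the standard soft-rescaling construction from \cite[Theorem 4.7.1]{McDuff_Salamon_2004} and \cite[Theorem 3.5]{Frauenfelder_disk} that the paper invokes (its own proof is omitted with a citation to Frauenfelder), adapted to the adiabatic setting via Theorems \ref{thm19}--\ref{thm20}, Proposition \ref{prop18}, and the quantization of the six bubble types to guarantee $m_0\geq\hbar$ so that the threshold $m_0-\hbar/2$ is positive. The choices of $r_0$, of $z_i$ as density maximizers, and of $\delta_i$ via the intermediate value theorem, together with the contradiction arguments for $z_i\to w_0$ and $\delta_i\to 0$, match the intended proof.
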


\begin{proof}
This is the same as the case of $J$-holomorphic curves (see \cite{Frauenfelder_disk}) and we omit the details.
\qed \end{proof}

\begin{defn}\label{defn35}
Suppose we are in the situation of Lemma \ref{lemma34}.
\begin{enumerate}
\item We say that the subsequence constructed {\bf has energy concentration of type ${\mc R}$ at $w_0$} with respect to the sequence $\lambda_i$, if: in the case $w_0 \in {\rm Int} \Omega $, $\displaystyle \lim_{i \to \infty} \lambda_i \delta_i = \infty$; in the case $w_0 \in \partial \Omega$, $\displaystyle \lim_{i \to \infty} \lambda_i ( \delta_i + {\rm Im} z_i )= \infty$.

\item We say that the subsequence constructed {\bf has energy concentration of type ${\mc S}$ at $w_0$} with respect to the sequence $\lambda_i$, if: in the case $w_0 \in {\rm Int} \Omega$, $\displaystyle \lim_{i \to \infty} \lambda_i \delta_i < \infty$; in the case $w_0 \in \partial \Omega$, $\displaystyle \lim_{i \to \infty} \lambda_i ( \delta_i + {\rm Im} z_i ) < \infty$.
\end{enumerate}
\end{defn}

\subsection{When the first bubble is in the quotient}

We will see that if a subsequence has energy concentration of type ${\mc R}$ (resp. type ${\mc S}$) at $w_0$, then for some further subsequence, to construct a bubble tree which is part of the stable scaled holomorphic disk, the first bubble we have to attach is a holomorphic curve in the quotient (resp. an affine vortex). The precise meaning of this is given by the following four propositions. 

\begin{prop}(cf. \cite[Proposition 44]{Ziltener_book})\label{prop36}
In the situation of Lemma \ref{lemma34}, if the sequence has an energy concentration of type ${\mc R}$ at $w_0 \in {\rm Int} \Omega$, then there exist a subsequence (still indexed by $i$), a sequence of M\"obius transformations $\varphi_i: {\mb C} \to \Omega \subset {\mb H}$, a finite subset $Z \subset {\mb C}$, satisfying the following conditions.
\begin{enumerate}
\item $\varphi_i$ converges u.c.s. to the constant $w_0$. 

\item $\hat{\bf v}^{(i)}:=(\varphi_i)^*{\bf v}^{(i)}$ converges modulo bubbling on ${\mb C} \smallsetminus Z$ to a holomorphic map $\bar{v}: {\mb C} \to \bar{X}$ in the sense of Definition \ref{defn17}.

\item If $E(\bar{v}) = 0$ then $\#Z\geq 2$.

\item For each $z_j \in Z$, the following limit
\beqn
m_j:= m(z_j):= \lim_{r \to 0} \lim_{i \to \infty} E ( \hat{\bf v}^{(i)}; B_r(z_j) )
\eeqn
exists and is positive. Moreover
\beqn
m_0 = E(\bar{v}) + \sum_{z_j\in Z} m_j. 
\eeqn

\item We have $\bar{u}(w_0) = \bar{v}(\infty) \in \bar{X}$. 
\end{enumerate}
\end{prop}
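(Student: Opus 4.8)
\textbf{Proof proposal for Proposition \ref{prop36}.} The plan is to run the soft-rescaling argument familiar from Gromov compactness (\cite{McDuff_Salamon_2004}, \cite{Frauenfelder_disk}), adapted to the adiabatic limit; the hypothesis that the energy concentration is of type ${\mc R}$, i.e.\ $\lambda_i\delta_i\to\infty$, is exactly what guarantees that the rescaled equation still has a diverging coupling constant, so the first bubble lives in $\bar{X}$ rather than being an affine vortex. Since $w_0$ is an interior point of $\Omega$, choose a holomorphic coordinate centered at $w_0$ in which $\nu=\sigma\,ds\,dt$ with $\sigma(w_0)=1$, and set $\varphi_i(z):=z_i+\delta_i z$ on exhausting balls $B_{R_i}\subset{\mb C}$ with $R_i\to\infty$ chosen so that $\varphi_i(B_{R_i})\Subset{\rm Int}\,\Omega$. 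Since $z_i\to w_0$ and $\delta_i\to 0$, the maps $\varphi_i$ converge u.c.s.\ to the constant $w_0$, which is item (1). A pullback computation shows that $\hat{\bf v}^{(i)}:=(\varphi_i)^*{\bf v}^{(i)}$ solves the symplectic vortex equation on $B_{R_i}$ with respect to $\hat\lambda_i^2\hat\nu_i$, where $\hat\lambda_i=\lambda_i\delta_i\to\infty$ and $\hat\nu_i$ converges u.c.s.\ to the standard area form (here $\sigma(w_0)=1$ is used). By scale-invariance of the vortex energy, and since $\hat{\bf v}^{(i)}$ has the same image as ${\bf v}^{(i)}$, the sequence $\hat{\bf v}^{(i)}$ is uniformly bounded, so Theorem \ref{thm20} on $\Sigma={\mb C}$ yields, after passing to a subsequence, a finite set $Z\subset{\mb C}$ and a $\bar{J}$-holomorphic map $\bar{v}$ on ${\mb C}\smallsetminus Z$ to which $\hat{\bf v}^{(i)}$ converges in the sense of Definition \ref{defn17}, with $\lim_{r\to 0}\limsup_i E(\hat{\bf v}^{(i)};B_r(z_j))\geq\epsilon>0$ at each $z_j\in Z$; a further diagonal refinement makes the $m_j$ honest limits. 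Using the uniform energy bound and the fact that $\pi_{\bar X}\circ u^{(i)}$ lands in a fixed compact subset of $\bar{X}$, removal of singularity for $\bar{J}$-holomorphic curves extends $\bar{v}$ over $Z$ to a map $\bar{v}\colon{\mb C}\to\bar{X}$, giving item (2).

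The crux is the energy identity (4). By scale-invariance, $E(\hat{\bf v}^{(i)};B_1)=E({\bf v}^{(i)};B_{\delta_i}(z_i))=m_0-\tfrac{\hbar}{2}$ by the defining property of $\delta_i$ in Lemma \ref{lemma34}; comparing balls $B_{\delta_i R}(z_i)$ with balls around $w_0$ and using \eqref{eqn51} shows $\lim_i E(\hat{\bf v}^{(i)};B_R)\in[m_0-\tfrac{\hbar}{2},m_0]$ for every fixed $R$, so $m_\infty:=\lim_{R\to\infty}\lim_i E(\hat{\bf v}^{(i)};B_R)$ exists in that interval. The u.c.s.\ convergence on ${\mb C}\smallsetminus Z$ (Proposition \ref{prop18}) together with the definition of the $m_j$ near $Z$ give $m_\infty=E(\bar{v})+\sum_{z_j\in Z}m_j$; it then remains to show $m_\infty=m_0$, i.e.\ that no energy escapes to infinity in the rescaled picture. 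The escaping energy lives in annuli $B_{r/\delta_i}\smallsetminus B_R\subset{\mb C}$ of conformal modulus tending to infinity and of total energy at most $\tfrac{\hbar}{2}$, hence below the quantization threshold $\hbar$; the annulus lemma --- the quantitative ``no energy in long necks'' estimate for the adiabatic vortex equation, cf.\ \cite{Ziltener_book} --- then forces this energy to vanish as $i\to\infty$, $R\to\infty$. This neck estimate is the main technical obstacle; the rest is bookkeeping.

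For item (3) I would exploit the maximum-at-the-center property inherited from Lemma \ref{lemma34}: since $e_i(z_i)=\sup_{B_{r_0}(w_0)}e_i$ and $\sigma(w_0)=1$, one gets $\hat e_i(z)\leq(1+o(1))\,\hat e_i(0)$ on every fixed ball. Hence, along the subsequence, either $\hat e_i(0)$ stays bounded --- in which case $\sup_{B_R}\hat e_i$ is bounded for each $R$ and Theorem \ref{thm20} gives $Z=\emptyset$ --- or $\hat e_i(0)\to\infty$, forcing $0\in Z$. Now if $E(\bar{v})=0$, then item (4) gives $m_0=\sum_j m_j$, so $Z\neq\emptyset$ (else $m_0=0$), hence $0\in Z$; and if in addition $Z=\{0\}$, then item (4) forces $m(0)=m_0$, contradicting $m(0)\leq\lim_i E(\hat{\bf v}^{(i)};B_1)=m_0-\tfrac{\hbar}{2}$. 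Therefore $\#Z\geq 2$.

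Finally, item (5) is a neck analysis: on the annuli $B_r(z_i)\smallsetminus B_{\delta_i R}(z_i)$ the $G$-invariant composition $\pi_{\bar X}\circ u^{(i)}$ carries energy below $\hbar$, so by the annulus lemma it is uniformly close to a constant there; its restriction to the outer circle converges (as $i\to\infty$, then $r\to 0$) to $\bar{u}(w_0)$ by continuity of $\bar{u}$, while its restriction to the inner circle, precomposed with $\varphi_i$, converges (as $i\to\infty$, then $R\to\infty$) to $\bar{v}(\infty)$ using the asymptotics of $\bar{v}$ at $\infty$ from removal of singularity. Matching the two endpoints gives $\bar{u}(w_0)=\bar{v}(\infty)$.
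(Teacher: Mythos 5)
Your overall route is the right one, and it is essentially the paper's: the paper proves this interior case only by reference to Ziltener, and the detailed model is the boundary analogue Proposition \ref{prop37}, whose scheme (rescale at $z_i$ by the scale $\delta_i$, invoke compactness modulo bubbling for the rescaled adiabatic sequence since $\lambda_i\delta_i\to\infty$, then an energy accounting and a neck analysis) you reproduce. Items (1), (2) and (5) are handled correctly, and your argument for (3) — $0\in Z$ via the maximum-at-the-center property, then $m(0)\le\lim_i E(\hat{\bf v}^{(i)};B_1)=m_0-\tfrac{\hbar}{2}<m_0$ forces a second point — is clean and valid once (4) is in place.

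The one genuine gap is the key step of (4), the claim that no energy is lost in the neck. As written, ``the neck energy is at most $\tfrac{\hbar}{2}$, hence below threshold, and the annulus lemma forces it to vanish'' is not a valid deduction: the annulus lemma (Proposition \ref{prop57}, or Ziltener's annulus estimate) only says that, when the total energy on a long annulus is below the threshold, the energy on \emph{interior} sub-annuli obtained by cutting a modulus $T$ from each end decays like $e^{-cT}$ times the total; it says nothing about the two end collars, where an amount of energy up to $\tfrac{\hbar}{2}$ could a priori persist (this is exactly the scenario of energy living at an intermediate scale $\epsilon_i$ with $\delta_i\ll\epsilon_i\ll 1$, which is below the quantization threshold and so produces no bubble). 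To close the argument you must also kill the collars: the outer collar $B_r(z_i)\smallsetminus B_{e^{-T}r}(z_i)$ has vanishing energy in the double limit $i\to\infty$, $r\to 0$ by the very definition of $m_0$ in \eqref{eqn51}, and the inner collar $B_{e^{T}R\delta_i}(z_i)\smallsetminus B_{R\delta_i}(z_i)$ has vanishing energy as $i\to\infty$, $R\to\infty$ because $\hat{\bf v}^{(i)}$ converges u.c.s.\ on compact subsets of ${\mb C}\smallsetminus Z$ (Proposition \ref{prop18}) and $E(\bar v)\le m_0<\infty$; only then does the annulus lemma, applied to the middle (where the area form is bounded below precisely because $\lambda_i R\delta_i\to\infty$, i.e.\ type ${\mc R}$), give $m_\infty=m_0$. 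Alternatively one can run the intermediate-scale contradiction argument of Step 2 of the proof of Proposition \ref{prop37} (the construction of $\epsilon_i$ and the secondary rescaling). Since you explicitly identify this neck estimate as the main technical obstacle but then assert it in one line, this is the missing piece that must be supplied; everything else in your proposal is bookkeeping, as you say.
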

\begin{proof}
See the proof of \cite[Proposition 44]{Ziltener_book}, though the statement of the proposition there differs a bit from this one.
\qed \end{proof}

\begin{prop}\label{prop37} In the situation of Lemma \ref{lemma34}, if the sequence has an energy concentration of type ${\mc R}$ at $w_0 \in \partial \Omega$, then there exist a subsequence (still indexed by $i$), a sequence of M\"obius transformations $\varphi_i: {\mb H} \to \Omega \subset {\mb H}$, a finite subset $Z \subset {\mb H}$, satisfying the following conditions.

\begin{enumerate}
\item $\varphi_i$ converges u.c.s. to the constant $w_0$;

\item $\hat{\bf v}^{(i)}:= (\varphi_i)^* {\bf v}^{(i)}$ converges on ${\mb H} \smallsetminus Z$ to a holomorphic map $\bar{v}: ({\mb H}, {\mb R}) \to (\bar{X}, \bar{L})$ in the sense of Definition \ref{defn17};

\item If $E(\bar{v})=0$ and $\#Z<2$ then $Z$ consists of only one point in ${\rm Int}{\mb H}$;

\item For each $z_j \in Z$, the following limit
\beq\label{eqn52}
m_j:= m(z_j) = \lim_{r\to \infty} \lim_{i \to \infty} E (\hat{\bf v}^{(i)}; B_r(z_j)\cap {\mb H} )
\eeq
exists and is positive. Moreover
\beqn
m_0 = E(\bar{v}) + \sum_{z_j \in Z} m_j.
\eeqn

\item We have $\bar{u}(w_0) = \bar{v}(\infty)$.
\end{enumerate}

\end{prop}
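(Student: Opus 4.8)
The plan is to run the argument of Proposition~\ref{prop36} (that is, of \cite[Proposition~44]{Ziltener_book}) in the presence of the boundary. Since the energy concentration is of type ${\mc R}$ at $w_0\in\partial\Omega$, the relevant rescaling scale is $\rho_i:=\delta_i+{\rm Im}\,z_i$, chosen exactly so that the effective scaling parameter $\lambda_i\rho_i=\lambda_i(\delta_i+{\rm Im}\,z_i)$ diverges, by the type-${\mc R}$ clause of Definition~\ref{defn35}. First I would fix a holomorphic coordinate on $\Omega$ centered at $w_0$ in which $\nu=\sigma\,ds\,dt$ with $\sigma(0)=1$, and set $\varphi_i(w):={\rm Re}\,z_i+\rho_i w$, a M\"obius transformation of ${\mb H}$ into $\Omega$. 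One checks that $\varphi_i$ converges u.c.s. to the constant $w_0$, that the open sets $\Omega_i:=\varphi_i^{-1}(\Omega)$ exhaust ${\mb H}$, that $\hat z_i:=\varphi_i^{-1}(z_i)={\bm i}\,{\rm Im}\,z_i/\rho_i$ satisfies $|\hat z_i|\le 1$, and that $\varphi_i^*(\lambda_i^2\nu_i)=(\lambda_i\rho_i)^2\hat\nu_i$ with $\hat\nu_i\to ds\,dt$ u.c.s. Thus $\hat{\bf v}^{(i)}:=(\varphi_i)^*{\bf v}^{(i)}\in\wt{\mc M}(\Omega_i,(\lambda_i\rho_i)^2\hat\nu_i;X,L)$ is a uniformly bounded sequence of vortices over domains exhausting ${\mb H}$, with scaling parameter $\lambda_i\rho_i\to\infty$.

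Next I would apply Theorem~\ref{thm20} to $\hat{\bf v}^{(i)}$ on $\Sigma={\mb H}$: after passing to a subsequence this produces a finite set $Z\subset{\mb H}$ and a $\bar J$-holomorphic map $\bar v\colon({\mb H},{\mb R})\to(\bar X,\bar L)$ of finite energy (hence extending over $\infty$ by removal of the boundary singularity), such that $\hat{\bf v}^{(i)}$ converges to $\bar v$ on ${\mb H}\smallsetminus Z$ in the sense of Definition~\ref{defn17}, and for each $z_j\in Z$ the limit $m_j$ in~\eqref{eqn52} exists and is $\ge\epsilon$; this gives items (1) and (2). For item (3), pass to a further subsequence so that $\hat z_i\to\hat z_*$ with ${\rm Im}\,\hat z_*\in[0,1]$; since $e_i(z_i)=\sup_{B_{r_0}(w_0)\cap{\mb H}}e_i$ and $E(\hat{\bf v}^{(i)};B_{\delta_i/\rho_i}(\hat z_i)\cap{\mb H})=m_0-\frac{\hbar}{2}>0$, if $\bar v$ is constant then $\hat z_*$ must lie in $Z$, so $Z\ne\emptyset$; and if moreover $\#Z<2$, then by an analogous stability bookkeeping to \cite[Proposition~44]{Ziltener_book} and \cite{Frauenfelder_disk} (a single \emph{boundary} concentration point is excluded exactly as in the interior case, whereas a single \emph{interior} concentration point is permitted) the unique point of $Z$ lies in ${\rm Int}\,{\mb H}$.

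For item (4) I would prove $m_0=E(\bar v)+\sum_{z_j\in Z}m_j$ by decomposing, for $i$ large, the energy of ${\bf v}^{(i)}$ on $B_{r_0}(w_0)\cap{\mb H}$ into a ``core'' $\varphi_i(B_R\cap{\mb H})$ and a half-annular ``neck'' $\varphi_i(\{|w|\ge R\}\cap{\mb H})\cap B_{r_0}(w_0)$: the core energy tends to $E(\bar v)+\sum m_j$ as $R\to\infty$, while the neck energy tends to $0$ as first $i\to\infty$ and then $R\to\infty$. The latter follows from an annulus energy estimate for vortices in the adiabatic regime carrying a Lagrangian boundary on ${\mb R}$, and the same estimate shows $\pi_{\bar X}\circ u^{(i)}$ stays $C^0$-close to a single point of $\bar X$ throughout the neck. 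Reading that point at the outer ($r_0$-)scale identifies it with $\bar u(w_0)$ (using the convergence of ${\bf v}^{(i)}$ to $\bar u$ on $\Omega\smallsetminus\{w_0\}$ and removal of singularity for $\bar u$ at $w_0$), and reading it at the inner ($|w|=R$) scale identifies it with $\bar v(\infty)$ (using removal of the boundary singularity of $\bar v$ at $\infty$); this yields item (5).

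The main obstacle is the neck analysis in the last step: one needs the boundary analogue of Ziltener's isoperimetric/annulus energy estimate, controlling both the energy and the oscillation of $\pi_{\bar X}\circ u^{(i)}$ on a long half-annulus $\{R\le|z-{\rm Re}\,z_i|\le r_0\}\cap{\mb H}$ with $u^{(i)}(\partial{\mb H})\subset L$ and with the area form in the intermediate regime between ``negligible'' and ``dominant''. This is exactly the place where the new decay estimate Theorem~\ref{thm12} (together with Corollary~\ref{cor13}) enters, since it is what makes the vortex essentially a holomorphic disk in $\bar X$ near $w_0$. Everything else---the choice of subsequences, the stability bookkeeping behind item (3), and the matching of the asymptotic evaluations---is routine and parallels \cite{Frauenfelder_disk} and \cite[Proposition~44]{Ziltener_book}, and could be left to the reader in the style of the rest of the paper.
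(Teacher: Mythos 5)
Your overall scheme (rescale around $z_i$ at the scale governed by $\delta_i+{\rm Im}\,z_i$, invoke the adiabatic compactness-modulo-bubbling theorem to produce $\bar v$ and $Z$, then do a core--neck decomposition to get the energy identity and the matching $\bar u(w_0)=\bar v(\infty)$) is the same as the paper's, which uses the scale $\tau_i=\delta_i$ or $\tau_i={\rm Im}\,z_i$ according to whether ${\rm Im}\,z_i/\delta_i$ stays bounded (case ${\mc R}1$) or diverges (case ${\mc R}2$); your single scale $\rho_i=\delta_i+{\rm Im}\,z_i$ is comparable to $\tau_i$ in either case, so that difference is cosmetic. However, there are two genuine gaps. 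First, item (3) is exactly the point where the boundary case differs from \cite[Proposition 44]{Ziltener_book}, and you dispose of it with ``analogous stability bookkeeping''; the asserted analogy is in fact wrong as stated, since in the interior case a constant limit forces $\#Z\geq 2$ outright, whereas here a constant limit with a \emph{single interior} bubbling point genuinely occurs (this is the ${\mc R}2$ regime, where ${\rm Im}\,z_i/\delta_i\to\infty$, $Z=\{{\bm i}\}$ and $\bar v$ is forced to be constant). Proving the dichotomy requires the paper's Steps 2, 3 and 5: first the statement $\lim_{R}\lim_i E({\bf v}^{(i)};B_{R\tau_i}(z_i)\cap{\mb H})=m_0$ (no energy hides between the concentration scale and the macroscopic scale, proved by an intermediate-scale contradiction argument), and then the counting argument that in case ${\mc R}1$ a constant $\bar v$ forces a bubbling point on $\partial B_1(\kappa)$ \emph{and} at $\kappa$ itself (the rescaled max-density point), while in case ${\mc R}2$ Step 1 pins $Z\subset\{{\bm i}\}$ and the energy identity forces $E(\bar v)\leq\hbar/2$, hence $\bar v$ constant. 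None of this is in your sketch, and the mechanism governing the dichotomy (the ratio ${\rm Im}\,z_i/\delta_i$, equivalently whether $\delta_i/\rho_i$ stays bounded below or tends to $0$ in your normalization) is not identified.

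Second, the analytic input for the neck is misattributed. Theorem \ref{thm12} concerns a single ${\mb H}$-vortex with the \emph{standard} area form near infinity and does not apply to the sequence ${\bf v}^{(i)}$ on the neck between scales $\tau_i$ and $r_0$, where the rescaled area forms blow up (this is the type-${\mc R}$ regime, $\lambda_i\tau_i\to\infty$). The tool the paper actually uses in Steps 2 and 6 is the annulus lemma for vortices on long strips with area form bounded below, Proposition \ref{prop57}, which rests on Po\'zniak's isoperimetric inequality and the local equivariant action (Lemmas \ref{lemma50}--\ref{lemma52}); with it your core--neck argument for items (4) and (5) does go through (the neck energy is at most roughly $\hbar/2$, below the annulus-lemma threshold, and the exponential decay from both ends pushes all neck energy into the two collars, whose contributions vanish as $r\to 0$ and $R\to\infty$). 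So you should replace the appeal to Theorem \ref{thm12} by Proposition \ref{prop57}, and supply the missing argument for item (3).
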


\begin{proof}
Without loss of generality, assume $w_0 = 0 \in {\mb H}$. We assume $z_i = x_i + {\bm i} y_i$. By taking a subsequence if necessary, we have the following two possibilities, listed in the below table; and we define a sequence $\tau_i$ in respective cases.
\begin{center}
\beq\label{eqn53}
\begin{tabular}{|c|c|c|} 
\hline
Case & $\displaystyle \lim_{i \to \infty} ( y_i /\delta_i )$ &  $ \tau_i$\\
\hline
${\mc R1}$ & $< \infty$ & $ \delta_i$\\
\hline
${\mc R2}$ & $=\infty$ & $ y_i$\\
\hline
\end{tabular}
\eeq
\end{center}
Then denote $\Omega_i:= B \left( \frac{1}{\sqrt\tau_i}\right) \cap {\mb H}$ and define $\varphi_i: \Omega_i \to {\mb H}$ to be the map $\varphi_i(w) = x_i + \tau_i w$. We will prove that the properties (1)--(5) of this proposition hold for this sequence of M\"obius transformations and some subsequence. The argument is very much in parallel with the proof of \cite[Theorem 3.5]{Frauenfelder_disk}.

We denote 
\beqn
\nu_i' = \tau_i^{-2} \varphi_{i}^* \nu_i
\eeqn
and it is easy to see that $\nu_i'$ converges uniformly on compact subsets to the standard area form on ${\mb H}$. Consider the uniformly bounded sequence $\hat{\bf v}^{(i)}:= \varphi_i^* {\bf v}^{(i)} \in \wt{\mc M}(\Omega_i, \lambda_i^2\tau_i^2 \nu_i'; X, L)$. Since $\Omega_i$ exhausts ${\mb H}$ and $\lambda_i \tau_i \to \infty$, by Theorem \ref{thm19}, there exists a subsequence (still indexed by $i$), a finite subset $Z \subset {\mb H}$ and a holomorphic disk $\bar{v}: ({\mb H}, {\mb R}) \to (\bar{X}, \bar{L})$ such that $\hat{\bf v}^{(i)}$ converges to $\bar{v}$ in the sense of Definition \ref{defn17}. By taking a subsequence, we have that, for each $z_j \in Z$,
\beqn
m_j = \lim_{\epsilon \to 0} \lim_{i \to \infty} E ( \hat{\bf v}^{(i)}; B_{\epsilon}(z_j)\cap {\mb H})
\eeqn
exists and is no less than $\hbar$. Following \cite{Frauenfelder_disk}, we denote 
\beqn
\big( \kappa_i, \rho_i, \rho \big) = \left\{ \begin{array}{cc}
 \displaystyle \big( \frac{{\bm i} y_i}{\delta_i}, 1, 1 \big),\ & {\rm in\ case\ }({\mc R1}),\\[0.3cm]
 \displaystyle \big( {\bm i}, \frac{\delta_i}{y_i} , 0 \big),\ & {\rm in\ case\ }({\mc R2}).
\end{array} \right.
\eeqn
Choose a subsequence so that $\kappa_i$ converges to some $\kappa\in {\mb H}$; moreover, $\rho_i \to \rho$.

Now we can prove the following facts, following the line of \cite[P. 232-236]{Frauenfelder_disk}.

{\bf Step 1.} We prove that for any $z_j \in Z$, 
\beq\label{eqn54}
| z_j - \kappa | \leq \rho.
\eeq
Indeed, suppose for some $z_j \in Z$ and some $R> \rho$, 
\beq\label{eqn55}
R > | z_j - \kappa | > \rho.
\eeq
Then by \eqref{eqn51}, there exists $\epsilon>0$ such that
\beqn
\lim_{i \to \infty} E ( {\bf v}^{(i)}; B_\epsilon (w_0)\cap {\mb H} ) \leq m_0 + \frac{\hbar }{8}.
\eeqn
Therefore, for sufficiently large $i$, 
\beqn
E ( {\bf v}^{(i)}; B_\epsilon  \cap {\mb H} ) \leq m_0 + \frac{\hbar }{4}.
\eeqn
Therefore, since $\tau_i \to 0$ and $z_i \to 0$, for large $i$ we have
\beqn
E ( {\bf v}^{(i)}; B_{2 R \tau_i}(z_i) \cap {\mb H} ) \leq m_0 + \frac{\hbar }{4}.
\eeqn
Since $\varphi_i$ maps $B_R(\kappa)\cap {\mb H} \subset B_{2R}(\kappa^i)\cap {\mb H}$ into $B_{2R \tau_i}(z_i)\cap {\mb H}$, we have
\beqn
E ( \hat{\bf v}^{(i)}; B_R(\kappa)\cap {\mb H} ) \leq E ( {\bf v}^{(i)}; B_{2R \tau_i}(z_i) ) \leq m_0 + \frac{\hbar}{4}.
\eeqn
Moreover, because $\varphi_i$ maps $B_{\rho_i}(\kappa_i) \cap {\mb H}$ onto $B_{\delta_i}(z_i)$, we have
\beqn
E ( \hat{\bf v}^{(i)}; B_{\rho_i}(\kappa_i)\cap {\mb H} ) = E ( {\bf v}^{(i)}; B_{\delta_i} \cap {\mb H} ) = m_0 - \frac{\hbar}{2}.
\eeqn
\eqref{eqn55} implies that for $r$ sufficiently small and $i$ sufficiently large, we have $B_r(z_j ) \subset B_R(\kappa) \smallsetminus B_{\rho_i}(\kappa_i)$. It implies that
\beqn
\lim_{r\to 0} \lim_{i \to \infty} E (\hat{\bf v}^{(i)}; B_r(z_j) \cap {\mb H} ) \leq \lim_{i \to \infty} \Big( E ( \hat{\bf v}^{(i)}; B_R(\kappa) ) - E ( {\bf v}^{(i)}; B_{\rho_i}(\kappa_i) ) \Big) \leq \frac{3\hbar }{4}.
\eeqn
This contradicts the condition on points in $Z$. Therefore \eqref{eqn54} holds.

{\bf Step 2.} We prove that 
\beq\label{eqn56}
\lim_{R \to \infty} \lim_{i \to \infty} E ( {\bf v}^{(i)}; B_{R \tau_i}(z_i)\cap {\mb H} ) = m_0.
\eeq
Indeed, by the definition of $m_0$, $\tau_i$ and the fact that $\tau_i \to 0$, for every $R \geq 1$,
\beqn
m_0 - \frac{\hbar }{2} \leq \lim_{i \to \infty} E ( {\bf v}^{(i)}; B_{R \tau_i} (z_i) ) \leq m_0.
\eeqn
If \eqref{eqn56} is false, then there exists $\rho \in (0, \frac{\hbar}{2}]$ such that,
\beq\label{eqn57}
\lim_{R \to +\infty} \lim_{i \to \infty} E ({\bf v}^{(i)}; B_{R \tau_i}(z_i) ) = m_0 - \rho.
\eeq
Then we choose $R_0>1$ sufficiently large such that 
\beq\label{eqn58}
m_0 - \rho \geq \lim_{i \to +\infty} E ( {\bf v}^{(i)}; B_{R_0 \tau_i}(z_i) ) \geq m_0 - \frac{3}{2} \rho. 
\eeq
Therefore, for any $a> b >1$, 
\beq\label{eqn59}
\lim_{i \to \infty} E ( {\bf v}^{(i)}; B_{a R_0 \tau_i} (z_i) \smallsetminus B_{ b R_0 \tau_i}(z_i) ) \leq \frac{\rho}{2}.
\eeq
We will show that it leads to a contradiction. Indeed, for all $l \in {\mb N}$, there exists $\epsilon_l \in (0, \frac{1}{ l})$ and $i_l\in {\mb N}$ such that
\beqn
i \geq i_l \Longrightarrow \Big| E ( {\bf v}^{(i)}; B_{\epsilon_l}(z_i) \cap {\mb H} ) - m_0 \Big| \leq \frac{1}{l}.
\eeqn
We can assume that $\epsilon_{l+1} < \epsilon_l$ and $i_{l+1} > i_l$ for all $l$. Define $\epsilon_i= \epsilon_l$ for all $ i_l \leq i < i_{l+1}$, we have
\beqn
\lim_{i \to \infty} E ( {\bf v}^{(i)}; B_{\epsilon_i}(z_i) \cap {\mb H} ) = m_0.
\eeqn
Since $\lim_{i \to \infty} \epsilon_i = 0$, for every $R>1$, we have
\beq\label{eqn510}
\lim_{i \to \infty} E ( {\bf v}^{(i)}; B_{R \epsilon_i}(z_i) \cap {\mb H} ) = m_0.
\eeq
Then by \eqref{eqn57}, we have
\beq\label{eqn511}
\lim_{i \to \infty} \frac{\tau_i}{\epsilon_i} = 0.
\eeq
By Definition \ref{defn35}, it implies that $\displaystyle \lim_{i \to \infty} \lambda_i \epsilon_i = +\infty$. 

Denote $\Omega_i' = B\left( \frac{1}{ \sqrt \epsilon_i} \right) \cap {\mb H}$ and define $\varphi_i': \Omega_i' \to {\mb H}$ by $\varphi_i'(z) = x_i + \epsilon_i z$. Denote $\nu_i'' = \epsilon_i^{-2} (\varphi_i')^* \nu_i\in \Omega^2(\Omega_i')$, which converges u.c.s. to the standard area form on ${\mb H}$. Then $(\varphi_i')^* {\bf v}^{(i)}\in \wt{\mc M}( \Omega_i', \lambda_i^2 \epsilon_i^2 \nu_i''; X, L)$ is uniformly bounded. Then by Theorem \ref{thm20}, a subsequence of $(\varphi_i')^* {\bf v}^{(i)}$ converges (modulo bubbling) to a holomorphic disk in $(\bar{X}, \bar{L})$. Then for any $R>1$ and any $\delta>0$, we have
\beqn
\begin{split}
&\ \lim_{i \to \infty} E ( (\varphi_i')^* {\bf v}^{(i)}; ( B_R ( {\bm i} \epsilon_i^{-1} y_i) \smallsetminus  B_\delta ( {\bm i} \epsilon_i^{-1} y_i) ) \cap {\mb H} ) \\
= &\ \lim_{i \to \infty} E ( {\bf v}^{(i)}; ( B_{R \epsilon_i} (z_i) \smallsetminus B_{\delta \epsilon_i} (z_i) ) \cap {\mb H} )\\
\leq &\  \lim_{i \to \infty} E ( {\bf v}^{(i)}; ( B_{R\epsilon_i}(z_i) \smallsetminus  B_{\tau_i} (z_i) ) \cap {\mb H} ) \\
\leq &\ m_0 - (m_0 - \hbar/2) = \hbar/2.
\end{split}
\eeqn
Here the first inequality follows \eqref{eqn511}, and the second inequality follows from \eqref{eqn510} and the definition of $\tau_i$. Therefore we see that the limit of $(\varphi_i')^* {\bf v}^{(i)}$ is a constant disk and the bubbling only happens at the origin of the ${\mb H}$. Therefore, for any $T>0$, we have
\beq\label{eqn512}
\lim_{i \to \infty} E ( {\bf v}^{(i)}; ( B_{\epsilon_i}(z_i) \smallsetminus B_{e^{-T} \epsilon_i}(z_i) ) \cap {\mb H} ) = 0. 
\eeq

On the other hand, since $\displaystyle \lim_{i \to \infty} ( y_i/ \epsilon_i ) = 0$, for $i$ large enough we have
\beq\label{eqn513}
B_{\epsilon_i}(z_i) \cap {\mb H} \subset B_{2 \epsilon_i}(x_i) \cap {\mb H} \subset B_{3 \epsilon_i}(z_i) \cap {\mb H}.
\eeq
On the other hand, since $R_0 \tau_i \geq y_i$, we have
\beq\label{eqn514}
B_{R_0 \tau_i }(y_i)\cap {\mb H} \subset B_{2 R_0 \tau_i}(x_i) \cap {\mb H} \subset B_{3 R_0 \tau_i}(y_i) \cap {\mb H}.
\eeq
Therefore, by \eqref{eqn58}, \eqref{eqn513} and \eqref{eqn514}, we have
\beq\label{eqn515}
\begin{split}
\rho \leq &\ \lim_{i \to \infty} E ( {\bf v}^{(i)}; ( B_{\epsilon_i}(z_i) \smallsetminus B_{3 R_0 \tau_i} (z_i) ) \cap {\mb H} ) \\
     \leq &\ \lim_{i \to \infty} E ( {\bf v}^{(i)}; ( B_{2 \epsilon_i}(x_i) \smallsetminus B_{2 R_0 \tau_i} (x_i) ) \cap {\mb H})\\
	   \leq &\ \lim_{i \to \infty} E ( {\bf v}^{(i)}; ( B_{3 \epsilon_i}(z_i) \smallsetminus B_{R_0 \tau_i} (z_i) ) \cap {\mb H} ) \\
		\leq &\ \frac{3}{2} \rho.  
\end{split}
\eeq

Similarly, we can show that 
\beq\label{eqn516}
\lim_{i \to \infty} E ( {\bf v}^{(i)}; ( B_{2\epsilon_i}(x_i) \smallsetminus B_{e^{-T} 2 \epsilon_i}(x_i) ) \cap {\mb H} ) = 0.
\eeq

On the other hand, since $\frac{3}{2} \rho \leq \frac{3}{4} \hbar < \hbar$, we can apply the annulus lemma (Proposition \ref{prop57}) to the neck region $( B_{ 2 \epsilon_i}(x_i) \smallsetminus B_{2 R_0 \tau_i}(x_i) ) \cap {\mb H} $, which is biholomorphic to a long strip (notice that $\lambda_i \tau_i \to +\infty$, so the area form on the strip is bounded from below). Then for $T$ large enough,  the energy on the region $( B_{e^{-T} 2 \epsilon_i}(x_i) \smallsetminus B_{e^T 2 R_0 \tau_i} (x_i) ) \cap {\mb H}$ is smaller than $\rho/2$. This implies that
\beq\label{eqn517}
\lim_{i \to \infty} E ( {\bf v}^{(i)}; ( B_{e^{-T} \epsilon_i}(z_i) \smallsetminus B_{e^T 3 R_0 \tau_i}(z_i) ) \cap {\mb H} ) < \rho/2.
\eeq
Combining \eqref{eqn515}--\eqref{eqn517}, it implies that
\beq\label{eqn518}
\lim_{i \to \infty} E ( {\bf v}^{(i)}; ( B_{e^T 3R_0 \tau_i} (z_i) \smallsetminus B_{3 R_0 \tau_i}(z_i) )\cap {\mb H} ) > \rho/2.
\eeq
This contradicts \eqref{eqn59}. Therefore \eqref{eqn56} holds.

{\bf Step 3.} We prove that in case ${\mc R1}$, if the limit $\bar{v}$ is constant, then $\#Z \geq 2$. Indeed, if $\bar{v}$ is constant, then since $d (Z, \kappa) \leq 1$, it follows that for $1< r< R$, 
\beqn
\lim_{i \to \infty} E ( \hat{\bf v}^{(i)}; (B_R(\kappa) \smallsetminus B_r(\kappa)) \cap {\mb H}) = E ( \bar{v}; (B_R(\kappa) \smallsetminus B_r(\kappa)) \cap {\mb H} ) = 0.
\eeqn
Therefore the limit of $E ( \hat{\bf v}^{(i)}; B_r(\kappa) \cap {\mb H} )$ is independent of $r>1$. Then for every $r>1$, choose $\rho_1 \in (1, r)$ and $\rho_2 > r$, we have $B_{\rho_1}(\kappa) \subset B_r(\kappa_i) \subset B_{\rho_2}(\kappa)$ for $i$ large enough. Hence
\beqn
\lim_{i \to \infty} E ( {\bf v}^{(i)}; B_{r \tau_i}(z_i) \cap {\mb H} ) = \lim_{i \to \infty} E ( \hat{\bf v}^{(i)}; B_r(\kappa_i)\cap {\mb H})
\eeqn
is also independent of $r>1$. Then by \eqref{eqn56}, for every $r>1$, we have
\beqn
\lim_{i \to \infty} E ( \hat{\bf v}^{(i)}; B_r(\kappa) \cap {\mb H} ) = m_0.
\eeqn
However, for $r = 1$ the above limit is $m_0 - \hbar/2$. Therefore there must be energy concentration of $\hat{\bf v}^{(i)}$ which happens on $\partial B_1(\kappa) \cap {\mb H}$, so $Z \neq \emptyset$. However, since $\kappa_i$ is the point which has the greatest energy	 density, $Z \neq \emptyset$ implies that $\kappa \in Z$. Therefore $\# Z \geq 2$. 

{\bf Step 4.} We prove that 
\beq\label{eqn519}
m_0 = E(\bar{v}) + \sum_{z^j \in Z} m_j.
\eeq
We take $r>1$. Then for $i$ large, by \eqref{eqn54}, $Z \subset B_r(\kappa_i)$. Then by \eqref{eqn56}, we have
\beqn
\begin{split}
m_0 = &\ \lim_{R \to \infty} \lim_{i\to \infty} E ( {\bf v}^{(i)}; B_{R \tau_i}(z_i) \cap {\mb H}) \\[0.25cm]
    = &\ \lim_{R \to \infty} \lim_{i \to \infty} E ( \hat{\bf v}^{(i)}; B_R (\kappa_i) \cap {\mb H} ) \\[0.25cm]
	  = &\ \lim_{R \to \infty} \lim_{i \to \infty} E ( \hat{\bf v}^{(i)}; (B_R (\kappa_i) \smallsetminus B_r(\kappa_i) ) \cap {\mb H} ) + \lim_{i \to \infty} E ( \hat{\bf v}^{(i)}; B_r(\kappa_i)\cap {\mb H} ) \\[0.25cm]
		= &\ 	E ( \bar{v}; {\mb H} \smallsetminus B_r(\kappa) ) + \lim_{\epsilon\to 0} \lim_{i \to \infty} E (\hat{\bf v}^{(i)}; ( B_r(\kappa) \smallsetminus \cup_{z_j} B_\epsilon(z_j) ) \cap {\mb H} ) + \sum_{z_j\in Z} m_j\\
		= &\ E ( \bar{v} ) + \sum_{z_j \in Z} m_j.
\end{split}
\eeqn

{\bf Step 5.} We prove that in case ${\mc R2}$, $\bar{v}$ is necessarily constant and $Z = \{{\bm i}\}$. Indeed, in this case, \eqref{eqn54} implies that $Z = \{{\bm i}\}$. Since $\kappa_i = {\bm i}$, we have
\beqn
E ( \hat{\bf v}^{(i)}; B_{\frac{\delta_i}{y_i}} ({\bm i}) \cap {\mb H} ) = m_0 - \hbar/2.
\eeqn
Since $\displaystyle \lim_{i \to \infty} \left( \delta_i / y_i \right) = 0$, we see that
\beqn
m_1 = \lim_{\epsilon \to 0} \lim_{i \to \infty} E ( \hat{\bf v}^{(i)}; B_\epsilon({\bm i}) \cap {\mb H} ) \geq \lim_{i \to \infty} E ( \hat{\bf v}^{(i)}; B_{\frac{\delta_i}{y_i }}({\bm i}) \cap {\mb H} )  = m_0 - \hbar/2.
\eeqn
Therefore, by \eqref{eqn519}, we have
\beqn
E ( \bar{v} ) = m_0 - m_1 \leq \hbar/2.
\eeqn
By the energy quantization property of holomorphic disks, we know that $\bar{v}$ is necessarily a constant holomorphic disk. 

{\bf Step 6.} The fact that $\bar{v}(\infty) = \bar{u}(0)$ can be proved using the annulus lemma (Proposition \ref{prop57}), noticing that no energy escapes at the neck. The proof of the proposition is complete.
\qed \end{proof}

\subsection{When the first bubble is an affine vortex}

We have the following two propositions treating type ${\mc S}$ energy concentrations. The first one (the interior case) was proved in \cite{Ziltener_book} and the second one is the analogue of the first one in the boundary case.

\begin{prop}(cf. \cite[Proposition 44]{Ziltener_book})\label{prop38}
In the situation of Lemma \ref{lemma34}, if the sequence has an energy concentration of type ${\mc S}$ at $w_0 \in {\rm Int} \Omega$, then there exist a subsequence (still indexed by $i$), a sequence of M\"obius transformations $\varphi_i: {\mb C} \to \Omega \subset {\mb H} \subset {\mb C}$, a sequence of smooth gauge transformations $g_i: {\mb C} \to G$, a finite subset $Z \subset {\mb C}$, satisfying the following conditions.

\begin{enumerate}
\item $\varphi_i$ converges u.c.s. on ${\mb C}$ to the constant $w_0$.

\item $\hat{\bf v}^{(i)}:= (g_i)^* (\varphi_i)^* \hat{\bf v}^{(i)}$ converges u.c.s. on ${\mb C}\smallsetminus Z$ to a ${\mb C}$-vortex $\hat{\bf v}$

\item If $(B, v)$ is trivial then $\#Z \geq 1$.

\item For each $z_j \in Z$, we have
\beqn
m_j:= m(z_j) = \lim_{r\to \infty} \lim_{i \to \infty} E ( \hat{\bf v}^{(i)}; B_r(z_j)\cap {\mb H} )
\eeqn
exists and is positive; and
\beqn
m_0 = E( B, v) + \sum_{z_j \in Z} m_j.
\eeqn

\item We have $\bar{u}(w_0) = \ev_\infty({\bf v})\in \bar{X}$.

\end{enumerate}
\end{prop}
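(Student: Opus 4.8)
The plan is to mimic the soft-rescaling argument of \cite[Proposition 44]{Ziltener_book}, which runs parallel to Step 2 of the proof of Proposition \ref{prop37}, but now rescaling at the scale $\lambda_i^{-1}$ so that the pulled-back area forms converge to the \emph{standard} form on $\mb{C}$ rather than being blown up. First I would normalize a holomorphic coordinate near $w_0$ so that $w_0 = 0$ and $\nu(0) = ds\,dt$, and set $\varphi_i(w) = z_i + \lambda_i^{-1}w$ on an exhausting sequence $\Omega_i \subset \mb{C}$; since $z_i \to 0 \in {\rm Int}\,\Omega$, for large $i$ the $\Omega_i$ have no boundary near any fixed ball, so $\varphi_i$ takes values in $\Omega \subset \mb{H} \subset \mb{C}$ and converges u.c.s. to the constant $0$. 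A direct computation gives that $\nu_i' := \lambda_i^2 \varphi_i^* \nu_i$ converges u.c.s. to the standard area form on $\mb{C}$, so $(\varphi_i)^*{\bf v}^{(i)}$ is a bounded sequence in $\wt{\mc M}(\Omega_i, \nu_i'; X)$. The type ${\mc S}$ hypothesis $\lim_i \lambda_i\delta_i < \infty$ is exactly what makes $\lambda_i^{-1}$ the correct scale: the disk $B_{\delta_i}(z_i)$, which already carries energy $m_0 - \hbar/2$, pulls back to a disk of bounded radius, so no faster rescaling is needed.

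Next I would invoke the compactness-modulo-bubbling result (Theorem \ref{thm15}) for this sequence over $\Sigma = \mb{C}$: after passing to a subsequence I obtain a finite set $Z \subset \mb{C}$, smooth gauge transformations $g_i$, and a smooth vortex $(B,v)$ over $\mb{C}$ such that $\hat{\bf v}^{(i)} := (g_i)^*(\varphi_i)^*{\bf v}^{(i)}$ converges u.c.s. to $(B,v)$ on $\mb{C}\smallsetminus Z$, and each $m_j := \lim_{r\to 0}\lim_i E(\hat{\bf v}^{(i)}; B_r(z_j))$ is at least $\epsilon(X,L) > 0$. That $(B,v)$ is a genuine $\mb{C}$-vortex follows from Corollary \ref{cor13} (compact image, using the uniform boundedness hypothesis) together with lower semicontinuity of energy, which gives $E(B,v) \leq m_0 < \infty$.

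The analytic heart, and the step I expect to be the main obstacle, is the ``no energy loss'' identity $\lim_{R\to\infty}\lim_i E(\hat{\bf v}^{(i)}; B_R) = m_0$. Granting it, the usual Gromov-type bookkeeping (no energy in the necks around the $z_j$) yields $m_0 = E(B,v) + \sum_{z_j\in Z} m_j$, and if $(B,v)$ is trivial then $m_0 = \sum_j m_j > 0$ forces $\#Z \geq 1$. I would prove the identity by contradiction, as in Step 2 of Proposition \ref{prop37}: if the limit were $m_0 - \rho$ with $\rho \in (0,\hbar/2]$, then an energy at least $\rho$ would have to sit in an annular region $B_{\epsilon_i}(z_i)\smallsetminus B_{R_0\lambda_i^{-1}}(z_i)$ for some $\epsilon_i \to 0$ with $\lambda_i\epsilon_i \to \infty$; rescaling by $\epsilon_i$ and applying the adiabatic-limit compactness Theorem \ref{thm20} produces a holomorphic disk or sphere in $\bar{X}$ which a comparison of energies forces to be constant, so all of that energy concentrates at the origin; then the annulus lemma (Proposition \ref{prop57}) applied to the long neck between scales $O(\lambda_i^{-1})$ and $O(\epsilon_i)$, where the pulled-back area form is bounded below because $\lambda_i$ times the inner radius is bounded below, shows that the energy there is in fact small, contradicting $\rho > 0$.

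Finally, for $\bar{u}(w_0) = \ev_\infty(B,v)$ I would use that there is no energy concentration in the neck joining scale $O(\lambda_i^{-1})$ to scale $O(1)$, so $\mu(u^{(i)}) \to 0$ uniformly there, $\pi_{\bar X}\circ u^{(i)}$ is well defined on the neck, and the annulus lemma again shows that it varies by $o(1)$ across it; its outer limit is $\bar u(w_0)$ by Definition \ref{defn17} and its inner limit is the asymptotic $G$-orbit of $v$ furnished by Proposition \ref{prop10}, i.e. $\ev_\infty(B,v)$. Once the scale $\lambda_i^{-1}$ and the annulus lemma are in place the remaining work is routine; the only genuinely delicate point is the intermediate-scale analysis of the third paragraph, ruling out energy that escapes at scales strictly between $\lambda_i^{-1}$ and $1$.
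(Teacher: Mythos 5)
Your proposal is correct and follows essentially the same route as the paper, which for this interior case simply defers to \cite[Proposition 44]{Ziltener_book} and to the parallel boundary argument (the proofs of Propositions \ref{prop37} and \ref{prop39}): rescale at scale $\lambda_i^{-1}$, apply Theorem \ref{thm15}, exclude intermediate-scale energy loss by the $\epsilon_i$-contradiction combined with an annulus estimate, and identify $\ev_\infty$ across the neck using Proposition \ref{prop10}. The one adjustment to make is that at an interior point the necks are full annuli, so the estimate you need is Ziltener's interior annulus lemma (\cite[Proposition 45]{Ziltener_book}, quoted at the start of the last subsection of Appendix \ref{appendixa}) rather than Proposition \ref{prop57}, which is the half-annulus/strip version with Lagrangian boundary conditions.
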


\begin{prop}\label{prop39}
In the situation of Lemma \ref{lemma34}, if the sequence has an energy concentration of type ${\mc S}$ at $w_0 \in \partial \Omega$, then there exist a subsequence (still indexed by $i$), a sequence of M\"obius transformations $\varphi_i: {\mb H} \to \Omega \subset {\mb H}$, a sequence of smooth gauge transformations $g_i: {\mb H} \to G$, a finite subset $Z \subset {\mb H}$, satisfying the following conditions.

\begin{enumerate}
\item $\varphi_i$ converges u.c.s. on ${\mb H}$ to the constant $w_0$.

\item $\hat{\bf v}^{(i)}:= (g_i)^* (\varphi_i)^* {\bf v}^{(i)}$ converges u.c.s. on ${\mb H} \smallsetminus Z$ to an ${\mb H}$-vortex ${\bf v}$.

\item If ${\bf v}$ is trivial then $\# \geq 1$;

\item For each $z_j \in Z$, we have
\beqn
m_j:= m(z_j) = \lim_{r\to \infty} \lim_{i \to \infty} E ( \hat{\bf v}^{(i)}; B_r(z_j)\cap {\mb H} )
\eeqn
exists and is positive; and
\beq\label{eqn520}
m_0 = E( {\bf v}) + \sum_{z_j \in Z} m_j.
\eeq

\item We have $\bar{u}(w_0) = \ev_\infty( {\bf v})\in \bar{L}$.
\end{enumerate}
\end{prop}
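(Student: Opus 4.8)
The plan is to follow the proof of Proposition \ref{prop38} (the interior type $\mc S$ case, from \cite{Ziltener_book}) and of Proposition \ref{prop37} (the boundary type $\mc R$ case), the only genuinely new ingredients being the analytic properties of $\mb H$-vortices established in Theorems \ref{thm12} and \ref{thm14}, which replace their $\mb C$-vortex counterparts. Normalize $w_0 = 0$ and write $z_i = x_i + {\bm i} y_i$ with $y_i \ge 0$; since the concentration is of type $\mc S$ at a boundary point, $\lambda_i(\delta_i + y_i)$ is bounded, and after passing to a subsequence we may assume $\lambda_i z_i \to \kappa$ for some $\kappa\in \mb H$. The correct rescaling is at the adiabatic scale $\lambda_i^{-1}$: set $\Omega_i := B(\sqrt{\lambda_i})\cap \mb H$, which exhausts $\mb H$, and $\varphi_i\colon \Omega_i \to \Omega$, $\varphi_i(w) = x_i + \lambda_i^{-1} w$. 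Then $\varphi_i \to w_0$ u.c.s., giving conclusion (1), and, exactly as in Lemma \ref{lemma27}, $\varphi_i^*(\lambda_i^2 \nu_i)$ converges u.c.s. to the standard area form on $\mb H$, so that $\varphi_i^* {\bf v}^{(i)}$ is a sequence of vortices with respect to area forms converging u.c.s.\ to $\nu_0$ --- precisely the situation of the ordinary (non-adiabatic) compactness theorems of Section \ref{section2}.

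First I would extract the limiting $\mb H$-vortex. Since the sequence is uniformly bounded, all $u^{(i)}$ take values in a fixed $G$-invariant compact set, so the second equation in \eqref{eqn21} forces $|F_{\varphi_i^* A^{(i)}}|$ to be bounded pointwise by a constant multiple of $|\mu(\varphi_i^* u^{(i)})|$, hence uniformly bounded on compact subsets of $\mb H$: no curvature concentration occurs. Applying Theorem \ref{thm15} to $\varphi_i^* {\bf v}^{(i)}$ then produces a subsequence, smooth gauge transformations $g_i\colon \mb H \to G$, a finite set $Z\subset \mb H$, and an $\mb H$-vortex ${\bf v}$ such that $\hat{\bf v}^{(i)} := (g_i)^*\varphi_i^* {\bf v}^{(i)}$ converges u.c.s. on $\mb H \smallsetminus Z$ to ${\bf v}$, and such that for each $z_j\in Z$ the limit $m_j := \lim_{r\to 0}\lim_{i\to\infty} E(\hat{\bf v}^{(i)}; B_r(z_j)\cap \mb H)$ exists and is $\ge \hbar$ by energy quantization of holomorphic spheres and disks, with $\hbar$ as fixed at the beginning of this section. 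Since ${\bf v}$ has finite energy and $u(\mb H)$ has compact closure, Theorem \ref{thm12} applies to ${\bf v}$; in particular $\ev_\infty({\bf v}) \in \bar L$ is well defined. This establishes conclusions (1) and (2).

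Next I would prove that no energy escapes, which gives \eqref{eqn520} and conclusion (3). The key claim, the analogue of \eqref{eqn56}, is
\beqn
\lim_{R\to\infty}\ \lim_{i\to\infty}\ E\big({\bf v}^{(i)};\, B_{R\lambda_i^{-1}}(x_i)\cap \mb H\big) = m_0 .
\eeqn
The inequality $\ge m_0 - \hbar/2$ is immediate because $B_{\delta_i}(z_i)\subset B_{C\lambda_i^{-1}}(x_i)$ for a bound $C$ on $\lambda_i(\delta_i + y_i)$. If the limit were strictly smaller, a fixed amount $\rho\in(0,\hbar/2]$ of energy would remain in the neck $\big(B_\epsilon(x_i)\smallsetminus B_{R\lambda_i^{-1}}(x_i)\big)\cap \mb H$ for every $R$; on this neck, written in cylindrical coordinates as a long strip, the area form $\lambda_i^2 \nu_i$ is bounded from below (at the inner radius it is comparable to $R^2\ge 1$), so one may run exactly the argument of Step 2 of the proof of Proposition \ref{prop37}: choose an auxiliary scale $\epsilon_i\to 0$ with $\lambda_i\epsilon_i\to\infty$, rescale at $\epsilon_i$, use Theorem \ref{thm20} to see that the resulting limit is a constant holomorphic disk with concentration only at the origin, and then slice the neck into annuli on which the energy is $<\hbar$ via the annulus lemma (Proposition \ref{prop57}); this yields a contradiction. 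Granting the claim, the computation of Step 4 of the proof of Proposition \ref{prop37} gives $m_0 = E({\bf v}) + \sum_{z_j\in Z} m_j$, which is \eqref{eqn520}; and if ${\bf v}$ is trivial then $m_0 = \sum_{z_j} m_j > 0$, so $\#Z\ge 1$.

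Finally, for conclusion (4) I would argue as in Step 6 of the proof of Proposition \ref{prop37}: the absence of energy loss, together with the exponential decay at infinity of the $\mb H$-vortex ${\bf v}$ (Theorem \ref{thm12}(2)) and the continuous extension of the limiting holomorphic disk $\bar u$ over $w_0$, shows that the image under $u^{(i)}$ of the neck region between radii $R\lambda_i^{-1}$ and $\epsilon$ about $x_i$ has arbitrarily small diameter in $\bar X$ once $R$ is large, $\epsilon$ small and $i$ large (small energy on an annulus controls the oscillation of $\pi_{\bar X}\circ u^{(i)}$ there). Comparing the two ends of the neck then identifies $\ev_\infty({\bf v})$ with $\bar u(w_0)$, which lies in $\bar L$ because $w_0\in\partial\Omega$. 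I expect the main obstacle to be the neck estimate behind the no-energy-loss claim, precisely because the rescaled area form $\lambda_i^2\nu_i$ still degenerates towards the inner end of the neck; this is what forces the auxiliary rescaling at the scale $\epsilon_i$ and makes Step 2 of the proof of Proposition \ref{prop37} the delicate point to be transcribed here.
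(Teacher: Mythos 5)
Your proposal is correct and follows essentially the same route as the paper: rescale at the adiabatic scale $\lambda_i^{-1}$, apply Theorem \ref{thm15} to extract the limiting ${\mb H}$-vortex and the bubbling set $Z$, prove no energy is lost at the neck by transplanting Step 2 of the proof of Proposition \ref{prop37} (with the annulus lemma, Proposition \ref{prop57}, applicable because the pulled-back area form on the neck is bounded from below), and deduce the energy identity, $\#Z\geq 1$ when ${\bf v}$ is trivial, and the matching of evaluations. Aside from a harmless relabeling of the last two conclusions, this is the paper's argument.
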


\begin{proof}
We write $z_i = x_i  + {\bm i} y_i$. We define 
\beqn
\kappa_i = {\bm i} y_i \lambda_i,\ \rho_i = \lambda_i \delta_i.
\eeqn
Choose a subsequence such that $\kappa:= \displaystyle \lim_{i \to \infty} \kappa_i$ and $\rho:= \displaystyle \lim_{i \to \infty} \rho_i$ exist. Define
\beqn
\varphi_i(w) = x_i + \lambda_i w,\ w \in B( {\sqrt{\lambda_i}} ) \cap {\mb H}.
\eeqn
Denote $\nu_i' = \lambda_i^{-2} \varphi_i^* \nu_i$. Then $\hat{\bf v}^{(i)} := (\varphi_i)^* {\bf v}^{(i)}\in \wt{\mc M}(B (\sqrt\lambda_i)\cap {\mb H}, \nu_i'; X, L)$. $\nu_i'$ converges u.c.s. on ${\mb H}$ to the standard area form. Then by Theorem \ref{thm15}, there exist a subsequence (still indexed by $i$), a finite subset $Z \subset {\mb H}$, and an ${\mb H}$-vortex ${\bf v}$, such that for any compact subset $K\subset {\mb H}\smallsetminus Z$, $\hat{\bf v}^{(i)}$ converges u.c.s. on ${\mb H}\smallsetminus Z$ to ${\bf v}$ modulo gauge transformation. Moreover, for any $z_j \in Z$, we have
\beqn
m_j:= \lim_{r \to \infty} E ( \hat{\bf v}^{(i)}; B_r(z^j) \cap {\mb H} ) \geq \hbar.
\eeqn

{\bf Step 1.} By the same argument as deriving \eqref{eqn54} while replacing $\tau_i$ by $\lambda_i^{-1}$, we can obtain
\beqn
z_j \in Z \Longrightarrow |z_j - \kappa| \leq \rho.
\eeqn 

{\bf Step 2.} We prove that
\beq\label{eqn521}
\lim_{R \to +\infty} \lim_{i \to \infty} E ( {\bf v}^{(i)}; B_{R \lambda_i^{-1}}(z_i) \cap {\mb H} ) = m_0. 
\eeq
Indeed, if the limit in \eqref{eqn521} is $m_0 -\rho$ for some $\rho>0$, then as we did in {\bf Step 2} in the proof of Proposition \ref{prop37}, we can find a subsequence (still indexed by $i$) and a sequence $\epsilon_i>0$ such that
\beqn
\lim_{i \to +\infty} \lambda_i \epsilon_i = +\infty,\ \lim_{i \to \infty} E ( {\bf v}^{(i)}; B_{\epsilon_i}(z_i) \cap {\mb H} ) = m_0.
\eeqn
We define $\varphi_i': B \left( \frac{1}{ \sqrt\epsilon_i} \right) \cap {\mb H} \to {\mb H}$ by $\varphi_i' (z) = x_i + \epsilon_i z$. As we did in {\bf Step 2} in the proof of Proposition \ref{prop37}, we can show that $(\varphi_i')^* {\bf v}^{(i)}$ converges on ${\mb H}^*$ to a trivial holomorphic disk. Therefore we derive \eqref{eqn514}--\eqref{eqn516} in the same way with $\tau_i$ replaced by $\lambda_i^{-1}$. The annulus lemma (Proposition \ref{prop57}) can still be applied to the neck region $( B_{2\epsilon_i}(x_i) \smallsetminus B_{2R_0 \tau_i}(x_i) ) \cap {\mb H}$ because the pull-backed area form on the strip is uniformly bounded from below. Then we can derive \eqref{eqn517} and \eqref{eqn518} and a similar contradiction. Therefore \eqref{eqn521} holds.

{\bf Step 3.} We prove that if $E({\bf v}) = 0$, then $Z \neq \emptyset$. Indeed, if $Z = \emptyset$, then the convergence $\hat{\bf v}^{(i)}$ is uniformly on any compact subset of ${\mb H}$. Therefore, 
\beqn
\begin{split}
0 = E({\bf v}) = &\ \lim_{R \to +\infty} \lim_{i \to \infty} E ( \hat{\bf v}^{(i)}; B_R(0) \cap {\mb H} ) \\
= &\ \lim_{R \to +\infty} \lim_{i \to \infty} E ( {\bf v}^{(i)}; B_{R \lambda_i^{-1}}(x_i) ) \\
= &\ m_0. 
\end{split}
\eeqn
This contradicts the fact $m_0>0$.

{\bf Step 4.} We prove \eqref{eqn520} as in {\bf Step 4} of the proof of Proposition \ref{prop37}.

{\bf Step 5.} The fact that $\bar{u}(w_0) = \ev_\infty( {\bf v})$ can be proved by using the annulus lemma (Proposition \ref{prop57}) by noticing that no energy escapes at the neck.
\qed \end{proof}

\section{Proof of the Compactness Theorem}\label{section6}

Now we are ready to prove the main compactness theorem (Theorem \ref{thm33}). The construction contains the following steps. First we will construct the limit curve, whose components are called the principal components. Via the sequence of M\"obius transformations associated to each principal component, the pull-backed sequence of vortices converges modulo bubbling in a sense depending on the scale of this component. Secondly, besides those bubbling points in the new curve, there might be energy which escapes from boundary nodal points and marked points. We construct a new object which exhausts the escaped energy by adding ``connecting bubbles''. Thirdly, the remaining energy loss happens at the bubbling points. We can find the complete bubble tree to be attached to the bubbling points on the principal components, where we need to use the soft rescaling results obtained in Section \ref{section5}. This will give us the limiting object and complete the proof.

\subsection{The limiting curve}

First, ignore the sequence $\lambda_i$. Taking a subsequence (still indexed by $i$), we may assume that $\lim_{i \to \infty} w_j^{(i)} = w_j \in \partial {\mb D}$ for $j = 1, \ldots, \ud k$. Then adding three more marked points $w_{-1}, w_{-2}, w_{-3}$ which avoid $w_j^{(i)}$ for all $j$ and all large $i$, we may assume that the sequence of disks with marked points 
\beqn
(w_1^{(i)}, \ldots, w_{\ud k}^{(i)}, w_{-1}, w_{-2}, w_{-3})
\eeqn
converges in $\ov{\mc M}_{0, \ud k+3}$ to a stable marked disk with $\ud k+3$ marked points, with combinatorial type given by a ribbon tree. Removing the exterior vertices corresponding to (the limit of) $w_{-1}$, $w_{-2}$, $w_{-3}$, we obtained a nodal disk (independent of the additional marked points $w_{-1}, w_{-2}, w_{-3}$) whose components are all stable except possibly for the one where $w_{-1}, w_{-2}, w_{-3}$ originally lied on. Such a nodal disk has its combinatorial type given by a rooted ribbon tree $(\ud{\mc T}{}_0, v_\infty)$, where $v_\infty$ represents the disk from which we removed $w_{-1}, w_{-2}, w_{-3}$. 

Now each vertex $v_{\ud\alpha} \in V(\ud{\mc T}{}_0)$ corresponds to a subtree of $\ud{\mc T}{}_0$, which contains all vertices $\ud\beta$ with $\ud \beta \geq \ud\alpha$, denoted by $\ud{\mc T}{}_0(\ud\alpha)$, which has root $v_{\ud \alpha}$. Moreover, $\alpha$ corresponds to a subset $I_{\ud\alpha} \subset \{1, \ldots, \ud k\}$. If $v_{\ud\alpha} \neq v_\infty$, then $I_{\ud\alpha}$ contains at least two elements. We denote
\beqn
d_{\ud \alpha}^{(i)} = {\rm diam} \big\{ w_j^{(i)}\ |\ j \in I_{\ud\alpha} \big\}
\eeqn
where the diameter is taken with respect to the metric on ${\mb D}$. We also define $d_\infty^{(i)} = 1$ for all $i$. Then it is easy to see that 
\begin{enumerate}
\item $v_{\ud\alpha} \neq v_\infty \Longrightarrow \displaystyle \lim_{i \to \infty} d_{\ud\alpha}^{(i)} = 0$, and 

\item ${\mc T}_0(\ud\alpha) \subset \ud{\mc T}{}_0(\beta), \ud\alpha \neq \ud\beta \displaystyle \Longrightarrow \lim_{i \to \infty} ( d_{\ud\alpha}^{(i)} /  d_{\ud\beta}^{(i)} ) = 0$.
\end{enumerate}

Now we define a coloring ${\mf s}_0: V(\ud{\mc T}{}_0) \to \{0, 1, \infty\}$ by using the sequence $\lambda_i$. Taking a subsequence if necessary, we define
\beqn
{\mf s}_0 (v_{\ud\alpha}) = \left\{ \begin{array}{cl} 0,\ & {\rm if}\ \displaystyle \lim_{i \to \infty} \lambda_i d_{\ud\alpha}^{(i)} = 0;\\
                                                    1,\ & {\rm if}\ \displaystyle \lim_{i \to \infty} \lambda_i d_{\ud\alpha}^{(i)} \in (0, \infty);\\
																										\infty,\ & {\rm if}\ \displaystyle \lim_{i \to \infty} \lambda_i d_{\ud\alpha}^{(i)} = \infty.
\end{array} \right.
\eeqn

Moreover, if there are two adjacent vertices $\ud\alpha$, $\ud\beta$ with ${\mf s}_0 (\ud\alpha) = 0$, ${\mf s}_0 (\ud\beta) = \infty$, then we modify the tree by inserting a new vertex $\ud\gamma$ between $\ud\alpha$ and $\ud\beta$, and define ${\mf s}_0 (\ud\gamma) = 1$. Then we obtain a colored rooted ribbon tree, denoted by $(\ud{\mc T}{}_0, {\mf s}_0 )$. To each such new vertex, we associate the sequence $d_{\ud\gamma}^{(i)}:= \lambda_i^{-1}$.

On the other hand, for each $\ud\alpha \in V({\mc T}_0) \smallsetminus \{v_\infty \}$, we consider the sequence of M\"obius transformations $\varphi_{\ud\alpha}^{(i)}: {\mb H} \to {\mb D}$ given by
\beq\label{eqn61}
\varphi_{\ud\alpha}^{(i)} (w) = w_{j_{\ud\alpha}}^{(i)} \frac{ {\bm  i} - d_{\ud\alpha}^{(i)} w }{{\bm i } + d_{\ud\alpha}^{(i)} w }.
\eeq
It maps $0 \in {\mb H}$ to $w_{j_{\ud\alpha}}^{(i)} \in \partial {\mb D}$ where $j_{\ud\alpha} \in I_{\ud\alpha}$ is the ``first'' element of $I_{\ud\alpha}$. We also define $\varphi_{v_0}^{(i)} = {\rm Id}_{\mb D}$ for all $i$. Then for all $j \in I_{\ud\alpha}$, we denote
\beqn
Z_{\ud\alpha}:= \left\{ \lim_{i \to \infty} \big( \varphi_{\ud\alpha}^{(i)} \big)^{-1} \big( w_j^{(i)} \big) \ |\ j \in I_{\ud\alpha}   \right\}.
\eeqn
By our construction, $Z_{\ud\alpha}$ is a finite subset of ${\mb R} \subset {\mb H}$ with at least two elements, except for the case where $\ud\alpha$ is the newly added vertex in $V_1(\ud{\mc T}{}_0)$, in which case $Z_{\ud\alpha} = \{0\}$. Moreover, by direct calculation, for every $v_{\ud\alpha} \in V(\ud{\mc T}{}_0)\smallsetminus \{v_\infty\}$ and for each compact subset $K \subset {\mb H}$,
\beqn
\lim_{i \to \infty} \big( d_{\ud\alpha}^{(i)} \big)^{-1}  \big\| d \varphi_{\ud\alpha}^{(i)} \big\|_{L^\infty(K)} \in (0, \infty).
\eeqn
We assign $\Sigma_{\ud\alpha} = {\mb D}$ if $\ud\alpha = \infty$ and otherwise $\Sigma_{\ud\alpha} = {\mb H}$. 

Now using the sequence of M\"obius transformations $\varphi_{\ud\alpha}^{(i)}$ to pull-back the sequence of vortices ${\bf v}^{(i)}$, we accomplish the first step of our construction. It is summarized as follows.

\begin{prop}\label{prop40}
There exist the following objects
\begin{enumerate}

\item a subsequence of the original sequence ${\bm s}_0$ (still indexed by $i$);

\item a based colored rooted tree ${\mc T}{}_0 = ({\mc T}_0, \ud{\mc T}{}_0, {\mf s}_0)$ with $\iota_0: \{1, \ldots, \ud k\} \to V_0 (\ud{\mc T}{}_0)\cup V_1(\ud{\mc T}{}_0)$ and for each $v_{\ud\alpha} \in V(\ud{\mc T}{}_0)$, a corresponding subset $I_{\ud\alpha} \subset \{1, \ldots, \ud k\}$;

\item a collection ${\mc C}_0$ of objects
\beqn
\Big( ({\mc C}_\alpha)_{v_\alpha \in V({\mc T}{}_0)},\ (z_{\alpha \alpha'})_{e_{\alpha \alpha'} \in E({\mc T}{}_0)},\ (w_j)_{1 \leq j \leq \ud k} \Big)
\eeqn
which are described as in Definition \ref{defn29} (O1)--(O5) for the case ${\mc T} = {\mc T}_0$;

\item for each $v_\alpha \in V({\mc T}{}_0)$ a finite subset $W_\alpha \subset \Sigma_\alpha \smallsetminus Z_\alpha$ and for each $w_\alpha \in W_\alpha$, a number $m(w_\alpha)>0$ (see Definition \ref{defn29} for the meaning of $Z_\alpha$);

\item for each $v_\alpha \in V({\mc T}{}_0)$ a sequence of M\"obius transformations $\varphi_\alpha^{(i)}: \Sigma_\alpha \to {\mb D}$ with $\varphi_\infty^{(i)} = {\rm Id}_{{\mb D}}$, and a sequence of numbers $d_\alpha^{(i)}>0$;

\item for each $v_\alpha \in V_0({\mc T}{}_0) \cup V_1({\mc T}{}_0)$ a sequence of smooth maps $g_\alpha^{(i)}: \Sigma_\alpha \to G$;

\end{enumerate}
They satisfy the following conditions.

\begin{enumerate}
\item[(i)] ${\mc C}_0$ and ${\mc T}{}_0$ satisfy the conditions of Definition \ref{defn29} except (C3).

\item[(ii)] For each $v_\alpha \in V({\mc T}_0)$, the subset $Z_\alpha$ and the subset $W_\alpha$ are disjoint. 

\item[(iii)] For $v_\alpha, v_\beta \in V({\mc T}_0)$, $\alpha> \beta \Longrightarrow \displaystyle \lim_{i \to \infty} ( d_\alpha^{(i)} /d_\beta^{(i)} ) = 0$; moreover, $d_{v_\infty}^{(i)} \equiv 1$ and 
\beqn
\lim_{i \to \infty} \lambda_i d_\alpha^{(i)} \in  \left\{ \begin{array}{cl} \{ +\infty\}, &\ v_\alpha \in V_\infty({\mc T}_0);\\
                                                                       (0, +\infty), &\ v_\alpha \in V_1({\mc T}_0);\\
																																			  \{0\}, &\ v_\alpha \in V_0({\mc T}_0).
\end{array} \right.
\eeqn

\item[(iv)] For each $v_\alpha \in V({\mc T}_0)$ and for any compact subset $K \subset \Sigma_\alpha$, we have
\beqn
\lim_{i \to \infty} \big( d_\alpha^{(i)} \big)^{-1} \big\| d \varphi_\alpha^{(i)} \big\|_{L^\infty(K)} \in (0, +\infty).
\eeqn
Moreover, if $v_{\ud\alpha} \in V(\ud{\mc T}{}_0)$, then 
\beqn
\varphi_{\ud\alpha}^{(i)} (z) = z_{j_{\ud \alpha}}^{(i)} \frac{ {\bm i} - d_{\ud\alpha}^{(i)} z}{{\bm i} + d_{\ud\alpha}^{(i)} z }.
\eeqn

\item[(v)] The collection ${\mc C}_0$, the sequences $\varphi_\alpha^{(i)}$ and $g_\alpha^{(i)}$ satisfy the conditions (L1)--(L4) of Definition \ref{defn31} with ${\mc T} = {\mc T}_0$ and each $Z_\alpha$ replaced by $Z_\alpha \cup W_\alpha$.

\item[(vi)] For each $w_\alpha \in W_\alpha$, we have
\beqn
m(w_\alpha):= \lim_{\epsilon \to 0} \limsup_{i \to \infty} E ( ( \varphi_\alpha^{(i)} )^* {\bf v}^{(i)}; B_\epsilon(w_\alpha) \cap {\mb H} ) > 0.
\eeqn
\end{enumerate}
\end{prop}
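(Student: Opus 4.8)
The plan is to process the rooted colored ribbon tree $\ud{\mc T}{}_0$ one vertex at a time, invoking at each vertex the compactness-modulo-bubbling theorem dictated by its color, and then to assemble the finitely many outputs along one common subsequence. Fix $v_{\ud\alpha}\in V(\ud{\mc T}{}_0)$ and put $\hat{\bf v}^{(i)}_{\ud\alpha}:=(\varphi_{\ud\alpha}^{(i)})^*{\bf v}^{(i)}$, defined on $\Omega_{\ud\alpha}^{(i)}:=(\varphi_{\ud\alpha}^{(i)})^{-1}({\mb D})$, which exhausts $\Sigma_{\ud\alpha}$. Differentiating \eqref{eqn61} gives $|d\varphi_{\ud\alpha}^{(i)}(z)|=2d_{\ud\alpha}^{(i)}|{\bm i}+d_{\ud\alpha}^{(i)}z|^{-2}$, so on every fixed compact subset of $\Sigma_{\ud\alpha}$ (using $d_{\ud\alpha}^{(i)}\to0$ for $\ud\alpha\neq v_\infty$, and $\varphi_\infty^{(i)}={\rm Id}$ with $d_\infty^{(i)}\equiv1$) the pulled-back area form equals $(d_{\ud\alpha}^{(i)})^2\rho_i^{\ud\alpha}\,ds\,dt$ with $\rho_i^{\ud\alpha}\to\rho^{\ud\alpha}_\infty>0$ uniformly with all derivatives; thus $\hat{\bf v}^{(i)}_{\ud\alpha}$ solves the vortex equation for $\mu_i^2\rho_i^{\ud\alpha}\,ds\,dt$ with $\mu_i:=\lambda_i d_{\ud\alpha}^{(i)}$, which already yields (iv). Three cases occur. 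If ${\mf s}_0(v_{\ud\alpha})=\infty$ (in particular for $v_\infty$), then $\mu_i\to\infty$ and Theorem \ref{thm20}, applied on $\Sigma=\Sigma_{\ud\alpha}$, yields a subsequence, a finite set $Z'_{\ud\alpha}\subset\Sigma_{\ud\alpha}$ and a map to which $\hat{\bf v}^{(i)}_{\ud\alpha}$ converges on $\Sigma_{\ud\alpha}\smallsetminus Z'_{\ud\alpha}$ in the sense of Definition \ref{defn17}; the uniform energy bound and removal of singularity for holomorphic curves extend it to a $\bar{J}$-holomorphic $\bar u_{\ud\alpha}$ on all of $\Sigma_{\ud\alpha}$. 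If ${\mf s}_0(v_{\ud\alpha})=1$, then $\mu_i$ stays bounded away from $0$ and $\infty$, the limiting area form is a positive multiple of the standard one (absorbed, after a fixed scaling of the ${\mb H}$-factor into $\varphi_{\ud\alpha}^{(i)}$, equivalently a constant rescaling of $d_{\ud\alpha}^{(i)}$), and Theorem \ref{thm15} yields a subsequence, a finite $Z'_{\ud\alpha}$, gauge transformations $g_{\ud\alpha}^{(i)}$ and an ${\mb H}$-vortex ${\bf v}_{\ud\alpha}$ to which $(g_{\ud\alpha}^{(i)})^*\hat{\bf v}^{(i)}_{\ud\alpha}$ converges u.c.s. on $\Sigma_{\ud\alpha}\smallsetminus Z'_{\ud\alpha}$. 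If ${\mf s}_0(v_{\ud\alpha})=0$, then $\mu_i\to0$, the limiting area form is $0$, and Theorem \ref{thm15} with $\nu=0$ gives a flat vortex on $\Sigma_{\ud\alpha}={\mb H}$; since ${\mb H}$ is contractible the connection is gauge-trivial, so after one more gauge transformation (absorbed into $g_{\ud\alpha}^{(i)}$) one gets a $J$-holomorphic disk $u_{\ud\alpha}:({\mb H},{\mb R})\to(X,L)$, with u.c.s. convergence off $Z'_{\ud\alpha}$.

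Passing to a common subsequence valid for all (finitely many) vertices, I would in addition arrange that $w_j:=\lim_i(\varphi_{\iota_0(j)}^{(i)})^{-1}(w_j^{(i)})$ exists for each $j$, that $z_{\ud\alpha\ud\beta}:=\lim_i(\varphi_{\ud\beta}^{(i)})^{-1}(w_{j_{\ud\alpha}}^{(i)})$ exists for each edge $e_{\ud\alpha\ud\beta}$ with $\ud\alpha$ the deeper vertex, and that $m(w):=\lim_{\epsilon\to0}\lim_i E(\hat{\bf v}^{(i)}_{\ud\alpha};B_\epsilon(w)\cap{\mb H})$ exists for every $w\in Z'_{\ud\alpha}$. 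Since $d_{\ud\alpha}^{(i)}/d_{\ud\beta}^{(i)}\to0$ whenever $\ud\alpha>\ud\beta$, all marked points $w_j^{(i)}$ with $j\in I_{\ud\alpha}$ collapse under $(\varphi_{\ud\beta}^{(i)})^{-1}$ to the single point $z_{\ud\alpha\ud\beta}$; comparing with $Z_{\ud\beta}=\{\lim_i(\varphi_{\ud\beta}^{(i)})^{-1}(w_j^{(i)})\mid j\in I_{\ud\beta}\}$ identifies $Z_{\ud\beta}$ with the set of child-nodes $z_{\ud\alpha\ud\beta}$ together with the markings $w_j$ with $\iota_0(j)=\ud\beta$, which is condition C1. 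Set $W_{\ud\alpha}:=Z'_{\ud\alpha}\smallsetminus Z_{\ud\alpha}$, so $W_{\ud\alpha}\cap Z_{\ud\alpha}=\emptyset$ by construction — condition (ii) — and $m(w)\geq\epsilon>0$ for $w\in W_{\ud\alpha}$ by the quantization clauses of Theorems \ref{thm15} and \ref{thm20} — condition (vi); the components of ${\mc C}_0$ are the maps $\bar u_{\ud\alpha}$, ${\bf v}_{\ud\alpha}$, $u_{\ud\alpha}$ together with the points $z_{\ud\alpha\ud\beta}$ and $w_j$.

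It remains to read off the rest. Conditions L1 and L2 of Definition \ref{defn31}, with $Z_{\ud\alpha}$ replaced by $Z_{\ud\alpha}\cup W_{\ud\alpha}=Z'_{\ud\alpha}$, are exactly the convergence statements above; L4 is the definition of $w_j$; and L3 — that $(\varphi_{\ud\beta}^{(i)})^{-1}\circ\varphi_{\ud\alpha}^{(i)}$ converges u.c.s. on $\Sigma_{\ud\alpha}$ to the constant $z_{\ud\alpha\ud\beta}$ — is a direct computation with the Möbius transformations \eqref{eqn61} using $d_{\ud\alpha}^{(i)}/d_{\ud\beta}^{(i)}\to0$; hence (v). Condition (iii) is the definition of ${\mf s}_0$ together with the convention $d_{\ud\gamma}^{(i)}=\lambda_i^{-1}$ for the inserted color-$1$ vertices, for which $d_{\ud\alpha}^{(i)}/d_{\ud\gamma}^{(i)}=\lambda_i d_{\ud\alpha}^{(i)}\to0$ and $d_{\ud\gamma}^{(i)}/d_{\ud\beta}^{(i)}=(\lambda_i d_{\ud\beta}^{(i)})^{-1}\to0$. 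Finally, for (i): C1 is done; each $E({\mc C}_{\ud\alpha})$ is finite, bounded by the uniform energy bound; and C2 holds because every vortex component already carries the node to its parent and is therefore stable, while the only principal component that could be combinatorially unstable is the base $v_\infty$, which counts as stable by convention. I expect the real content of the argument to be organizational rather than analytic — all the estimates already live in Theorems \ref{thm15}, \ref{thm19}, \ref{thm20} and the soft-rescaling results — namely the verification that the combinatorial tree $\ud{\mc T}{}_0$, extracted from $\ov{\mc M}_{0,\ud k+3}$ using only the marked points, is consistent with the analytic scaling and bubbling of the vortices, i.e. that the colors, the scales $d_{\ud\alpha}^{(i)}$ and the normalizations \eqref{eqn61} fit together and the finitely many convergences glue into a single object of the stated combinatorial type.
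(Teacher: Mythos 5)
Your proposal follows essentially the same route as the paper's own proof: the tree, the scales $d_{\ud\alpha}^{(i)}$, the coloring, and the M\"obius maps \eqref{eqn61} are taken from the construction preceding the proposition, one applies Theorem \ref{thm15} at the color-$0$ and color-$1$ vertices and Theorem \ref{thm20} at the color-$\infty$ vertices, passes to a diagonal subsequence over the finitely many vertices, and sets $W_\alpha$ to be the additional bubbling points, which is exactly what the paper does (it leaves the verification of (i)--(vi) as ``easy to check''). The only imprecision is your justification of (C2): stability of the non-root vertices follows from the stability of the limiting marked disk (every such component carries at least three special points) together with the degree-$2$ criterion for the inserted color-$1$ vertices, not merely from ``carrying the node to the parent'', but this does not affect the conclusion.
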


\begin{proof}
The tree $({\mc T}_0, \ud{\mc T}{}_0, {\mf s}_0)$, the sequences of M\"obius transformations $\varphi_\alpha^{(i)}$ and positive numbers $d_\alpha^{(i)}$ in (5) are constructed as we just described. Notice that at this moment ${\mc T}_0 = \ud{\mc T}{}_0$. They satisfy the conditions (iii) and (iv). For each $\alpha$, for the sequences $\varphi_\alpha^{(i)}$ defined by \eqref{eqn61}, we consider the sequence of vortices 
\beqn
( \varphi_\alpha^{(i)} )^* {\bf v}^{(i)}
\eeqn
defined on an exhausting sequence of compact subsets of $\Sigma_\alpha$. Then by applying Theorem \ref{thm15} (if $v_\alpha \in V_0({\mc T}_0) \cup V_1({\mc T}_0)$) or Theorem \ref{thm20} (if $\alpha \in V_\infty({\mc T}_0)$), we can extract a subsequence (still indexed by $i$) and objects claimed in (3), (4), (6). It is easy to check that this collection satisfies the conditions (i)--(vi). \qed 
\end{proof}

\subsection{Bubbling at nodes and markings}\label{subsection63}

For each edge $\ud{e} \in E(\ud{\mc T}{}_0)\cup E_\infty(\ud{\mc T}{}_0)$, define 
\beq\label{eqn62}
N_{\ud e}^{(i)}:= \left\{ \begin{array}{cl}\displaystyle  \left\{ z\in {\mb H}\ |\  \sqrt{ \frac{d_{\ud\alpha}^{(i)}}{ d_{\ud \alpha'}^{(i)}} } < |z| <  \sqrt{ \frac{ d_{\ud \alpha'}^{(i)} }{ d_{\ud \alpha}^{(i)} } }  \right\},\ & \ud{e} = e_{\ud\alpha \ud\alpha'} \in E(\ud{\mc T}{}_0);\\[0.6cm]
               \Big\{ z\in {\mb H}\ |\ 0 < |z| <  \big(  d_{\iota(j)}^{(i)} \big)^{-1}  \Big\},\ & \ud{e}  = e_j \in E_\infty(\ud{\mc T}{}_0),
\end{array} \right.
\eeq
\beq\label{eqn63}
d_{\ud{e}}^{(i)} = \left\{ \begin{array}{cc} \sqrt{ d_{\ud\alpha}^{(i)} d_{\ud\alpha'}^{(i)}},\ & \ud{e} = e_{\ud\alpha \ud\alpha'} \in E(\ud{\mc T}{}_0);\\
                             ( d_{\ud\beta}^{(i)})^2,\ & \ud{e} = e_j \in E_\infty(\ud{\mc T}{}_0),\ \iota(j) = \ud\beta, \end{array} \right.
\eeq
Define $z_{\ud{e}}^{(i)}$ to be $z_{j_{\ud\alpha}}^{(i)}$ in the former case and $w_j^{(i)}$ in the latter case. Via the exponential map $w \mapsto z = e^w$ we identify  $N_{\ud{e}}^{(i)}$ with a sequence of strips
\beq\label{eqn64}
S_{\ud{e}}^{(i)} =: \big( p^{(i)}_{\ud{e}}, q_{\ud{e}}^{(i)} \big) \times [0, \pi].
\eeq
Define a sequence of M\"obius transformations $\psi_{\ud{e}}^{(i)}: S_{\ud{e}}^{(i)} \to {\mb D}$ by 
\beq\label{eqn65}
\psi_{\ud{e}}^{(i)} = z_{\ud{e}}^{(i)} \frac{{\bm i} - d_{\ud{e}}^{(i)} e^w}{ {\bm i} + d_{\ud{e}}^{(i)} e^w}.
\eeq
Denote by ${\bf v}_{\ud e}^{(i)}$ the pull-back of ${\bf v}^{(i)}$ via $\psi_{\ud{e}}^{(i)}$. For $\tau>0$, denote
\beqn
S^{(i)}_{\ud{e},\tau}:= \big[ p_{\ud{e}}^{(i)} + \tau, q_{\ud{e}}^{(i)} - \tau \big] \times \big[ 0, \pi \big].
\eeqn

\begin{defn}\label{defn41}
The package of objects 
\beqn
{\mc I}: = \Big\{ {\bm s}_0, {\mc T}_0, {\mc C}_0, (W_\alpha^0)_{v_\alpha \in V({\mc T}_0)}, (m(w_\alpha^0))_{w_\alpha^0 \in W_\alpha^0}, (d_\alpha^{(i)}), (\varphi_\alpha^{(i)}), (g_\alpha^{(i)}) \Big\}
\eeqn
satisfying the conditions listed in Proposition \ref{prop40} is called an {\bf induction package}. We say that ${\mc I}$ is {\bf exhaustive} (resp. {\bf semi-exhaustive}) at nodes, if for any $\ud{e} \in E(\ud{\mc T}{}_0) \cup E_\infty(\ud{\mc T}{}_0)$, we have
\beqn
\lim_{\tau \to +\infty} \limsup_{i \to \infty} E \big( ( \psi_{\ud{e}}^{(i)} )^* {\bf v}^{(i)}; S_{\ud{e},\tau}^{(i)} \big) = 0
\eeqn
\beq\label{eqn66}
\Big( {\rm resp.}\ \lim_{\tau\to +\infty} \limsup_{i \to \infty} \big( \sup_{S_{\ud{e},\tau}^{(i)}} e^{(i)} \big) < \infty. \Big)
\eeq
Here $e^{(i)}$ is the energy density function of ${\bf v}_{\ud e}^{(i)}$ with respect to the flat metric on the strip.
\end{defn}

By the energy quantization property, it is easy to see that being exhaustive at nodes implies being semi-exhaustive at nodes. 

Based on the induction package ${\mc I}_0$ constructed by Proposition \ref{prop40}, we are going to construct induction packages which are semi-exhaustive and exhaustive at nodes. The induction process is described as follows. 

For each $\ud{e}\in E(\ud{\mc T}{}_0) \cup E_\infty(\ud{\mc T}{}_0)$, consider the limit in \eqref{eqn66}. If it is finite then we do nothing; if it is infinite, then we can find a subsequence (still indexed by $i$) and a sequence $w^{(i)}:= (s^{(i)}, t^{(i)}) \in S_{\ud{e}}^{(i)}$ such that 
\beqn
\lim_{i \to \infty} e^{(i)}(w^{(i)}) = \infty,\ \lim_{i \to \infty} \min \left\{ | s^{(i)} - p_{\ud{e}}^{(i)}|, |s^{(i)} - q_{\ud{e}}^{(i)} |\right\} = \infty.
\eeqn
Then choose a sequence $l_i\to +\infty$ such that $ s^{(i)} - l_i \geq p_{\ud{e}}^{(i)}$ and $s^{(i)} + l_i \leq q_{\ud{e}}^{(i)}$. Consider the sequence of half annuli
\beqn
N^{(i)}:= \big\{ z \in {\mb H}\ |\  e^{-l_i} < |z| < e^{l_i} \big\}.
\eeqn
Consider the sequence of M\"obius transformations $\varphi_{\ud{e}}^{(i)}: N^{(i)} \to {\mb D}$ defined by 
\beq\label{eqn67}
\varphi^{(i)}_{\ud{e}, N} (w) = z_{\ud{e}}^{(i)} \frac{{\bm i} - d_{\ud{e}}^{(i)} e^{s^{(i)}}  w}{{\bm i} + d_{\ud{e}}^{(i)} e^{s^{(i)}} w} .
\eeq

Now the sequence $N^{(i)}$ exhausts ${\mb H}^*:= {\mb H}\smallsetminus \{0\}$. We consider the sequence 
\beqn
{\bf v}_{\ud{e}, N}^{(i)}:= ( \varphi_{\ud{e}, N}^{(i)} )^* {\bf v}^{(i)}
\eeqn
on $N^{(i)}$. We have the following two possibilities. 

{\bf I.} If ${\mf s}_{\ud{e}} > 1$, then $\lambda_i d_{\ud{e}}^{(i)} e^{s^{(i)}} \to \infty$. Then the sequence of pull-back area forms $( \varphi_{\ud{e}, N}^{(i)})^* \lambda_i^2 \nu_0$ blows up uniformly on compact subsets of ${\mb H}^*$. Since the total energy of ${\bf v}^{(i)}$ is bounded, so is that of ${\bf v}_{\ud{e},N}^{(i)}$. Then by Theorem \ref{thm20}, we can find a subsequence (still denoted by $i$), a finite subset $W_{\ud{e}} \subset {\mb H}^*$, such that 
\begin{enumerate}
\item ${\bf v}_{\ud{e},N}^{(i)}$ converges on ${\mb H}^* \smallsetminus W_{\ud{e}}$ to a holomorphic strip $\bar{u}_{\ud{e}}$ in $(\bar{X}, \bar{L})$.

\item For each $w \in W_{\ud{e}}$, we have
\beq\label{eqn68}
m(w):= \lim_{\epsilon \to 0} \lim_{i \to \infty} E ( {\bf v}_{\ud{e},N}^{(i)}; B_\epsilon(w) \cap {\mb H}^* ) > 0.
\eeq
\end{enumerate}

{\bf II.} If ${\mf s}_{\ud{e}} < 1$, then $\lambda_i d_{\ud{e}}^{(i)} e^{s^{(i)}} \to 0$. Then the sequence of pull-back area forms $( \varphi^{(i)}_{\ud{e}} )^* \lambda_i^2 \nu_0$ converges to zero uniformly on any compact subset of ${\mb H}^*$. Then in a similar way as the above case, we can find a subsequence (still denoted by $i$), and a finite subset $W_{\ud{e}} \subset {\mb H}^*$ such that 
\begin{enumerate}
\item There exists a sequence of gauge transformations $g_{\ud{e}}^{(i)}: {\mb H}^* \to G$ such that the sequence $( g_{\ud{e}}^{(i)} )^* {\bf v}_{\ud{e},N}^{(i)}$ converges modulo bubbling on ${\mb H}^* \smallsetminus W_{\ud{e}}$ to a holomorphic strip $u_{\ud{e}}$ in $(X, L)$.

\item For each $w\in W_{\ud{e}}$, \eqref{eqn68} holds. 
\end{enumerate}

In either case, above, we can construct a new induction package ${\mc I}$ by doing the following to the original ${\mc I}_0$. 

\begin{enumerate}
\item We choose the subsequence ${\bm s}_1$ of ${\bm s}_0$ as we did;

\item We do a type-$\ud{\mc{GD}}$ growth to $({\mc T}_0, \ud{\mc T}{}_0, {\mf s}_0)$ at the edge $\ud{e}$, obtaining a new tree $({\mc T}, \ud{\mc T}, {\mf s})$ (see Definition \ref{defn59}), where the new vertex is temporarily denoted by $v_{\ud\gamma}$. The map $\iota$ automatically extends to the new tree and we set $I_{\ud\gamma} = I_{\ud{e}'}$. 

\item To the collection ${\mc C}_0$, we add a new object corresponding to the new vertex $\ud\gamma$: in the case ({\bf I}) above, we add $\bar{u}_{\ud\gamma} = \bar{u}_{\ud{e}}$; in the case ({\bf II}) above, we add $u_{\ud\gamma} = u_{\ud{e}}$ and we associate to $\ud\gamma$ the sequence of gauge transformations $g{}_{\ud{e}}^{(i)}$.

\item If $\ud{e}  = e_{\ud\alpha \ud\alpha'} \in E({\mc T}_0)$, then we remove the node $z_{\ud\alpha \ud\alpha'}$ and for the two new edges $e_{\ud\alpha \ud\gamma}$ and $e_{\ud\gamma \ud\alpha'}$, define $z_{\ud\alpha \ud\gamma} = 0$, $z_{\ud\gamma \ud\alpha'} = z_{\ud\alpha \ud\alpha'}$. If $\ud{e} = e_j$ with $\iota(j) = \ud\beta$, then for the new edge $e_{\ud\gamma \ud{e}'}$, define $z_{\ud\gamma \ud{e}'} = w_j^0$ and define $w_j = 0$. 

\item To the new vertex $v_{\ud\gamma}$, we associate the subset $W_{\ud\gamma} \subset \Sigma_{\ud\gamma} \smallsetminus Z_{\ud\gamma} = {\mb H}^*$ which consists of the bubbling points; and to each $w \in W_{\ud\gamma}$, we associate the number $m(w)$ given by \eqref{eqn68}.

\item To the new vertex $v_{\ud\gamma}$, we associate the sequence of M\"obius transformations $\varphi_{\ud\gamma}^{(i)} = \varphi_{\ud{e}, N}^{(i)}$ defined by \eqref{eqn67} and the sequence of numbers $d_{\ud\gamma}^{(i)} =d_{\ud{e}}^{(i)} e^{s^{(i)}}$. 

\end{enumerate}

It is routine to check that the new package of objects gives an induction package. The above operation can be done inductively and the induction process stops at finite time because of energy quantization. Then we prove
\begin{prop}\label{prop44}
For any induction package 
\beqn
{\mc I}_0 = \left\{ {\bm s}_0, {\mc T}_0, {\mc C}_0, (W_\alpha^0)_{v_\alpha \in V({\mc T}_0)}, (m(w_\alpha^0))_{w_\alpha^0 \in W_\alpha^0}, (d_\alpha^{(i)}), (\varphi_\alpha^{(i)}), (g_\alpha^{(i)}) \right\}
\eeqn
which is not semi-exhaustive at nodes, we can construct an induction package
\beqn
{\mc I}_1 : = \left\{ {\bm s}_1, {\mc T}_1, {\mc C}_1, (W_\alpha^1)_{v_\alpha \in V({\mc T}_1)}, (m(w_\alpha^1))_{w_\alpha^1 \in W_\alpha^1}, (d_\alpha^{(i)}), (\varphi_\alpha^{(i)}),  (g_\alpha^{(i)}) \right\}
\eeqn
which is semi-exhaustive at nodes such that ${\bm s}_1$ is a subsequence of ${\bm s}_0$, ${\mc T}_1$ is obtained from ${\mc T}_0$ by finitely many type-$\ud{\mc{GD}}$ growths (see Definition \ref{defn59}), and objects in ${\mc I}_1$ labelled by the old vertices coincide with the corresponding objects in ${\mc I}_0$.
\end{prop}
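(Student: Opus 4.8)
The plan is to iterate the rescaling procedure set up in the paragraphs preceding the statement, and to show by an energy–quantization argument that it stops after finitely many steps. The construction of a single growth is already essentially spelled out above; the content of the proposition is that repeating it is legitimate and terminates.

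First I would perform one growth. Assuming ${\mc I}_0$ is not semi-exhaustive at nodes, Definition \ref{defn41} furnishes an edge $\ud e \in E(\ud{\mc T}{}_0)\cup E_\infty(\ud{\mc T}{}_0)$ for which the limit in \eqref{eqn66} is $+\infty$. I would pass to a subsequence, pick a point $w^{(i)}=(s^{(i)},t^{(i)})$ in the strip $S_{\ud e}^{(i)}=(p_{\ud e}^{(i)},q_{\ud e}^{(i)})\times[0,\pi]$ at which the flat-metric energy density of ${\bf v}_{\ud e}^{(i)}$ explodes, with $s^{(i)}$ at distance tending to $\infty$ from both endpoints, and form the rescaled sequence $(\varphi_{\ud e,N}^{(i)})^*{\bf v}^{(i)}$ on the family $N^{(i)}$ exhausting ${\mb H}^*$, exactly as in \eqref{eqn67}. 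Depending on whether the pulled-back area forms blow up ($\lambda_i d_{\ud e}^{(i)}e^{s^{(i)}}\to\infty$, case (I)) or collapse ($\to 0$, case (II)), I would apply Theorem \ref{thm20} in the first case, and its analogue in the second (Uhlenbeck compactness for the connections, whose curvatures tend to $0$ after rescaling, together with Gromov compactness for the maps), in each case after a further subsequence and, in case (II), after Uhlenbeck gauge transformations $g_{\ud e}^{(i)}$. This yields a limiting holomorphic strip $\bar u_{\ud e}$ in $(\bar X,\bar L)$, respectively $u_{\ud e}$ in $(X,L)$, a finite bubbling set $W_{\ud e}\subset {\mb H}^*$, and positive masses $m(w)$, $w\in W_{\ud e}$, as in \eqref{eqn68}. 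A type-$\ud{\mc{GD}}$ growth at $\ud e$ (Definition \ref{defn59}) then inserts a new vertex $\ud\gamma$ carrying this strip together with $\varphi_{\ud\gamma}^{(i)}=\varphi_{\ud e,N}^{(i)}$, $d_{\ud\gamma}^{(i)}=d_{\ud e}^{(i)}e^{s^{(i)}}$, the subset $W_{\ud\gamma}=W_{\ud e}$ and (in case (II)) the sequence $g_{\ud\gamma}^{(i)}$; the two nodes of the subdivided edge and the affected marked point are redistributed as prescribed, and every other component and all its data are copied verbatim.

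Next I would check that the new package is again an induction package, i.e. that it satisfies (i)--(vi) of Proposition \ref{prop40}. The scale condition (iii) follows from $d_{\ud\gamma}^{(i)}=d_{\ud e}^{(i)}e^{s^{(i)}}$: the limit $\lim_i\lambda_i d_{\ud\gamma}^{(i)}$ is $\infty$ (case (I)) or $0$ (case (II)), which fixes the color ${\mf s}(\ud\gamma)$, while $d_{\ud\alpha}^{(i)}/d_{\ud\gamma}^{(i)}\to 0$ and $d_{\ud\gamma}^{(i)}/d_{\ud\alpha'}^{(i)}\to 0$ because $s^{(i)}$ lies interior to $S_{\ud e}^{(i)}$ at growing distance from both endpoints. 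Condition (iv) is the explicit derivative estimate for the M\"obius map \eqref{eqn67}, while (ii), (v), (vi) are exactly the output of Theorem \ref{thm20} (or its analogue) transported through $\varphi_{\ud\gamma}^{(i)}$, and (i) is inherited because the other components are untouched. The one combinatorial point — that inserting $\ud\gamma$ might require an auxiliary color-$1$ vertex to avoid making a color-$0$ vertex adjacent to a color-$\infty$ vertex — is already absorbed into the definition of a type-$\ud{\mc{GD}}$ growth in Appendix \ref{appendixb}. I expect this step to be routine, following the soft-rescaling arguments of \cite[Theorem 4.7.1]{McDuff_Salamon_2004} and \cite[Theorem 3.5]{Frauenfelder_disk}.

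The main work, and the place I expect the real difficulty, is proving termination. Each growth attaches at $\ud\gamma$ a component of total energy $m_{\ud e}=E(\bar u_{\ud e})+\sum_{w\in W_{\ud e}}m(w)$ (with $u_{\ud e}$ in place of $\bar u_{\ud e}$ in case (II)), and by the energy quantization for holomorphic spheres, disks and strips, for ${\mb C}$-vortices and for ${\mb H}$-vortices — Proposition \ref{prop11}, Theorem \ref{thm14}, and the choice of $\hbar$ at the beginning of Section \ref{section5} — this component is nontrivial, so $m_{\ud e}\geq\hbar$ (either the strip is nonconstant, or $W_{\ud e}\neq\emptyset$ and some $m(w)\geq\hbar$). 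The delicate assertion is that this mass is \emph{new}: it is carried off from the deep part of the neck $S_{\ud e,\tau}^{(i)}$ (large $\tau$), which by construction of ${\mc I}_0$ is seen neither by ${\mc C}_{\ud\alpha}$ nor by ${\mc C}_{\ud\alpha'}$ nor by any component attached at a previously grown edge. This is the no-energy-loss bookkeeping of the annulus lemma (Proposition \ref{prop57}): up to errors that vanish as the necks are lengthened, the energy of ${\bf v}^{(i)}$ splits into contributions localized over compact regions of the various component domains, so attaching a new component strictly increases the running sum $\sum_{v_\alpha\in V({\mc T})}E({\mc C}_\alpha)$ by at least $\hbar$ with no double-counting. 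Since that sum is bounded above by $E_0:=\limsup_i E({\bf v}^{(i)})<\infty$, at most $\lfloor E_0/\hbar\rfloor$ growths can occur. The resulting package ${\mc I}_1$ is then semi-exhaustive at nodes; ${\bm s}_1$ is the composition of the finitely many subsequences extracted, ${\mc T}_1$ is obtained from ${\mc T}_0$ by finitely many type-$\ud{\mc{GD}}$ growths, and the data indexed by $V({\mc T}_0)$ agree with those of ${\mc I}_0$ by construction.
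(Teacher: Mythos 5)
Your proposal is correct and follows essentially the same route as the paper: there the proposition is established precisely by the single-growth construction spelled out in the paragraphs preceding it (rescaling at a point of density blow-up deep in the neck, the two cases according to whether the rescaled area forms blow up or degenerate, then a type-$\ud{\mc{GD}}$ growth recording the limit strip, its bubbling set and the new scales $d_{\ud e}^{(i)}e^{s^{(i)}}$), with the verification of the package axioms declared routine and termination deduced from energy quantization, exactly as you argue. Your extra bookkeeping that each growth captures at least $\hbar$ of previously unrecorded neck energy is simply an expansion of the paper's one-line termination remark.
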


\subsection{Constructing connecting bubbles}

Now suppose we have finished the induction described in the previous subsection. Then, for each boundary nodal point (or marked point) represented by the edge $\ud{e} \in E(\ud{\mc T}{}_1) \cup E_\infty({\mc T}_1)$, if we consider the sequence of strips $S_{\ud{e}}^{(i)}$ as in \eqref{eqn64} with M\"obius transformations $\varphi_{\ud{e}}^{(i)}$ defined in \eqref{eqn65}, then for the sequence
\beqn
{\bf v}_{\ud e}^{(i)}:= ( \varphi_{\ud{e}}^{(i)} )^* {\bf v}^{(i)},
\eeqn
the limit in \eqref{eqn66} is finite. However, the solutions on the sequence of strips may have Floer-Morse type degeneration. Denote
\beqn
\wt\updelta:= \wt\updelta (L, L) = \min\{ \updelta(L, L), \updelta'(L, L) \}. 
\eeqn
Here $\updelta$ is the constant given by Lemma \ref{lemma51}, and $\updelta'$ is the constant which appeares in Po\'zniak's isoperimetric inequality (Lemma \ref{lemma50}). It is easy to see that we have the following.
\begin{prop}\label{prop45}
There exist a subsequence (still denoted by $i$) and sequences of numbers
\beqn
s_1^{(i)}, \ldots, s_m^{(i)} \in  \big( p_{\ud{e}}^{(i)}, q_{\ud{e}}^{(i)} \big),\ s_1^{(i)}< s_2^{(i)}< \cdots < s_m^{(i)},\ (m \geq 0)
\eeqn
satisfying
\begin{enumerate} 
\item For each $l \in \{1, \ldots, m\}$ we have
\beqn
\lim_{i\to \infty} \big( s_l^{(i)} - p_{\ud{e}}^{(i)} \big)  = \lim_{i\to \infty} ( q_{\ud{e}}^{(i)} - r_l^{(i)} ) = +\infty;
\eeqn
and $l_1 < l_2 \Longrightarrow \displaystyle \lim_{i \to \infty} ( s_{l_2}^{(i)} -  s_{l_1}^{(i)} )= \infty$.

\item For each $l \in \{1, \ldots, m\}$ and $i$, there exists $t_l^{(i)} \in [0, \pi]$ such that
\beq\label{eqn69}
\lim_{i \to \infty} e_i(  s_l^{(i)}, t_l^{(i)} ) \geq \big( \frac{1}{2} \updelta \big)^2.
\eeq
Here $e_i: S_{\ud{e}}^{(i)} \to {\mb R}_+ \cup \{0\}$ is the energy density function of ${\bf v}_{\ud e}^{(i)}$ with respect to $ds dt$.

\item For any sequence $( s^{(i)}, t^{(i)}) \in S_{\ud{e}}^{(i)}$ satisfying
\beqn
\lim_{i \to \infty} \min \left\{ |s^{(i)} - p_{\ud{e}}^{(i)}|, |s^{(i)} - q_{\ud{e}}^{(i)}| \right\} = \lim_{i \to \infty} | s^{(i)} - s_l^{(i)}| = \infty,\ \forall l = 1, \ldots, m,
\eeqn
we have
\beq\label{eqn610}
\limsup_{i \to \infty} e_i( s^{(i)}, t^{(i)}) < \big( \frac{1}{2} \updelta \big)^2.
\eeq
\end{enumerate}
\end{prop}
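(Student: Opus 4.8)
The plan is to prove this by a hard rescaling argument along the strips $S_{\ud e}^{(i)}$, modelled on the corresponding step of Gromov compactness for holomorphic strips. First I would fix $E_0 := \sup_i E({\bf v}^{(i)}) < \infty$ and note that, since each map used to pull ${\bf v}^{(i)}$ back to $S_{\ud e}^{(i)}$ is the restriction of a biholomorphism into ${\mb D}$ and energy is gauge invariant, the energy of ${\bf v}_{\ud e}^{(i)}$ over any subset of $S_{\ud e}^{(i)}$ is $\leq E_0$. Then I would select the columns inductively: given mutually separated columns $s_1^{(i)}, \dots, s_{l-1}^{(i)}$ (none when $l=1$), set
\begin{equation*}
R_{l,\sigma}^{(i)} := \Big\{ (s,t) \in S_{\ud e}^{(i)} \;:\; \min\{ s - p_{\ud e}^{(i)},\, q_{\ud e}^{(i)} - s \} \geq \sigma,\ \ |s - s_j^{(i)}| \geq \sigma \ \text{ for } 1 \leq j < l \Big\},
\end{equation*}
and let $\mathfrak M_l := \lim_{\sigma \to \infty} \limsup_{i \to \infty} \big( \sup_{R_{l,\sigma}^{(i)}} e_i \big)$, the bracket being nonincreasing in $\sigma$ so the limit exists. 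If $\mathfrak M_l < (\tfrac12\updelta)^2$ I would stop and set $m = l-1$; otherwise the inner $\limsup$ is $\geq (\tfrac12\updelta)^2$ for every $\sigma$, and a diagonal argument in $\sigma = 1,2,\dots$ together with passage to a subsequence would yield points $(s_l^{(i)}, t_l^{(i)}) \in R_{l,\sigma_i}^{(i)}$ with $\sigma_i \to \infty$ and $e_i(s_l^{(i)}, t_l^{(i)}) \to \ell_l \geq (\tfrac12\updelta)^2$. Re-indexing so that $s_1^{(i)} < \cdots < s_m^{(i)}$, the separation built into $R_{l,\sigma_i}^{(i)}$ would give (1) and the last limit would give (2).

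Next I would show the procedure terminates. By a mean value inequality for vortices over a bordered surface (Appendix \ref{appendixa}), applicable on the interiors of the strips $S_{\ud e}^{(i)}$ in view of the normalizations of Proposition \ref{prop40} and the reductions of Subsection \ref{subsection63}, there are constants $\rho > 0$ and $\eta_0 > 0$, depending only on $X$, $L$, $J$, $\updelta$, such that $e_i(w) \geq (\tfrac12\updelta)^2 - o(1)$ at an interior point $w$ forces $E\big({\bf v}_{\ud e}^{(i)}; B_\rho(w) \cap S_{\ud e}^{(i)}\big) \geq \eta_0$ for $i$ large (with the boundary version used when $w$ is a boundary point). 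Since the columns are pairwise at distance $\to \infty$, for large $i$ these balls are disjoint, and comparison with $E_0$ bounds the number of columns by $E_0/\eta_0$; hence the induction halts after finitely many steps and finitely many subsequence extractions.

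Finally, for (3): once the procedure has halted, $\mathfrak M_{m+1} < (\tfrac12\updelta)^2$, so I can pick $\sigma_0$ with $\limsup_{i \to \infty} \sup_{R_{m+1,\sigma_0}^{(i)}} e_i < (\tfrac12\updelta)^2$; any sequence $(s^{(i)}, t^{(i)})$ whose distances to $p_{\ud e}^{(i)}$, $q_{\ud e}^{(i)}$ and to every $s_l^{(i)}$ tend to $\infty$ eventually lies in $R_{m+1,\sigma_0}^{(i)}$, which yields \eqref{eqn610}. The threshold $(\tfrac12\updelta)^2$ itself plays no special role here — its significance, via Lemma \ref{lemma51} and Po\'zniak's inequality (Lemma \ref{lemma50}), is used only afterwards. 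The two points I would treat most carefully are the use of the recessive \emph{regions} $R_{l,\sigma}^{(i)}$ (not a single recessive sequence) in defining $\mathfrak M_l$, which is what makes (3) hold for \emph{all} escaping sequences at once, and the $i$-uniform mean value inequality on the degenerating strips; the latter is the genuinely analytic input and the main obstacle.
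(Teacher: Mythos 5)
Your proof is correct, and in fact the paper gives no argument for Proposition \ref{prop45} at all (it is introduced with ``It is easy to see that\dots''), so your scheme --- inductive column selection via the recessive regions $R_{l,\sigma}^{(i)}$ and the quantities $\mathfrak{M}_l$, termination from the uniform mean value inequality (Lemma \ref{lemma54} combined with Lemma \ref{lemma53}, using boundary reflection with a $(J,L,\mu)$-admissible metric as in Propositions \ref{prop55}--\ref{prop56}), and deduction of (3) from $\mathfrak{M}_{m+1}<\big(\tfrac12\updelta\big)^2$ at a fixed large $\sigma_0$ --- is precisely the standard argument the authors presuppose, with the genuinely analytic input (the $i$-uniform constant in $\Delta e_i\geq -{\bm c}(e_i+e_i^2)$ on the degenerating strips) available because the pulled-back area forms $\sigma_{\ud e}^{(i)}$ satisfy \eqref{eqna9} with $i$-independent constants. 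The only cosmetic additions I would make are to note explicitly this uniformity of \eqref{eqna9}, and that after arranging $|s_{l_1}^{(i)}-s_{l_2}^{(i)}|\to\infty$ one passes to a further subsequence so that the ordering $s_1^{(i)}<\cdots<s_m^{(i)}$ is independent of $i$.
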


Moreover, if we denote 
\beq\label{eqn611}
\varphi_{\ud{e},l}^{(i)}(w) = z_{\ud{e}}^{(i)} \frac{ {\bm i} - d_{\ud{e}}^{(i)} e^{s_l^{(i)}} w}{{\bm i} + d_{\ud{e}}^{(i)} e^{s_l^{(i)}} w}, 
\eeq
denote $s_0^{(i)} = p_{\ud{e}}^{(i)}$, $s_{m+1}^{(i)} = q_{\ud{e}}^{(i)}$ and for $l = 1, \ldots, m$, denote
\beq\label{eqn612}
\Sigma_{\ud{e},l}^{(i)}:= \left\{ z \in {\mb H}\ |\   e^{s_{l-1}^{(i)} - s_l^{(i)}}  \leq |z| \leq e^{s_{l+1}^{(i)} - s_l^{(i)}} \right\}.
\eeq
which exhausts ${\mb H}^*$. We consider ${\bf v}_{\ud{e},l}^{(i)}:= ( \varphi_{\ud{e},l}^{(i)} )^* {\bf v}^{(i)}$.

Now, if ${\mf s}(\ud{e}') = \infty$, then $( \varphi_{\ud{e},l}^{(i)} )^* \lambda_i^2 \nu_0$ blows up uniformly on ${\mb H}^*$. Then there is a subsequence (still indexed by $i$) such that for each $l = 1, \ldots, m$, the sequence ${\bf v}_{\ud e, l}^{(i)}$ converges (modulo gauge) to a holomorphic strip $\bar{u}_{\ud{e},l}$ in $(\bar{X}, \bar{L})$. We may assume that $t_{\ud{e},l}^{(i)}$ converges to $t_{\ud{e},l} \in [0, \pi]$. Then \eqref{eqna6} and Proposition \ref{prop18} implies that the limit $\bar{u}_{\ud{e},l}$ is nonconstant and then the energy of $\bar{u}_{\ud{e},l}$ is no less than some positive constant which only depends on $(\bar{X}, \bar{L})$. 

In this case, we can do the following update to the induction package. 
\begin{enumerate}
\item Choose the subsequence ${\bm s}_2$ of ${\bm s}_1$ we just found.

\item We do type-$\ud{\mc{GD}}$ growth to the tree $({\mc T}_1, \ud{\mc T}{}_1, {\mf s}_1)$ at the edge $\ud{e}$ (see Definition \ref{defn59}), but instead of inserting just one intermediate vertex, we insert $m$ intermediate vertices, temporarily labelled by $\ud\gamma{}_1, \ldots, \ud\gamma{}_m$ and the edge $\ud{e}$ breaks into $\ud{e}{}_0, \ldots, \ud{e}{}_m$. Thus we obtained a new based colored rooted tree $({\mc T}_2, \ud{\mc T}{}_2, {\mf s}_2)$, where the new vertices are contained in $V_\infty({\mc T}_2)$. 

\item To the collection ${\mc C}_1$, we add $m$ new objects corresponding to the new vertices: for each $\ud\gamma{}_l$ ($l= 1, \ldots, m$), we add $\bar{u}_{\ud\gamma{}_l} = \bar{u}_{\ud{e}, l}$; we remove the node $z_{\ud{e}}$ from the collection and instead, for the new edges $\ud{e}{}_l$, define $z_{\ud{e}{}_m} = \cdots = z_{\ud{e}{}_1} = 0$ and $z_{\ud{e}{}_0} = z_{\ud{e}}$.

\item To each new vertex $\ud\gamma{}_l$, the subset $W_{\ud\gamma{}_l} = \emptyset$; we associate the sequence of M\"obius transformations $\varphi_{\ud\gamma{}_l}^{(i)} = \varphi_{\ud{e}, l}^{(i)}$ given by \eqref{eqn611}, and the sequence of numbers $d_{\ud\gamma{}_l}^{(i)}:= d_{\ud{e}}^{(i)} e^{s_l^{(i)}}$.
\end{enumerate}

On the other hand, if ${\mf s}(\ud{e}')  \leq 1$, then for the sequence ${\bf v}_{\ud{e},l}^{(i)}$, there exists a sequence of gauge transformations $g_{\ud{e},l}^{(i)}: {\mb H}^* \to G$ such that $( g_{\ud{e},l}^{(i)} )^* {\bf v}_{\ud{e},l}^{(i)}$ converges to $(0, u_{\ud{e},l})$ uniformly with all derivatives on compact subsets of ${\mb H}^*$. Here $u_{\ud{e},l}$ is a holomorphic strip in $(X, L)$ with positive energy. Then we can update the induction package accordingly. We leave the details to the reader.

We carry out the above process for every edge $\ud{e} \in E(\ud{\mc T}{}_1)$. Then we obtain a new induction package, denoted by
\beqn
{\mc I}_2: = \Big\{ {\bm s}_2, {\mc T}_2, {\mc C}_2, (W_\alpha^2)_{v_\alpha \in V({\mc T}_2)}, (m(w_\alpha^2))_{w_\alpha^2 \in W_\alpha^2}, (d_\alpha^{(i)}), (\varphi_\alpha^{(i)}), (g_\alpha^{(i)}) \Big\}.
\eeqn

\subsection{Proof that no energy is lost at nodes and markings}

We would like to show that the induction package we just constructed is exhaustive at nodes. It suffices to show that, for any edge $\ud{e} \in E(\ud{\mc T}{}_2) \cup E_\infty(\ud{\mc T}{}_2)$, for the sequence of necks $S_{\ud{e}}^{(i)}$, we have
\beq\label{eqn613}
\lim_{\tau\to +\infty} \limsup_{i \to \infty} E ( {\bf v}^{(i)}; S_{\ud{e},\tau}^{(i)} ) = 0.
\eeq
If ${\mf s}(\ud{e}') > 1$, then the sequence of area forms $\lambda_i^2 ( \varphi_{\ud{e}}^{(i)} )^* \nu_0$ diverges uniformly to infinity on any $S_{\ud{e},\tau}^{(i)}$. If we write $( \varphi_{\ud{e}}^{(i)} )^* {\bf v}^{(i)} = {\bf v}_{\ud{e}}^{(i)} = (B_{\ud e}^{(i)}, u_{\ud e}^{(i)})$ and transform $B_{\ud{e}}^{(i)}$ into temporal gauge so that $B_{\ud{e}}^{(i)} = \psi^{(i)} dt$, and if we write
\beq\label{eqn614}
\gamma_s^{(i)}(t) = v_{\ud{e}}^{(i)}(s, t),\ \psi_s^{(i)}(t) = \psi^{(i)}(s, t),\ s \in \big( p_{\ud{e}}^{(i)}, q_{\ud{e}}^{(i)}  \big),
\eeq
then \eqref{eqn610} implies that for $\tau$ large enough and $s \in \big( p_{\ud{e}}^{(i)} + \tau, q_{\ud{e}}^{(i)}- \tau \big)$, $( \gamma_s^{(i)}, \psi_s^{(i)} ) \in {\mc P}_{\updelta}$. Here ${\mc P}_{\updelta}$ denotes the set of paths $(x(t), \eta(t))$ satisfying \eqref{eqna8}. By the same argument as used in the proof of Proposition \ref{prop57}, \eqref{eqn613} holds. 

On the other hand, if ${\mf s}(\ud{e}') \leq 1$, we cannot use the argument of Proposition \ref{prop57} because the sequence of area forms $\lambda_i^2 (\psi_{\ud{e}}^{(i)})^* \nu_0$ are not uniformly bounded from below. We write
\beqn
\nu_{\ud{e}}^{(i)} = \sigma_{\ud{e}}^{(i)} dsdt = ( \varphi_{\ud{e}}^{(i)} )^* \lambda_i^2 \nu_0.
\eeqn
By \eqref{eqn62}, \eqref{eqn63} and by estimating the derivative of $\varphi_{\ud{e}}^{(i)}$ we see that  
\beqn
\sigma_{\ud{e}}^{(i)} \leq (16 \lambda_i d_{\ud{e}}^{(i)} )^2 e^{2s} \leq 16 ( \lambda_i d_{\ud{e}'}^{(i)} )^2 e^{ - 2 ( q_{\ud{e}}^{(i)} - s) }.
\eeqn
Since $\lambda_i d{}_{\ud{e}'}^{(i)}$ is bounded, for each $\epsilon > 0$, we choose $a^{(i)} (\epsilon) < q_{\ud{e}}^{(i)}$ such that for $s \leq a^{(i)} (\epsilon )$, 
\beq\label{eqn616}
\sigma^{(i)} \leq 16 ( \lambda_i d_{\ud{e}'}^{(i)} )^2 e^{-2( q_{\ud{e}}^{(i)} - a^{(i)} (\epsilon) )} e^{-2 (a^{(i)} (\epsilon) - s)}  < \epsilon e^{-2 (a^{(i)} (\epsilon)  -s)}.
\eeq

Denote $s_0: = q_{\ud{e}}^{(i)} - a^{(i)} (\epsilon)$, which can be made only depend on $\epsilon$ but not on $i$. Then we transform $(B_{\ud{e}}^{(i)}, u_{\ud{e}}^{(i)})$ into temporal gauge and with notations similar to \eqref{eqn614} with an extra requirement that $\psi^{(i)} ( q_{\ud{e}}^{(i)} - s_0, t) \equiv 0$. Then for $i$ large and $s \in [ p_{\ud{e}}^{(i)} + s_0, q_{\ud{e}}^{(i)}- s_0 ]$, we have
\beq\label{eqn617}
 \psi^{(i)} (s, t) = - \int_s^{q_{\ud{e}}^{(i)}- s_0} \frac{\partial \psi^{(i)}}{\partial \tau} (\tau, t) d\tau = \int_s^{q_{\ud{e}}^{(i)}- s_0 } \sigma^{(i)}(\tau, t) \mu(u^{(i)} (e^{\tau + {\bm i} t})) d\tau.
\eeq
Since $|\mu(u^{(i)})|$ is uniformly bounded by a constant $c_1$, by \eqref{eqn616} and \eqref{eqn617},
\beqn
\big| \psi^{(i)} (s, t) \big| \leq c_1 \int_s^{q_{\ud{e}}^{(i)} - s_0}  \epsilon e^{-2( s_0 - \tau)} d\tau \leq \frac{c_1 \epsilon }{2} \big( 1- e^{-2(q_{\ud{e}}^{(i)} - s_0 - s)} \big) \leq \frac{c_1 \epsilon}{2}.
\eeqn
Notice that $|{\mc X}_{\psi_s^{(i)}}|$ is controlled by $|\psi_s^{(i)}|$. Then there exists $c_2>0$ such that 
\beqn
\big|\dot\gamma_s^{(i)} \big| \leq \big| \dot\gamma_s^{(i)} + {\mc X}_{\psi_s^{(i)}} (\gamma_s^{(i)} ) \big| + \big| {\mc X}_{\psi_s^{(i)} }(\gamma_s^{(i)} ) \big| \leq \frac{1}{2} \updelta' +  c_2\epsilon.
\eeqn
So for $\epsilon$ small enough, $|\dot\gamma_s^{(i)}| < \updelta'$. Recall that $\updelta'$ is the constant associated with the cleanly intersecting pair $(L, L)$ in $X$ in Po\'zniak's isoperimetric inequality Lemma \ref{lemma50}. Then we can define the local action ${\bm a}(\gamma{}_s^{(i)})$ by \eqref{eqna6}, which satisfies \eqref{eqna7}. We also consider the equivariant local action given by
\beqn
{\bm A} \big( \wt\gamma_s^{(i)} \big):= {\bm A} \big( \gamma_s^{(i)} ,\psi_s^{(i)} \big) = {\bm a} \big( \gamma_s^{(i)} \big) + \int_0^\pi \big\langle \mu \big( \gamma_s^{(i)} (t) \big), \psi_s^{(i)} (t) \big\rangle dt.
\eeqn
Then for appropriate constants $c_3, \cdots$ (omitting the index $i$ to save space)
\beq\label{eqn618}
\begin{split}
&\ {\bm A} \big( \gamma_s^{(i)}, \psi_s^{(i)} \big) \\
\leq &\ {\bm a} \big( \gamma_s  \big)  + \int_0^\pi \big| \mu \big( \gamma_s (t) \big) \big| \cdot \big| \psi_s (t) \big| dt \\
 \leq &\ {\bm c}' \int_0^\pi \big| \dot\gamma_s (t) \big|^2 dt + c_3\epsilon \\
\leq &\  2{\bm c}' \int_0^\pi \Big( \big| \dot\gamma_s + {\mc X}_{\psi_s}(\gamma_s) \big|^2 + \big| {\mc X}_{\psi_s}(\gamma_s ) \big|^2  +  \sigma (s, t) \big| \mu \big( \gamma_s \big) \big|^2 \Big) dt + c_4 \epsilon \\
\leq &\ 2{\bm c}' \int_0^\pi \Big( \big| \dot\gamma_s + {\mc X}_{\psi_s}(\gamma_s) \big|^2 + \sigma (s, t) \big|\mu \big( \gamma_s (t) \big) \big|^2 \Big) dt  + c_5\epsilon.
\end{split}
\eeq
Notice that 
\beqn
\int_0^\pi \Big( \big| \dot\gamma_s^{(i)} + {\mc X}_{\psi_s^{(i)}}(\gamma_s^{(i)}) \big|^2 + \sigma^{(i)} (s, t) \big|\mu ( \gamma_s^{(i)} ) \big|^2 \Big) dt = \int_0^\pi e_i (s, t) dt.
\eeqn
For $s \geq s_0$, we define 
\beqn
F^{(i)} (s) = E \Big( {\bf v}_{\ud{e}}^{(i)}; \big[ p_{\ud{e}}^{(i)} + s, q_{\ud{e}}^{(i)} -s \big] \times \big[ 0, \pi \big] \Big).
\eeqn
Then \eqref{eqn618} implies 
\beqn
\Big| {\bm A} \Big( \wt\gamma_{p_{\ud{e}}^{(i)} + s}^{(i)} \Big) - {\bm A} \Big( \wt\gamma_{q_{\ud{e}}^{(i)} - s}^{(i)} \Big) \Big|  \leq  - 2{\bm c}' \frac{d }{ds} F^{(i)} (s)  + c_5 \epsilon.
\eeqn
On the other hand, we know that the left-hand-side above is equal to $F^{(i)}(s)$. Therefore
\beqn
 \frac{d}{ds} \left( \exp \left( \frac{s}{2 {\bm c}' } \right) F^{(i)} (s) \right) \leq  c_5 \epsilon \frac{d}{ds} \left( \exp \left( \frac{s }{ 2 {\bm c}'}  \right) \right).
\eeqn
and therefore
\beqn
\begin{split}
F^{(i)} (s) \leq &\ \exp \Big( - \frac{s }{2 {\bm c}' } \Big) \Big( \exp \Big( \frac{ q_{\ud{e}}^{(i)} - s_0 }{2{\bm c}' } \Big) F^{(i)}(q_{\ud{e}}^{(i)} - s_0 )  +  c_5 \epsilon \exp \Big( \frac{s}{2 {\bm c}' }  \Big) \Big) \\
\leq &\ \exp \Big( - \frac{1}{2 {\bm c}' } ( s - (q_{\ud{e}}^{(i)} - s_0))\Big)   + c_5 \epsilon. 
\end{split}
\eeqn
Let $\epsilon$ be arbitrarily small, we see that the limit \eqref{eqn613} is zero. 

Therefore, the induction package constructed in the previous step is exhaustive at nodes. In summary, we have proved the following. 
\begin{prop}\label{prop46}
For any induction package
\beqn
{\mc I}_0: = \left\{ {\bm s}_0, {\mc T}_0, {\mc C}_0, (W_\alpha^0)_{v_\alpha \in V({\mc T}_0)}, (m(w_\alpha^0))_{w_\alpha \in W_\alpha^0}, (d_\alpha^{(i)}), (\varphi_\alpha^{(i)}), (g_\alpha^{(i)}) \right\}
\eeqn
which is not exhaustive at nodes, we can construct an induction package
\beqn
{\mc I}: = \left\{ {\bm s}, {\mc T}, {\mc C}, (W_\alpha)_{v_\alpha \in V({\mc T})}, (m(w_\alpha))_{w_\alpha \in W_\alpha}, (d_\alpha^{(i)}), (\varphi_\alpha^{(i)}), (g_\alpha^{(i)}) \right\}
\eeqn
which is exhaustive at nodes such that, ${\bm s}$ is a subsequence of ${\bm s}_0$, the tree ${\mc T}$ is obtained from ${\mc T}_0$ by doing type-$\ud{\mc{GD}}$ growths for finitely many times, and objects of ${\mc I}$ labelled by the old vertices coincide with the corresponding objects in ${\mc I}_0$.
\end{prop}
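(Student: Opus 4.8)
The plan is to assemble the three constructions carried out in this section. First, if ${\mc I}_0$ is not already semi-exhaustive at nodes, I would invoke Proposition \ref{prop44} to replace it by an induction package that is semi-exhaustive at nodes, obtained from ${\mc I}_0$ by finitely many type-$\ud{\mc{GD}}$ growths together with a passage to a subsequence, and with all objects attached to the old vertices unchanged. So we may assume from the outset that ${\mc I}_0$ is semi-exhaustive at nodes.

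Second, for each edge $\ud e \in E(\ud{\mc T}{}_0) \cup E_\infty(\ud{\mc T}{}_0)$ I would run the connecting-bubble construction: using semi-exhaustiveness and Proposition \ref{prop45}, locate the finitely many concentration slices $s_1^{(i)} < \cdots < s_m^{(i)}$ on the neck $S_{\ud e}^{(i)}$ where the energy density survives. Rescale by the M\"obius transformations $\varphi_{\ud{e},l}^{(i)}$ of \eqref{eqn611} and pass to a subsequence. If ${\mf s}(\ud e') = \infty$ the pulled-back area forms $\lambda_i^2 (\varphi_{\ud{e},l}^{(i)})^* \nu_0$ blow up and, by Theorem \ref{thm20}, the rescaled sequence converges modulo gauge to a holomorphic strip $\bar u_{\ud{e},l}$ in $(\bar X, \bar L)$; if ${\mf s}(\ud e') \leq 1$ the area forms degenerate and, after a suitable gauge transformation, the rescaled sequence converges u.c.s. to a holomorphic strip $u_{\ud{e},l}$ in $(X, L)$. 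In either case nonconstancy is forced by the lower bound \eqref{eqn69} together with Proposition \ref{prop18}, so each new component carries energy bounded below by a uniform positive constant depending only on $(\bar X, \bar L)$ or $(X, L)$. Insert $m$ new vertices at the edge $\ud e$ via $m$ type-$\ud{\mc{GD}}$ growths, redistribute the nodes, and attach these strips together with the sequences $\varphi_{\ud{e},l}^{(i)}$, the numbers $d_{\ud e}^{(i)} e^{s_l^{(i)}}$, and (in the degenerate case) the gauge sequences. Carrying this out for every edge yields an induction package ${\mc I}_2$.

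Third, I would show ${\mc I}_2$ is exhaustive at nodes, i.e.\ that \eqref{eqn613} holds for every remaining neck $S_{\ud e}^{(i)}$. When ${\mf s}(\ud e') > 1$ the area forms diverge uniformly on $S_{\ud{e},\tau}^{(i)}$, the restricted paths $(\gamma_s^{(i)}, \psi_s^{(i)})$ lie in ${\mc P}_{\updelta}$ by \eqref{eqn610}, and the exponential-decay argument behind the annulus lemma (Proposition \ref{prop57}) applies verbatim. When ${\mf s}(\ud e') \leq 1$ the area form $\sigma_{\ud e}^{(i)}$ decays like $e^{-2(q_{\ud e}^{(i)} - s)}$ away from the right end of the neck; putting the connection in temporal gauge and using the uniform bound on $|\mu(u^{(i)})|$, one bounds $|\psi^{(i)}(s,t)| \leq \tfrac{1}{2} c_1 \epsilon$ on the relevant range, hence $|\dot\gamma_s^{(i)}| < \updelta'$, so Po\'zniak's isoperimetric inequality (Lemma \ref{lemma50}) applies to the equivariant local action; the resulting differential inequality for $F^{(i)}(s)$ gives exponential decay up to an $O(\epsilon)$ error, and letting $\epsilon \to 0$ yields \eqref{eqn613}. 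Finally, the process terminates: each type-$\ud{\mc{GD}}$ growth produces a component of energy at least $\hbar$ while $\lim_i E({\bf v}^{(i)})$ is bounded, so only finitely many growths occur.

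The main obstacle is the degenerate case ${\mf s}(\ud e') \leq 1$ in the third step. Here the naive neck-stretching estimate fails because the pulled-back area forms are not uniformly bounded below, and one must instead exploit the explicit exponential decay of $\sigma_{\ud e}^{(i)}$ to control the temporal-gauge connection, then feed this into an action--energy differential inequality; the delicate point is keeping every error term uniformly $O(\epsilon)$, independently of $i$, so that the exponential decay survives the limit.
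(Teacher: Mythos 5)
Your proposal follows the paper's own argument essentially step for step: reduce to the semi-exhaustive case via Proposition \ref{prop44}, insert the connecting strip bubbles at each neck using Proposition \ref{prop45} and the rescalings \eqref{eqn611} (with the dichotomy on ${\mf s}(\ud e')$ determining whether the strip lives in $(\bar X,\bar L)$ or $(X,L)$), and then verify exhaustiveness at nodes by the annulus-lemma argument when ${\mf s}(\ud e')>1$ and by the temporal-gauge/Po\'zniak action--energy differential inequality with the $O(\epsilon)$ error when ${\mf s}(\ud e')\leq 1$, with termination by energy quantization. This is correct and is the same proof as in the paper.
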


\subsection{Apply soft rescaling}

So far we have constructed an induction package ${\mc I}$. Notice that so far in ${\mc I}$ the tree ${\mc T}$ is actually a colored rooted ribbon tree, i.e., ${\mc T} = \ud{\mc T}$. By property (vi) of the definition of induction package (see Proposition \ref{prop40}) and the fact that ${\mc I}$ is exhaustive at nodes, one has
\beqn
\lim_{i \to \infty} E ( {\bf v}^{(i)}) = \sum_{v_\alpha \in V({\mc T})} E({\mc C}_\alpha) + \sum_{v_\alpha \in V({\mc T})} \sum_{w_\alpha \in W_\alpha}  m(w_\alpha).
\eeqn
To continue to find the limit, we apply the soft rescaling results (Proposition \ref{prop36}--\ref{prop39}) to the bubbling points $w_\alpha$ and Theorem \ref{thm16}.

\subsubsection{Growing the roots and stems}

First, for any $v_\alpha \in V_\infty({\mc T})$ and $w_\alpha \in W_\alpha$, by the previous construction, we can choose a subsequence (still indexed by $i$), such that the subsequence ${\bf v}_\alpha^{(i)}:= ( \varphi_\alpha^{(i)} )^* {\bf v}^{(i)}$ has energy concentration of either type ${\mc R}$ or type ${\mc S}$ at $w_\alpha$, with respect to the sequence $\lambda_i d_\alpha^{(i)}$. 

{\bf I.} If $w_\alpha \in {\rm Int} \Sigma_\alpha$ and the energy concentration has type ${\mc R}$ with respect to $\lambda_i d_\alpha^{(i)}$, then we can find a subsequence (still indexed by $i$), a sequence of M\"obius transformations $\varphi_{w_\alpha}^{(i)}: {\mb C} \to \Sigma_\alpha$ and a finite subset $W \subset {\mb C}$ satisfying the conditions listed in Proposition \ref{prop36}.

\begin{enumerate}

\item We replace the subsequence by the new subsequence we just found (still indexed by $i$).

\item By a type-$\mc{GR}$ growth at the vertex $v_\alpha$, we obtain a new tree ${\mc T}_1$ with a new vertex called $w_\alpha$ with new edge $e_{w_\alpha \alpha}$. 

\item To the collection ${\mc C}$ we add the following objects.
\begin{itemize}
\item For the new vertex $w_\alpha$, add $\bar{u}_{w_\alpha}: {\mb C} \to \bar{X}$ which is a holomorphic sphere and is the limit of $( \varphi_{w_\alpha}^{(i)} )^* {\bf v}_\alpha^{(i)}$;

\item For the new edge $e_{w_\alpha \alpha}$, we give $z_{w_\alpha \alpha} = w_\alpha \in {\rm Int} \Sigma_\alpha$.

\item For the new vertex $w_\alpha$, we add the set $W_{w_\alpha} = W$ and to each $w_j \in W_{w_\alpha}$, we associate the mass $m(w_j)$ given by \eqref{eqn52}.

\item For the new vertex $w_\alpha$, we choose the associated sequence of M\"obius transformations by $\varphi_{w_\alpha}^{(i)} = \varphi_\alpha^{(i)} \circ \varphi_{w_\alpha}^{(i)}$, and the sequence $d_{w_\alpha}^{(i)} = \tau_i d_\alpha^{(i)}$ (here $\tau_i$ is the sequence defined in \eqref{eqn53}). 

\end{itemize}
\end{enumerate}
It is routine to check that it is indeed an induction package. 

There are three remaining cases listed as follows. 

{\bf II.} $w_\alpha \in {\rm Int} \Sigma_\alpha$ with type ${\mc S}$ energy concentration with respect to $\lambda_i d_\alpha^{(i)}$

{\bf III.} $w_\alpha \in \partial \Sigma_\alpha$ with type ${\mc R}$ energy concentration with respect to $\lambda_i d_\alpha^{(i)}$.

{\bf IV.}  $w_\alpha \in \partial \Sigma_\alpha$ with type ${\mc S}$ energy concentration with respect to $\lambda_i d_\alpha^{(i)}$.

It is also routine to write down how to grow the tree and update the induction package as we did in the first case. We omit the details. Then we can repeat the process for finitely many times until for all $\alpha \in V_\infty({\mc T})$, $W_\alpha = \emptyset$. 

\subsubsection{Growing the flowers}

Now for any $v_\alpha \in V_0({\mc T}) \cup V_1({\mc T})$, we have the sequences $g_\alpha^{(i)}$ and $\varphi_\alpha^{(i)}$. By construction the sequence of area forms $\nu_\alpha^{(i)}:= \lambda_i^2 ( \varphi_\alpha^{(i)} )^* \nu_0$ converges to a constant multiple of the standard area form on $\Sigma_\alpha$. Without loss of generality, we assume that the limit $\nu_\alpha$ is either one or zero times of the standard area form, depending on whether $v_\alpha \in V_1({\mc T})$ or $v_\alpha \in V_0({\mc T})$. We also know that the sequence ${\bf v}_\alpha^{(i)}: =( g_\alpha^{(i)} )^* ( \varphi_\alpha^{(i)} )^* {\bf v}^{(i)}$ converges modulo bubbling on $\Sigma_\alpha \smallsetminus W_\alpha$ to a vortex ${\bf v}_\alpha \in \wt{\mc M}(\Sigma_\alpha; X, L)$. We have the following two situations.

{\bf I.} If $w_\alpha \in {\rm Int} \Sigma_\alpha$ (resp. $w_\alpha \in \partial \Sigma_\alpha$), then by Theorem \ref{thm16}, we can construct a stable holomorphic sphere (resp. disk) $\big( (u_\beta)_{v_\beta \in V({\mc T}(w_\alpha))}, (z_{\beta \beta'})_{e_{\beta\beta'\in E({\mc T}(w_\alpha))}} \big)$ in $X$ modelled on a branch (resp. based branch) ${\mc T}(w_\alpha)$, and, for each $v_\beta \in V({\mc T}(w_\alpha))$, a sequence of M\"obius transformations $\hat\varphi_\beta^{(i)}: \Sigma_\beta \to \Sigma_\alpha$ and a sequence of gauge transformations $\hat{g}_\beta^{(i)}: \Sigma_\beta \to G$ such that 
\begin{itemize}
\item For any $v_\beta \in V({\mc T}(w_\alpha))$, the sequence $( \hat{g}_\beta^{(i)} )^* ( \hat\varphi_\beta^{(i)} )^* {\bf v}_\alpha^{(i)}$ converges to $(0, u_\beta)$ uniformly on any compact subset of $\Sigma_\beta \smallsetminus Z_\beta$.

\item For each $e_{\beta \beta'} \in E({\mc T}(w_\alpha))$, $( \hat\varphi_{\beta'}^{(i)} )^{-1} \hat\varphi_\beta^{(i)}$ converges uniformly on any compact subset to the constant $z_{\beta \beta'} \in \Sigma_{\beta'}$.

\item We have $\displaystyle \lim_{\epsilon \to 0} \lim_{i \to \infty} E ( {\bf v}_\alpha^{(i)}; B_\epsilon ( w_\alpha) ) = \sum_{v_\beta \in V({\mc T}(w_\alpha))} E(u_\beta)$. 
\end{itemize}
In this case, we do the following operations to the induction package ${\mc I}$. Moreover, we ignore the sequences of numbers $(d_\alpha^{(i)})$ and all conditions required for them in the definition of induction package. 
\begin{enumerate}
\item We choose the subsequence we just found.

\item We grow the tree ${\mc T}$ a type-$\mc{GF}$ (resp. type-$\ud{\mc{GF}}$) growth at the vertex $v_\alpha$ by attaching the branch (resp. based branch) ${\mc T}(w_\alpha)$.

\item We modify the collection ${\mc C}$ as follows.
\begin{itemize}

\item Include the components $u_\beta$ corresponding to all $v_\beta \in V({\mc T}(w_\alpha))$ and include the nodes $z_{\beta \beta'}$ for all $e_{\beta\beta'} \in E({\mc T}(w_\alpha))$.

\item Include the sequence of M\"obius transformations $\varphi_\beta^{(i)}:= \varphi_\alpha^{(i)} \circ \hat\varphi_\beta^{(i)}$ and the sequence of gauge transformations $g_\beta^{(i)}:= \hat{g}_\beta^{(i)} g_\alpha^{(i)}(w_\alpha) $ for all $v_\beta \in V({\mc T}(w_\alpha))$.

\item Remove the number $m(w_\alpha)$ from the collection of masses. 

\end{itemize}
\end{enumerate}

We can repeat the above process for every $v_\alpha \in V_0({\mc T}) \cup V_1({\mc T})$ and every $w_\alpha \in W_\alpha$. It stops after finitely many times and then $W_\alpha = \emptyset$ for all $\alpha \in V({\mc T})$. This finishes the construction of the stable scaled holomorphic disk as a subsequential limit of the sequence ${\bf v}^{(i)}$ with respect to $\lambda_i$, and thus finishes the proof of Theorem \ref{thm33}.

\appendix

\section{Analysis of Vortices}\label{appendixa}

In this appendix we establish several necessary estimates related to the problem and provide proofs of Theorem \ref{thm12} and Theorem \ref{thm14}. The techniques used here are standard, and all results which don't involve boundary conditions are essentially covered in the previous literature such as \cite{Cieliebak_Gaio_Mundet_Salamon_2002} and \cite{Gaio_Salamon_2005}. A new technical result here is the proof of the existence of admissible almost complex structures with respect to a $G$-Lagrangian (Lemma \ref{lemma49}).

\subsection{Preliminaries}

\subsubsection*{A neighborhood of $\mu^{-1}(0)$}

We assumed that $0$ is a regular value of $\mu$. Therefore we can fix two numbers $\updelta>0$, ${\bm m} = {\bm m}(\updelta)>0$ satisfying that for any $x \in X$ with $|\mu(u)|\leq \updelta$, the map $\xi \mapsto {\mc X}_\xi(x) \in T_x M$ is injective and
\beq
\omega( {\mc X}_\xi, J {\mc X}_\xi) \geq {\bm m} |\xi|^2.
\eeq
We fix these two numbers throughout this appendix.

\subsubsection*{$G$-invariant metrics}

Let $h$ be a $G$-invariant Riemannian metric on $X$ with respect to which $J$ is isometric. We denote by $\nabla$ the Levi-Civita connection of $h$, and $R$ the Riemannian curvature. 

Let $\Xi \subset {\mb H}$ be an open subset with coordinates $(s, t)$. For any smooth map $(u, \phi, \psi): \Xi\to X \times {\mf g} \times {\mf g}$, we denote
\beqn
v_s:= \partial_s u + {\mc X}_\phi(u), \ v_t: =\partial_t u + {\mc X}_\psi(u),\ \kappa:= \partial_s \psi - \partial_t \phi + [\phi, \psi].
\eeqn
Moreover, we have a natural covariant derivative on $u^* TX \oplus {\mf g}$ associated with $(u, \phi, \psi)$, defined as follows. For any $V \in \Gamma(u^* TX)$, we define
\beq\label{eqna2}
\nabla_{A, s} V = \nabla_s V + \nabla_V {\mc X}_\phi,\ \nabla_{A, t} V = \nabla_t V + \nabla_V {\mc X}_\psi;
\eeq
for $\xi: \Xi \to {\mf g}$, we define
\beqn
\nabla_{A, s} \xi = \partial_s \xi + \left[ \phi, \xi \right],\ \nabla_{A, t} \xi = \partial_t \xi + \left[ \psi, \xi \right].
\eeqn
By the $G$-invariance of $h$ one can check that this covariant derivative preserves the metric.

We can extend the covariant derivative to tensor fields along $u$, by Leibniz rule. We denote $v_s = \partial_s u + {\mc X}_\phi$, $v_t = \partial_t u + {\mc X}_\psi$. 
 Then we have
\begin{lemma}\label{lemma47}
If $T$ is a $G$-invariant tensor field on $X$ and $\nabla$ is the Levi-Civita connection of a $G$-invariant metric, then $\nabla_{A, s} T = \nabla_{v_s} T$. 
In particular, if $J$ is a $G$-invariant almost complex structure, then $\nabla_{A, s}J = \nabla_{v_s} J$. 
\end{lemma}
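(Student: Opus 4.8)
The plan is to prove the identity $\nabla_{A,s}T=\nabla_{v_s}T$ by comparing two tensorial derivations acting on tensor fields along $u$ and reducing everything to the single vector-field case via the standard relation between Lie derivative and Levi-Civita connection. First I would recall that for any vector field $Y$ on $X$ and the torsion-free connection $\nabla$ one has the decomposition $L_Y=\nabla_Y-\mathcal{A}_{\nabla Y}$, where $\mathcal{A}_{\nabla Y}$ is the algebraic derivation on all tensor powers of $TX$ induced by the bundle endomorphism $W\mapsto\nabla_W Y$ (acting as this endomorphism on contravariant slots, as minus its transpose on covariant slots, and as zero on functions). This is nothing but the torsion-free rewriting $[Y,W]=\nabla_Y W-\nabla_W Y$ combined with the Leibniz rule, so I would state it only on the three generating cases --- functions, vector fields, one-forms --- since both sides are derivations compatible with tensor products and contractions.

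Next I would invoke $G$-invariance: if $T$ is a $G$-invariant tensor field then $L_{\mathcal{X}_\xi}T=0$ for every $\xi\in\mathfrak{g}$, hence by the decomposition above $\nabla_{\mathcal{X}_\xi}T=\mathcal{A}_{\nabla\mathcal{X}_\xi}T$. Then I would observe that the covariant derivative $\nabla_{A,s}$ of \eqref{eqna2}, extended to all tensor fields by the Leibniz rule, is exactly $\nabla_{\partial_s u}+\mathcal{A}_{\nabla\mathcal{X}_\phi}$ on tensors built from $u^*TX$: the first summand is the pullback-connection derivative in the $s$-direction, and the correction term $V\mapsto\nabla_V\mathcal{X}_\phi$ appearing in \eqref{eqna2} is precisely the endomorphism whose induced tensorial derivation is $\mathcal{A}_{\nabla\mathcal{X}_\phi}$ (this is immediate on vector fields from \eqref{eqna2}, and follows on one-forms and on functions from the defining Leibniz identities). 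Combining the two observations gives
\[ \nabla_{A,s}T=\nabla_{\partial_s u}T+\mathcal{A}_{\nabla\mathcal{X}_\phi}T=\nabla_{\partial_s u}T+\nabla_{\mathcal{X}_\phi}T=\nabla_{\partial_s u+\mathcal{X}_\phi}T=\nabla_{v_s}T, \]
and the asserted special case $\nabla_{A,s}J=\nabla_{v_s}J$ follows at once since $J$ is a $G$-invariant $(1,1)$-tensor; the statement for $\nabla_{A,t}$ is identical with $\phi$ and $\partial_s u$ replaced by $\psi$ and $\partial_t u$.

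The step I expect to require the most care is the bookkeeping involved in identifying $\nabla_{A,s}$ with $\nabla_{\partial_s u}+\mathcal{A}_{\nabla\mathcal{X}_\phi}$ uniformly across tensor types: one must check that the sign conventions on covariant versus contravariant slots in $\mathcal{A}_{\nabla\mathcal{X}_\phi}$ are exactly those forced by extending \eqref{eqna2} through the Leibniz rule, and one must make sure that the torsion-freeness of the Levi-Civita connection is genuinely used --- it is what kills the remaining commutator terms. No $\omega$-compatibility or Kähler-type hypothesis on $J$ enters; only the $G$-invariance of both the metric and of $T$ is needed.
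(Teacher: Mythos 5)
Your argument is correct. Note that the paper itself states Lemma \ref{lemma47} without proof (it is followed only by the remark that the extension to $\mathfrak{g}$-valued tensors is straightforward), so there is no written argument to compare against; your route is the standard one and it does fill the gap: the decomposition $L_Y=\nabla_Y-\mathcal{A}_{\nabla Y}$ on the generators (functions, vector fields, one-forms), the identification of the Leibniz extension of $\nabla_{A,s}$ with $\nabla_{\partial_s u}+\mathcal{A}_{\nabla\mathcal{X}_\phi}$, and the vanishing $L_{\mathcal{X}_\phi}T=0$ combine exactly as you say, with torsion-freeness doing the work of converting brackets into covariant derivatives. One small remark: the $G$-invariance of the metric is not actually needed for this identity --- any torsion-free connection for which $\nabla_{A,s}$ is defined by \eqref{eqna2} would do; the metric invariance is used elsewhere (e.g.\ to see that $\nabla_{A,s}$ is metric-compatible), so your closing sentence slightly overstates the hypotheses, but this does not affect the validity of the proof.
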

It is straightforward to extend the above result to ${\mf g}$-valued tensor fields. We denote
\beqn
\rho(v_s, V) = \nabla_{A, s} \left( d\mu \cdot J V \right) - d\mu \cdot \left(J  \nabla_{A, s} V \right),\ \rho(v_t, V) = \nabla_{A, t} \left( d\mu \cdot JV \right) - d\mu \cdot \left( J \nabla_{A, t} V \right).
\eeqn

To estimate the energy density, it is convenient to have a special type of metrics on $X$.
\begin{defn}\label{defn48}
Let $L$ be a $G$-Lagrangian of $(X, \omega, \mu)$. Let $J$ be a $G$-invariant almost complex structure. A $(J, L, \mu)$-admissible Riemannian metric is a $G$-invariant Riemannian metric $h$ on $X$ satisfying
\begin{enumerate}
\item $J$ is isometric;

\item $J(TL)$ and $TL$ are orthogonal with respect to $h$;

\item $L$ is totally geodesic with respect to $h$;

\item $T\mu^{-1}(0)$ is orthogonal to $J{\mc X}_\xi$ for all $\xi \in {\mf g}$.
\end{enumerate}
\end{defn}
In the non-equivariant case Frauenfelder \cite{Frauenfelder_disk} proved the existence of a similar type of metric for a Lagrangian submanifold satisfying (1)--(3). Here we generalize this result.

\begin{lemma}\label{lemma49}
For any $G$-Lagrangian $L$, and $G$-invariant almost complex structure $J$, there exists a $(J, L, \mu)$-admissible Riemannian metric.
\end{lemma}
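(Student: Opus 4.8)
The plan is to build the admissible metric $h$ in two stages: first near the boundary piece $L$ (and the $G$-orbits through it), and then extend over all of $X$ in a $G$-invariant way, using a partition of unity subordinate to $G$-invariant open sets. The starting point is Frauenfelder's construction in the non-equivariant case, which near $L$ produces a metric satisfying conditions (1)--(3): one takes a $G$-invariant tubular neighborhood of $L$ modelled on the normal bundle $J(TL)$, declares $TL$ and $J(TL)$ orthogonal, and uses the exponential-type coordinates in the normal direction to make $L$ totally geodesic. Because $L$ is a $G$-Lagrangian, all the data involved ($J$, the normal bundle $J(TL)$, the tubular neighborhood) can be chosen $G$-invariantly, so this construction already respects the $G$-action; averaging over $G$ if necessary, one obtains a $G$-invariant metric $h_1$ on a $G$-invariant neighborhood $U_L$ of $L$ with properties (1)--(3). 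The only new requirement beyond Frauenfelder is (4), the condition $T\mu^{-1}(0) \perp J{\mc X}_\xi$, which lives near $\mu^{-1}(0)$.

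Next I would arrange property (4) near $\mu^{-1}(0)$. Since $0$ is a regular value and $G$ acts freely on $\mu^{-1}(0)$, on the $G$-invariant neighborhood where $|\mu|\leq\updelta$ the vector fields $J{\mc X}_\xi$ span a subbundle $\mc V$ complementary to a natural ``horizontal'' distribution, and one can take a $G$-invariant metric $h_2$ in which $T\mu^{-1}(0)$ is orthogonal to $\mc V$; concretely, split $TX = T\mu^{-1}(0)^{h} \oplus \mc V$ near $\mu^{-1}(0)$ where $T\mu^{-1}(0)^h$ is an appropriate $J$- and $G$-invariant extension of $T\mu^{-1}(0)$, and build $h_2$ diagonally with respect to this splitting, keeping $J$ isometric by symmetrizing $\frac12(h_2 + h_2(J\cdot,J\cdot))$. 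The compatibility point is that \emph{on the overlap} $U_L \cap \{|\mu|\leq \updelta\}$ one needs a single metric satisfying all of (1)--(4) simultaneously; here one uses that $L \subset \mu^{-1}(0)$, so along $L$ the distribution $\mc V = \mathrm{span}\{J{\mc X}_\xi\}$ is already transverse to $TL \oplus J(TL)$ direction-by-direction (freeness of the action on $L$), and one can choose the tubular-neighborhood/normal-coordinate construction of $h_1$ so that it is block-diagonal with respect to $\mc V$ as well — i.e. do Frauenfelder's construction \emph{inside} the $h_2$-orthogonal complement of $\mc V$. This produces a $G$-invariant metric on $U_L \cup \{|\mu| \leq \updelta/2\}$ with all four properties.

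Finally, away from these regions there is no constraint except (1), so I would pick any $G$-invariant $J$-isometric metric $h_3$ on $X$ (average a Riemannian metric over $G$ and symmetrize against $J$), choose a $G$-invariant partition of unity $\{\chi_0,\chi_1\}$ with $\chi_1$ supported in $U_L \cup \{|\mu|\leq \updelta/2\}$ and $\chi_1\equiv 1$ on $L\cup\{|\mu|\leq\updelta/4\}$, and set $h = \chi_1 h_{\mathrm{loc}} + \chi_0 h_3$ followed by the symmetrization $h \mapsto \frac12(h + h(J\cdot,J\cdot))$ to restore (1). Properties (2),(3),(4) are \emph{pointwise} conditions that hold on the set where $\chi_1 \equiv 1$, which contains $L$ and a neighborhood of $\mu^{-1}(0)$ — exactly where the vortex estimates in the appendix require them — so a convex combination that equals $h_{\mathrm{loc}}$ there suffices; one checks that the symmetrization step preserves (2)--(4) because $J$-invariance of the conditions makes them stable under $h \mapsto h(J\cdot,J\cdot)$. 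The main obstacle I anticipate is the overlap compatibility in the second step: ensuring that Frauenfelder's totally-geodesic normal-coordinate construction for $L$ can be carried out \emph{simultaneously} with the orthogonality $T\mu^{-1}(0)\perp J{\mc X}_\xi$, i.e. that the two geometric prescriptions do not conflict along $L$. This works precisely because $L \subset \mu^{-1}(0)$ and the $G$-action is free on $L$, so the three distributions $TL$, $J(TL)$, and $\mathrm{span}\{{\mc X}_\xi, J{\mc X}_\xi\}$ interact in a controlled way; making this interaction precise (and verifying that total geodesy of $L$ is not disturbed) is the only delicate point, and everything else is a routine partition-of-unity and averaging argument.
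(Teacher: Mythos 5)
There is a genuine gap. Your outline correctly isolates the hard point---making Frauenfelder's conditions (2)--(3) for $L$ hold \emph{simultaneously} with condition (4) near $\mu^{-1}(0)$---but it does not resolve it: ``do Frauenfelder's construction inside the $h_2$-orthogonal complement of $\mc V$'' is an assertion, not an argument, and it is not even well posed, since along $L$ one has $T{\mf g}|_L\subset TL$ and hence $\mc V|_L=J\,T{\mf g}|_L\subset J(TL)$, so the distribution $J(TL)$ you must control is partly \emph{inside} $\mc V$; likewise the proposed ``$J$- and $G$-invariant extension $T\mu^{-1}(0)^h$ of $T\mu^{-1}(0)$'' cannot exist, because $T\mu^{-1}(0)$ contains $T{\mf g}$ but not $J\,T{\mf g}$ and so is never $J$-invariant along $\mu^{-1}(0)$. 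This is exactly where the paper's proof does its work: it applies Frauenfelder \emph{on the quotient} $(\bar X,\bar L)$, lifts the metric to $\mu^{-1}(0)$ via the splitting $H\oplus T{\mf g}$ and proves by a Koszul-type computation that $L$ stays totally geodesic in $(\mu^{-1}(0),h')$, and then extends off $\mu^{-1}(0)$ in coordinates adapted to $J$ and the $G$-orbits, prescribing the $0$-jet (orthogonality $T\mu^{-1}(0)\perp J{\mc X}_\xi$ together with $\wt h(J{\mc X}_\xi,J{\mc X}_\eta)=\wt h({\mc X}_\xi,{\mc X}_\eta)$) and a $1$-jet equation in the normal direction chosen precisely so that total geodesy and (4) survive the symmetrization $h=\tfrac12(\wt h+\wt h(J\cdot,J\cdot))$. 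None of this construction appears in your proposal.

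Two of your supporting claims are also incorrect as stated. First, (3) is not a pointwise condition: total geodesy of $L$ is a condition on the $1$-jet of the metric along $L$, which is why the paper must solve an explicit first-order condition (their equation for $\partial_{\tau_i}\wt a$) before symmetrizing, and why $G$-averaging at the end requires the separate argument $\hat\nabla_XY=\int_G(g_*)^{-1}\nabla_{g_*X}(g_*Y)\,dg$ using $G$-invariance of $L$. Second, the claim that (4) is preserved under $h\mapsto\tfrac12\bigl(h+h(J\cdot,J\cdot)\bigr)$ ``by $J$-invariance of the condition'' is false: for $Y\in T\mu^{-1}(0)$ one gets the term $h(JY,{\mc X}_\xi)$, and $JY$ need not lie in $T\mu^{-1}(0)$, so (4) for $h$ alone gives nothing; the paper kills this term only by additionally arranging $H\perp T{\mf g}$, the $J$-invariance of $H$, and the condition $\wt h(J{\mc X}_\xi,J{\mc X}_\eta)=\wt h({\mc X}_\xi,{\mc X}_\eta)$. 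So the proposal's overall strategy (Frauenfelder upstairs near $L$, a second metric near $\mu^{-1}(0)$, patch and average) could perhaps be repaired, but as written the decisive compatibility step and the preservation of (3)--(4) under symmetrization and gluing are exactly the points left unproved, and they are the substance of the lemma.
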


\begin{proof}
By \cite[Lemma A.3]{Frauenfelder_disk}, there exists a Riemannian metric $\bar{h}$ on the symplectic quotient $\bar{X}$ satisfying (1)--(3) for the Lagrangian $\bar{L}$. Now we construct a suitable lift of $\bar{h}$ to $\mu^{-1}(0)$. Let $h_0$ be the metric on $X$ induced by $\omega$ and $J$. Let $H\subset T\mu^{-1}(0)$ be the orthogonal complement (with respect to $h_0$) of the distribution $T{\mf g}$ generated by infinitesimal $G$-actions. Then it is easy to see that $H$ is $G$-invariant, and we have  an isomorphism $H \simeq \pi^* T\bar{X}$, where $\pi: \mu^{-1}(0) \to \bar{X}$ is the projection. Then we can pull-back $\bar{h}$ to a $G$-invariant metric on $H$, and choose a $G$-invariant metric on $T{\mf g}$ such that $\langle {\mc X}_\xi, {\mc X}_\xi\rangle = |\xi|^2$. Let $h'$ be the direct sum of the two components, which is a Riemannian metric on $\mu^{-1}(0)$. 

We claim that $L$ is totally geodesic in $\mu^{-1}(0)$ with respect to $h'$. Let $\nabla'$ be the Levi-Civita connection of $h'$. It suffices to check that $(\nabla')_X Y \in TL$ for any vector fields $X, Y$ tangent to $L$. Since this is a pointwise condition, we assume that $X, Y$ are both $G$-invariant. For any $Z$ orthogonal to $TL$ in $\mu^{-1}(0)$, we have (the inner products in the following are the ones for $h'$)
\beq\label{eqna3}
\begin{split}
\langle (\nabla')_X Y, Z \rangle = &\ X \langle Y, Z \rangle - \langle Y, (\nabla')_X Z \rangle \\
= &\ - \langle Y, [ X, Z] \rangle - \langle Y, (\nabla')_Z X \rangle\\
= &\ \langle Y, [Z, X] \rangle - Z \langle Y, X \rangle + \langle (\nabla')_Z Y, X \rangle\\
= &\  \langle Y, [Z, X] \rangle - Z \langle Y, X \rangle + \langle [Z, Y], X \rangle + \langle (\nabla')_Y Z, X \rangle \\
= &\ \langle Y, [Z, X] \rangle - Z \langle Y, X \rangle  + \langle [Z, Y], X \rangle - \langle Z, (\nabla')_Y X \rangle\\
= &\ \langle Y, [Z, X] \rangle - Z\langle Y, X \rangle + \langle [Z, Y], X \rangle - \langle Z, (\nabla')_X Y \rangle.
\end{split}
\eeq
We choose $Z$ to be pull-backed from $\bar{X}$ so that $Z$ is $G$-invariant and $\pi_* Z$ is a smooth vector field on $\bar{X}$ and orthogonal to $T\bar{L}$.

If $X, Y \in \Gamma(H \cap TL) \simeq \Gamma(\pi^* T\bar{L})$, then we choose $X, Y$ to be pull-backed from $\bar{X}$ and such that $\langle X, Y \rangle$ is a constant. Then $\pi_* X, \pi_* Y$ are smooth vector fields on $\bar{X}$. Then by the definition of $h'$, we see that
\beqn
\langle Y, [Z, X]\rangle = \langle \pi_* Y, [ \pi_* Y, \pi_* Z] \rangle_{\bar{h}},\ \langle [Z, Y], X\rangle = \langle [\pi_* Z, \pi_* Y], \pi_* X \rangle_{\bar{h}}.
\eeqn
Then by the same calculation as \eqref{eqna3}, we know that the sum of the above two terms is equal to $2\langle \bar\nabla_{\pi_* X} (\pi_* Y), \pi_* Z \rangle_{\bar{h}}$, which vanishes by the totally geodesic assumption on $\bar{h}$. 

On the other hand, if $X \in \Gamma( T{\mf g})$ and $Y\in \Gamma(H \cap TL)$, then $\langle Y, X\rangle \equiv 0$. We can choose $Z$ being $G$-invariant and $[ Y, Z]$ vanishing at a point where we want to evaluate \eqref{eqna3}. Then \eqref{eqna3} vanishes at that point. Finally, if $X, Y \in \Gamma(T{\mf g})$, then we take $X = {\mc X}_\xi$, $Y = {\mc X}_\eta$ for $\xi, \eta$ constants. Then $\langle Y, X \rangle$ is a constant and $[X, Z] = [Y, Z]= 0$. So \eqref{eqna3} vanishes. 

Now we would like to extend $h'$ to a metric on $X$ which satisfies (1)--(4).

We choose local coordinates $\theta_1, \ldots, \theta_k,x_1, \ldots, x_m$ of $L$ where the first $k = {\rm dim} G$ coordinates are coordinates of $G$-orbits. Extend them to coordinates 
\beqn
\theta_1, \ldots, \theta_k, x_1, \ldots, x_m, \tau_1, \ldots, \tau_k, y_1, \ldots, y_m
\eeqn
on $X$ such that the first $k$ coordinates are still coordinates of $G$-orbits, $L$ is parametrized by $(\theta, x, 0, 0)$ and satisfying
\beqn
J \frac{\partial}{\partial \theta_i} = \frac{\partial }{\partial \tau_i},\ i=1,\ldots, k,\ J \frac{\partial }{\partial x_j} = \frac{\partial }{\partial y_j},\ j=1, \ldots, m,
\eeqn
on $L$. We remark that $\frac{\partial }{\partial y_j}$ may not be tangent to $\mu^{-1}(0)$ but $\mu^{-1}(0)$ can be parametrized as $( \theta, x, \tau(\theta, x, y), y)$ where $\tau$ satisfies $\tau(\theta, x, 0) = 0$. We write $J$ as
\beqn
J = \left( \begin{array}{cc} A(\theta, x, \tau, y) & B(\theta, x, \tau, y) \\ C(\theta, x,\tau, y) & D(\theta, x, \tau, y)
\end{array} \right)
\eeqn
where $A, B, C, D$ are of size $(m+k) \times (m+k)$ and $A(\theta, x, 0, 0) = D(\theta, x, 0, 0) = 0$, $C(\theta, x, 0, 0) = - B(\theta, x, 0, 0)^T = I_{m+k}$. We consider a locally defined metric 
\beqn
\wt{h}(\theta, x, \tau, y) = \left( \begin{array}{cc} \wt{a}(\theta, x,\tau, y) & \wt{b}^T(\theta, x, \tau, y)  \\ \wt{b}(\theta, x,\tau, y) & \wt{c}(\theta, x,\tau, y)
\end{array}\right).
\eeqn
where the matrix decomposition is written with respect to the same coordinates. Then the value of $\wt{a}(\theta, x, \tau(\theta, x, y), y)$ and part of $\wt{b}(\theta, x, \tau(\theta, x, y), y)$ and $\wt{c}(\theta, x, \tau(\theta, x, y), y)$ have been determined by the choice of $h'$. Then we choose the undetermined part of $\wt{h}|_{\mu^{-1}(0)}$ such that $T\mu^{-1}(0) \bot J{\mc X}_\xi$ for any $\xi \in{\mf g}$ with respect to $\wt{h}|_{\mu^{-1}(0)}$ and such that $\wt{h}(J{\mc X}_\xi, J{\mc X}_\eta) = \wt{h}({\mc X}_\xi, {\mc X}_\eta)$. Moreover, we require that the 1-jet of $\wt{h}$ along $L$ in the $\tau$-direction satisfy
\beq\label{eqna4}
\frac{\partial}{\partial \tau_i} \wt{a}(\theta, x, 0, 0) + \frac{\partial }{\partial \tau_i} \left( A^T \wt{a} A + C^T \wt{b} A + A^T \wt{b}^T C + C^T \wt{c} C \right)(\theta, x, 0, 0) = 0,\ i = 1, \ldots, k.
\eeq
Since $A|_L\equiv 0$, the left-hand-side of this equation is $\partial_{\tau_i} \wt{a}(\theta, x, 0, 0)$ plus terms which don't contain derivatives of $\wt{a}$. Therefore \eqref{eqna4} has a solution subordinate to all the constrains we have put on the 0-jet of $\wt{h}$. This gives us a metric $\wt{h}$ defined in the coordinate patch.

Now we define $h(v, w) = \frac{1}{2} \big( \wt{h}(v, w) + \wt{h}(Jv, Jw) \big)$ and we claim that $h$ satisfies Definition \ref{defn48} inside the coordinate patch we are considering, except for the $G$-invariance. As in \cite{Frauenfelder_disk}, we can use such locally constructed metrics and a partition of unity to construct a global metric $h$, satisfying (1)--(4). 

Indeed, the first condition is automatic. For the fourth condition, for $Y \in T\mu^{-1}(0)$ and $Z = J{\mc X}_\xi$, decompose $Y = Y_1 + Y_2$ where $Y_1 \in H$ and $Y_2 = {\mc X}_\eta$ for some $\eta \in {\mf g}$. Then $h(Y, J{\mc X}_\xi) = \frac{1}{2} \wt{h} ( J Y_1 + J {\mc X}_\eta, - {\mc X}_\xi) = 0$ by the condition required for $\wt{h}$. Therefore, $J(TL)$ is orthogonal to $TL$. 

For the totally geodesic condition, we see that \eqref{eqna4} implies that for $X, Y$ tangent to $L$, $\nabla_X Y \in T\mu^{-1}(0)|_L$ where $\nabla$ is the Levi-Civita connection of $h$. Moreover, by the condition that $\wt{h}(J{\mc X}_\xi, J{\mc X}_\eta) = \wt{h}({\mc X}_\xi, {\mc X}_\eta)$ along $\mu^{-1}(0)$, we see that $h|_{\mu^{-1}(0)} = \wt{h}|_{\mu^{-1}(0)}$. Since $L$ is totally geodesic with respect to $\wt{h}|_{\mu^{-1}(0)} = h'$, this implies that $\nabla_X Y \in TL$ and $L$ is totally geodesic in the local coordinate patch. 

Now the metric $h$ constructed above may not be $G$-invariant. We integrate $h$ against the Haar measure of $G$, getting a $G$-invariant metric $\hat{h}$. The point-wise conditions (1), (2), (4) are clearly preserved. To see that $L$ is totally geodesic with respect to $\hat{h}$, it suffices to show that for any vector fields $X, Y$ tangent to $L$, $\hat\nabla_X Y$ is tangent to $L$, where $\hat \nabla$ is the Levi-Civita connection of $h$. Indeed, if we denote by $\nabla$ the Levi-Civita connection of $h$, then 
\beqn
\hat\nabla_X Y = \int_G \left( g_* \right)^{-1}\nabla_{g_* X} (g_* Y)  dg.
\eeqn
Since $L$ is $G$-invariant, $g_* X$, $g_* Y$ are both tangent to $L$. Therefore we see that $\hat\nabla_X Y$ is tangent to $L$. Therefore $\hat{h}$ is a $(J, L, \mu)$-admissible metric.
\qed \end{proof}

Now we fix a $(J, L, \mu)$-admissible metric $h$ on $X$. We use $\langle \cdot, \cdot \rangle$ to denote the inner product of $h$ in the remaining of this appendix.  Then by (4) of Definition \ref{defn48}, there exists ${\bm n}= {\bm n}(\updelta)>0$ such that for any $x \in X$ with $|\mu(x)| \leq \updelta$, and any $Y \in T_x X$, 
\beqn
\big| \langle J {\mc X}_{d\mu\cdot Y}, Y \rangle \big| \leq {\bm n} \Big( \big| d\mu \cdot Y \big|^2  + \big| \mu(x) \big| |Y|^2 \Big).
\eeqn
Since a rescaling of $h$ is still $(J, L, \mu)$-admissible, we may assume instead
\beq\label{eqna5}
\big| \langle J {\mc X}_{d\mu \cdot Y}, Y \rangle \big| \leq \frac{1}{2} \big| d\mu \cdot Y \big|^2  + {\bm n} \big| \mu(x) \big| \big| Y \big|^2.
\eeq	
Moreover, we may assume that $h$ coincides with a small constant multiple of $\omega(\cdot, J \cdot)$ whenever $|\mu(x)| \geq \updelta$, so that \eqref{eqna5} holds throughout $X$. 

\subsection{The isoperimetric inequality}

We first recall Po\'zniak's isoperimetric inequality (\cite[Lemma 3.4.5]{Pozniak}). Let $(Y, \omega)$ be a symplectic manifold and let $L_0, L_1\subset Y$ be two compact Lagrangian submanifolds which intersect cleanly in $Y$. 

\begin{lemma}\label{lemma50}\cite[Lemma 3.4.5]{Pozniak} There exist constants $\updelta' = \updelta'(L_0, L_1) >0$ and ${\bm c}' = {\bm c}'(L_0, L_1)>0$ satisfying the following condition. Let $x: [0, \pi] \to Y$ be a $C^1$-path with $x(0) \in L_0, x(1) \in L_1$ and $\int_0^\pi |x'(t)|^2 dt \leq ( \updelta')^2$. Then there exists a $C^1$-map $v: [0,1]\times [0, \pi] \to X$ with 
\beqn
v(s, 0) \in L_-, v(s, \pi) \in L_+, v(0, t) \in L_- \cap L_+, v(1, t) = x(t).
\eeqn
Moreover, if we define the symplectic action of the path $x$ as 
\beq\label{eqna6}
{\bm a}(x) = - \iint_{[0,1]\times[0, \pi]} v^* \omega,
\eeq
then  
\beq\label{eqna7}
|{\bm a}(x)|\leq {\bm c}' \int_0^\pi |x'(t)|^2 dt.
\eeq
\end{lemma}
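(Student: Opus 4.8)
To prove Lemma \ref{lemma50} I would argue locally along $C := L_0 \cap L_1$ and then patch. \textbf{Step 1 (reduction to a neighborhood of $C$).} First I would observe that, by Cauchy--Schwarz, a path with $\int_0^\pi |x'|^2\,dt \le (\updelta')^2$ has length at most $\sqrt\pi\,\updelta'$, so $x(0)\in L_0$ lies within distance $\sqrt\pi\,\updelta'$ of $L_1$. Since the continuous function $\zeta\mapsto \operatorname{dist}(\zeta,L_1)$ on the compact manifold $L_0$ vanishes exactly on $C$ and is bounded below outside any neighborhood of $C$, for $\updelta'$ small enough $x(0)$ — and hence, by the length bound, the whole image of $x$ — is confined to an arbitrarily small prescribed neighborhood of $C$ in $Y$. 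Covering $C$ by finitely many charts of the kind produced in Step 2 and invoking a Lebesgue-number argument, it then suffices to prove the estimate for paths contained in a single such chart, with $\updelta'$ and ${\bm c}'$ finally taken to be the smallest, resp.\ largest, of the local constants.

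\textbf{Step 2 (local normal form).} Near each point of $C$ I would use the standard Darboux normal form for a cleanly intersecting pair of Lagrangians (as in \cite{Pozniak}): there exist symplectic coordinates $(q,p)=(q',q'',p',p'')$ on a ball $B\subset Y$ about that point, with $q'\in\mathbb{R}^{k}$ ($k=\dim C$) and $q''\in\mathbb{R}^{n-k}$ ($n=\dim L_0$), such that $\omega=\sum_i dq_i\wedge dp_i$, $L_0=\{p'=0,\ p''=0\}$, $L_1=\{q''=0,\ p'=0\}$, and therefore $C=\{q''=0,\ p'=0,\ p''=0\}$. Writing $x(t)=(q'(t),q''(t),p'(t),p''(t))$, the hypotheses $x(0)\in L_0$, $x(\pi)\in L_1$ read $p'(0)=p''(0)=0$ and $q''(\pi)=p'(\pi)=0$.

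\textbf{Step 3 (the capping square and the area estimate).} I would cap $x$ off by the explicit $C^1$ map
\beqn
v(s,t) := \big(\, q'(t),\ s\,q''(t),\ s\,p'(t),\ s\,p''(t)\,\big),\qquad (s,t)\in[0,1]\times[0,\pi],
\eeqn
which stays in $B$ (each coordinate of $v(s,t)$ is the corresponding coordinate of $x(t)$ scaled by a factor in $[0,1]$, so $|v(s,t)|\le|x(t)|$ in the chart) and satisfies $v(1,\cdot)=x$, $v(0,\cdot)\in C$, $v(\cdot,0)\in L_0$, $v(\cdot,\pi)\in L_1$ — each boundary condition being immediate from the vanishing relations of Step 2. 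A direct computation gives
\beqn
v^*\omega=\Big(-\sum_j\dot q'_j\,p'_j+s\sum_j\big(q''_j\dot p''_j-\dot q''_j\,p''_j\big)\Big)\,ds\wedge dt,
\eeqn
hence ${\bm a}(x)=-\iint v^*\omega=\int_0^\pi\big(\sum_j\dot q'_j p'_j-\tfrac12\sum_j(q''_j\dot p''_j-\dot q''_j p''_j)\big)\,dt$. Since $p'_j$ vanishes at both endpoints and $q''_j,p''_j$ each vanish at one endpoint, Cauchy--Schwarz bounds their $C^0$-norms by $\sqrt\pi$ times the $L^2$-norms of their derivatives; substituting this, using Cauchy--Schwarz in $t$ once more and Young's inequality, yields $|{\bm a}(x)|\le\tfrac\pi2\int_0^\pi|\dot x|^2\,dt$ in the flat coordinate metric, and comparing the latter with the fixed metric of $Y$ over the compact chart gives the asserted bound with a constant ${\bm c}'={\bm c}'(L_0,L_1)$.

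I expect the only real subtlety to be Step 2: pinning down the normal form with precisely the coordinate description of $L_0,L_1$ that makes the interpolation $v$ and its four boundary conditions transparent — this is exactly where clean intersection is used. Granting that, the capping square and the area bound are entirely explicit, and what remains, the global patching and the choice of $\updelta'$, is routine.
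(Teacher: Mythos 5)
Your argument is correct, and it is essentially the proof of the source the paper itself relies on: the paper gives no proof of this lemma but quotes it from Po\'zniak \cite[Lemma 3.4.5]{Pozniak}, whose argument is precisely your combination of the clean-intersection local normal form (Darboux coordinates with $L_0=\{p=0\}$ and $L_1=N^*C=\{q''=0,\ p'=0\}$) with the explicit radial capping $v(s,t)=(q'(t),\,s\,q''(t),\,s\,p'(t),\,s\,p''(t))$ and elementary Cauchy--Schwarz estimates, the reduction to a single chart being routine by compactness of $L_0,L_1$ and the length bound $\sqrt{\pi}\,\updelta'$. The only discrepancies are typographical in the paper's statement ($x(1)\in L_1$ should read $x(\pi)\in L_1$, and $L_\pm$ should read $L_0,L_1$), which you interpreted correctly.
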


Now we consider two $G$-Lagrangian submanifolds $L_0, L_1$ of the Hamiltonian $G$-manifold $(X, \omega, \mu)$. Suppose they intersect cleanly in $\bar{X}$. Consider the path spaces
\beqn
{\mc P}:= \left\{ (x, \eta) \in C^{\infty}([0,\pi], X \times {\mf g} ) \ |\  x(0)\in L_0,\ x(\pi) \in L_1 \right\},
\eeqn
\beqn
{\mc P}_0:= \left\{ (x,\eta) \in {\mc P}\ |\ x'(t) + {\mc X}_{\eta(t)} (x(t)) = 0 \right\}.
\eeqn
Then we define a ``local action functional'' analogous to that in \cite{Frauenfelder_thesis} and \cite{Gaio_Salamon_2005} when a path $(x, \eta) \in {\mc P}$ is sufficiently close to ${\mc P}_0$. More precisely, for $(x, \eta) \in {\mc P}$, we denote
\beqn
l(x, \eta) = \int_0^\pi \left| x'(t) + {\mc X}_{\eta(t)} (x(t)) \right| dt. 
\eeqn
Then we have
\begin{lemma}\label{lemma51}
There exist positive constants $\updelta = \updelta(L_0, L_1)$ and ${\bm c} = {\bm c}(L_0, L_1)$ such that for $(x, \eta) \in {\mc P}$ with
\beq\label{eqna8}
\sup_{t\in [0,\pi ]} \left| \mu(x(t)) \right| \leq \updelta,\ \sup_{t\in [0,\pi]} \left| x'(t) + {\mc X}_{\eta(t)} (x(t)) \right| \leq \updelta,
\eeq
there exists $(\wt{x}, \wt\eta) \in {\mc P}_0$ such that
\beqn
\sup_{t\in [0,\pi ]} \left| \eta(t) - \wt\eta(t) \right| \leq {\bm c} l(x, \eta),\ d(x(t), \wt{x}(t)) \leq {\bm c} \left( \left| \mu(x(t)) \right| + l(x, \eta) \right).
\eeqn
\end{lemma}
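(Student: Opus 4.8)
## Proof Strategy for Lemma \ref{lemma51}

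The plan is to reduce the equivariant statement to the non-equivariant one by straightening out the $G$-action near $\mu^{-1}(0)$ and then applying the clean-intersection structure of $\bar L_0, \bar L_1$ in $\bar X$. Since $0$ is a regular value of $\mu$ and $G$ acts freely on $\mu^{-1}(0)$, a $G$-invariant neighborhood $U_X$ of $\mu^{-1}(0)$ fibers over $\bar X$ via $\pi_{\bar X}$, and by the admissible metric we fixed (Lemma \ref{lemma49}) the distributions $T\mu^{-1}(0)$ and $J{\mc X}_\xi$ are orthogonal; by shrinking $\updelta$ we may assume that on $\{|\mu|\le\updelta\}$ the map $\xi\mapsto{\mc X}_\xi(x)$ is an isomorphism onto its image with the lower bound $\omega({\mc X}_\xi, J{\mc X}_\xi)\ge {\bm m}|\xi|^2$. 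The set ${\mc P}_0$ of solutions to $x'+{\mc X}_\eta(x)=0$ with $x(0)\in L_0$, $x(\pi)\in L_1$ is precisely the set of horizontal lifts of smooth paths $\bar x\colon[0,\pi]\to\bar X$ with $\bar x(0)\in\bar L_0$, $\bar x(\pi)\in\bar L_1$, together with the connection-type data $\wt\eta$ recording the infinitesimal reparametrization; in particular ${\mc P}_0$ is a Banach submanifold of ${\mc P}$ of finite codimension near the constant paths in $\bar L_0\cap\bar L_1$.

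The key estimate is a quantitative implicit function theorem / normal-form argument in the spirit of \cite[Proposition 11.1]{Gaio_Salamon_2005} and the local-action construction in \cite{Frauenfelder_thesis}. First I would decompose the defect $x'(t)+{\mc X}_{\eta(t)}(x(t))$ into its component along $T\mu^{-1}(0)$ and the component along the ${\mc X}$-directions and the $J{\mc X}$-directions using the admissible splitting; hypothesis \eqref{eqna8} bounds $|\mu(x(t))|$ and the full defect pointwise by $\updelta$. Using $|\mu(x(t))| \le \updelta$, the point $x(t)$ lies in $U_X$, so one can project: set $\bar x(t) = \pi_{\bar X}(x(t))$, which is a $C^\infty$ path in $\bar X$ with $\bar x(0)\in\bar L_0$, $\bar x(\pi)\in\bar L_1$ and, crucially, $\int_0^\pi|\bar x'(t)|^2\,dt \le C\,l(x,\eta)^2 \le C\updelta^2$ because the horizontal component of the defect is exactly $\bar x'$ and the vertical component is controlled by $|\mu(x(t))|$ together with the defect. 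If $\updelta$ is small enough this is $\le (\updelta')^2$, so Pozniak's Lemma \ref{lemma50} (applied to $\bar L_0, \bar L_1$ in $\bar X$) furnishes a nearby broken-geodesic-type comparison and, more to the point, the clean intersection means $\bar L_0\cap\bar L_1$ is a submanifold with a well-defined nearest-point projection on a neighborhood; I would let $\bar x_0$ be the constant path at this projection of $\bar x(0)$ and produce $(\wt x,\wt\eta)\in{\mc P}_0$ as the horizontal lift starting appropriately, choosing $\wt\eta$ so that $\wt x(t)$ stays in the $G$-orbit through the lift of $\bar x(t)$.

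The two required bounds then follow by ODE comparison. For $|\eta(t)-\wt\eta(t)|$: both $\eta$ and $\wt\eta$ solve (resp. approximately solve) the same linear-in-$\eta$ relation forced by the $\mu^{-1}(0)$-constraint; subtracting, one gets a linear ODE for the difference whose inhomogeneous term is the defect $x'+{\mc X}_\eta(x)$ projected to the ${\mc X}$-directions, so Gronwall gives $\sup|\eta-\wt\eta|\le {\bm c}\,l(x,\eta)$, using the lower bound ${\bm m}|\xi|^2$ to invert the map $\xi\mapsto{\mc X}_\xi$. For $d(x(t),\wt x(t))$: the distance in $\bar X$ between $\bar x(t)$ and $\bar x_0(t)=\wt x$'s projection is controlled by $\int|\bar x'|\,dt\le {\bm c}\,l(x,\eta)$ by Lemma \ref{lemma50}'s companion estimate, while the vertical distance (within the fiber of $\pi_{\bar X}$) between $x(t)$ and $\mu^{-1}(0)$, resp.\ the discrepancy in the orbit direction, is controlled by $|\mu(x(t))|$ and by $\sup|\eta-\wt\eta|$; combining, $d(x(t),\wt x(t))\le{\bm c}(|\mu(x(t))|+l(x,\eta))$.

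I expect the main obstacle to be the careful bookkeeping of the two "gauge" freedoms — the freedom in choosing $\wt\eta$ (an infinitesimal reparametrization along the $G$-orbit) and the freedom in choosing the base point of the horizontal lift $\wt x$ — so that a single uniform constant ${\bm c}$ works simultaneously for both inequalities, and so that the output genuinely lies in ${\mc P}_0$ rather than just approximately. Making the dependence of all constants on $(L_0,L_1)$ (and not on the individual path) explicit requires a compactness argument over the compact intersection locus $\bar L_0\cap\bar L_1\subset\bar X$; this is where the assumption that $\bar L_0,\bar L_1$ intersect cleanly, together with compactness of $L_0,L_1$, is essential. Everything else is a Gronwall estimate and an application of the admissible-metric orthogonality relations, which are routine given Lemma \ref{lemma49} and Lemma \ref{lemma50}.
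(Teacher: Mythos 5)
Your overall strategy---push the path down to $\bar X$, use cleanness of $\bar L_0\cap\bar L_1$ to find a nearby intersection point, and build the comparison pair inside a $G$-orbit---is in the right spirit, but the construction of $(\wt x,\wt\eta)$ and both estimates have genuine gaps. Any $(\wt x,\wt\eta)\in{\mc P}_0$ has $\wt x$ contained in a single $G$-orbit lying in $L_0\cap L_1$ (since $L_0,L_1$ are $G$-invariant subsets of $\mu^{-1}(0)$), hence it projects to a \emph{constant} point of $\bar L_0\cap\bar L_1$. So your prescription is self-contradictory: a horizontal lift of the constant path forces $\wt\eta\equiv 0$, and then $\sup|\eta-\wt\eta|\le{\bm c}\,l(x,\eta)$ is simply false in general (take $x$ moving along an orbit with large $\eta$ and zero defect), while ``choosing $\wt\eta$ so that $\wt x(t)$ stays in the $G$-orbit through the lift of $\bar x(t)$'' takes you out of ${\mc P}_0$ whenever $\bar x$ is nonconstant. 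Your Gronwall argument for the first bound also has nothing to run on: $\eta$ is free data constrained only by the pointwise bounds \eqref{eqna8}; it satisfies no differential equation, so there is no ``linear ODE for the difference.'' The paper's construction avoids all of this: project $x(t)$ to a path $x_0(t)\subset\mu^{-1}(0)$ with $d(x(t),x_0(t))\le c\,|\mu(x(t))|$ and defect comparable to that of $x$; use cleanness of $\bar L_0,\bar L_1$ to find $q\in L_0\cap L_1$ with $d(q,x_0(0))\le c\,l(x,\eta)$; then take $\wt\eta=\eta$ \emph{exactly} and $\wt x(t)=g(t)q$, where $g(0)={\rm Id}$, $g'g^{-1}=-\eta$. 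This lies in ${\mc P}_0$ because $G\cdot q\subset L_0\cap L_1$, and the first inequality becomes trivial.

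For the distance bound, controlling ``the discrepancy in the orbit direction'' by $\sup|\eta-\wt\eta|$ is not sufficient even once $\wt\eta$ is fixed; what is needed is the co-rotating-frame estimate: by $G$-invariance of the metric, $d(x_0(t),\wt x(t))=d(g(t)^{-1}x_0(t),q)$, and the derivative of $g(t)^{-1}x_0(t)$ equals $(g(t)^{-1})_*(x_0'+{\mc X}_\eta(x_0))$, so integrating the defect gives $d(x_0(t),\wt x(t))\le d(x_0(0),q)+c\,l(x,\eta)$; adding $d(x(t),x_0(t))\le c\,|\mu(x(t))|$ yields the claim with the value of $|\mu|$ at time $t$, as required. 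Two smaller points: your inequality $\int_0^\pi|\bar x'|^2\,dt\le C\,l(x,\eta)^2$ is wrong as stated (an $L^2$ norm is not controlled by an $L^1$ norm); the needed smallness follows instead from the pointwise hypothesis \eqref{eqna8}. And Lemma \ref{lemma50} contains no ``companion estimate'' bounding the distance from an endpoint to $\bar L_0\cap\bar L_1$ by the length of the path; that linear estimate is a separate standard consequence of clean intersection, which is exactly what the paper invokes---in fact Lemma \ref{lemma50} is not needed for this lemma at all (it enters later, in Lemma \ref{lemma52} and Proposition \ref{prop55}).
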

\begin{proof}
For small $\updelta>0$, there exist a unique path $(x_0, \eta_0): [0,\pi ]\to \mu^{-1}(0) \times {\mf g}$ such that
\beqn
x_0(0) \in L_0,\ x_0(\pi ) \in L_1,\ x(t) = \exp_{x_0(t)} (J {\mc X}_{\eta_0}),\ \left| \eta_0(t) \right|  \leq c_1 \left| \mu(x(t)) \right|.
\eeqn
Moreover, there exists $c_2>0$ such that
\beqn
\left| x_0'(t) + {\mc X}_{\eta(t)} (x_0(t)) \right| \leq c_2 \left| x'(t) + {\mc X}_{\eta(t)} (x(t)) \right|.
\eeqn
On the other hand, since $\bar{L}_0$ and $\bar{L}_1$ intersect cleanly, there exists $\bar{q} \in \bar{L}_0 \cap \bar{L}_1$ such that
\beqn
d( \bar{q}, \bar{x}_0(0)) \leq c_3 \int_0^\pi |\bar{x}_0'(t)| dt. 
\eeqn
Then we can choose a lift $q \in L_0 \cap L_1$ of $\bar{q}$, such that
\beqn
d \left( q, x_0(0) \right) \leq c_4 l(x_0, \eta) \leq c_2 c_4 l(x, \eta).
\eeqn
On the other hand, choose $g: [0,\pi ]\to G$ such that $g(0) = {\rm Id}$, $ g'(t) g(t)^{-1} = -\eta(t)$. Define
\beqn
(\wt{x}(t), \wt\eta(t)) = \left( g(t) q, \eta (t) \right) = g^{-1} (q, 0)\in {\mc P}_0.
\eeqn
Then we see there exists $c_5>0$ such that 
\beqn
\begin{split}
d(x(t), \wt{x}(t)) \leq &\ d(x(t), x_0(t)) + d(x_0(t), \wt{x}(t)) \\[0.2cm]
\leq &\  c_5 |\mu(x(t))| + d ( g(t)^{-1}x_0 (t), q) \\
\leq &\ c_5 |\mu(x(t))| + c_5 d(x_0(0), q) + c_5  \int_0^\pi \left| \frac{d}{dt} g(t)^{-1} x_0(t) \right| dt\\
\leq &\ c_6 \left( |\mu(x(t))| + l(x, \eta) \right).
\end{split}
\eeqn
\qed \end{proof}

For $\updelta>0$, denote by ${\mc P}_\updelta \subset {\mc P}$ the subset of pairs $(x, \eta)$ that satisfy \eqref{eqna8}. Then we define the local action functional for all $(x, \eta) \in {\mc P}_\updelta$ by
\beqn
{\mc A}_{loc} (x, \eta) = - \int_{[0,1]\times [0,\pi ]} u^* \omega + \int_0^\pi \langle \mu(x(t)), \eta(t) \rangle dt. 
\eeqn

\begin{lemma}\label{lemma52}
There exist positive constants $\updelta = \updelta(L_0, L_1)$ and ${\bm c} = {\bm c}(L_0, L_1)$ such that for $(x, \eta) \in {\mc P}_\updelta$, we have
\beqn
\left| {\mc A}_{loc}(x, \eta) \right| \leq {\bm c} \int_0^1 \left( \left| x'(t) + {\mc X}_{\eta(t)} (x(t)) \right|^2 + \left| \mu(x(t)) \right|^2 \right) dt.
\eeqn
\end{lemma}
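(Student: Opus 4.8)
The functional ${\mc A}_{loc}$ in the statement is (by the definition of ${\bm a}$ in \eqref{eqna6}) the ``equivariant local action'' ${\bm a}(x)+\int_0^\pi\langle\mu(x),\eta\rangle\,dt$, and the plan is to gauge away the $\mf g$-component $\eta$ so that the estimate reduces directly to Po\'zniak's isoperimetric inequality, Lemma \ref{lemma50}. The point is that under a gauge transformation $g\colon[0,\pi]\to G$ the quantity ${\mc A}_{loc}$ is invariant (the potential boundary corrections at $t=0,\pi$ vanish because $L_0,L_1\subset\mu^{-1}(0)$), so after an appropriate gauge $\eta$ disappears and one is left with the ordinary symplectic action of a short path, to which Lemma \ref{lemma50} applies.

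Concretely, given $(x,\eta)\in{\mc P}_\updelta$ I would first solve the ODE $\dot g(t)g(t)^{-1}=-\eta(t)$ with $g(0)={\rm Id}$ to get a smooth $g\colon[0,\pi]\to G$, and set $z(t)=g(t)^{-1}x(t)$. Since $x=gz$ and $G$ acts by isometries of the fixed $G$-invariant metric $h$, one computes $g_*\dot z = x'+{\mc X}_\eta(x)$, hence $|\dot z(t)|=|x'(t)+{\mc X}_{\eta(t)}(x(t))|$ pointwise, so $\int_0^\pi|\dot z|^2\,dt=\int_0^\pi|x'+{\mc X}_\eta(x)|^2\,dt\le\pi\updelta^2$. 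Moreover $z(0)=x(0)\in L_0$ and $z(\pi)=g(\pi)^{-1}x(\pi)\in L_1$ because $L_1$ is $G$-invariant. Choosing $\updelta$ small enough that $\pi\updelta^2\le(\updelta'(L_0,L_1))^2$, Lemma \ref{lemma50} produces a filling $v\colon[0,1]\times[0,\pi]\to X$ of $z$ with $v(s,0)\in L_0$, $v(s,\pi)\in L_1$, $v(0,t)\in L_0\cap L_1$, $v(1,t)=z(t)$ and $\big|\int_{[0,1]\times[0,\pi]}v^*\omega\big|\le{\bm c}'\int_0^\pi|\dot z|^2\,dt$.

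It then remains to identify ${\mc A}_{loc}(x,\eta)$ with ${\bm a}(z)=-\int v^*\omega$, for which I use the filling $v_x(s,t):=g(t)\cdot v(s,t)$ of $x$; its boundary conditions hold since $L_0$, $L_1$ and $L_0\cap L_1=\pi^{-1}(\bar L_0\cap\bar L_1)$ are $G$-invariant, and $v_x(0,t)\in\mu^{-1}(0)$. From $\partial_t v_x=-{\mc X}_{\eta}(v_x)+g_*\partial_t v$, the $G$-invariance of $\omega$, and the moment map identity $\omega({\mc X}_\eta,\cdot)=\langle d\mu(\cdot),\eta\rangle$, one gets $\omega(\partial_s v_x,\partial_t v_x)=\partial_s\langle\mu(v_x),\eta\rangle+\omega(\partial_s v,\partial_t v)$; integrating and using $\mu(v_x(0,t))=0$, $v_x(1,t)=x(t)$ gives $\int v_x^*\omega=\int_0^\pi\langle\mu(x),\eta\rangle\,dt+\int v^*\omega$, hence
\[
{\mc A}_{loc}(x,\eta)=-\int_{[0,1]\times[0,\pi]}v_x^*\omega+\int_0^\pi\langle\mu(x),\eta\rangle\,dt=-\int_{[0,1]\times[0,\pi]}v^*\omega={\bm a}(z).
\]
Combining with the previous step, $|{\mc A}_{loc}(x,\eta)|=|{\bm a}(z)|\le{\bm c}'\int_0^\pi|x'+{\mc X}_\eta(x)|^2\,dt\le{\bm c}'\int_0^\pi\big(|x'+{\mc X}_\eta(x)|^2+|\mu(x)|^2\big)\,dt$, which is the claim with ${\bm c}={\bm c}'$ (the $|\mu|^2$ term is genuinely slack here, included only for convenience in Section \ref{section6}).

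The only real obstacle is the bookkeeping in the last paragraph: getting every sign right so that the two $\int_0^\pi\langle\mu(x),\eta\rangle$ terms cancel, and verifying that $v_x$ really is an admissible filling of $x$ — which is exactly where $G$-invariance of the Lagrangians and the inclusion $L_0\cap L_1\subset\mu^{-1}(0)$ are used. As an alternative to re-invoking Lemma \ref{lemma50} for $z$, one may instead start from the pair $(\tilde x,\tilde\eta)\in{\mc P}_0$ supplied by Lemma \ref{lemma51} (for which $\tilde\eta=\eta$, so $g^{-1}\tilde x\equiv q\in L_0\cap L_1$) and build the filling of $z$ by coning to $q$, using the distance bounds there; this makes the dependence of ${\bm c}$ on $(L_0,L_1)$ explicit but is otherwise identical.
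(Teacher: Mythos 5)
Your argument is correct and is essentially the same one the paper outsources to Step 3 of \cite[Lemma 3.17]{Frauenfelder_thesis}: gauge away $\eta$ via $\dot g g^{-1}=-\eta$, apply Po\'zniak's inequality (Lemma \ref{lemma50}) to $z=g^{-1}x$, and use the moment map identity to see that the equivariant correction term cancels, so ${\mc A}_{loc}(x,\eta)={\bm a}(z)$. Your sign bookkeeping and the boundary conditions for the filling $v_x(s,t)=g(t)v(s,t)$ (using $G$-invariance of $L_0$, $L_1$ and $L_0\cap L_1\subset\mu^{-1}(0)$) check out, so this is a faithful expansion of the cited proof.
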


\begin{proof}
The same as in Step 3 of the proof of \cite[Lemma 3.17]{Frauenfelder_thesis}.
\qed \end{proof}

\subsection{{\it a priori} estimates}

\begin{lemma}\label{lemma53}
Let $u: B_{R+r} \to {\mb R}_+\cup \{0\}$ be a $C^2$-function and $a>0$ such that
\beqn
\Delta u \geq - a(u +u^2). 
\eeqn
Then for any $x \in B_R$, 
\beqn
\int_{B_r(x)} u \leq \frac{\pi}{8a} \Longrightarrow u(x) \leq \max \{  \frac{\pi}{8} , \frac{4a r^2 }{\pi} \} \frac{1}{r^2}  \int_{B_r(x)} u. 
\eeqn
\end{lemma}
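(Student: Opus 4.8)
The plan is to prove this by the classical mean value (Heinz rescaling) argument, reducing everything to the elementary sub-mean-value inequality for nonnegative functions satisfying $\Delta u\ge-\kappa u$ on a small disk. First, after translating I may assume $x=0$, and after the rescaling $y\mapsto u(ry)$ I may assume $r=1$: the constant $a$ is replaced by $ar^2$, the quantity $r^{-2}\int_{B_r(x)}u$ is preserved, and the whole statement is invariant in form. So write $\varepsilon:=\int_{B_1}u$, with hypothesis $\varepsilon\le\pi/(8a)$, and aim for $u(0)\le\max\{\pi/8,\ 4a/\pi\}\,\varepsilon$. I would then split according to which branch of the $\max$ is active, i.e. whether $a$ lies below a fixed threshold (roughly $\pi^2/32$, where the two branches cross) or above it.

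In the ``small-curvature'' regime $u$ is nearly subharmonic, so I would argue directly: pass to the spherical average $\bar u(s)$ of $u$ over $\partial B_s$, use $\overline{u^2}\le(\sup_{B_1}u)\,\bar u$ together with the smallness of $\int u$ to get a differential inequality $(s\bar u')'\ge -c\,a\,s\,\bar u$ on a definite disk, and integrate it to a mean value bound $u(0)=\bar u(0)\le\frac1\pi\bigl(1+O(a)\bigr)\varepsilon$, which is sharp at $a=0$ (the mean value property of subharmonic functions) and $\le\frac{\pi}{8}\varepsilon$ once the threshold on $a$ is chosen small enough. The only subtlety here is controlling $\sup_{B_1}u$, which is handled exactly as in the other regime.

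In the ``large-curvature'' regime localization is unavoidable, and I would use the Heinz rescaling. The continuous function $w(\rho):=(1-\rho)^2\sup_{\overline{B_\rho}}u$ on $[0,1]$ satisfies $w(0)=u(0)$ and $w(1)=0$, hence attains its maximum at some $\rho_0\in[0,1)$; choose $x_0$ with $|x_0|\le\rho_0$ and $u(x_0)=\sup_{\overline{B_{\rho_0}}}u=:C$, and set $\delta:=\tfrac12(1-\rho_0)$. Maximality of $w$ gives $\sup_{B_\delta(x_0)}u\le 4C$ and $u(0)\le 4\delta^2C\le C$, so it suffices to bound $C$. On $B_\delta(x_0)$ one has $\Delta u\ge-a(u+u^2)\ge-a(1+4C)u=:-\kappa u$; running the spherical-average ODE around $x_0$ on a disk of radius $\delta':=\min\{\delta,\ (2\kappa)^{-1/2}\}$ yields $\bar u\ge C/2$ on $[0,\delta']$, whence $\varepsilon\ge\int_{B_{\delta'}(x_0)}u\ge\tfrac{\pi}{2}C(\delta')^2$. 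If $\delta'=\delta$ this reproduces the near-subharmonic estimate; if $\delta'=(2\kappa)^{-1/2}$ it reads $C\lesssim\frac{a(1+4C)}{\pi}\varepsilon$, and since $\varepsilon\le\pi/(8a)$ the coefficient of $C$ on the right can be absorbed, giving $C\le\frac{4a}{\pi}\varepsilon$ and hence $u(0)\le C\le\frac{4a}{\pi}\varepsilon$. Undoing the rescaling produces the term $\frac{4ar^2}{\pi}\cdot\frac1{r^2}\int_{B_r(x)}u$.

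The main obstacle is precisely this self-referential step: the constant $\kappa=a(1+4C)$ that governs the sub-mean-value inequality already involves the supremum $C$ one is trying to bound, so the inequality $\kappa(\delta')^2\lesssim 1$ needed to run the ODE must be made self-consistent by a bootstrap off the smallness hypothesis $\varepsilon\le\pi/(8a)$ (in practice one uses a slightly smaller numerical constant so that the absorption of $C$ is strict). Everything else — the Heinz rescaling, the spherical-average differential inequality, its integration to a mean value bound, the control of $\sup u$ on the relevant disk, and the constant-chasing needed to land on exactly $\max\{\pi/8,\ 4ar^2/\pi\}$ — is routine, and I would compress it heavily.
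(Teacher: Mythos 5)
Your ``large-curvature'' branch is essentially the paper's own proof: the Heinz function $(1-\rho)^2\sup_{B_\rho}u$, the bound $u\le 4C$ on $B_\delta(x_0)$, the sub-mean-value inequality for $u+\kappa C|w-x_0|^2$ (your spherical-average ODE), and your dichotomy $\delta'=\delta$ versus $\delta'=(2\kappa)^{-1/2}$ is the paper's $m\delta^2\le 1$ versus $m\delta^2>1$ with $m=\kappa=a(1+4C)$. The genuine gap is the ``small-curvature'' branch you add in order to hit the printed constant $\tfrac{\pi}{8}$. To get $u(x)\le\tfrac1\pi(1+O(a))\int_{B_1(x)}u$ you must run the spherical-average inequality essentially out to radius $1$ (stopping at radius $\theta$ only gives the constant $\tfrac{1}{\pi\theta^2}$, which beats $\tfrac\pi8$ only for $\theta\gtrsim 0.9$), and the linearization $\overline{u^2}\le(\sup_{B_1}u)\,\bar u$ with an $O(a)$ coefficient requires a bound on $\sup_{B_1}u$ of size $O(1)$. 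Your claim that this sup bound ``is handled exactly as in the other regime'' is not correct: the Heinz trick only controls $(1-\rho)^2\sup_{B_\rho}u$, so it says nothing near $\partial B_1(x)$, and even on $B_{1/2}(x)$ the bound it yields is of size $\varepsilon\le\pi/(8a)$, i.e.\ $a\sup u=O(1)$, not $O(a)$. So the $\pi/8$ branch is unproved. It is also unnecessary: the constant $\pi/8$ in the statement is evidently a misprint for $8/\pi$ (this is what the paper's proof concludes and what is used in Proposition \ref{prop56}), and for $8/\pi$ the single Heinz argument covers all $a$ with no case split in $a$.

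The second problem you partly flag yourself: with the literal threshold $\int_{B_1(x)}u\le\pi/(8a)$ the absorption in the large-curvature case does not close. Your radius $(2\kappa)^{-1/2}$ gives $C\le\tfrac{4a}{\pi}\varepsilon+\tfrac{16a}{\pi}\varepsilon\,C$, and $\tfrac{16a}{\pi}\varepsilon$ can be as large as $2$; even with the optimal radius $\kappa^{-1/2}$ the coefficient of $C$ is exactly $1$, so the bootstrap is vacuous, and ``using a slightly smaller numerical constant'' means proving a different statement. To be fair, the paper's own Case 2 has the same borderline defect, hidden by a factor-of-two slip (it writes $\tfrac{\pi c^*}{2m}=\tfrac{\pi c^*}{a+4ac^*}$ although $m=a+4ac^*$, and the conclusion $c^*\le\tfrac14$ evaporates once the factor is restored); since the lemma is only ever invoked with unspecified constants, this part is a fixable constant-chasing issue (e.g.\ tighten the hypothesis to $\int_{B_r(x)}u\le\pi/(16a)$) rather than a conceptual one. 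The missing sup control in your small-$a$ branch, however, is a real gap in the argument as proposed.
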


\begin{proof}
We first prove for the case that $r = 1$. Using the Heinz trick (cf. \cite[Page 82]{McDuff_Salamon_2004}), define the function $f: [0, 1]\to {\mb R}$ by
\beqn
f(\rho) = (1- \rho)^2 \sup_{B_\rho(x)} u.
\eeqn
Let $\rho^* \in [0,1)$ be some number at which $f$ attains its maximum. Choose $w^* \in B_{\rho^*}(x)$ such that $u(z^*) = \sup_{B_{\rho^*}(x)} u$ and denote $c^* = u(z^*)$. Denote $\delta = \frac{1 - \rho^* }{2}< 1$. Then for $w \in B_\delta(w^*)$,
\beqn
u(w) \leq \sup_{B_{\rho^* + \delta}(x)} u \leq \frac{  ( 1- \rho^*)^2 }{ ( 1- \rho^* - \rho)^2 } \sup_{B_{\rho^*}(x)} u = 4 c^*. 
\eeqn
Therefore on $B_\delta(w^*)$, we have 
\beqn
\Delta u \geq - a (u +u^2) \geq  - \left( 4ac^* + 16 a(c^*)^2 \right)=: -4 m c^* ,
\eeqn
which implies that the function $\wt{u}(w) = u(w) + m c^* |w- w^*|^2$ is subharmonic on $B_\delta (w^*)$. Therefore, for any $\rho \in (0, \delta]$, we have
\beqn
c^* = u(w^*) \leq \frac{1 }{ \pi \rho^2} \int_{B_\rho (w^*)} \left( u + mc^* |w- w^*|^2 \right) = \frac{1}{\pi \rho^2}\int_{B_\rho(w^*)} u + \frac{1}{2} m c^* \rho^2. 
\eeqn
Now if $ m \delta^2 \leq 1$, then we take $\rho = \delta$, which implies that 
\beqn
\frac{1}{2} c^* \leq c^* - \frac{1}{2} mc^* \delta^2  \leq \frac{1}{\pi \delta^2} \int_{B_\delta(w^*)} u. 
\eeqn
Then
\beqn
u(x) = f(0) \leq f(\rho^*) = 4 \delta^2 c^* \leq \frac{8}{\pi} \int_{B_\delta(w^*)} u \leq \frac{8}{\pi} \int_{B_1(x)} u. 
\eeqn
On the other hand, if $m \delta^2 > 1$, then take $\rho = \sqrt{ \frac{ 1 }{m }  } < \delta$, we see 
\beqn
\frac{c^*}{2} \leq \frac{ m}{\pi} \int_{B_\rho(w^*)} u \leq \frac{ m}{\pi} \int_{B_1(x)} u. 
\eeqn
Therefore
\beqn
\frac{\pi}{8a } =  \epsilon \geq \int_{B_1(x)} u \geq \frac{ \pi c^* }{2 m } =  \frac{ \pi c^* }{a + 4 a c^*}.
\eeqn
It implies that $c^* \leq \frac{1}{4}$. Therefore
\beqn
u(x) \leq u(w^*) = c^* \leq \frac{2 ( a + 4 ac^*) }{\pi} \int_{B_1(x)} u \leq \frac{4 a }{\pi} \int_{B_1(x)} u. 
\eeqn
In summary, we see that 
\beqn
\int_{B_1(x)} u \leq \frac{\pi }{8 a} \Longrightarrow u(x) \leq \max\{ \frac{8}{\pi} , \frac{4 a }{\pi} \} \int_{B_1(x)} u. 
\eeqn
For general $r>0$, the estimate follows by applying the above argument to $v(x) = u(rx)$ and $a$ replaced by $ar^2$.
\qed \end{proof}



\subsection{Mean-value estimate}

Let $h$ be a $(J, L, \mu)$-admissible metric on $X$ satisfying \eqref{eqna5}. Let $\nabla$ be the Levi-Civita connection of $h$ and we have the covariant derivatives defined by \eqref{eqna2}.

Now we consider the vortex equation on $\Xi$. An area form can be written as $\sigma ds dt$ for a smooth function $\sigma: \Xi \to (0, +\infty)$. We assume that there exist $c^{(l)} >0$, $l = 1, \ldots$ such that
\beq\label{eqna9}
\left| \nabla^l \sigma \right| \leq c^{(l)} \sigma.
\eeq

The vortex equation is written as
\beq\label{eqna10}
v_s + J v_t = 0,\ \kappa + \sigma \mu(u) = 0.
\eeq
Using a $G$-invariant metric $h$, we define the energy density for a solution $(u, \phi, \psi)$ by 
\beqn
e(s, t) = | v_s(s, t) |_h^2 + \sigma(s, t) | \mu(u(s, t)) |^2,
\eeqn
where second norm is the $G$-invariant metric on ${\mf g}$ we used to define the vortex equation. 

\begin{lemma}\label{lemma54}
Let $\Xi\subset {\mb C}$ be an open subset. For any compact subset $K\subset X$, there exists ${\bm c} = {\bm c}(K)>0$ depending on $K$ (and also on the constants $c^{(l)}$ of \eqref{eqna9}) such that for any solution $(u, \phi, \psi)$ of \eqref{eqna10} satisfying $u(\Xi) \subset K$, its energy density function $e$ satisfies 
\beq\label{eqna11}
\Delta e\geq -{\bm c} \left( e + e^2 \right). 
\eeq
Here $\Delta$ is the standard Laplacian in coordinates $(s, t)$. If $\sigma$ is constant, we have
\beq\label{eqna12}
\Delta e \geq - {\bm c} e^2.
\eeq
\end{lemma}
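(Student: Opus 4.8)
The plan is to establish a Bochner-type pointwise differential inequality for the energy density $e = |v_s|^2 + \sigma|\mu(u)|^2$ of a solution of the vortex equation \eqref{eqna10}. First I would compute $\Delta e$ in coordinates $(s,t)$ by expanding $\Delta e = \partial_s^2 e + \partial_t^2 e$ and applying the covariant Leibniz rule with respect to the connection $\nabla_A$ introduced in \eqref{eqna2}. Since the equation is $v_s + Jv_t = 0$, the two terms $|v_s|^2$ and $|v_t|^2$ are equal, so it is cleanest to first differentiate $|v_s|^2$ and $\sigma|\mu(u)|^2$ separately using $\nabla_{A,s}, \nabla_{A,t}$, then convert ordinary derivatives into covariant ones, picking up curvature terms of $h$ (controlled on the compact set $K$) and terms involving $\nabla_{A,s}v_s$, $\nabla_{A,t}v_s$, etc.

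The key identities I would use are the ones extracted from the vortex equation: differentiating $v_s + Jv_t = 0$ gives $\nabla_{A,s}v_s + (\nabla_{A,s}J)v_t + J\nabla_{A,s}v_t = 0$ (and similarly in $t$), where by Lemma \ref{lemma47} $\nabla_{A,s}J = \nabla_{v_s}J$, so that term is bounded by $|v_s|\,|v_t| = |v_s|^2$ on $K$. The second equation $\kappa + \sigma\mu(u) = 0$ together with the standard commutation relation $\nabla_{A,s}v_t - \nabla_{A,t}v_s = {\mc X}_\kappa(u)$ (the curvature identity for the covariant derivative along $u$) lets me express the "mixed" second derivatives in terms of $\sigma\mu(u)$ and hence control them by $e$. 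One also needs the identity $d\mu(u)\cdot v_s = $ (something controlled), coming from the moment-map property $d(\mu(\xi)) = \omega({\mc X}_\xi,\cdot)$, which produces the $\rho(v_s,V)$-type terms; these are tensorial in $v_s$ and bounded on $K$. Collecting everything, the "good" terms $|\nabla_{A,s}v_s|^2 + |\nabla_{A,t}v_s|^2 + \sigma|\nabla_A\mu(u)|^2 \ge 0$ can be dropped, and the remaining "error" terms are all bounded by $\mathbf{c}(e + e^2)$, with the constant depending on the $C^1$-geometry of $h$ over $K$ and on the constants $c^{(l)}$ from \eqref{eqna9} (which enter only through derivatives of $\sigma$). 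When $\sigma$ is constant, the derivatives of $\sigma$ vanish, so the linear-in-$e$ terms that carried a factor of $\nabla\sigma$ disappear; a closer bookkeeping shows the remaining error is purely quadratic, giving $\Delta e \ge -\mathbf{c}\,e^2$.

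The main obstacle I anticipate is the careful bookkeeping of the $\sigma$-dependent terms: the term $\sigma|\mu(u)|^2$ in $e$ contributes both $\Delta\sigma\cdot|\mu(u)|^2$ and cross terms $\nabla\sigma\cdot\nabla|\mu(u)|^2$, and one must use \eqref{eqna9} plus Cauchy–Schwarz ($|\nabla\sigma|\,|\mu||\nabla_A\mu| \le \tfrac12\sigma|\nabla_A\mu|^2 + C\sigma|\mu|^2$) to absorb the dangerous cross terms into the good square terms, leaving only $Ce$ and $Ce^2$. Verifying that in the constant-$\sigma$ case no linear term survives — i.e. that every leftover error genuinely carries two factors among $\{v_s, \mu(u)\}$ — requires tracking which curvature-of-$h$ terms are quadratic versus which are ${\mc X}_\kappa = -\sigma{\mc X}_{\mu(u)}$-type (already quadratic since $\mu(u)$ is one factor and $v_s$ or $v_t$ the other). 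This is routine but error-prone; once it is done, Lemma \ref{lemma53} applies directly to $e$ to yield the mean-value estimate used elsewhere.
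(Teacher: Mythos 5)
Your overall strategy (Bochner computation with the covariant derivatives \eqref{eqna2}, the commutation identity, dropping the good squares, and Cauchy--Schwarz with \eqref{eqna9} for the $d\sigma$ and $\Delta\sigma$ terms) is the same as the paper's, and the constant-$\sigma$ case is indeed obtained by killing the $c^{(1)},c^{(2)}$ terms. However, the step you dismiss as ``routine bookkeeping'' --- that \emph{all} remaining error terms are bounded by ${\bm c}(e+e^2)$ with ${\bm c}$ depending only on $K$ and the $c^{(l)}$ --- is precisely where the lemma is nontrivial, and your plan as written does not close it. The whole point is that ${\bm c}$ must be independent of $\sigma$ (in the application $\sigma\sim\lambda_i^2\to\infty$). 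The Laplacian of $|v_s|^2$ produces, via $\nabla_{A,s}v_s+\nabla_{A,t}v_t=\sigma J{\mc X}_\mu+\cdots$, the terms $\sigma\big(\langle J{\mc X}_{d\mu\cdot v_s},v_s\rangle-\langle J{\mc X}_{d\mu\cdot v_t},v_t\rangle\big)$. For a generic $G$-invariant metric these are only of size $C\sigma|v_s|^2$, which is \emph{not} controlled by $e+e^2$ uniformly in $\sigma$ (take $\mu(u)\approx 0$, $|v_s|\approx 1$, $\sigma$ huge: then $e\approx 1$ but $\sigma|v_s|^2\approx\sigma$). This is why the paper works with a $(J,L,\mu)$-admissible metric (Definition \ref{defn48}(4), existence Lemma \ref{lemma49}) normalized so that \eqref{eqna5} holds: it gives $|\langle J{\mc X}_{d\mu\cdot v},v\rangle|\le \tfrac12|d\mu\cdot v|^2+{\bm n}|\mu||v|^2$, and the $\tfrac{\sigma}{2}|d\mu\cdot v|^2$ piece is then absorbed by the positive term $\sigma(|d\mu\cdot v_s|^2+|d\mu\cdot v_t|^2)$ coming from $\Delta(\sigma|\mu(u)|^2)$. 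In particular the two halves of $e$ cannot be estimated separately, as your plan suggests; they must be added so that this cancellation is available.

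A second missing ingredient of the same nature: after Young's inequality the estimate produces terms of the form $\epsilon\,\sigma^2|\mu|^2$ (e.g.\ from $\sigma|\mu||v_s|^2\le\epsilon\sigma^2|\mu|^2+c_\epsilon|v_s|^4$ and from $|d\sigma|\,|{\mc X}_\mu|\,|v_s|$), and $\sigma^2|\mu|^2$ is also not bounded by $e+e^2$ in general. The paper absorbs these using the good term $\sigma^2\omega({\mc X}_\mu,J{\mc X}_\mu)$ together with the nondegeneracy $\omega({\mc X}_\xi,J{\mc X}_\xi)\ge{\bm m}|\xi|^2$ on $\{|\mu|\le\updelta\}$ ($0$ a regular value of $\mu$), and on $\{|\mu|>\updelta\}$ by $\sigma^2|\mu|^2\le\updelta^{-2}\sigma^2|\mu|^4\le\updelta^{-2}e^2$. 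Neither the admissibility condition \eqref{eqna5} nor the ${\bm m},\updelta$ dichotomy appears in your proposal, and without them the claimed bound with a $\sigma$-independent constant fails along this route; so you should make both mechanisms explicit (and note that the lemma is stated for the admissible metric fixed in the appendix, not an arbitrary $G$-invariant one).
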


\begin{proof}

Since the covariant derivative respects the metric, we have
\beqn
\begin{split}
\frac{1}{2}\Delta \left| \mu(u) \right|^2 = &\  \left| \nabla_{A, s} \mu \right|^2 + \left| \nabla_{A, t} \mu \right|^2  +  \left\langle \left( (\nabla_{A, s})^2  + ( \nabla_{A, t})^2 \right) \mu, \mu \right\rangle\\
= &\ \left| d\mu \cdot v_s \right|^2 + \left| d \mu \cdot v_t \right|^2  + \left\langle \nabla_{A, s} ( d\mu \cdot v_s) + \nabla_{A, t} (d\mu \cdot v_t), \mu \right\rangle\\
= &\ \left| d\mu \cdot v_s \right|^2 + \left| d \mu \cdot v_t \right|^2  + \left\langle \nabla_{A, s} ( d\mu \cdot (- Jv_t) + \nabla_{A, t} (d\mu \cdot Jv_s), \mu \right\rangle\\
= &\ \left| d\mu \cdot v_s \right|^2 + \left| d \mu \cdot v_t \right|^2  + \left\langle \rho(v_t, v_s) - \rho(v_s, v_t) , \mu \right\rangle\\
&\ + \left\langle d\mu \cdot \left( J \nabla_{A, t} v_s - J \nabla_{A, s} v_t \right), \mu     \right\rangle\\
= &\ \left| d\mu \cdot v_s \right|^2 + \left| d \mu \cdot v_t \right|^2  + \left\langle \rho(v_t, v_s) - \rho(v_s, v_t) - d\mu \cdot (J {\mc X}_\kappa)  , \mu \right\rangle
\end{split}
\eeqn
Denote $\bar\rho(v_s, v_t) = \rho(v_s, v_t) - \rho(v_t, v_s)$. Then there exist $c_K>0$ and for any $\epsilon>0$, $c_{K, \epsilon}>0$, depending on the metric $h$ and the compact subset $K$ such that
\beq\label{eqna13}
\begin{split}
\frac{1}{2}\Delta  \left( \sigma \left| \mu(u) \right|^2\right) = &\ \sigma^2 \langle d\mu \cdot J {\mc X}_\mu, \mu \rangle\\
&\ + \sigma \left(  \left| d\mu \cdot v_s \right|^2 + \left| d\mu\cdot v_t \right|^2 + \langle \mu(u), - \bar\rho(v_s, v_t) \rangle \right)\\
& + \frac{ \Delta \sigma }{ 2} \left| \mu(u) \right|^2  + 2 \langle (\partial_s \sigma) d\mu\cdot v_s  + (\partial_t \sigma) d\mu \cdot v_t, \mu \rangle  \\
 \geq &\ \sigma^2 \omega( {\mc X}_\mu, J{\mc X}_\mu) + \sigma \left( |d\mu \cdot v_s |^2 + |d\mu \cdot v_t|^2 \right) \\
& + \frac{\Delta \sigma }{ 2} |\mu(u)|^2 -  c_K \left( |d\sigma| |v_s| |\mu| + \sigma |\mu| |v_s|^2\right)\\
\geq &\ \sigma^2 \left( \omega ( {\mc X}_\mu, J{\mc X}_\mu ) -  \left( \frac{ |\Delta \sigma| }{2\sigma^2 } + \epsilon \right) |\mu(u)|^2   \right)\\
& + \sigma \left( |d\mu \cdot v_s|^2 + |d\mu \cdot v_t|^2 \right) - c_{K, \epsilon} |v_s|^4 - c_{K, \epsilon} \left( \frac{ |d\sigma|^2 }{\sigma^2} \right) |v_s|^2.
\end{split}
\eeq

To estimate $\Delta |v_s|^2$, we have the following standard calculations.
\begin{align}
\begin{split}\label{eqna14}
\nabla_{A, s} v_t - \nabla_{A, t} v_s = &\ \nabla_s ( \partial_t u + {\mc X}_\psi ) + \nabla_{v_t} {\mc X}_\phi - \nabla_t (\partial_s u + {\mc X}_\phi) - \nabla_{v_s} {\mc X}_\psi \\
= &\ \nabla_{\partial_s u} {\mc X}_\psi + X_{\partial_s \psi} + \nabla_{v_t} {\mc X}_\phi - \nabla_{\partial_t u} {\mc X}_\phi - X_{\partial_t \phi} - \nabla_{v_s} {\mc X}_\psi \\
= &\ X_{\partial_s \psi} - X_{\partial_t \phi} + \nabla_{{\mc X}_\psi} {\mc X}_\phi - \nabla_{{\mc X}_\phi} {\mc X}_\psi \\
 = &\  - \sigma {\mc X}_\mu.
\end{split}\\
\begin{split}
\nabla_{A, t} {\mc X}_\mu  = &\  \nabla_t {\mc X}_\mu + \nabla_{{\mc X}_\mu} {\mc X}_\psi \\
            = &\  {\mc X}_{\partial_t \mu} + \nabla_{\partial_t u} {\mc X}_\mu + \nabla_{{\mc X}_\psi} {\mc X}_\mu + \left[ {\mc X}_\mu, {\mc X}_\phi \right]\\
                     = &\  {\mc X}_{d\mu\cdot \partial_t u} + X_{[\phi, \mu]} + \nabla_{v_t} {\mc X}_\mu\\
										= &\  {\mc X}_{d\mu \cdot v_t} + \nabla_{v_t} {\mc X}_\mu
\end{split}\\
\begin{split}
\nabla_{A, s} v_s + \nabla_{A, t} v_t = &\  \nabla_{A, t} (J v_s) - \nabla_{A, s} (J v_t) \\
= &\ J \left( \nabla_{A, t} v_s - \nabla_{A, s} v_t \right)  + \left( \nabla_{A, s} J \right) v_t - \left( \nabla_{A, t} J \right) v_s\\
= &\  \sigma J {\mc X}_\mu + \left( \nabla_{v_s} J \right) v_t - \left( \nabla_{v_t} J \right) v_s .
\end{split}
\end{align}
On the other hand, by the $G$-invariance of $\nabla J$ and Lemma \ref{lemma47}, there exist tensors $L_1, L_2, L_3$ such that
\begin{align*}
\nabla_{A, s} \left( \left( \nabla_{v_s} J \right) v_t \right) = &\ L_1 (v_s, v_s, v_t) + L_2 \left( \nabla_{A, s} v_s, v_t \right) + L_3\left( v_s, \nabla_{A, s} v_t \right);\\
\nabla_{A, s} \left( \left( \nabla_{v_t}J \right) v_s \right) = &\ L_1 (v_s, v_t, v_s) + L_2 \left( \nabla_{A, s} v_t, v_s \right) + L_3 \left( v_t, \nabla_{A, s} v_s \right).
\end{align*}
Therefore, we have
\beqn
\begin{split}
( \nabla_{A, s}^2 + \nabla_{A, t}^2 ) v_s = &\ \nabla_{A, s} \left( \nabla_{A, s} v_s + \nabla_{A, t} v_t \right) - \left[ \nabla_{A, s}, \nabla_{A, t} \right] v_t - \nabla_{A, t} \left( \nabla_{A, s} v_t - \nabla_{A, t} v_s \right)\\
= &\ \nabla_{A, s} \left( \sigma J {\mc X}_\mu + \left( \nabla_{v_s} J\right) v_t - \left( \nabla_{v_t} J \right) v_s \right) \\
&\ - R(v_s, v_t)v_t + \sigma \nabla_{v_t} {\mc X}_\mu - \sigma  {\mc X}_{d\mu\cdot v_t} + \sigma  \nabla_{v_t} {\mc X}_\mu  + (\partial_t \sigma) {\mc X}_\mu\\
= &\ \sigma \left( \nabla_{v_s} (J {\mc X}_\mu) + J {\mc X}_{d\mu \cdot v_s} + J \nabla_{v_s} {\mc X}_\mu + 2 \nabla_{v_t} {\mc X}_\mu - {\mc X}_{d\mu \cdot v_t}  \right) - R(v_s, v_t) v_t \\
&\ + ( \partial_s \sigma) J {\mc X}_\mu + (\partial_t \sigma) {\mc X}_\mu  + \nabla_{A, s} \left( (\nabla_{v_s} J ) v_t - (\nabla_{v_t} J ) v_s \right)\\
= & \ \sigma \left( \nabla_{v_s} (J {\mc X}_\mu) + J {\mc X}_{d\mu \cdot v_s} + J \nabla_{v_s} {\mc X}_\mu + 2 \nabla_{v_t} {\mc X}_\mu - {\mc X}_{d\mu \cdot v_t}  \right) \\
& - R(v_s, v_t) v_t + ( \partial_s \sigma) J {\mc X}_\mu + (\partial_t \sigma) {\mc X}_\mu  \\
&\ + L_1(v_s, v_s, v_t) + L_2 (\nabla_{A,s} v_s, v_t) + L_3 (v_s, \nabla_{A, s} v_t) \\
&\  + L_1 (v_s, v_t, v_s) + L_2 \left( \nabla_{A, s} v_t, v_s \right) + L_3 \left( v_t, \nabla_{A, s} v_s \right).
\end{split}
\eeqn

Therefore, since $u(\Xi)\subset K$, with abusive use of (small) $\epsilon$ and (big) $c_{K, \epsilon}$, we have
\beq
\begin{split}\label{eqna17}
\frac{1}{2} \Delta \left| v_s \right|^2 = &\ \left| \nabla_{A, s} v_s \right|^2 + \left| \nabla_{A, t} v_s \right|^2 + \left\langle \left( (\nabla_{A, s})^2 + (\nabla_{A, t})^2\right) v_s, v_s \right\rangle\\
\geq  &\ \left| \nabla_{A, s} v_s \right|^2 + \left| \nabla_{A, t} v_s \right|^2 - c_{K, \epsilon} |v_s|^4 \\
&\ + \sigma \left\langle \nabla_{v_s} (J {\mc X}_\mu) + J {\mc X}_{d\mu \cdot v_s} + J \nabla_{v_s} {\mc X}_\mu + 2 \nabla_{v_t} {\mc X}_\mu - {\mc X}_{d\mu \cdot v_t}, v_s \right\rangle\\
&\ - |d\sigma| |{\mc X}_\mu| |v_s| - \epsilon\left( \left| \nabla_{A, s} v_s \right|^2 + \left| \nabla_{A, s} v_t  \right|^2 \right) \\
\geq  &\ \left| \nabla_{A, s} v_s \right|^2 + \left| \nabla_{A, t} v_s \right|^2 - c_{K,\epsilon} |v_s|^4 - \sigma c_{K, \epsilon} |\mu| |v_s|^2\\
&\  + \sigma \left(\langle J {\mc X}_{d\mu \cdot v_s}, v_s \rangle - \langle J {\mc X}_{d\mu \cdot v_t}, v_t \rangle \right) \\
 &\ - |d\sigma| |{\mc X}_\mu| |v_s|- \left( \epsilon  \left| \nabla_{A, s} v_s \right|^2 + 2\epsilon \left| \nabla_{A, t} v_s  \right|^2 + 2\epsilon \sigma^2 \left| {\mc X}_\mu \right|^2 \right) \\
\geq &\ - \epsilon \sigma^2 |\mu(u)|^2 - c_{K, \epsilon} |v_s|^4  - c_{K, \epsilon} \frac{|d\sigma|^2 }{\sigma^2} |v_s|^2 \\
&\ + \sigma \left(\langle J {\mc X}_{d\mu \cdot v_s}, v_s \rangle - \langle J {\mc X}_{d\mu \cdot v_t}, v_t \rangle \right).
\end{split}
\eeq

Therefore, by \eqref{eqna9}, \eqref{eqna13} and \eqref{eqna17}, we have
\beq\label{eqna18}
\begin{split}
\frac{1}{2} \Delta e \geq  &\ \sigma^2 \left( \omega({\mc X}_\mu, J{\mc X}_\mu) -\left( \frac{|\Delta \sigma| }{ \sigma^2} + \epsilon\right)|\mu|^2 \right)  - c_{K, \epsilon} \left( |v_s|^4 + \frac{ |d\sigma|^2 }{\sigma^2} |v_s|^2 \right) \\
&\ + \sigma \left( |d\mu\cdot v_s|^2 + |d\mu\cdot v_t|^2 + \langle J {\mc X}_{d\mu \cdot v_s}, v_s \rangle - \langle J {\mc X}_{d\mu \cdot v_t}, v_t \rangle\right)\\
\geq &\ \sigma^2 \left( \omega ({\mc X}_\mu, J{\mc X}_\mu) - \epsilon |\mu|^2 \right) - c^{(2)} \sigma |\mu|^2  - c_{K, \epsilon}  \left( |v_s|^4 +  \left( c^{(1)} \right)^2 |v_s|^2\right) \\
&\ + \sigma \left( \frac{1}{2} |d\mu\cdot v_s|^2 +  \frac{1}{2} |d\mu \cdot v_t|^2 - {\bm n} |\mu(u)| |d\mu \cdot v_s| |v_s| - {\bm n} |\mu(u)| |d\mu \cdot v_t ||v_t| \right)\\
\geq &\ \sigma^2 \left( \omega({\mc X}_\mu, J{\mc X}_\mu)  - \epsilon |\mu|^2 \right)- c^{(2)} \sigma |\mu|^2  - c_{K, \epsilon}  \left( |v_s|^4 +  \left( c^{(1)} \right)^2 |v_s|^2\right) \\
&\  - \sigma {\bm n}^2 |\mu|^2 (|v_s|^2 + |v_t|^2).
\end{split}
\eeq
Here the second inequality follows from \eqref{eqna5}. Moreover, if $|\mu| \leq \updelta$, then for $\epsilon \leq {\bm m}$, the first term of the last line is nonnegative; if $|\mu|> \updelta$, then the first term is greater than or equal to $- \epsilon \updelta^{-2} \sigma^2 |\mu|^4$. In either case, there exist ${\bm c}', {\bm c}>0$ depending on $c_{K, \epsilon}$, ${\bm m}$, ${\bm n}$ and $\updelta$ such that
\beq\label{eqna19}
\begin{split}
\frac{1}{2} \Delta e \geq &\ \sigma^2 \big( \omega({\mc X}_\mu, J{\mc X}_\mu) - \epsilon |\mu|^2 \big) -{\bm c}' e^2 - \Big( c^{(2)} + c_{K, \epsilon} \big( c^{(1)} \big)^2 \Big) e\\
\geq &\ - {\bm c} e^2 - \Big( c^{(2)} + c_{K, \epsilon} \big( c^{(1)} \big)^2 \Big) e.
\end{split}
\eeq
This implies \eqref{eqna11}. \eqref{eqna12} follows by setting $c^{(1)}= c^{(2)} = 0$. \qed \end{proof}

\subsection{Removal of singularity at punctures}\label{appendixa5}

We prove the first part of Lemma \ref{lemma6}, which we restate as follows.
\begin{prop}\label{prop55}
Let $L_-, L_+$ be two $G$-Lagrangians of $X$ which intersect cleanly in $\bar{X}$. Let $(\Sigma, \partial \Sigma) = \left( {\mb D}^* \cap {\mb H}, {\mb D}^* \cap {\mb R} \right)$ and $\partial_\pm \Sigma = {\mb D}^* \cap {\mb R}_\pm$. Suppose ${\bf v}$ is a bounded solution to \eqref{eqn22} on $\left( \Sigma, \partial \Sigma  \right)$ with respect to a smooth area form $\nu \in \Omega^2 ( \Sigma)$. Then there exists a smooth gauge transformation $g: \Sigma \to G$ such that $g^* u$ extends continuously to $\{0\}$. 
\end{prop}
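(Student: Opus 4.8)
The plan is to adapt the standard ``removal of singularity'' argument for vortices (as in \cite{Cieliebak_Gaio_Mundet_Salamon_2002}, \cite{Gaio_Salamon_2005}) to the bordered situation, the key new point being that the boundary lies on two (possibly distinct) cleanly intersecting $G$-Lagrangians, so the local action functional from Lemma \ref{lemma51} and Lemma \ref{lemma52} must replace the usual one. First I would pass to the logarithmic coordinate $w = s + {\bm i}t = -\log z$, which biholomorphically identifies $(\Sigma, \partial\Sigma)$ with the half-infinite strip $(S, \partial S) = ([0,\infty) \times [0,\pi], [0,\infty)\times\{0,\pi\})$; under this change the area form $\nu$ pulls back to $\sigma(s,t)\,ds\,dt$ with $\sigma$ decaying like $e^{-2s}$, so in particular $\sigma$ satisfies estimates of the form \eqref{eqna9} and the energy on $\{s \geq T\}$ tends to $0$ as $T\to\infty$ because $E({\bf v}) < \infty$. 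By Corollary \ref{cor13}-type reasoning (or simply boundedness of ${\bf v}$) the image stays in a fixed compact $K$, so the mean-value inequality Lemma \ref{lemma54}, combined with Lemma \ref{lemma53}, upgrades the $L^2$-smallness of the energy on far-out sub-strips to a pointwise bound $e(s,t) \to 0$ as $s \to \infty$, uniformly in $t$.

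Next, with the energy density decaying, I would put the connection in temporal gauge on $\{s \geq T_0\}$ (i.e. arrange $\phi \equiv 0$, absorbing this into $g$) and study the loops $\gamma_s(t) = u(s,t)$ together with $\eta_s(t) = \psi(s,t)$. The pointwise energy bound forces $\sup_t|\mu(\gamma_s(t))|$ and $\sup_t|\dot\gamma_s(t) + {\mc X}_{\eta_s}(\gamma_s(t))|$ to be small for $s$ large, so $(\gamma_s, \eta_s) \in {\mc P}_\updelta$ in the notation of Lemma \ref{lemma51}. The core estimate is then a differential inequality for the restricted energy $F(s) = E({\bf v}; [s,\infty)\times[0,\pi])$: using the equivariant local action ${\mc A}_{loc}$ of Lemma \ref{lemma52} (whose value controls, and is controlled by, $\int |\dot\gamma + {\mc X}_\eta|^2 + |\mu|^2$), one shows $F(s) \leq -{\bm c}\,\frac{d}{ds}F(s)$ for $s$ large, hence exponential decay $F(s) \leq C e^{-s/{\bm c}}$. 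This is essentially the strip analogue of the isoperimetric/action argument already used in Section \ref{section6} (see the derivation around \eqref{eqn618}), and the cleanly-intersecting hypothesis on $\bar{L}_-, \bar{L}_+$ is exactly what makes Lemma \ref{lemma51} and Lemma \ref{lemma52} applicable.

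From the exponential decay of $F$ I would then derive exponential decay of $\int_0^\pi(|\dot\gamma_s|^2 + |\dot\gamma_s + {\mc X}_{\eta_s}|^2)\,dt$ and of $\sup_t |\mu(\gamma_s(t))|$, which via Lemma \ref{lemma51} produces a nearby genuine element $(\wt\gamma_s, \wt\eta_s) \in {\mc P}_0$, i.e. a point $q_s$ on the $G$-orbit $O \subset L_-\cap L_+$ with $d(\gamma_s(t), g_s(t)q_s) \leq C e^{-\delta s}$. Summing the resulting bound on $|\partial_s(g^{-1}u)|$ over $s \in [T_0,\infty)$ (after modifying the gauge $g$ so that the $G$-valued factors telescope) shows that $g^{-1}u$ is Cauchy as $s\to\infty$, hence converges to a single limit point $x\in L_- \cap L_+$; translating back to the $z$-coordinate, $g^* u$ extends continuously to $z = 0$ with value $x$.

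\textbf{Main obstacle.} The delicate part is the bootstrap at the very start: one must get from ``$E$ finite'' to ``$e(s,t)\to 0$ pointwise.'' Because $\sigma$ degenerates exponentially, the mean-value inequality \eqref{eqna11} has an $e$-linear term with coefficient involving $|\Delta\sigma|/\sigma^2$ and $|d\sigma|^2/\sigma^2$, which are bounded but not small; I would handle this by working on unit sub-strips where the constants are uniform and invoking Lemma \ref{lemma53} with the $L^2$-energy on that sub-strip (which is genuinely small for $s$ large) as the hypothesis, so that the output pointwise bound is small. The second subtlety is the gauge bookkeeping needed to make the $G$-factors $g_s(t)$ converge rather than merely stay bounded; this is where temporal gauge plus the explicit $g'g^{-1} = -\eta$ construction from the proof of Lemma \ref{lemma51} is essential. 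Everything else is a routine adaptation of the closed-case arguments, so I would state those steps briefly and refer to \cite{Cieliebak_Gaio_Mundet_Salamon_2002} and \cite{Gaio_Salamon_2005} for the parallel computations.
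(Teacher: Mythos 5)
Your outline follows the same broad route as the paper (logarithmic coordinates, mean-value estimate to get pointwise decay of the energy density, gauge fixing, an action/isoperimetric argument, convergence to a point of $L_-\cap L_+$), but it has a genuine gap exactly at the step you pass over most quickly: upgrading $L^2$-smallness of the energy to a pointwise bound ``uniformly in $t$.'' Lemma \ref{lemma54} and Lemma \ref{lemma53} are interior estimates (the Heinz-type bound needs a full ball $B_r(x)$ inside the domain), so by themselves they give nothing at points with $t$ near $0$ or $\pi$. The paper's proof handles this by extending the energy density across $\partial{\mb H}$ by even reflection and checking that the reflected function still satisfies $\Delta \wt{e}\geq -{\bm c}(1+\wt{e}^2)$, which requires $\partial_t \wt{e}=0$ along the boundary; verifying this Neumann condition is precisely where the $(J,L_\pm,\mu)$-admissible metrics of Definition \ref{defn48} / Lemma \ref{lemma49} (totally geodesic $L$, $J(TL)\perp TL$, orthogonality to $J{\mc X}_\xi$) are used, and this is the main new technical input of the appendix. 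Your ``main obstacle'' paragraph instead worries about the size of the $\sigma$-dependent constants, which is a lesser issue; without the reflection (or some substitute boundary mean-value estimate) the uniform pointwise decay is not established.

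There is a second, more repairable, problem in your core differential inequality $F(s)\leq -{\bm c}\,F'(s)$. Deriving it from Lemma \ref{lemma52} requires bounding $\int_0^\pi|\mu(\gamma_s)|^2\,dt$ by the energy density $\int_0^\pi\bigl(|v_s|^2+\sigma|\mu|^2\bigr)\,dt$, which needs $\sigma$ bounded below (this is why Proposition \ref{prop57} carries the constant $\underline\sigma$, and why Section \ref{section6} explicitly says that argument cannot be used when the area forms degenerate). Here $\sigma\sim e^{-2s}\to 0$, so the estimate fails as stated. The fix is the one the paper uses: put $A$ in temporal gauge, integrate $\partial_s\psi=-\sigma\mu(u)$ using the exponential decay of $\sigma$ and boundedness of $|\mu(u)|$ to make $\psi$ (hence ${\mc X}_\psi$ and the correction term $\int\langle\mu,\psi\rangle\,dt$) small, and then run the action argument with the non-equivariant Po\'zniak inequality (Lemma \ref{lemma50}) applied to the loops $\gamma_s$, as in the derivation around \eqref{eqn618}; the paper's own proof of the proposition concludes exactly this way. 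You gesture at temporal gauge and at \eqref{eqn618}, so this part can be repaired, but as written the central estimate is set up with the wrong ($\underline\sigma$-dependent, equivariant) inequality, and the reflection/admissible-metric step is missing altogether.
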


\begin{proof}
Identify $\Sigma$ with $[0, +\infty) \times [0, \pi]$ via $\Sigma \ni z \mapsto w = s + {\bm i} t= -  \log z$ and view the strip as a subset of ${\mb H}$. Suppose  $\nu = \sigma ds dt$, it is easy to check that \eqref{eqna9} is satisfied. Let $\wt{e}: [0, +\infty) \times [0,\pi ] \to {\mb R}_+ \cup \{0\}$ be the energy density function. By Lemma \ref{lemma54}, we have
\beqn
\Delta \wt{e} \geq - {\bm c} \left( 1 + \wt{e}^2 \right).
\eeqn
Here ${\bm c}$ depends on a choice of $G$-invariant metric on $X$. 

Now to derive pointwise decay of $\wt{e}$ as $s \to +\infty$, we have to extend $\wt{e}$ a bit beyond the boundary of $[0,+\infty) \times [0,\pi]$. For example, we use reflection to define
\beqn
\wt{e}(s, t) = \wt{e}(s, -t),\ t \in (-a, 0)
\eeqn
for $a > 0$ a small constant. Then $\wt{e}$ is extended to $[0, +\infty) \times (-a, 1]$. To see that the extension still satisfies \eqref{eqna11}, it suffices to show that $\partial_t \wt{e}(s, 0) = 0$. This is the place where we need the properties of metrics defined by Definition \ref{defn48}.  Choose a $(J, L_-, \mu)$-admissible metric $h_-$. 

First, by the boundary condition, $\partial_t | \mu(u) |^2 = 0$. On the other hand,
\beqn
\begin{split}
\frac{1}{2} \partial_t \langle v_s, v_s \rangle = &\ \langle \nabla_{A, t} v_s, v_s \rangle \\
= &\ \langle \nabla_{A, s} v_t , v_s \rangle + \langle \nabla_{A, t} v_s - \nabla_{A,s} v_t, v_s \rangle \\
= &\ \langle \nabla_{A, s} (J v_s), v_s \rangle + \langle {\mc X}_\mu, v_s \rangle \\
= &\  \left\langle \left( \nabla_{v_s} J \right) v_s, v_s \right\rangle + \left\langle J \left( \nabla_s v_s + \nabla_{v_s} {\mc X}_\phi \right), v_s \right\rangle + \langle {\mc X}_\mu, v_s \rangle.
\end{split}
\eeqn
Here the third equality follows from \eqref{eqna14} and the last follows from $\nabla_{A, s} J = \nabla_{v_s} J$. Then evaluating at $t = 0$, we see that in the last row, the first term vanishes because $\nabla J$ is skew-adjoint; the second term vanishes because $L_-$ is totally geodesic and $JTL_-$ is orthogonal to $TL_-$; the third term vanishes by the boundary condition.

Therefore \eqref{eqna11} holds on $[0, +\infty) \times (-a, 1]$, for the constant ${\bm c}$ associated with the metric $h_-$. By the mean value estimate (\cite[Page 12]{Salamon_lecture}) for any $B_{\frac{a}{2}}(w) \subset [0, +\infty) \times (-a, \pi]$,
\beqn
\lim_{s\to +\infty} \wt{e}(s, t) = 0,\ \forall t \in \left[ 0, \pi - \frac{a}{2} \right).
\eeqn
To achieve the estimate near the other boundary component, simply take a $(J, L_+, \mu)$-admissible metric $h_+$ and do the reflection along the other boundary. Since all metrics are equivalent, we see that $\wt{e}(s, t)$ converges to zero uniformly as $s \to +\infty$. 

On the other hand, it is easy to see that there exists a gauge transformation $g: [0, +\infty) \times [0,1] \to G$ which transforms $(A, u)$ into temporal gauge, i.e., 
\beqn
g^* (A, u) = (\psi dt, u).
\eeqn
Since $\sigma$ decays exponentially as $s \to +\infty$, by the equation $\partial_s \psi + \sigma \mu(u) = 0$ and the uniform boundedness of $|\mu(u)|$, we see that 
\beqn
\lim_{s \to +\infty} \left| {\mc X}_{\psi}(u(s, t)) \right| = 0.
\eeqn
Therefore $\left| \partial_t u (s, t) \right| \to 0$ as $s \to +\infty$. Let $\gamma_s(t) = u(s, t)$. Since $L_-$ and $L_+$ intersect cleanly in $X$, by the Po\'zniak's isoperimetric inequality Lemma \ref{lemma50}, we can prove that there exists $x\in L_- \cap L_+$ such that $\displaystyle \lim_{s \to \infty} u(s, t) = x$.
\qed \end{proof}

\subsection{Energy quantization of ${\mb H}$-vortices}\label{appendixa4}

\begin{prop}\label{prop56}
Let $L$ be a $G$-Lagrangian of $X$ and $K \subset X$ be a compact subset. Then there exists a constant $\epsilon_{K, L} > 0$ such that the following holds. Suppose $z_0 \in {\mb H}$ and $r>0$. Suppose ${\bf v} \in \wt{\mc M}(X, L; B_{2r}(z_0) \cap {\mb H}, \nu_0)$. Then if $u( B_{2r}(z_0) \cap {\mb H})\subset K$ and $E({\bf v}) < \epsilon_{K, L}$, then 
\beqn
\sup_{B_{r}(z_0) \cap {\mb H}} e({\bf v}) \leq \frac{8}{\pi r^{2}} E( {\bf v}, B_{2r}(z_0) \cap {\mb H}).
\eeqn
Here $e({\bf v}): B_{2r}(z_0) \cap {\mb H} \to {\mb R}_+ \cup \{0\}$ is the energy density function of ${\bf v}$ with respect to the standard metric on ${\mb H}$ and a $(J, L, \mu)$-admissible metric $h$.
\end{prop}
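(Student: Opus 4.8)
The plan is to run the usual $\varepsilon$-regularity bootstrap: feed the energy density $e$ of ${\bf v}$ into the mean-value inequality of Lemma \ref{lemma53}, using Lemma \ref{lemma54} to provide the differential inequality it requires, and dispose of the Lagrangian boundary by an even reflection of $e$ across $\mb R=\partial\mb H$, exactly as in the proof of Proposition \ref{prop55}. If $B_{2r}(z_0)$ does not meet $\mb R$ this is the purely interior estimate and no reflection is needed, so assume $B_{2r}(z_0)$ touches $\mb R$ (the essential case being $z_0\in\partial\mb H$).

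First I would reduce to $r=1$ by rescaling the domain via $z\mapsto z/r$: this turns ${\bf v}$ into a solution of the vortex equation on $B_2(z_0/r)\cap\mb H$ with respect to the \emph{constant} area form $r^2\nu_0$. Since this area form is constant the quantities $c^{(l)}$ of \eqref{eqna9} vanish, so \eqref{eqna12} gives $\Delta e\ge -{\bm c}_K e^2$ with ${\bm c}_K$ depending on $K$ and $h$ but not on $r$; as $E({\bf v})$ and the inequality to be proved are invariant under this rescaling, it suffices to treat $r=1$.

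Next comes the reflection, which is the heart of the matter. Writing $w=s+{\bm i}t$, along $\partial\mb H$ one has $\partial_t|\mu(u)|^2=0$ because $u(\mb R)\subset L\subset\mu^{-1}(0)$, while, using $v_t=Jv_s$, the identity \eqref{eqna14} and Lemma \ref{lemma47},
\beqn
\tfrac12\,\partial_t|v_s|^2=\langle\nabla_{A,s}(Jv_s),v_s\rangle+\langle{\mc X}_\mu,v_s\rangle=\langle(\nabla_{v_s}J)v_s,v_s\rangle+\langle J\nabla_{A,s}v_s,v_s\rangle+\langle{\mc X}_\mu,v_s\rangle .
\eeqn
At $t=0$ the first term vanishes by skew-adjointness of $\nabla J$, the last because ${\mc X}_\mu=0$ on $L$, and the middle one because $v_s$ is tangent to the $G$-invariant $L$, $L$ is totally geodesic so $\nabla_{A,s}v_s$ remains tangent to $L$, and $JTL\perp TL$. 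Hence $\partial_t e(s,0)=0$, so the even reflection $\wt e(s,t):=e(s,|t|)$ is a nonnegative $C^1$ function on a neighborhood of $B_1(z_0)\cap\mb H$ in $\mb C$ satisfying $\Delta\wt e\ge -{\bm c}_K(\wt e+\wt e^2)$ in the sense needed for Lemma \ref{lemma53}. Choosing $\epsilon_{K,L}$ small enough that $2\epsilon_{K,L}\le\pi/(8{\bm c}_K)$, a first application of Lemma \ref{lemma53} bounds $\sup\wt e$ on a slightly smaller half-ball by $C({\bm c}_K)\epsilon_{K,L}$; shrinking $\epsilon_{K,L}$ once more makes $\sup\wt e$ so small that $\Delta\wt e\ge -{\bm c}_K(\sup\wt e)\wt e\ge -{\bm\eta}(\wt e+\wt e^2)$ with ${\bm\eta}\le 2$, whereupon a second application of Lemma \ref{lemma53}, now with multiplicative constant $\max\{8/\pi,4{\bm\eta}/\pi\}=8/\pi$, gives $\wt e(z_1)\le\frac{8}{\pi}\int_{B_1(z_1)}\wt e$ for every $z_1$ in the rescaled half-ball of radius $1$. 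Bounding the reflected integral by the energy of ${\bf v}$ over $B_2(z_0/r)\cap\mb H$ and undoing the rescaling yields the asserted estimate.

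The main obstacle is precisely this reflection step: one must verify that the energy density has vanishing normal derivative along $\partial\mb H$ so that the Bochner-type inequality of Lemma \ref{lemma54} survives the doubling, and this forces the use of all four defining properties of a $(J,L,\mu)$-admissible metric — which is why Lemma \ref{lemma49} was needed. By comparison, the reduction to $r=1$ (so that the constant in the Laplacian inequality does not degenerate with $r$) and the two-step passage to the clean constant $8/\pi$ are routine once that is in hand.
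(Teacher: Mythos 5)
Your argument is correct and follows the paper's own route: reflect the energy density evenly across $\partial{\mb H}$ (justified by $\partial_t e=0$ along the boundary, which is exactly the computation in the proof of Proposition \ref{prop55} using a $(J,L,\mu)$-admissible metric) and then apply a mean value inequality to the reflected density under the small-energy threshold. The only cosmetic difference is that the paper invokes \cite[Lemma 4.3.2]{McDuff_Salamon_2004} for $\Delta e \geq -{\bm c}e^2$, which yields the constant $8/(\pi r^2)$ directly for every $r$, whereas you rescale to $r=1$ and recover the constant by a two-step bootstrap through Lemma \ref{lemma53}; the minor constant bookkeeping (e.g.\ the doubling of the integral under reflection) is handled no less carefully than in the paper itself and is immaterial for the applications.
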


\begin{proof}
For the same reason as in the proof of Proposition \ref{prop55}, $e$ can be extended to $B_{2r}(z_0) \subset {\mb C}$ by reflection along the boundary of $B_{2r}(z_0) \cap {\mb H}$. This extension still satisfies 
\beqn
\Delta e \geq -{\bm c} e^2.
\eeqn
Then by \cite[Lemma 4.3.2]{McDuff_Salamon_2004}, the estimate holds. 
\qed \end{proof}

\begin{proof}[Proof of Theorem \ref{thm14}]
Choose $\epsilon_{X, L} = \epsilon_{K, L}$ for $K = X_{c_0}$ where $X_{c_0}$ is the one in \eqref{eqn23}. Suppose ${\bf v}$ is an ${\mb H}$-vortex and $E({\bf v}) < \epsilon_{X, L}$. Then by the mean value estimate in Proposition \ref{prop56}, we see that for any $z\in {\mb H}$ and $ r>0$, 
\beqn
e(z)\le \frac{8}{\pi r^{2}} E ( {\bf v}; B_r(z) ).
\eeqn
Let $r\to \infty$, we have $e(z)=0$. Thus $E({\bf v})=0$.
\qed \end{proof}

\subsection{Annulus lemma for vortices on strips}

Ziltener proved (\cite[Proposition 45]{Ziltener_book}) that for any annulus $A(r, R) = \{z\in {\mb C}\ |\ r\leq |z|\leq R\}$ and any small $\epsilon>0$, there exists a constant $E(r, \epsilon)$ such that for any vortex ${\bf v} = (A, u)$ on $A(r, R)$ with respect to the standard area form, if $E({\bf v}) \leq E(r, \epsilon)$, then 
\beqn
\begin{split}
E ( {\bf v}; A(ar, a^{-1} R) ) \leq &\ c a^{ - 2 + \epsilon}  E({\bf v}),\\
{\rm diam}_G ( u(A(ar, a^{-r}R)) ) \leq &\ c a^{-1 + \epsilon} \sqrt{ E({\bf v})}
\end{split}
\eeqn
for some constant $c>0$ and for any $a \in \big[ 2, \sqrt{ R/r} \big]$. Now we prove an analogue of this result on strips. Via the map $w = s + {\bm i}t = \log z$, we identify the strip $A^+(r, R) = A(r, R) \cap {\mb H}$ with 
\beqn
[p, q]\times [0, \pi] := [\log r, \log R] \times [0, \pi].
\eeqn
Let $\nu  = \sigma(s, t) ds dt$ be an area form satisfying \eqref{eqna9}. 

\begin{prop}\label{prop57}
There exists ${\bm a} = {\bm a}(L_-, L_+) > 0$, $\upepsilon = \upepsilon(L_-, L_+)>0$ satisfying the following condition. Given a smooth solution ${\bf v} = (u, \phi, \psi)$ to \eqref{eqna10} with boundary condition $u(A(r, R)\cap {\mb R}_\pm) \subset L_\pm$. Suppose $\sigma$ is bounded from below by a constant $\underline\sigma$. If $E({\bf v}) \leq  \upepsilon$, then for any $s \in \left[ \log 2, \frac{1}{2} ( q-p) \right]$, we have
\beq\label{eqna20}
E( {\bf v}; A^+ ( e^s r, e^{-s} R) ) \leq {\bm a} \exp \Big( - \frac{s \min\{1, \underline\sigma\} }{{\bm c} } \Big) E ( {\bf v}; A^+	(r, R) );
\eeq
\beq\label{eqna21}
{\rm diam}_G ( u( A^+( e^s r, e^{-s} R) )) \leq {\bm a} \exp \Big( -\frac{ s \min\{ 1, \underline\sigma \} }{2 {\bm c}  }\Big) \sqrt{E ( {\bf v}; A^+(r, R) ) }.
\eeq
Here ${\bm c} = {\bm c} (L_-, L_+) >0$ is the ${\bm c}(L_0, L_1)$ in Lemma \ref{lemma52} for $L_0 = L_+$, $L_1 = L_-$.
\end{prop}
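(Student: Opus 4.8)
The plan is to follow the now-classical ``differential inequality for the energy on a long strip'' argument (as in \cite{Gaio_Salamon_2005}, \cite{Ziltener_book}) adapted to the strip with cleanly-intersecting Lagrangian boundary, using the equivariant local action functional ${\mc A}_{loc}$ from Lemma \ref{lemma52}. First I would identify $A^+(r,R)$ with $[p,q]\times[0,\pi]$ via $w=s+{\bm i}t=\log z$ and put ${\bf v}$ into temporal gauge, so $\phi\equiv 0$ and ${\bf v}=(\gamma,\psi)$ with $\gamma_s(t)=u(s,t)$, $\psi_s(t)=\psi(s,t)$. Then I would define
\beqn
F(s):= E\big({\bf v}; A^+(e^s r, e^{-s} R)\big),\quad s\in[\log 2,\tfrac12(q-p)],
\eeqn
and aim to prove a decay estimate $-F'(s)\geq c_0\min\{1,\underline\sigma\}\,F(s)$, which upon integration yields \eqref{eqna20}; the diameter bound \eqref{eqna21} then follows from \eqref{eqna20} by the standard trick of summing the $L^2$-lengths of the paths $\gamma_s$ over dyadic scales (using Cauchy--Schwarz on $[0,\pi]$ to pass from energy to length).

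The heart of the matter is the differential inequality. Writing $F(s) = \iint_{[\cdot]}(|\dot\gamma_\tau+{\mc X}_{\psi_\tau}(\gamma_\tau)|^2 + \sigma|\mu(\gamma_\tau)|^2)\,dt\,d\tau$ (using the vortex equation to rewrite the energy density), differentiation in $s$ gives $-F'(s)$ as (essentially) twice the integral of the energy density over the two cross-sections $\{p+s\}\times[0,\pi]$ and $\{q-s\}\times[0,\pi]$. On the other hand, the quantity $F(s)$ itself equals the difference of equivariant local actions ${\bm A}(\wt\gamma_{p+s})-{\bm A}(\wt\gamma_{q-s})$ — this is the vortex version of ``Stokes' theorem'' relating the action to the symplectic area swept out, and it is exactly the computation carried out in equation \eqref{eqn618} and the lines following it in Subsection 6.4. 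Combining this identity with Lemma \ref{lemma52}, which bounds $|{\bm A}(\wt\gamma_\tau)|$ by ${\bm c}$ times the energy density integrated over the cross-section $\{\tau\}\times[0,\pi]$, one gets $F(s)\leq {\bm c}\cdot(\text{cross-section energy at }p+s\text{ and }q-s)$; but the cross-section energy must be measured against the flat metric, and the $\sigma|\mu|^2$ term in $F$ carries a factor $\sigma\geq\underline\sigma$, which is where the $\min\{1,\underline\sigma\}$ enters. Putting these together yields $F(s)\leq -\tfrac{{\bm c}}{\min\{1,\underline\sigma\}} F'(s)$, i.e. the desired Gronwall-type inequality.

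Before any of this can run, one must verify the hypotheses of Lemma \ref{lemma52} hold on each cross-section $\{\tau\}\times[0,\pi]$ for $\tau$ in the relevant range: namely that $(\gamma_\tau,\psi_\tau)\in{\mc P}_\updelta$, i.e. $\sup_t|\mu(\gamma_\tau(t))|\leq\updelta$ and $\sup_t|\dot\gamma_\tau(t)+{\mc X}_{\psi_\tau(t)}(\gamma_\tau(t))|\leq\updelta$. This is where the smallness assumption $E({\bf v})\leq\upepsilon$ is used: by the mean-value estimate for the energy density (Lemma \ref{lemma54} together with the mean-value inequality, applied after reflecting $e$ across the boundary as in the proof of Proposition \ref{prop55} — legitimate because $h$ is $(J,L_\pm,\mu)$-admissible), the pointwise energy density on $A^+(2r, R/2)$ is controlled by $C\,E({\bf v})/(\text{local radius})^2$; choosing $\upepsilon$ small enough relative to $\updelta$, ${\bm m}$, ${\bm n}$ and the geometry of the clean intersection forces the cross-sectional quantities below $\updelta$. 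One also needs $\psi_\tau$ itself to be small, which follows from integrating $\partial_s\psi=-\sigma\mu(u)$ and using the decay of $\sigma$ (hypothesis \eqref{eqna9}) together with the energy bound, exactly as in \eqref{eqn617}--\eqref{eqn618}. The main obstacle I anticipate is bookkeeping the two ends of the strip simultaneously and making the constant ${\bm a}$ genuinely independent of $r,R$ and of $\underline\sigma$ except through the stated $\min\{1,\underline\sigma\}$ factor; the analytic content is entirely standard once the reflection trick and the action identity are in place, so I would present it concisely and refer to \cite{Gaio_Salamon_2005} and the computation in Subsection 6.4 for the routine parts.
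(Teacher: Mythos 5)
This is essentially the paper's own proof: the mean-value estimate (Lemma \ref{lemma54} together with Lemma \ref{lemma53}, after reflecting the density across the boundary using $(J,L_\pm,\mu)$-admissible metrics) puts each cross-section $(\gamma_\tau,\psi_\tau)$ into ${\mc P}_\updelta$, then the identity $E(s)=\big|{\mc A}(p+s)-{\mc A}(q-s)\big|$ combined with the isoperimetric inequality Lemma \ref{lemma52} gives exactly the Gronwall-type inequality with the $\min\{1,\underline\sigma\}$ factor, and the diameter bound follows from a second application of the pointwise density estimate (your dyadic summation is just a repackaging of the paper's step, and it too needs that pointwise estimate, not Cauchy--Schwarz alone). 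The one sentence you should delete is the aside that ``$\psi_\tau$ itself must be small, via decay of $\sigma$'': smallness of $\psi_\tau$ is not required for membership in ${\mc P}_\updelta$ (condition \eqref{eqna8} involves only $|\mu(\gamma_\tau)|$ and $|\dot\gamma_\tau+{\mc X}_{\psi_\tau}(\gamma_\tau)|$), and the decay mechanism you cite is unavailable under the present hypothesis that $\sigma$ is bounded \emph{below} — it belongs to the degenerate-area-form case treated around \eqref{eqn617}, where the paper explicitly notes this proposition cannot be applied.
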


\begin{proof} Let $\epsilon (s, t)$ be the energy density with respect to the cylindrical coordinates, so that
\beqn
E ({\bf v}; A^+(r, R) ) = \int_p^q \int_0^\pi \epsilon (s, t) ds dt.
\eeqn
By the estimate of Lemma \ref{lemma54} on the strip and Lemma \ref{lemma53}, we know that there exists $\upepsilon>0$ such that if $E ({\bf v}; A^+(r, R) ) \leq \upepsilon$, then for any $(s, t) \in [ p + \log 2, q - \log 2] \times [0, \pi]$, we have $\epsilon (s, t)\leq \updelta(L_-, L_+)$. Here $\updelta(L_-, L_+)$ is the one from Lemma \ref{lemma52}. (Notice that when applying Lemma \ref{lemma53}, we have to use $(J, L_\pm, \mu)$-admissible metrics to extend the energy density function beyond the boundary of the strip, and notice that the $r$ of Lemma \ref{lemma53} is uniformly bounded). We write $A = d + \phi(s, t) ds + \psi(s, t) dt$. Then for $s \in [p + 2, q - 2]$, we have $(\gamma_s, \psi_s):= ( u( s + {\bm i} \cdot), \psi(s, \cdot) ) \in {\mc P}_\updelta$. So we can define the local equivariant action
\beqn
{\mc A}(s) = {\mc A}_{loc}( \gamma_s, \psi_s). 
\eeqn
For $s \in [ \log 2, ( q - p)/2 ] $, we denote $E(s) = E ( {\bf v}; A^+ ( e^s r, e^{-s} R) )$. Then by the isoperimetric inequality (Lemma \ref{lemma52}), for ${\bm c} = {\bm c}(L_-, L_+)$, we have
\begin{multline*}
E(s) =  \big| {\mc A}( p + s ) - {\mc A}(q - s) \big|\\
\leq  {\bm c} \Big( \int_0^\pi \big(  | v_t( p + s, t) |^2 + | \mu(u (p + s, t)) |^2 \big) dt +  \int_0^\pi \big(  | v_t (q -s, t) |^2 +  | \mu(u( q -s, t)) |^2 \big) dt \Big) \\
\leq  \frac{ {\bm c}}{\min \{1, \underline\sigma\}}  \int_0^\pi \big( e(p+s, t) + e(q-s, t) \big) dt  = - \frac{ {\bm c} }{\min\{ 1, \underline\sigma \}}  \frac{d}{ds} E(s). 
\end{multline*}
Here $e(s, t) = |v_t(s, t)|^2 + \sigma(s, t)|\mu(u(s, t))|^2$. Abbreviate $\wt{\bm c} = {\bm c}/ \min\{1, \underline{\sigma}\}$. Then we have
\beqn
E(s) \leq E(2) \exp \Big( - \frac{ s-2}{\wt{\bm c}}\Big) \leq E ( {\bf v}; A^+(r, R) ) e^{2 /{\bm c}} \exp \Big( - \frac{s}{\wt{\bm c}}\Big).
\eeqn
Therefore \eqref{eqna20} holds.

To prove the estimate for the radius, apply Lemma \ref{lemma53} to $\epsilon $ again, for a choice of $r$ uniformly bounded from below. Then $e$ decays in a similar way as 
\beq\label{eqna22}
\sup_{[p + s, q - s]\times [0, \pi]} \epsilon = O ( \exp ( - s/\wt{\bm c}) ).
\eeq
Integrating over $[ p + s, q - s]$ gives the upper bound on the equivariant diameter.
\qed \end{proof}

Now we prove the following asymptotic property of ${\mb H}$-vortices.

\begin{proof}[Proof of Theorem \ref{thm12}] Let $q \to +\infty$ in \eqref{eqna22}, we obtain
\beqn
\epsilon(s, t) \leq {\bm a} \exp( -s/ {\bm c}).
\eeqn
Since $\epsilon(s, t) = e^{2s} e(s, t)$, we obtain \eqref{eqn27}. On the other hand, we could transform $(A, u)$ into temporal gauge, i.e., $A = d + \psi d t$ such that  
\beqn
\lim_{s \to +\infty} \psi( s, t) = 0.
\eeqn
Then the decay of $\epsilon$ implies that in this gauge, $u$ converges to a limit in $L_- \cap L_+$.
\qed \end{proof}

\section{Trees}\label{appendixb}

In this appendix we fix notions and notations of trees. 

In our convention, a tree ${\mc T}$ consists of a finite set of vertices $V({\mc T})$, a finite set of (finite) edges $E({\mc T}) \subset V({\mc T}) \times V({\mc T})$, and a finite set of {\bf semi-infinite edges} $E_\infty({\mc T})$. The semi-infinite edges are attached to vertices, by a map $\ud\iota: E_\infty({\mc T}) \to V({\mc T})$. A {\bf rooted tree} is a tree with a distinguished vertex, which is usually denoted by $v_\infty$. 

In this paper we mainly consider rooted trees. There are obvious notions of morphisms between rooted trees, and rooted subtrees. We index vertices by letters $\alpha, \beta, \gamma$, etc.. For a rooted tree ${\mc T}$, there is a canonical partial order $\leq$ in $V({\mc T})$ with $v_\infty$ the unique minimal element. Moreover, for notational purpose, we only consider edges with the correct orientation, i.e., for $v_\alpha, v_\beta\in V({\mc T})$, we write $\alpha \succ \beta$ if and only if $v_\alpha$, $v_\beta$ are adjacent and $\beta \leq \alpha$. For any $v_\alpha \in V({\mc T}) \smallsetminus \{ v_\infty\}$, we denote by $v_{\alpha'}\in V({\mc T})$ the unique vertex such that $\alpha \succ \alpha' \in E({\mc T})$. For $e\in E({\mc T}) \cup E_\infty({\mc T})$, denote by $e'\in V({\mc T})$ the end point of $e'$ which is closer to the root.

We regard a tree ${\mc T}$ as a 1-dimensional simplicial complex. Note that a semi-infinite edge only has one end combinatorially, but the point at infinity on the semi-infinite edge is regarded as a point of the simplicial complex.

\begin{defn} (\cite[Definition 1.1]{Fukaya_Oh}) \label{defn58}
\begin{enumerate}

\item A {\bf rooted ribbon tree} with $\ud k$ semi-infinite edges consists of a rooted tree $\ud{\mc T}$ with a topological embedding ${\mf i}: \ud{\mc T} \to \ov{{\mb D}}$ such that $\partial \ud{\mc T}:= {\mf i}^{-1}( \partial {\mb D}) $ consists of the $\ud k$ infinities of these semi-infinite edges. As a convention we always order the semi-infinite edges by $e_1, \ldots, e_{\ud k}$ which respects the cyclic ordering induced by the embedding. 

\item An isomorphism between two rooted ribbon trees $(\ud {\mc T}, {\mf i})$ and $(\ud {\mc T}', {\mf i}')$ consists of a rooted tree isomorphism $\rho: \ud {\mc T} \to \ud {\mc T}'$ together with an isotopy ${\mf i}_t: ( \ud {\mc T}, \partial \ud{\mc T})  \to ({\mb D}, \partial {\mb D})$ as embedding of pairs between ${\mf i}'\circ \rho$ and ${\mf i}$. Two ribbon trees with $\ud k$ semi-infinite edges are {\bf equivalent} if there is an isomorphism between them. An isotopy class of embeddings ${\mf i}$ for a rooted tree ${\mc T}$ is called a {\bf ribbon structure} on ${\mc T}$.

\item A {\bf based rooted tree} consists of a rooted tree ${\mc T}$ and a rooted subtree $\ud{\mc T}$ where the latter is equipped with a ribbon structure.

\item A {\bf colored rooted tree} is a rooted tree ${\mc T}$ together with an order-reversing map ${\mf s}: V({\mc T}) \to \{0, 1, \infty\}$ (called the {\bf coloring}) such that within every path of ${\mc T}$, ${\mf s}^{-1}(1)$ consists of at most one vertex.

\item A vertex $v_\alpha$ of a based colored rooted tree $({\mc T}, \ud{\mc T}, {\mf s})$ is {\bf stable} if one of the followings is true.
\begin{itemize}
\item $v_\alpha = v_\infty$;

\item $v_\alpha \in V({\mc T}) \smallsetminus V(\ud{\mc T})$ and $d_{{\mc T}}(v_\alpha) \geq 3$;

\item $v_\alpha \in V(\ud{\mc T})$, ${\mf s}(v_\alpha) \neq 1$ and $2 d_{\mc T}(v_\alpha) - d_{\ud{\mc T}}(v_\alpha) \geq 3$;

\item ${\mf s}(v_\alpha) = 1$ and $d_{\mc T}(v_\alpha) \geq 2$.
\end{itemize}
\end{enumerate}
\end{defn}

\begin{defn}\label{defn59}
Let $({\mc T}_0, \ud{\mc T}{}_0, {\mf s}_0)$ be a based colored rooted tree. A {\bf growth} is another based colored rooted tree $({\mc T}_1, \ud{\mc T}{}_1, {\mf s}_1)$ with a morphism $\rho: ({\mc T}_1, \ud{\mc T}{}_1, {\mf s}_1) \to ({\mc T}_0, \ud{\mc T}{}_0, {\mf s}_0)$, which is the composition of finitely many elementary growths of the following types.

$\mc{GR}.$ In this case $V({\mc T}_1) = V({\mc T}_0) \cup \{v_\alpha\}$ with $v_\alpha \in {\mf s}_1^{-1}(\infty) \smallsetminus V(\ud{\mc T}{}_1)$. $\rho$ contracts the edge $e_{\alpha \alpha'} \in E({\mc T}_1)$. $\mc{GR}$ corresponds to sphere bubbling downstairs.

$\ud{\mc{GR}}.$ In this case $V({\mc T}_1) = V({\mc T}_0) \cup \{ v_{\ud\alpha}\}$ with $v_{\ud\alpha} \in {\mf s}_1^{-1}(\infty) \cap V(\ud {\mc T}_1)$. $\rho$ contracts the edge $e_{\ud\alpha \ud\alpha'} \in E({\mc T}_1)$. $\ud{\mc{GR}}$ corresponds to disk bubbling downstairs.

$\mc{GS}.$ In this case $V({\mc T}_1) = V({\mc T}_0) \cup \{ v_{\alpha}\}$ with $v_\alpha \in {\mf s}_1^{-1}(1) \smallsetminus V(\ud{\mc T}{}_1)$. $\rho$ contracts the edge $e_{\alpha \alpha'} \in E({\mc T}_1)$. $\mc{GS}$ corresponds to the ${\mb C}$-vortex bubbling.

$\ud{\mc{GS}}.$ In this case $V({\mc T}_1) = V({\mc T}_0) \cup \{ v_{\ud\alpha}\}$ with $v_{\ud\alpha} \in {\mf s}_1^{-1}(0) \cap V(\ud{\mc T}{}_1)$. $\rho$ contracts the edge $e_{\ud\alpha \ud\alpha'}$. $\ud{\mc{GS}}$ corresponds to ${\mb H}$-vortex bubbling.

$\mc{GF}.$ In this case $V({\mc T}_1) = V({\mc T}_0) \cup \{ v_\alpha\}$ with $v_\alpha \in {\mf s}^{-1}(0) \smallsetminus V(\ud{\mc T}{}_1)$. $\rho$ contracts the edge $e_{\alpha \alpha'}$. $\mc{GF}$ corresponds to sphere bubbling upstairs.

$\mc{GF}.$ In this case $V({\mc T}_1) = V({\mc T}_0) \cup \{ v_{\ud\alpha}\}$ with $v_{\ud\alpha} \in {\mf s}^{-1}(0) \cap V(\ud{\mc T}{}_1)$. $\rho$ contracts the edge $e_{\ud\alpha \ud\alpha'}$. $\ud{\mc{GF}}$ corresponds to disk bubbling upstairs.

$\ud{\mc{GD}}.$ In this case $V({\mc T}_1) = V({\mc T}_0) \cup \{ v_{\ud\alpha_1}, \ldots, v_{\ud\alpha_s}\}$ and $\rho$ contracts the path $\ud\alpha \succ \ud\alpha_1 \succ \cdots \succ \ud\alpha_s \succ \ud\alpha'$ in $\ud{\mc T}{}_1$ to the edge $e_{\ud\alpha \ud\alpha'} \in E(\ud{\mc T}{}_0)$. $\ud{\mc{GD}}$ corresponds to the appearance of connecting disk bubbles, either upstairs or downstairs.
\end{defn}

\subsection{Combinatorial types of stable holomorphic spheres and disks}\label{appendixb2}

In this subsection we set up some convention of expressing holomorphic spheres or disks. 

\subsubsection{Holomorphic spheres}

Stable holomorphic spheres are modelled on ordinary trees. We will consider stable holomorphic spheres with a single marked point, which specifies a root of the tree. Therefore, rooted trees are what such objects are modelled on. Let ${\mc T} = (V({\mc T}), E({\mc T}), v_\infty)$ be a rooted tree. A stable holomorphic sphere in an almost K\"ahler manifold $(X, \omega, J)$ modelled on ${\mc T}$ is a collection of objects
\beqn
{\mc S} = \Big( (u_\alpha)_{v_\alpha \in V({\mc T})},\ (z_{\alpha \beta})_{e_{\alpha \beta} \in E({\mc T})} \Big)
\eeqn
where 
\begin{enumerate}
\item For each $v_\alpha \in V({\mc T})$, $u_\alpha: {\mb C} \to X$ is a holomorhpic map with finite energy (therefore extends to a holomorphic sphere with an evaluation $u_\alpha(\infty)$), such that $E(u_\alpha) = 0$ implies that $d_{\mc T}(v_\alpha) \geq 3$ or $v_\alpha = v_\infty$ and $d_{\mc T}(v_\alpha) \geq 2$.

\item For each $e_{\alpha \beta} \in E({\mc T})$, $z_{\alpha \beta} \in {\mb C}$ such that $u_\alpha(\infty) = u_\beta( z_{\alpha \beta})$ and for each $\beta \in V({\mc T})$, the collection of points $Z_\beta:= ( z_{\alpha  \beta} )_{\beta = \alpha'}$ are distinct.
\end{enumerate}
In this situation, we call the rooted tree ${\mc T}$ a {\bf branch}. 

\subsubsection{Holomorphic disks}

Stable holomorphic disks are modelled on based trees. We will consider stable holomorphic disks with a single boundary marked point, which specifies a root of the base. A stable $J$-holomorphic disk in $(X, L)$ modelled on a based rooted tree $({\mc T}, \ud{\mc T}, v_\infty)$ is a collection
\beqn
{\mc D} = \Big( (u_\alpha)_{v_\alpha \in  V({\mc T})},\ (z_{\alpha \beta})_{e_{\alpha \beta} \in E({\mc T})} \Big)
\eeqn
where
\begin{enumerate}
\item For each $v_{\ud\alpha} \in V(\ud{\mc T})$, $u_{\ud\alpha}: ({\mb H}, {\mb R}) \to (X, L)$ is a holomorphic map with finite energy (therefore extends to a holomorphic disk in $(X, L)$ with an evaluation $u_{\ud\alpha}(\infty)$); for each $v_\alpha \in V({\mc T}) \smallsetminus V(\ud{\mc T})$, $u_\alpha: {\mb C} \to X$ is a holomorphic map with finite energy (therefore has an evaluation $u_\alpha(\infty)$); they should satisfy the stability condition: for $v_\alpha \in V({\mc T}) \smallsetminus V(\ud{\mc T})$ and $E(u_\alpha) = 0$, $d_{\mc T}(v_\alpha) \geq 3$; if $v_{\ud \alpha} \in V(\ud{\mc T})\smallsetminus \{ v_\infty\}$ and $E(u_\alpha) = 0$, $2d_{\mc T}(v_{\ud\alpha}) - d_{\ud{T}}(v_{\ud\alpha}) \geq 3$; if $E(u_\infty) = 0$ then $2d_{\mc T}(v_\infty) - d_{\ud{\mc T}}(v_\infty) \geq 2$.

\item (Denote $\Sigma_\alpha = {\mb H}$ if $v_{\alpha} \in V(\ud{\mc T})$ and $\Sigma_\alpha = {\mb C}$ otherwise.) For each $e_{\alpha \beta}\in E(\ud{\mc T})$, $z_{\alpha \beta}\in \partial \Sigma_\beta$; for each $e_{\alpha \beta}\notin E(\ud{\mc T})$, $z_{\alpha \beta}\in {\rm Int} \Sigma_\beta$. They satisfy $u_\alpha(\infty) = u_\beta( z_{\alpha \beta})$ and for each $\beta \in V({\mc T})$, the collection of points $Z_\beta:= (z_{\alpha \beta})_{\alpha' = \beta}$ are distinct.
\end{enumerate}
In this situation, we call the based rooted tree ${\mc T}$ a {\bf based branch}.

\bibliography{symplectic_ref,physics_ref}
\bibliographystyle{amsplain}

\end{document}